\definecolor{winered}{rgb}{0.6,0,0}
\definecolor{lessblue}{rgb}{0,0,0.7}
\newcommand{\myitem}[3]{\item[#2]\def\@currentlabel{#3}\label{#1}}
\def\@tocline#1#2#3#4#5#6#7{
\begingroup
  \par
    \parindent\z@ \leftskip#3 \relax \advance\leftskip\@tempdima\relax
                  \rightskip\@pnumwidth plus 4em \parfillskip-\@pnumwidth
    \ifcase #1 
       \vskip 0.6em \hskip 0em 
       \or
       \or \hskip 0em 
       \or \hskip 1em 
    \fi%
    %
    #6
    %
    \nobreak\relax{\leavevmode\leaders\hbox{\,.}\hfill}
    \hbox to\@pnumwidth {\@tocpagenum{#7}}
  \par
\endgroup
}
 \def\l@section{\@tocline{0}{0pt}{0pc}{}{}}
\renewcommand{\tocsection}[3]{%
  \indentlabel{\@ifnotempty{#2}{ 
    \ignorespaces\bfseries{#2. #3}}}
  \indentlabel{\@ifempty{#2}{\ignorespaces\bfseries{#3}}{}} 
    \vspace{1.5pt}}
\renewcommand{\tocsubsection}[3]{%
  \indentlabel{\@ifnotempty{#2}{
    \ignorespaces#2. #3}}
  \indentlabel{\@ifempty{#2}{\ignorespaces #3}{}}
    \vspace{1.5pt}}
\renewcommand{\tocsubsubsection}[3]{%
  \indentlabel{\@ifnotempty{#2}{
    \ignorespaces#2. #3}}
  \indentlabel{\@ifempty{#2}{\ignorespaces #3}{}}
    \vspace{1.5pt}}
\def\@nomenstarted{0}
\newlength{\@nomenoldtabcolsep}
\newcommand{\nomenstart}
  {%
    \def\@nomenstarted{1}%
    \setlength{\@nomenoldtabcolsep}{\tabcolsep}%
    \setlength{\tabcolsep}{3.5pt}%
    \begin{longtable}{p{0.11\textwidth} p{0.86\textwidth}}
  }
\newcommand{\nomenitem}[2]{%
    \ifcase\@nomenstarted%
      \or 
      \or \\ 
    \fi%
    #1\,{\leavevmode\leaders\hbox{\,.}\hfill} & #2%
    \def\@nomenstarted{2}%
  }%
\newcommand{\nomenend}
  {\\%
      \end{longtable}%
      \setlength{\tabcolsep}{\@nomenoldtabcolsep}%
      \def\@nomenstarted{0}%
  }
\newcommand{\BIG}{\bBigg@{3.5}}
\newcommand{\vast}{\bBigg@{4}}
\newcommand{\Vast}{\bBigg@{5}}
\newcommand{\VAST}[1]{\bBigg@{#1}}
\numberwithin{equation}{section}
\numberwithin{figure}{section}
\newtheorem{thm}{Theorem}[section]
\newtheorem{prop}[thm]{Proposition}
\newtheorem{lemma}[thm]{Lemma}
\newtheorem{cor}[thm]{Corollary}
\newtheorem*{thm*}{Theorem}
\newtheorem*{prop*}{Proposition}
\newtheorem*{cor*}{Corollary}
\newtheorem*{conj*}{Conjecture}
\theoremstyle{definition}
\newtheorem{definition}[thm]{Definition}
\theoremstyle{remark}
\newtheorem{rmk}[thm]{Remark}
\newtheorem{example}[thm]{Example}
\newcommand{\fakephantomsection}{%
  \Hy@MakeCurrentHref{\@currenvir.\the\Hy@linkcounter}
  \Hy@raisedlink{\hyper@anchorstart{\@currentHref}\hyper@anchorend}%
  \Hy@GlobalStepCount\Hy@linkcounter%
}
\newcommand{\mc}{\mathcal}
\newcommand{\cA}{\mc A}
\newcommand{\cC}{\mc C}
\newcommand{\cD}{\mc D}
\newcommand{\cE}{\mc E}
\newcommand{\cF}{\mc F}
\newcommand{\cH}{\mc H}
\newcommand{\cL}{\mc L}
\newcommand{\cM}{\mc M}
\newcommand{\cN}{\mc N}
\newcommand{\cO}{\mc O}
\newcommand{\cP}{\mc P}
\newcommand{\cR}{\mc R}
\newcommand{\cV}{\mc V}
\newcommand{\cX}{\mc X}
\newcommand{\ms}{\mathscr}
\newcommand{\sD}{\ms D}
\newcommand{\C}{\mathbb{C}}
\newcommand{\N}{\mathbb{N}}
\newcommand{\R}{\mathbb{R}}
\newcommand{\Z}{\mathbb{Z}}
\newcommand{\Sph}{\mathbb{S}}
\newcommand{\sfr}{\mathsf{r}}
\newcommand{\sfs}{\mathsf{s}}
\newcommand{\sfH}{\mathsf{H}}
\newcommand{\sfZ}{\mathsf{Z}}
\newcommand{\bfA}{\mathbf{A}}
\newcommand{\bfR}{\mathbf{R}}
\newcommand{\fm}{\mathfrak{m}}
\newcommand{\ft}{\mathfrak{t}}
\newcommand{\slpa}{\slashed{\partial}{}}
\newcommand{\End}{\operatorname{End}}
\renewcommand{\Re}{\operatorname{Re}}
\renewcommand{\Im}{\operatorname{Im}}
\newcommand{\Id}{\operatorname{Id}}
\newcommand{\supp}{\operatorname{supp}}
\newcommand{\sgn}{\operatorname{sgn}}
\newcommand{\dv}{\operatorname{div}}
\newcommand{\rank}{\operatorname{rank}}
\newcommand{\diag}{\operatorname{diag}}
\newcommand{\eps}{\epsilon}
\newcommand{\la}{\langle}
\newcommand{\ol}{\overline}
\newcommand{\pa}{\partial}
\newcommand{\dd}{{\mathrm d}}
\newcommand{\ra}{\rangle}
\newcommand{\spec}{\operatorname{spec}}
\newcommand{\specb}{\operatorname{spec}_\bop}
\newcommand{\wh}{\widehat}
\newcommand{\xra}{\xrightarrow}
\newcommand{\pfstep}[1]{$\bullet$\ \underline{\textit{#1}}}
\newcommand{\pfsubstep}[2]{{\bf#1}\ \textit{#2}}
\newcommand{\bop}{{\mathrm{b}}}
\newcommand{\scop}{{\mathrm{sc}}}
\newcommand{\eop}{{\mathrm{e}}}
\newcommand{\cp}{{\mathrm{c}}}
\newcommand{\Diff}{\mathrm{Diff}}
\DeclareMathOperator{\Op}{Op}
\newcommand{\Ope}{\Op_\eop}
\newcommand{\Vb}{\cV_\bop}
\newcommand{\Ve}{\cV_\eop}
\newcommand{\Diffb}{\Diff_\bop}
\newcommand{\Diffe}{\Diff_\eop}
\newcommand{\Psib}{\Psi_\bop}
\newcommand{\Psie}{\Psi_\eop}
\newcommand{\Vsc}{\cV_\scop}
\newcommand{\Diffsc}{\Diff_\scop}
\newcommand{\WF}{\mathrm{WF}}
\newcommand{\Ell}{\mathrm{Ell}}
\newcommand{\Char}{\mathrm{Char}}
\newcommand{\WFe}{\WF_{\eop}}
\newcommand{\Elle}{\mathrm{Ell}_\eop}
\newcommand{\Omegab}{{}^{\bop}\Omega}
\newcommand{\Omegae}{{}^{\eop}\Omega}
\newcommand{\Tb}{{}^{\bop}T}
\newcommand{\Tsc}{{}^{\scop}T}
\newcommand{\Te}{{}^{\eop}T}
\newcommand{\Se}{{}^{\eop}S}
\newcommand{\Ssc}{{}^{\scop}S}
\newcommand{\sigmae}{{}^\eop\upsigma}
\newcommand{\loc}{{\mathrm{loc}}}
\newcommand{\CI}{\cC^\infty}
\newcommand{\CIdot}{\dot\cC^\infty}
\newcommand{\CIc}{\cC^\infty_\cp}
\newcommand{\CmI}{\cC^{-\infty}}
\newcommand{\Hb}{H_{\bop}}
\newcommand{\He}{H_{\eop}}
\newcommand{\phg}{{\mathrm{phg}}}
\newcommand{\Ric}{\mathrm{Ric}}
\newcommand{\bhm}{\fm}
\newcommand{\openbigpmatrix}[1]
  {%
    \def\@bigpmatrixsize{#1}%
    \addtolength{\arraycolsep}{-#1}%
    \begin{pmatrix}%
  }
\newcommand{\closebigpmatrix}
  {%
    \end{pmatrix}%
    \addtolength{\arraycolsep}{\@bigpmatrixsize}%
  }
\newlength{\enummargin}\setlength{\enummargin}{1.5em}
\newcommand{\usref}[1]{{\upshape\ref{#1}}}
\newcommand*{\fwbw}[1]{\expandafter\@fwbw\csname c@#1\endcsname}
\newcommand*{\@fwbw}[1]{\ifcase #1 \or {\rm fw}\or {\rm bw}\fi}
\AddEnumerateCounter{\fwbw}{\@fwbw}
\begin{document}

\title[Wave equations with timelike curves of conic singularities]{Local theory of wave equations with timelike curves of conic singularities}

\date{\today}

\begin{abstract}
  We develop a general theory for the existence, uniqueness, and higher regularity of solutions to wave-type equations on Lorentzian manifolds with timelike curves of cone-type singularities. These singularities may be of geometric type (cone points with time-dependent cross sectional metric), of analytic type (such as asymptotically inverse square singularities or first order asymptotically scaling-critical singular terms), or any combination thereof. We can treat tensorial equations without any symmetry assumptions; we only require a condition of mode stability type for the stationary model operators defined at each point along the curve of cone points. In symmetric ultrastatic settings, we recover the solvability theory given by the functional calculus for the Friedrichs extension.
\end{abstract}

\subjclass[2010]{Primary: 35L05, Secondary: 35L81, 58J47, 35P25}

\author{Peter Hintz}
\address{Department of Mathematics, ETH Z\"urich, R\"amistrasse 101, 8092 Z\"urich, Switzerland}
\email{peter.hintz@math.ethz.ch}

\maketitle

\section{Introduction}
\label{SI}

We develop a general uniqueness, solvability, and regularity theory for a large class of wave equations $P u=f$ on Lorentzian manifolds $(\cM,g)$ where the metric $g$ and the operator $P$ are permitted to feature conic or scaling-critical singularities along a timelike curve $\cC\subset\cM$.

We give an illustration in a contrived special case for technical simplicity. We work on Minkowski space $\cM=\R_t\times\R^n_x$, $g=-\dd t^2+\dd x^2$, with $\cC=\R_t\times\{0\}$; the wave operator is $\Box_g=-D_t^2+\sum_{j=1}^n D_{x^j}^2$ where $D=i^{-1}\pa$. Introducing spatial polar coordinates $x=r\omega$, $r=|x|\geq 0$, $\omega=\frac{x}{|x|}\in\Sph^{n-1}$, we consider domains
\[
  \Omega=\{ t_-\leq t\leq t_+,\ r\leq r_+ + \tau(t_+-t) \},
\]
where $t_-<t_+$ and $\tau>1$; see Figure~\ref{FigIDom}. As function spaces on $M=\R_t\times[0,\infty)_r\times\Sph^{n-1}$ with the metric volume density $r^{n-1}|\dd t\,\dd r\,\dd g_{\Sph^{n-1}}|$, we consider \emph{weighted edge Sobolev spaces}\footnote{We shall only work in compact subsets of $M$ here, so $r$ is bounded away from $\infty$.} \cite{MazzeoEdge} which for $s\in\N_0$ and $\ell\in\R$ are defined as
\[
  \He^{s,\ell}(M) = \{ u = r^\ell u_0 \colon V_\eop^\alpha u_0\in L^2(M),\ |\alpha|\leq s \},\qquad
  V_\eop=(r\pa_t, r\pa_r, \Omega_1,\ldots,\Omega_N),
\]
where $\Omega_1,\ldots,\Omega_N$ span $\cV(\Sph^{n-1})$ over $\CI(\Sph^{n-1})$. We then define the Hilbert space
\[
  \He^{s,\ell}(\Omega)^{\bullet,-} = \{ u|_{\Omega^\circ} \colon u\in\He^{s,\ell}(M),\ u|_{t<t_-}\equiv 0 \}.
\]
We shall also consider the stronger notion of \emph{b-regularity}, which is defined via testing with the vector fields
\[
  V_\bop := (\pa_t, r\pa_r, \Omega_1,\ldots,\Omega_N).
\]

\begin{thm}[Inverse square potential with damping]
\label{ThmIBaby}
  Let $V(t,r,\omega)=r^{-2}V_0(t,r,\omega)$ with $V_0(t,0,\omega)=V_0(t)$, and assume that $\Re\sqrt{(\frac{n-2}{2})^2+V_0(t)}>\frac12$ for $t\in[t_-,t_+]$. Let $a(t,r,\omega)=r^{-1}a_0(t,r,\omega)$, where $a_0(t,0,\omega)=a_0(t)>0$ is small. Then for $s=1$ and $\ell=\frac32$ (or for sufficiently small $|\ell-\frac32|$), the equation
  \[
    (\Box_g + V + a\pa_t)u = f \in \He^{s-1,\ell-2}(\Omega)^{\bullet,-}
  \]
  has a forward solution $u\in\He^{s,\ell}(\Omega)^{\bullet,-}$ which is unique in this space and satisfies
  \[
    \|u\|_{\He^{s,\ell}(\Omega)^{\bullet,-}}\leq C\|f\|_{\He^{s-1,\ell-2}(\Omega)^{\bullet,-}}
  \]
  for a constant $C=C(V,a,\ell)$. Furthermore, if $f$ has $k$ additional degrees of \emph{b-regularity}, i.e.\ $V_\bop^\beta f\in\He^{s-1,\ell-2}(\Omega)^{\bullet,-}$ for all $|\beta|\leq k$, then also $u$ has $k$ orders of b-regularity as well, i.e.\ $V_\bop^\beta u\in\He^{s,\ell}(\Omega)^{\bullet,-}$ for all $|\beta|\leq k$.
\end{thm}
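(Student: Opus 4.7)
The plan is to reduce Theorem~\ref{ThmIBaby} to the general edge-type solvability framework developed in the main body of the paper by multiplying through by $r^2$. The operator
\[
  \wt P := r^2(\Box_g + V + a\pa_t) = -(r\pa_t)^2 + (r\pa_r)^2 + (n-2)(r\pa_r) + \Delta_{\Sph^{n-1}} + V_0 + a_0\,(r\pa_t)
\]
is then an edge differential operator of order $2$ with coefficients smooth down to the edge $\{r=0\}$, and the equation $Pu=f$ becomes $\wt P u = r^2 f \in \He^{s-1,\ell}(\Omega)^{\bullet,-}$. It remains to verify the two hypotheses of the abstract theorem: an indicial (non-resonance) condition at the weight $\ell$, and the mode stability condition for the normal operator family along $\cC$.

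The indicial family of $\wt P$ at $t\in[t_-,t_+]$ acts on functions on $\Sph^{n-1}$ as
\[
  I_t(\alpha) = -\alpha^2 - (n-2)\alpha + V_0(t) + \Delta_{\Sph^{n-1}},
\]
so the indicial roots are $\alpha_\pm^{(k)}(t) = -\tfrac{n-2}{2}\pm\sqrt{(\tfrac{n-2}{2})^2+V_0(t)+\lambda_k}$, with $\lambda_k\ge 0$ the eigenvalues of $-\Delta_{\Sph^{n-1}}$. A short calculation shows that the critical weight line attached to $\ell=\tfrac32$ (i.e.\ $\Re\alpha=\tfrac{3-n}{2}$) is strictly separated from every $\alpha_\pm^{(k)}(t)$ precisely under the hypothesis $\Re\sqrt{(\tfrac{n-2}{2})^2+V_0(t)}>\tfrac12$, with the Friedrichs root $\alpha_+^{(k)}$ above and $\alpha_-^{(k)}$ below; the same is true for all $\ell$ in a small open neighborhood of $\tfrac32$, as asserted. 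So the edge/Mellin Fredholm theory applies at this weight.

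Mode stability is the technical heart of the proof. One must show that the spectral family
\[
  \wh N_t(\sigma) = r^2\sigma^2 + (r\pa_r)^2 + (n-2)(r\pa_r) + \Delta_{\Sph^{n-1}} + V_0(t) + i\sigma a_0(t)\,r,
\]
obtained from the time-translation-invariant normal operator $N_t(\wt P)$ by Fourier transformation in the Minkowski time variable, is invertible between suitably weighted spaces on the model cone for every $t\in[t_-,t_+]$ and every $\sigma$ in the closed lower half plane $\{\Im\sigma\le 0\}$, with uniform polynomial bounds in $\sigma$. At $\sigma=0$ this is essentially the Friedrichs extension theory for the stationary cone Laplacian with inverse-square potential, enabled directly by the subcriticality of $V_0$. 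In the large-$\sigma$ regime one combines semiclassical elliptic and real-principal-type propagation estimates on the cone with a radial-point/Mellin estimate at the tip. The intermediate regime, in particular invertibility down to the real axis, is the main obstacle, and it is precisely here that the smallness and positivity of $a_0$ enter: they rule out real nonzero poles of $\wh N_t(\sigma)^{-1}$ and allow the resolvent estimate to be propagated up to $\{\Im\sigma=0\}$.

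With these two ingredients in place, the abstract theorem yields existence and uniqueness of the forward solution $u\in\He^{s,\ell}(\Omega)^{\bullet,-}$ together with the stated estimate. For higher b-regularity one commutes with the vector fields in $V_\bop$: since $V_0$ and $a_0$ are smooth in $(t,r,\omega)$ down to $r=0$, the commutators $[\wt P,V_\bop]$ lie in $\Diffe^2$ without any loss of the edge structure, so $\wt P(V_\bop^\beta u) = V_\bop^\beta(r^2 f) + E_\beta$ with commutator errors $E_\beta$ controlled by $V_\bop^{\beta'}u$ for $|\beta'|<|\beta|$. Induction on $|\beta|$ and repeated application of the base-level estimate then close the argument.
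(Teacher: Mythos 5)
Your reduction to the general framework (multiplying by $r^2$, checking the indicial condition, checking spectral admissibility, then invoking the abstract solvability theorem) is the right overall strategy and matches the paper's route via Remark~\ref{RmkExScVMild}. However, there is a genuine gap at the one point where this example is delicate: you never verify that the \emph{constant} orders $\sfs=1$, $\ell=\tfrac32$ are $P$-admissible in the sense of Definition~\ref{DefSULocAdm}, and you misattribute the role of the damping term. Admissibility requires the radial-point threshold inequalities $\sfs>-\tfrac12+\ell+\vartheta_{\rm in}$ at $\pa\cR_{\rm in}^\pm$ and $\sfs<-\tfrac12+\ell+\vartheta_{\rm out}$ at $\pa\cR_{\rm out}^\pm$, together with monotonicity of $\sfs$ along the lifted null-geodesic flow. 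For the undamped operator $\Box_g+V$ one has $\vartheta_{\rm in}=\vartheta_{\rm out}=0$, so with $\ell=\tfrac32$ both thresholds equal $1$ and \emph{no constant order} satisfies both strict inequalities; a strictly decreasing variable order would be forced. The entire purpose of the term $a\pa_t$ with $a_0>0$ is to shift the subprincipal symbol at the radial sets so that $\vartheta_{\rm in}=-\tfrac12 a_0<0<\tfrac12 a_0=\vartheta_{\rm out}$ (Definition~\ref{DefSUNThr}, Lemma~\ref{LemmaSUPrThr}), opening the window $(1-\tfrac12 a_0,\,1+\tfrac12 a_0)$ in which the constant order $\sfs=1$ sits. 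Your proposal instead locates the role of $a_0>0$ in the mode stability analysis (``ruling out real nonzero poles''), which is not where it is needed and is not justified: spectral admissibility holds already for $a_0=0$ by the Wronskian/Hankel-function analysis of Lemma~\ref{LemmaExScV}, and persists for the damped operator by a perturbation argument using only the \emph{smallness} of $a_0$, not its sign. Without the threshold verification, Theorem~\ref{ThmSUeNonrf} simply cannot be applied with the constant order claimed in the statement.

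Two secondary issues. First, for the model operators there is no separate ``large $\sigma$'' or ``intermediate'' regime to estimate: the normal operator family is exactly homogeneous under the joint dilation of $(\sigma^{-1},r)$, so everything reduces by scaling to the reduced normal operator at $|\hat\sigma|=1$, $\Im\hat\sigma\geq 0$, and the uniform bounds in $\sigma$ are automatic (Lemma~\ref{LemmaSUIInv}, Proposition~\ref{PropSUIFwd}); no semiclassical high-energy analysis is required. Second, the higher b-regularity step is not a straightforward commutation induction: $[r^2 P,\pa_t]\in\Diffe^2(M)$, so commuting $\pa_t$ through the equation costs one order of \emph{edge} regularity, and the induction you describe would only yield $V_\bop^\beta u\in\He^{s-|\beta|,\ell}$. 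Recovering the edge order requires propagation in the mixed spaces $H_{\eop;\bop}^{(\sfs;k),\ell}$, the module-regularity argument at the outgoing radial set (Lemma~\ref{LemmaSUbComm}, Proposition~\ref{PropSUbRout}) to avoid a $k$-dependent upper threshold, and a difference-quotient regularization to justify the a priori memberships; this is the content of the proof of Theorem~\ref{ThmSUb} and cannot be replaced by the naive commutator bookkeeping you sketch.
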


\begin{figure}[!ht]
\centering
\includegraphics{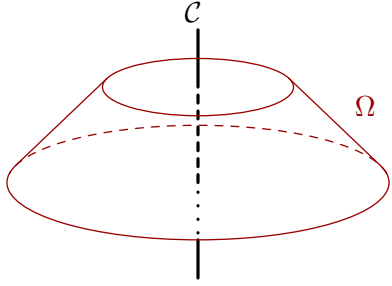}
\caption{The spacetime domain $\Omega$ on which, in Theorem~\ref{ThmIBaby}, we solve wave equations whose coefficients and source terms feature singularities at the timelike curve $\cC$ of cone points.}
\label{FigIDom}
\end{figure}

The uniqueness part in particular implies that finite speed of propagation holds, i.e.\ $\supp u$ is contained in the causal future of $\supp f$. We explain in Remark~\ref{RmkExScVMild} how Theorem~\ref{ThmIBaby} follows from the main results of this paper, Theorems~\ref{ThmSUeNonrf} and \ref{ThmSUb},\footnote{We also mention Theorems~\ref{ThmSUbPhg} and \ref{ThmSUIVP} on polyhomogeneous expansions at $r=0$ and initial value problems, respectively.} which concern wave-type equations generalizing the above setup in the following ways.
\begin{enumerate}
\item\label{ItIg} We work on \emph{arbitrary spacetimes} $(\cM,g)$ and with metrics $g$ which may have \emph{conic singularities along a timelike curve} $\cC$. In polar coordinates around $\cC$, such metrics take the form
\begin{equation}
\label{EqIMetric}
  g=-\dd t^2+\dd r^2+r^2 h(t,\omega;\dd\omega)+\cO(r)
\end{equation}
where $h$ is a (time-dependent) Riemannian metric on $\Sph^{n-1}$, and $\cO(r)$ indicates terms which are smooth in polar coordinates and whose coefficients, expressed in terms of $\dd t,\dd r,r\,\dd\omega$,\footnote{We do allow for all possible cross terms, so $\dd t^2$, $\dd t\,\dd r$, $r\,\dd t\,\dd\omega$, $\dd r^2$, $r\,\dd r\,\dd\omega$, $r^2\,\dd\omega\,\dd\omega'$.} vanish simply at $r=0$. The spacetime domains $\Omega$ on which our theory takes the simplest form are subject to a \emph{non-refocusing} condition: null-geodesics cannot start and end at $\cC$ while staying inside $\Omega$.\footnote{Without this assumption on $\Omega$, we can still prove existence and regularity results, albeit lossy ones; see Remarks~\ref{RmkSUeGeneral} and \ref{RmkSUbGeneral}.} (This is satisfied in the setting of Theorem~\ref{ThmIBaby}.)
\item\label{ItITensor} We can deal with arbitrary \emph{tensorial} equations $P u=f$ where $P$ has the same principal symbol as $\Box_g$ and may in addition feature scaling-critical singular terms ($\sim r^{-2},r^{-1}\pa_t,r^{-1}\pa_r,r^{-2}\pa_\omega$) at $\cC$. We require spectral assumptions (of mode stability type) for \emph{stationary model operators} defined at each point $(t_0,0)\in\cC\cap\bar\Omega$.
\item\label{ItIVar} We can work on edge Sobolev spaces with optimal ranges of weights $\ell$. Our estimates typically require using \emph{variable} edge regularity orders. (The peculiar setup of Theorem~\ref{ThmIBaby} allows for the edge regularity order to be \emph{constant} and \emph{integer}.)
\end{enumerate}

See~\S\ref{SsIG} for details. The applications of our general theory given in~\S\ref{SEx} include wave equations on spacetimes with timelike curves of cone points (with possibly variable metric on the cross section) and their coupling with (complex) potentials which have asymptotically inverse square singularities. Inspired by the work of Baskin--Wunsch \cite{BaskinWunschDiracCoulomb}, we also discuss the Dirac--Coulomb operator, whose `square' fits into our framework; see~\S\ref{SssExScC}.

The author's main motivation for the present study comes from gluing problems in general relativity of the type discussed in \cite{HintzGlueLocI}: introducing on a vacuum spacetime $(M,g)$, so $\Ric(g)=0$, local coordinates $t\in\R$, $x\in\R^3$ near a timelike curve $\cC$ so that $\cC=\{x=0\}$ and $g=-\dd t^2+\dd x^2+\cO(|x|)$, one may wish to modify $g$ in an $\eps$-neighborhood of $x=0$ by replacing it with $g_\eps:=\chi(x/\eps)g^{\rm S}_\eps+(1-\chi(x/\eps))g$ where $g^{\rm S}_\eps=-(1-\frac{2\eps}{|x|})\dd t^2+(1-\frac{2\eps}{|x|})^{-1}\dd r^2+r^2 g_{\Sph^2}$ (in polar coordinates in $x$) is the metric of a mass $\eps$ Schwarzschild black hole \cite{SchwarzschildPaper}, and $\chi$ is equal to $1$ on a large ball and $0$ outside a larger ball. Correcting $g_\eps$ to a solution of the Einstein vacuum equations requires, among other things, estimates for the linearization of the Einstein equations around $(M,g)$, with the important caveat that one needs to regard $\cC$ as a singular locus: after all, for any positive $\eps$, waves entering the region $|x|\leq C\eps$ are scattered by the mass $\eps$ black hole, akin to scattering by a conic singularity. The tensorial nature of the linearized Einstein equations, the time-dependent nature of the metric $g$, and the absence of symmetry or self-adjointness properties thus motivated the development of a robust quantitative uniqueness, solvability, and regularity theory in this paper.

\bigskip

The main focus of the existing literature on wave equations in the presence of cone-type singularities (by which we mean geometric cone points, inverse square potentials, or other scaling-critical singular terms) is on the \emph{ultrastatic case}
\begin{equation}
\label{EqIP}
  P = -D_t^2 + \Delta_\gamma + V,
\end{equation}
where $\Delta_\gamma$ is the (nonnegative) Laplacian on a Riemannian manifold $(X,\gamma)$ with conic singularities, and $V$ is a suitable \emph{real-valued} potential with at worst inverse square singularities; here $\gamma$ and $V$ are \emph{independent of time}. Existence and uniqueness of solutions of initial value problems, or of forward solutions, is straightforward in spaces of distributions in $t$ with values in domains of powers of (the Friedrichs extension of) $\Delta_\gamma+V$. The objectives are then refined descriptions of the propagation of singularities, or proofs of Strichartz and local smoothing estimates. (By contrast, in the general setting considered in this paper, even basic questions of existence and uniqueness questions cannot be obtained by spectral or energy methods.)

Concretely, the diffraction by waves on conic manifolds, given by solutions of the operator~\eqref{EqIP} with $V=0$, was studied in detail by Cheeger--Taylor \cite{CheegerTaylorConicalI,CheegerTaylorConicalII} on exact cones (i.e.\ $\gamma=\dd r^2+r^2 h(\omega;\dd\omega)$ near $r=0$) using separation of variables and special function analysis; Yang \cite{YangDiffraction} improved their description of the diffracted wave front and clarified the relationship between diffraction by a conic singularity and scattering on the large end of an exact cone. Closely related to this is the work of Keeler--Marzuola \cite{KeelerMarzuolaCones} on dispersive estimates for Schr\"odinger equations on exact cones coupled to radial potentials; see also \cite{SchlagSofferStaubachConicI,SchlagSofferStaubachConicII}. Melrose--Wunsch \cite{MelroseWunschConic} gave the first treatment of the propagation of singularities on non-exact cones: they prove (under a necessary non-focusing assumption) that the strongest singularities propagate along geometric null-geodesics, i.e.\ limits of null-geodesics barely missing the cone point, whereas singularities along diffractive null-geodesics, i.e.\ those emanating from the cone point in all other directions, are weaker. The microlocal propagation of edge regularity into and out of the cone point (blown up to a full $(n-1)$-sphere) plays a central role both in \cite[\S8]{MelroseWunschConic} and in the present paper (\S\ref{SsSUPr}). More general results describing fine aspects of the propagation of singularities when $(X,\gamma)$ has edge singularities (which generalizes the conic setting) or corners are obtained in \cite{MelroseVasyWunschEdge,MelroseVasyWunschDiffraction} using the techniques developed in \cite{VasyPropagationCorners} which proves b-regularity relative to the quadratic form domain. These proofs utilize a mixed (edge-)differential and b-pseudodifferential operator algebra to accommodate the structure of the operator and the notion of regularity whose propagation is analyzed; in the present paper, we instead use a mixed b-differential and edge-pseudodifferential algebra in~\S\ref{SsSUb} to describe integer order b-regularity with respect to (possibly variable order) edge Sobolev spaces. Global-in-time propagation estimates in the presence of conic singularities are established for the wave equation on exact cones by Baskin--Marzuola \cite{BaskinMarzuolaCone}; the late-time behavior of solutions of the wave equation is shown to be described by explicitly computable resonances \cite{BaskinMarzuolaComp}. We also mention the work by Baskin--Wunsch \cite{BaskinWunschConicDecay} who prove a \emph{very weak Huyghens' principle} when $(X,\gamma)$ has finitely many cone points (subject to some mild geometric conditions) and is Euclidean near infinity: the wave, with compactly supported initial data, is as smooth as one wishes in any fixed compact subset of $X$ after a sufficiently long time.\footnote{Our results apply in the presence of several disjoint timelike curves of cone points, though the non-refocusing condition imposed on the spacetime domains now requires the non-existence of null-geodesics starting at one curve end ending at the same \emph{or another} curve. However, one can again concatenate our results to obtain solvability and regularity results on general spacetime domains.}

On Lorentzian manifolds with corners, and thus \emph{outside the ultrastatic regime}, Vasy \cite{VasyDiffractionForms} describes the diffraction of singularities for \emph{given} solutions of the wave equation on differential forms (which includes Maxwell's equations as a special case) with natural boundary conditions. We remark that such regularity results are closely related to, but do not by themselves imply, uniqueness and solvability of the underlying wave equation. Similarly, the works \cite{VasyWaveOnAdS,GannotWrochnaAdS,DappiaggiMartaAdS} describe the propagation of singularities for solutions of the Klein--Gordon equation at the conformal boundary of (not necessarily stationary) asymptotically anti--de~Sitter type spacetimes. These results are complemented by statements regarding existence and uniqueness of solutions which are proved by energy methods (see also \cite{HolzegelAdS,WarnickAdS}).

We now turn to wave operators on Minkowski space (so $\gamma$ in~\eqref{EqIP} is the Euclidean metric and $\Delta_\gamma=-\sum_{j=1}^n \pa_{x^j}^2$) coupled to singular potentials $V$. At a singularity of $V$, say $r=0$, the assumption $\liminf_{r\to 0}r^2 V>-(\frac{n-2}{2})^2$ implies, by Hardy's inequality, the positivity (locally near $r=0$ at least) of the quadratic form associated with $\Delta_\gamma+V$. Using the Friedrichs extension to define $\Delta_\gamma+V$, Burq and Planchon--Stalker--Tahvildar-Zadeh \cite{BurqPlanchonStalkerTahvildarZadehInvSq,PlanchonStalkerTahvildarZadehInvSq} establish Strichartz estimates in the case that $V=\frac{V_0}{r^2}$ is an exact inverse square potential; and Qian \cite{QianDiffractionInvSq} proves diffractive improvements for the propagation of singularities through an inverse square singularity (allowing $(r^2 V)(r,\theta)|_{r=0}$ to be variable). Duyckaerts \cite{DuyckaertsInvSq} proves local smoothing and Strichartz estimates for the Schr\"odinger flow for $\Delta+V$ for approximate inverse square potentials located at a finite collection of points; the main ingredient are (lossless) high energy estimates for the associated limiting resolvent $(\Delta+V-(\lambda\pm i0))^{-1}$ when $\lambda\to\infty$. Also the aforementioned \cite{BaskinWunschConicDecay} deduces lossless high energy resolvent estimates from their very weak Huyghens' principle via Vainberg's machinery \cite{VainbergAsymptotic,GalkowskiVainberg}. The location of resonances, i.e.\ poles of the meromorphic continuation of the resolvent, is described by Hillairet--Wunsch \cite{HillairetWunschConic}. In this resolvent context, we also mention Xi's precise parametrix for high frequency diffraction by non-exact conic singularities \cite{XiConeParametrix}. In \cite{HintzConicProp}, the author proves high energy estimates for semiclassical operators featuring conic-type singularities without symmetry or self-adjointness properties; such operators arise as spectral families, in the high frequency regime, of stationary ($t$-independent) wave operators of the type considered in the present paper.

\bigskip

To illustrate the limitations of spectral and energy methods, consider again
\begin{equation}
\label{EqIEq}
  (-D_t^2+\Delta+V)u=f
\end{equation}
where $\Delta$ is the (non-negative) Euclidean Laplacian, and $V$ is an inverse square potential; say $V=\frac{V_0}{r^2}\chi(r)$ where $\chi\in\CIc([0,\infty))$ equals $1$ near $0$, and $V_0\in\C$. By the Hardy inequality, factors of $r^{-1}=|x|^{-1}$ should be regarded as having the same strength at $r=0$ as derivatives $\pa_{x^j}$. Thus, the symmetry assumption $V_0\in\R$ is crucial if one wishes to apply energy methods; concretely, in the case $V_0>-(\frac{n-2}{2})^2$, the derivative of the energy
\begin{equation}
\label{EqIEnergy}
  E(t):=\frac12\int_{\R^n} |\pa_t u|^2+|\nabla_x u|^2+V|u|^2 + C|u|^2\,\dd x
\end{equation}
(where $C>0$ is chosen so that $E(t)$ is coercive) of a solution of~\eqref{EqIEq} can be bounded by $E$ itself (and the source term $f$). This works also in the case that $V_0$ is time-dependent, and applies also on nontrivial backgrounds $(M,g)$.

If one wishes to access the functional calculus for (the Friedrichs extension of) $\Delta+V$, which immediately gives a solvability and uniqueness theory, one must assume $V_0\in\R$ and $V_0>-(\frac{n-2}{2})^2$, and now also stationarity, i.e.\ the time-independence of $V$. (We present a detailed comparison of our theory with this approach in~\S\ref{SssExCUStat}.)

More permissively, one may study stationary operators also for complex-valued potentials $V$ (which model, in a simple manner, wave operators acting on sections of a vector bundle which does not possess a natural positive definite fiber inner product), and also on general stationary Lorentzian backgrounds, by means of the Fourier transform in time. In the special case under consideration here, this produces the spectral family $\Delta+V-\sigma^2$, $\sigma\in\C$, whose invertibility in $\Im\sigma\geq 0$ on suitable function spaces implies the forward solvability of~\eqref{EqIEq} by the Paley--Wiener theorem. (See Proposition~\ref{PropSUIFwd} for a similar argument.)

Equations which feature time-dependent coefficients but do not satisfy any symmetry or reality conditions do not seem to be accessible using these techniques. The approach of the present paper is to combine two ingredients:
\begin{enumerate}
\item a regularity theory, based on essentially classical microlocal propagation results in weighted edge Sobolev spaces \cite{MelroseWunschConic,MelroseVasyWunschEdge}---which natively handle bundles and non-symmetric operators, and for which only the geometry of $(M,g)$ matters (but no lower order terms of the wave-type operator except for the calculation of threshold regularities at certain \emph{radial sets}); and
\item the forward solvability of stationary model operators, effected via the Fourier transform, which are defined at each individual point of $\cC$ and capture the momentary structure of the wave operator there.
\end{enumerate}
Energy methods are used only near the boundary of $\Omega$ in order to `cap off' our otherwise microlocal estimates (which only ever provide estimates $u$ on some set in terms of weaker norms of $u$ on a slightly larger set). The general strategy of combining (microlocal) control of regularity and the invertibility of model operators (or normal operators) to get invertibility or Fredholm statements for operators on noncompact or singular spaces has been applied previously in a large variety of settings, in particular in the context of Melrose's geometric microlocal analysis program \cite{MelroseMendozaB,MazzeoMelroseHyp,MazzeoEdge,MelroseAPS,MazzeoMelroseFibred,VasyThreeBody,VasyManyBody,GuillarmouHassellResI,AlbinGellRedmanDirac,GrieserTalebiVertmanPhiLowEnergy}; applications to wave equations include \cite{HintzVasySemilinear,BaskinVasyWunschRadMink,GellRedmanHaberVasyFeynman,BaskinVasyWunschRadMink2,HintzNonstat}.

\subsection{Setup of the general theory}
\label{SsIG}

We only sketch the general setup here and refer the reader to~\S\ref{SSU} for details.. Focusing on a neighborhood $\R_t\times[0,1)_r\times\Sph^{n-1}\subset M$ of the curve of cone points, we consider metrics of the form~\eqref{EqIMetric} (see~\S\ref{SD} for the assumptions away from $r=0$), and wave-type operators $P$ (which may act on sections of a smooth vector bundle $\cE\to M$, cf.\ \S\ref{SSU}) which are thus, in local coordinates $\omega^1,\ldots,\omega^{n-1}$ on $\Sph^{n-1}$, of the form
\[
  P = \pa_t^2 - \pa_r^2 - \frac{n-1+b}{r}\pa_r + r^{-2}\Delta_{h(t)} + a r^{-1}\pa_t + r^{-2}w^j\pa_{\omega^j} + r^{-2}V + \tilde P.
\]
Here $a,b,w^j,V$ are functions of $(t,\omega)$, and $\tilde P$ is a second order operator whose coefficients (with respect to $\pa_t^2,\pa_r^2,r^{-2}\pa_{\omega^j}\pa_{\omega^k}$ and $r^{-1}\pa_t$, $r^{-1}\pa_r$, $r^{-2}\pa_{\omega^j}$ and $1$) are smooth and vanish at $r=0$; we require the principal part of $P$ to be the same as that of $\Box_g$. The operator $r^2 P$ is a Lorentzian edge differential operator \cite{MazzeoEdge}, i.e.\ it is constructed out of the edge vector fields $r\pa_t$, $r\pa_r$, $\pa_{\omega^j}$---the smooth vector fields tangent to the fibers of $\{r=0\}=\R_t\times\{0\}\times\Sph^{n-1}\to\R_t$---and its principal part is a Lorentzian signature quadratic form in these. Only using this information about the principal symbol, and mild information about subprincipal terms at the radial sets (the subsets of the appropriate edge phase space $\Te^*M\setminus o$ where lifts of null-geodesics hit or leave the `cone point' $r=0$), one can control regularity in edge Sobolev spaces fully microlocally; see \S\ref{SsSUPr}. A caveat here is that the degree of edge regularity propagating out of the curve of cone points is limited by some number $s_{\rm out}$ (see Definition~\ref{DefSULocAdm}), whereas propagation into the curve requires a lower bound $s_{\rm in}$; if $s_{\rm in}\geq s_{\rm out}$, we therefore cannot control edge regularity emanating from the curve and focusing back on it at a later point. This is why we work on \emph{non-refocusing} spacetime domains on which, by definition, such dynamics do not occur.\footnote{This is only a mild restriction, as any sufficiently small domain has this property.}

The normal operator $N_{\eop,t_0}(P)$ of $P$ at the point $t_0\in\R$ along the `curve' (really: along the front face of the blow-up of the curve) $r=0$ is given by freezing the coefficients of $a,b,w^j,V$ at $t=t_0$. The resulting operator is thus stationary (invariant under translations in $t$) and homogeneous with respect to dilations in $(t,r)$. To analyze it, we pass to the Fourier transform, thus replacing $\pa_t$ by $-i\sigma$ where $\sigma\in\C$ is the spectral parameter; and we moreover exploit the homogeneity in $(\sigma^{-1},r)$ by passing to $\hat r=r|\sigma|$ and $\hat\sigma=\frac{\sigma}{|\sigma|}$. This produces the reduced normal operator \cite{LauterPsdoConfComp}
\begin{align*}
  \hat N_{\eop,t_0}(P,\hat\sigma) &= -\pa_{\hat r}^2 - \frac{n-1+b(t_0,\omega)}{\hat r}\pa_{\hat r} + \hat r^{-2}\Delta_{h(t_0)} - \hat\sigma^2 \\
    &\qquad - \frac{i\hat\sigma a(t_0,\omega)}{\hat r} + \hat r^{-2}w^j(t_0,\omega)\pa_{\omega^j} + \hat r^{-2}V(t_0,\omega)
\end{align*}
on $[0,\infty)_{\hat r}\times\Sph^{n-1}$; see~\S\ref{SsSUN}; in the special case $P=\Box_g$, this is the spectral family of the Laplacian on an exact cone. We require this operator to be invertible not only for $\hat\sigma=\pm 1$, but also for $\Im\hat\sigma>0$, $|\hat\sigma|=1$; since we are not in a symmetric setting, this is not automatic (unlike for example in \cite{MelroseWunschConic}). The relevant function spaces are weighted b-Sobolev spaces near $\hat r=0$ (measuring regularity with respect to $\hat r^\ell L^2$, for suitable $\ell$, with respect to $\hat r\pa_{\hat r}$, $\pa_{\omega^j}$) \cite{MelroseAPS} and weighted scattering Sobolev spaces near $\hat r=\infty$ (now testing regularity using $\pa_{\hat r}$, $\hat r^{-1}\pa_{\omega^j}$) \cite{MelroseEuclideanSpectralTheory} for suitable choices of weights. Typically, the decay order at $\hat r=\infty$ needs to be variable in order to distinguish between incoming ($\sim e^{-i\hat\sigma\hat r}$) and outgoing ($\sim e^{i\hat\sigma\hat r}$) spherical waves; see~\S\ref{SsSUI} and also \cite[Proposition~5.28]{VasyMinicourse}.

The observation which lies at the heart of our theory is that the natural b-scattering estimates on $\hat N_{\eop,t_0}(P,\hat\sigma)^{-1}$ in Lemma~\ref{LemmaSUIInv} for $|\hat\sigma|=1$, $\Im\hat\sigma\geq 0$, upon passing back to $r,\sigma$, translate directly into estimates on edge Sobolev spaces, with the dictionary for the orders provided by Lemma~\ref{LemmaEInvFT}. (In the special case of $\He^1$, this is elementary: membership $u\in\He^1$ (with support in $r\lesssim 1$) is equivalent to the memberships $u,r\pa_t u,r\pa_r u,\pa_\omega u\in L^2$, so by Plancherel to $L^2(\R_\sigma;L^2)$-membership of $(1+r|\sigma|)\hat u$, $r\pa_r\hat u$, $\pa_\omega\hat u$, and thus upon rewriting in terms of $\hat r=r|\sigma|$ to a certain $L^2$-membership of $\la\hat r\ra\hat u$, $\hat r\pa_{\hat r}\hat u$, $\pa_\omega\hat u$, which means b-regularity of order $1$ near $\hat r=0$, and scattering regularity of order $1$ with respect to $\la\hat r\ra^{-1}L^2$.) Using a Paley--Wiener argument, we thus obtain a forward inverse of $N_{\eop,t_0}(P)$ on appropriate edge Sobolev spaces.

Combining the edge regularity estimates with the normal operator invertibility, one can show solvability and uniqueness for forward problems for $P$ on small domains around the point $t_0$. The concatenation of such results leads to the following semi-global result.

\begin{thm}[Main result, rough version]
\label{ThmI}
  Let $\Omega$ be a non-refocusing domain whose boundary is a union of spacelike hypersurfaces, and suppose that the reduced normal operator $\hat N_{\eop,t_0}(P,\hat\sigma)$ is invertible for all $|\hat\sigma|=1$, $\Im\sigma\geq 0$, at all points $t_0$ along the curve which lie in $\bar\Omega$ on function spaces with weight $\ell\in\R$ at $\hat r=0$ and encoding an outgoing property at $\hat r=\infty$. Then the forward problem for $P u=f$ has a unique solution $u\in\He^{\sfs,\ell}(\Omega)^{\bullet,-}$ for source terms $f\in\He^{\sfs-1,\ell-2}(\Omega)^{\bullet,-}$, where the (typically variable) edge regularity order $\sfs$ is monotone along the future null-geodesic flow (lifted to the edge phase space) and is required to satisfy explicit upper and lower bounds near $r=0$.
\end{thm}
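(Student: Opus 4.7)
\emph{Strategy.} The plan is to combine three ingredients: (a)~a microlocal a priori estimate in weighted, variable-order edge Sobolev spaces, based on edge propagation of singularities; (b)~forward invertibility of each translation-invariant normal operator $N_{\eop,t_0}(P)$ along $\cC$, obtained from the hypothesized invertibility of $\hat N_{\eop,t_0}(P,\hat\sigma)$ via Fourier transform and Paley--Wiener; and (c)~Lorentzian energy estimates to close off (a) at the spacelike boundary components of $\Omega$. The spacelike boundary hypothesis enables (c), while the non-refocusing hypothesis is what permits the variable edge order $\sfs$ to be chosen globally consistently on $\bar\Omega$.

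\emph{A priori estimates.} I first show that any $u \in \He^{\sfs,\ell}(\Omega)^{\bullet,-}$ with $P u = f \in \He^{\sfs-1,\ell-2}(\Omega)^{\bullet,-}$ satisfies the quantitative bound in the statement. Away from $\cC$, this follows from classical variable-order propagation of singularities for Lorentzian wave operators, supplemented by an energy estimate at the past spacelike boundary to eliminate lower-order error terms. Near $\cC$, I invoke the edge radial point propagation results of \S\ref{SsSUPr}: these require $\sfs$ to lie above a threshold $s_{\rm in}$ at incoming edge radial points and below $s_{\rm out}$ at outgoing ones, and permit $\sfs$ to propagate through $\cC$ when it is monotone along the lifted null-geodesic flow. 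The non-refocusing hypothesis ensures no flow line carries $\sfs$ from an outgoing radial point back to an incoming one, so the thresholds can be met simultaneously on all of $\bar\Omega$ by a single variable order.

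\emph{Forward inversion of the normal operator and gluing.} For each $t_0 \in \cC \cap \bar\Omega$, the operator $N_{\eop,t_0}(P)$ is invariant under $t$-translations and $(t,r)$-dilations. Fourier-transforming in $t$ and rescaling $\hat r = r|\sigma|$, $\hat\sigma = \sigma/|\sigma|$, the invertibility of $\hat N_{\eop,t_0}(P,\hat\sigma)$ for $|\hat\sigma|=1$, $\Im\hat\sigma \geq 0$ on the appropriate weighted b-scattering Sobolev spaces (Lemma~\ref{LemmaSUIInv}) furnishes uniform-in-$\hat\sigma$ bounds. Translating $\hat r$-side b-scattering norms into $r$-side edge norms via the dictionary of Lemma~\ref{LemmaEInvFT}, and applying Plancherel, yields a bounded inverse of $N_{\eop,t_0}(P)$ between the requisite edge Sobolev spaces; Paley--Wiener, applied after analytic extension of $\hat\sigma$ to $\Im\hat\sigma > 0$, selects the forward branch (cf.~Proposition~\ref{PropSUIFwd}). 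On a short time slab $U_{t_0} = \{|t - t_0| < \delta_{t_0}\} \cap \Omega$, the difference $P - N_{\eop,t_0}(P)$ has coefficients vanishing at $r = 0$ (the $\tilde P$ part) or at $t = t_0$ (the shift in frozen coefficients), hence acts as a small perturbation of $N_{\eop,t_0}(P)$ on the edge spaces for $\delta_{t_0}$ small; a Neumann series built from the forward inverse then produces a forward solution on $U_{t_0}$. Covering $\bar\Omega$ by finitely many such slabs, together with a neighborhood of $\bar\Omega \setminus \cC$ where classical hyperbolic theory applies, patches these local solutions into a global one: overlapping pieces agree by the uniqueness of Step~1 combined with finite speed of propagation, and the global estimate is inherited from Step~1.

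\emph{Main obstacle.} The principal technical difficulty is the simultaneous accommodation of the edge radial point thresholds, the monotonicity of $\sfs$ along the null-geodesic flow, and the weight constraint dictated by the normal operator inversion near each $t_0$. The non-refocusing hypothesis is precisely what makes these compatible: without it, a null-geodesic could emanate from $\cC$ (carrying only the lower $s_{\rm out}$ threshold) and later return to $\cC$ (where propagation demands the higher $s_{\rm in}$ threshold), forcing $\sfs$ to decrease along the flow in conflict with the microlocal monotonicity requirement, and yielding only the lossy variants stated in Remarks~\ref{RmkSUeGeneral}--\ref{RmkSUbGeneral}. Reconciling (a) and (b) when $\sfs$ is variable also demands careful bookkeeping in the translation between edge and b-scattering orders via Lemma~\ref{LemmaEInvFT}, which is where much of the technical weight of the proof lies.
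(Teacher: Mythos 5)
Your overall architecture---microlocal edge propagation in variable-order edge Sobolev spaces, forward inversion of $N_{\eop,t_0}(P)$ via Fourier transform and Paley--Wiener, a perturbation argument off the normal operator, and gluing---is the same as the paper's (Propositions~\ref{PropSUPrIn}--\ref{PropSULocReg}, \ref{PropSUIFwd}, \ref{PropSUeSmall}, Theorem~\ref{ThmSUeNonrf}). Two steps, however, do not work as written. First, the perturbation argument on a time slab $U_{t_0}=\{|t-t_0|<\delta_{t_0}\}\cap\Omega$ fails: the coefficients of $P-N_{\eop,t_0}(P)$ are $\cO(r)+\cO(|t-t_0|)$ relative to $r^{-2}\Diffe^2$, so on a region that is small only in $t$ the $\cO(r)$ contribution is of size $\sup_{U_{t_0}}r$, which is not controlled by $\delta_{t_0}$, and the Neumann series does not converge. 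The paper instead works on domains $S_\lambda(\Omega')$ with $S_\lambda(t',r',\omega')=(t_0+\lambda t',\lambda r',\omega')$, shrinking in $t$ and $r$ \emph{simultaneously} so that $\lambda^2 S_\lambda^*P-N_{\eop,t_0}(P)=\cO(\lambda)$ uniformly; the part of $\Omega$ at bounded distance from $\cC$ is handled by classical hyperbolic theory, and the pieces are concatenated by a finite-speed-of-propagation/uniqueness iteration (cf.\ Figure~\ref{FigSUeSmallIt}). Relatedly, one must first check that the global forward inverse of $N_{\eop,t_0}(P)$ descends to the truncated $(\bullet,-)$-spaces on the small domain, which is the nontrivial first step of the proof of Proposition~\ref{PropSUeSmall}.

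Second, ``Lorentzian energy estimates at the spacelike boundary components'' are not available near $\cC$ for general $P$: the singular terms $r^{-1}\pa_t$, $r^{-2}V$, etc.\ carry no symmetry or reality, so the standard energy identity on level sets of $t$ produces error terms as strong as the energy itself (see the discussion around~\eqref{EqIEnergy}). The paper's substitute is the edge-local energy estimate of Proposition~\ref{PropSULoc} on wedge domains $\frac{t-t_0}{r}\in[\tau_-,\tau_+]\subset(-1,1)$ with multiplier $r^{-2\ell}e^{-\digamma\tau}\pa_\tau$; using it to cap off the microlocal estimates at the final boundary hypersurface forces the radial point estimates to be sharply localized in $\frac{t-t_+}{r}$, which is why the paper needs the edge calculus with edge-regular (non-smooth) symbols of \S\ref{SsEPsdo}. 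This localization mechanism is absent from your proposal but is where the solution is actually controlled up to the final boundary hypersurface near $\cC$.
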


See Theorem~\ref{ThmSUeNonrf} for the detailed statement which also gives uniqueness in $\He^{-\infty,\ell}$. We mention one important technical aspect involved in the proof of Theorem~\ref{ThmI}: in order to close the microlocal estimates, we need to complement them with energy estimates near the initial and final boundary hypersurfaces of $\Omega$. As explained around~\eqref{EqIEnergy}, we do not have access to such estimates for general $P$. Instead, we only use energy estimates in wedge-shaped domains of the form $\frac{t-t_0}{r}\in[\tau_-,\tau_+]\subset(-1,1)$, in which $\tau=\frac{t-t_0}{r}$ is a time function and where such energy estimates (on $L^2$ spaces with $r^\ell$ weights) are straightforward to prove using the vector field multiplier $r^{-2\ell}e^{-\digamma\tau}\pa_\tau$, $\digamma\gg 1$; see~\S\ref{SsSULoc}. The utility of these `edge-local' energy estimates and corresponding edge-local solvability results arises from the ability to localize edge microlocal estimates to such wedge-shaped domains. The point is that localizers in $\tau$, while having non-smooth coefficients in $M$, \emph{do} have edge-regular coefficients, which is sufficient to allow for a definition of such sharp (edge-pseudodifferential) localizers; see~\S\ref{SsEPsdo}.

Commuting $P u=f$ with a collection of vector fields (including $\pa_t$ which is \emph{not} an edge vector field) gives higher b-regularity at the expense of a matching loss in edge regularity; this loss can be removed via the microlocal propagation of edge regularity relative to spaces with fixed b-regularity. See \S\ref{SsSUb} and Theorem~\ref{ThmSUb} for details.

\begin{rmk}[Limitations]
\label{RmkIGLim2}
  Restricted to symmetric ultrastatic settings, Theorem~\ref{ThmI} produces the solution given by the functional calculus for the Friedrichs extension; see Propositions~\ref{PropCDUStat}--\ref{PropCDUStatSpec}, and also Proposition~\ref{PropExCDUMink}. It is not clear how to deal with other boundary conditions, e.g.\ those considered in \cite{VasyDiffractionForms}.
\end{rmk}

\subsection{Outline}

In~\S\ref{SD}, we describe the class of spacetimes and Lorentzian metrics, containing timelike curves of cone points, on which we will study wave-type equations; we also introduce the notion of non-refocusing spacetime domains and analyze (monotone functions along the lift to phase space of) the null-geodesic flow on them. The heart of the paper is~\S\ref{SSU} in which we introduce the class of wave-type operators under consideration and study their uniqueness and solvability properties. Applications are described in~\S\ref{SEx}. Appendix~\ref{SE} recalls notions of geometric singular analysis as well as aspects of (microlocal) analysis in Mazzeo's edge calculus \cite{MazzeoEdge}.

\subsection*{Acknowledgments}

I would like to thank Andr\'as Vasy for useful conversations. I gratefully acknowledge the hospitality of the Erwin Schr\"odinger Institute in Vienna in June and July 2023 during the writing of this paper.

\section{Geometry of spacetime domains}
\label{SD}

The reader unfamiliar with geometric singular analysis and the edge calculus is advised to consult Appendix~\ref{SE} before proceeding. We work with an $(1+n)$-dimensional smooth manifold $M$ with embedded boundary $\pa M$. We assume that $\pa M$ is the total space of a fibration
\[
  \Sph^{n-1} - \pa M \xra{\phi} I
\]
where $I\subseteq\R$ is an open interval. We then set
\begin{equation}
\label{EqDEquiv}
  \cM := M / {\sim}\,, \quad p\sim q\ \ \text{if and only if}\ \ p,q\in\pa M,\ \phi(p)=\phi(q),
\end{equation}
and equip $\cM$ with the quotient topology; then $\cM$ is a smooth manifold away from the singular locus $\cC:=\pa M/{\sim}\cong\R$ which we shall refer to as the `curve of cone points'. See Figure~\ref{FigD}.

Note that if, conversely, we start with a smooth manifold $\cM\cong I_t\times\cX$, and $\cC=I\times\{x_0\}$ (for some $x_0\in\cX$) is a curve in $\cM$, then the blow-up $M:=[\cM;\cC]$ has the above structure; in the spirit of \cite{MelroseWunschConic,MelroseVasyWunschEdge}, rather than working on $\cM$ with geometric and analytic objects which become singular along $\cC$, we work here with smooth objects on the manifold with boundary $M$ which degenerate in a structured manner as one approaches $\pa M$.

\begin{figure}[!ht]
\centering
\includegraphics{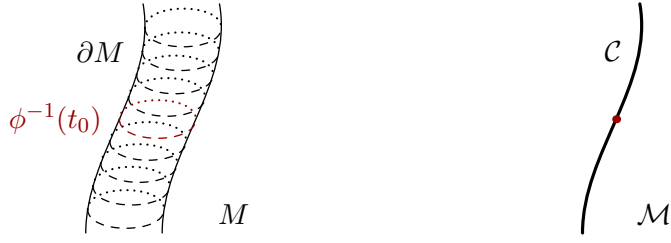}
\caption{\textit{On the left:} the spacetime manifold $M$ and the fibers of its boundary fibration (dashed). \textit{On the right:} the singular quotient space $\cM$.}
\label{FigDM}
\end{figure}

On $M^\circ=M\setminus\pa M$, we consider a smooth Lorentzian metric $g$ of signature $(-,+,\ldots,+)$; we assume that there exists a collar neighborhood
\begin{equation}
\label{EqDCollar}
  I_t \times [0,\bar r)_r \times \Sph^{n-1}_\omega
\end{equation}
of $\pa M$ with $r\in\CI(M)$ a boundary defining function, so that the fibration is given by $\phi(t,\omega)=t$ and so that $g$ takes the form
\begin{equation}
\label{EqDMetric}
  g = -\dd t^2 + \dd r^2 + r^2 h(t,\omega;\dd\omega) + \tilde g,
\end{equation}
where $\tilde g$ is a quadratic form in $\dd t,\dd r,r\,\dd\omega$ with coefficients of class $r\CI$. Therefore,
\[
  g_\eop := r^{-2}g
\]
is a smooth Lorentzian edge metric on $M$, and $r^{-2}\tilde g\in r\CI(M;S^2\,\Te^*M)$ is of lower order at $\pa M=r^{-1}(0)$. We moreover assume that $(M^\circ,g)$ is time-orientable, with $\pa_t$ declared to be future timelike near $\pa M$; and we assume that there exists a global time function $\ft\in\CI(M)$ (so $\dd\ft$ is everywhere past timelike) whose restriction to $\pa M$ is fiber-constant.

\begin{definition}[Spacetimes]
\label{DefDSpacetime}
  We call $(M,g)$ satisfying these conditions a \emph{spacetime with a timelike curve of cone points}.
\end{definition}

Since $\ft|_{\pa M}$ is a monotone reparameterization of $t|_{\pa M}$, we may reparameterize $\ft$ so that $\ft=t$ at $\pa M$; and since $\dd t$ and $\dd\ft$ are both past timelike near $\pa M$, we can further modify $\ft$ to be equal to $t$ in a neighborhood of $\pa M$ (see also \cite[Lemma~3.30]{HintzGlueLocI}); we may thus relabel $\ft$ as $t$. That is, the function $t$ in~\eqref{EqDCollar} is the restriction to the collar neighborhood of a global time function $t\in\CI(M)$.

Given $t_0\in I$, consider $g_\eop|_{\phi^{-1}(t_0)}$, obtained by freezing the coefficients (as an edge metric) at $t=t_0$ and $r=0$. Extending this by translation-invariance in $t$ and dilation-invariance in $(t,r)$, we obtain the model (edge-)metric
\begin{equation}
\label{EqDMetricModel}
  g_{\eop,t_0} = r'{}^{-2}\bigl(-\dd t'{}^2 + \dd r'{}^2 + r'{}^2 h(t_0,\omega;\dd\omega)\bigr)
\end{equation}
on the inward pointing normal bundle ${}^+N\phi^{-1}(t_0)$ of $\pa M$ over $\phi^{-1}(t_0)$, here identified with $\R_{t'}\times[0,\infty)_{r'}\times\Sph^{n-1}_\omega$ where we write $t'=\dd t$ and $r'=\dd r$.

On $(M,g)$, we shall consider wave equations on domains of the following type.

\begin{definition}[Spacetime domains]
\label{DefD}
  A \emph{spacetime domain} $\Omega$ in $(M,g)$ is a nonempty precompact open subset $\Omega\subset M$ of the form
  \begin{equation}
  \label{EqD}
    \Omega = \bigcap_{j=1}^{N_{\rm ini}}\{t_{\rm ini,j}>0\} \cap \bigcap_{j=1}^{N_{\rm fin}}\{t_{\rm fin,j}<0\}
  \end{equation}
  where $N_{\rm ini},N_{\rm fin}\geq 1$, and the functions $t_{\rm ini,j},t_{\rm fin,j}\in\CI(M)$ have past timelike differentials near their respective zero sets on $\bar\Omega$; we require the differentials of every collection of these functions are linearly independent at the joint zero set of the functions in this collection. Furthermore, we require that for $j\neq 1$, the zero sets of $t_{\rm ini,j}$ and $t_{\rm fin,j}$ are disjoint from $\bar\Omega\cap\pa M$; and $t_{\rm ini,1}^{-1}(0)\cap t_{\rm fin,1}^{-1}(0)$ is disjoint from $\bar\Omega\cap M$ as well. We call the sets $t_{\rm ini,j}^{-1}(0)\subset\bar\Omega$ \emph{initial boundary hypersurfaces}, and the sets $t_{\rm fin,j}^{-1}(0)\subset\bar\Omega$ \emph{final boundary hypersurfaces}.
\end{definition}

See Figure~\ref{FigD}. The final condition rules out domains which have a wedge singularity at $\pa M$, an example being $\{\frac{r}{2}<|t|<1-\frac{r}{2}\}\subset\R_t\times[0,\infty)_r\times\Sph^{n-1}$. Control of waves on such domains can be accomplished using simple energy methods; see Proposition~\ref{PropSULoc}.

\begin{figure}[!ht]
\centering
\includegraphics{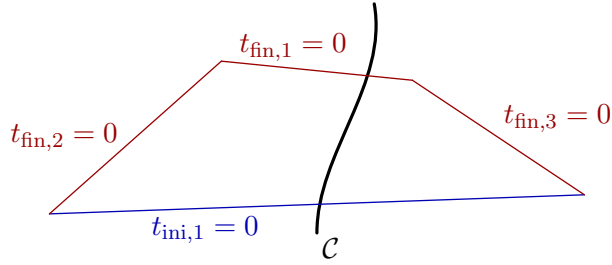}
\caption{A spacetime domain $\Omega$, with the fibers of $\pa M$ collapsed to points.}
\label{FigD}
\end{figure}

For all sufficiently small $\delta_{\rm ini,j},\delta_{\rm fin,j}\in\R$, the set
\[
  \Omega_{\delta_{\rm ini,1},\ldots,\delta_{\rm ini,N_{\rm ini}},\delta_{\rm fin,1},\ldots,\delta_{\rm fin,N_{\rm fin}}} = \bigcap_{j=1}^{N_{\rm ini}} \{ t_{\rm ini,j}>\delta_{\rm ini,j} \} \cap \bigcap_{j=1}^{N_{\rm fin}} \{ t_{\rm fin,j}<\delta_{\rm fin,j} \}
\]
is a spacetime domain as well. In the special case $\delta_{\rm ini,j}=\delta_{\rm ini}$, $\delta_{\rm fin,j}=\delta_{\rm fin}$ for all $j$, we denote this domain simply by
\begin{equation}
\label{EqDEnlarged}
  \Omega_{\delta_{\rm ini},\delta_{\rm fin}}.
\end{equation}
Thus, $\Omega_{\delta,-\delta}\subsetneq\Omega\subsetneq\Omega_{-\delta,\delta}$.

The plan for the remainder of this section is as follows.
\begin{enumerate}
\item In~\S\ref{SsDE}, we describe the null-geodesic flow, lifted to the edge cotangent bundle, on $(M,g)$.
\item In~\S\ref{SsDNrf}, we introduce a subclass of spacetime domains for which the null-geodesic flow does not connect two distinct points along $\cC\subset\cM$, and show that general spacetime domains can be decomposed into finite unions of such \emph{non-refocusing domains}.
\item In~\S\ref{SsDFn}, we construct certain monotone functions on non-refocusing domains, which will be used as order functions for the microlocal edge regularity theory in~\S\ref{SsSUPr}.
\end{enumerate}

\emph{Throughout, we work on a fixed spacetime with a timelike curve of cone points $(M,g)$.}

\subsection{Null-geodesics near the curve of cone points}
\label{SsDE}

Recalling $g_\eop=r^{-2}g$, denote by $G_\eop\colon\Te^*M\to\R$, $G_\eop(\zeta)=g_\eop^{-1}(\zeta,\zeta)$, the dual metric function. In the collar neighborhood~\eqref{EqDCollar}, we write edge-covectors as
\begin{equation}
\label{EqDECovec}
  {-}\sigma\,\frac{\dd t}{r} + \xi\,\frac{\dd r}{r} + \eta,\qquad \eta\in T^*\Sph^{n-1}.
\end{equation}
Using the summation convention, we thus obtain from~\eqref{EqDMetric}
\[
  G_\eop(t,r,\omega;\sigma,\xi,\eta) = -\sigma^2 + \xi^2 + h(t,\omega)^{a b}\eta_a\eta_b + \tilde G_\eop,\qquad \tilde G_\eop\in r P^{[2]}(\Te^*M),
\]
i.e.\ $\tilde G_\eop$ is quadratic in the fibers (and vanishes at $r=0$); here $h(t,\omega)^{a b}$ denotes the coefficients of the inverse metric of $h(t,\omega)$ in local coordinates $\omega^a$ on $\Sph^{n-1}$. We write
\[
  \Sigma = \{ \zeta\in \Te^*M\setminus o \colon G_\eop(\zeta)=0 \} = \Sigma^- \sqcup \Sigma^+
\]
for the characteristic set and its past (`$-$') and future (`$+$') components; we use the same symbols for the closures in $\ol{\Te^*}M\setminus o$. A change of variables computation shows that the Hamiltonian vector field of $G_\eop$ is
\begin{align}
  H_{G_\eop} &= -(\pa_\sigma G_\eop)r\pa_t + (\pa_\xi G_\eop)(r\pa_r+\sigma\pa_\sigma) + (\pa_{\eta_a}G_\eop)\pa_{\omega^a} \nonumber\\
    &\qquad + r(\pa_t G_\eop)\pa_\sigma - \bigl((r\pa_r+\sigma\pa_\sigma)G_\eop\bigr)\pa_\xi - (\pa_{\omega^a}G_\eop)\pa_{\eta_a} \nonumber\\
\label{EqDEHamOrig}
    &= 2\sigma r\pa_t + 2\xi(r\pa_r+\sigma\pa_\sigma) + 2 h^{a b}\eta_a\pa_{\omega^b} + r\pa_t h^{a b}\eta_a\eta_b\pa_\sigma + 2\sigma^2\pa_\xi - \pa_{\omega^c}h^{a b}\eta_a\eta_b\pa_{\eta_c} + H_{\tilde G_\eop},
\end{align}
where $H_{\tilde G_\eop}\in r\Ve(\Te^*M)$ is homogeneous of degree $1$ in the fibers (and indeed has fiber-linear coefficients relative to $r\pa_t,r\pa_r,\pa_{\omega^a},\zeta_\mu\zeta_\nu\pa_{\zeta_\lambda}$ where $\zeta=(\sigma,\xi,\eta_a)$). With $\pa_t$ being future timelike, we have $\Sigma^\pm=\Sigma\cap\{\pm\sigma>0\}$ near $\pa M$. Consider a point $\zeta\in\Sigma$ over $r=0$ where $H_{G_\eop}$ is fiber-radial; then $\eta=0$ and $\xi\sigma=c\sigma$, $\sigma^2=c\xi$ for some $c\in\R$. Since $\sigma$ cannot vanish, we get $c=\xi$ from the first equation, and the second equation gives $\xi^2=\sigma^2$. We define
\begin{subequations}
\begin{align}
\label{EqDERadIn}
  \cR_{\rm in}^\pm &= \{ (t,r,\omega;\sigma,\xi,\eta) = (t,0,\omega;\sigma,-\sigma,0) \colon \pm\sigma>0 \}, \\
\label{EqDERadOut}
  \cR_{\rm out}^\pm &= \{ (t,r,\omega;\sigma,\xi,\eta) = (t,0,\omega;\sigma,\sigma,0) \colon \pm\sigma>0 \},
\end{align}
\end{subequations}
and use the same symbols also for their closures in $\ol{\Te^*}M\setminus o$; we moreover set $\cR_{\rm in}=\cR_{\rm in}^\pm$ and $\cR_{\rm out}=\cR_{\rm out}^\pm$.

Let us define $\tilde H=H_{\tilde G_\eop}+r\pa_t h^{a b}\eta_a\eta_b\pa_\sigma\in r\Ve(\Te^*M)$. In the projective coordinates
\begin{equation}
\label{EqDECoordProj}
  \rho_\infty = \frac{1}{|\sigma|},\quad
  \hat\xi=\frac{\xi}{\sigma},\quad
  \hat\eta=\frac{\eta}{\sigma}
\end{equation}
near $\Sigma\cap\{\pm\sigma>0\}\subset\ol{\Te^*}M$, we then compute the rescaled Hamiltonian vector field
\begin{equation}
\label{EqDEHam}
\begin{split}
  \sigma^{-1}H_{G_\eop} &= 2 r\pa_t + 2\hat\xi(r\pa_r-\hat\xi\pa_{\hat\xi} - \hat\eta\pa_{\hat\eta} - \rho_\infty\pa_{\rho_\infty}) + 2\pa_{\hat\xi} \\
    &\quad\qquad + 2 h^{a b}\hat\eta_a\pa_{\omega^b} - \pa_{\omega^c}h^{a b}\hat\eta_a\hat\eta_b\pa_{\hat\eta_c} + \sigma^{-1}\tilde H.
\end{split}
\end{equation}
Restricted to $\Sigma^\pm$, its linearization at $\pa\cR_{\rm in}^\pm\subset\Se^*M$ (where $\hat\xi=-1$ and $\rho_\infty=0$) is $2 r\pa_t-2 r\pa_r + 2\hat\eta\pa_{\hat\eta} + 2\rho_\infty\pa_{\rho_\infty}$, while at $\pa\cR_{\rm out}^\pm$ (where $\hat\xi=1$) it is $2r\pa_t+2 r\pa_r - 2\hat\eta\pa_{\hat\eta} - 2\rho_\infty\pa_{\rho_\infty}$. Inside of $r=0$, the $\sigma^{-1}H_{G_\eop}$-flow describes distance $\pi$ propagation on a fixed fiber $\phi^{-1}(t)=\Sph^{n-1}$ (with metric $h(t)$) from $\cR_{\rm in}$ to $\cR_{\rm out}$; indeed, at $\rho_\infty=0=r$, we have
\[
  \frac12(\sigma^{-1}H_{G_\eop})|_{\Se^*_{\pa M}M} = (1-\hat\xi^2)\pa_{\hat\xi} - \hat\xi\hat\eta\pa_{\hat\eta} + h^{a b}\hat\eta_a\pa_{\omega^b} - \frac12\pa_{\omega^c}h^{a b}\hat\eta_a\hat\eta_b\pa_{\hat\eta_c}.
\]
Its integral curves satisfy $\hat\xi(s)=\tanh(s-s_0)$ for some $s_0\in\R$, and thus, on $\Sigma$, where $|\hat\eta|_{h^{-1}}^2=1-\hat\xi(s)^2=\cosh(s-s_0)^{-2}$, the curve $\omega(s)$ describes a geodesic on $(\Sph^{n-1},h(t))$ (with $t$ fixed) of length $\int_{-\infty}^\infty\frac{\dd s}{\cosh(s-s_0)}=\pi$. Thus, $\cR_{\rm in}^\pm$ and $\cR_{\rm out}^\pm$ are saddle points for the $\sigma^{-1}H_{G_\eop}$-flow, with integral curves tending from $r>0$ to $\cR_{\rm in}^\pm$, propagating distance $\pi$ along a fiber, and emanating from $\cR_{\rm out}^\pm$ into $r>0$.

Fix now a positive function $\rho_\infty\in S^{-1}_{\rm hom}(\Te^*M\setminus o)$ which equals $|\sigma|^{-1}$ near the characteristic set over a neighborhood of $\pa M$. If $\gamma\colon I\subseteq\R\to\Sigma^\pm$ is an integral curve of
\begin{equation}
\label{EqDRescHam}
  \sfH_{G_\eop} := \rho_\infty H_{G_\eop}
\end{equation}
in $r>0$, with image lying over a compact subset of $M$, so that $\gamma$ is not contained in $\cR_{\rm in}^\pm\cup\cR_{\rm out}^\pm$ and so that $\liminf_{s\searrow\inf I} r(\gamma(s))=0$ (resp.\ $\liminf_{s\nearrow\sup I} r(\gamma(s))=0$), then $\gamma(s)$ necessarily tends to $\cR_{\rm out}^\pm$ as $s\searrow\inf I=-\infty$ (resp.\ $\cR_{\rm in}^\pm$ as $s\nearrow\sup I=+\infty$); this follows from the source and sink nature of $\cR_{\rm out}^\pm$ and $\cR_{\rm in}^\pm$ in the radial direction.

\begin{rmk}[Flow-in and flow-out]
\label{RmkDFlowInOut}
  One can show that $\gamma(s)$ tends to a single point in $\cR_{\rm out}^\pm$ and $\cR_{\rm in}^\pm$ in the backward and forward direction. More precisely, the stable manifold of $\pa\cR_{\rm in}^\pm$ and the unstable manifold of $\pa\cR_{\rm out}^\pm$ are smooth $(n+1)$-dimensional submanifolds of $\Se^*M$ (whereas the unstable manifold of $\pa\cR_{\rm in}^\pm$ and the stable manifold of $\pa\cR_{\rm out}^\pm$ are both equal to the characteristic set over $\pa M$ minus the other radial set), and they correspond to conic coisotropic submanifolds of $\Te^*M\setminus o$. This follows from the stable/unstable manifold theorem in the form given in \cite[Theorem~4.1]{HirschPughShubInvariantManifolds} in a manner similar to the proof of \cite[Theorem~1.2]{MelroseWunschConic}. We leave the details to the reader.
\end{rmk}

\begin{lemma}[Re-focusing onto the curve of cone points]
\label{LemmaDEAway}
  For each $t_0\in I$, there exist $\delta>0$ and conic neighborhoods $U_{\rm out}$ and $U_{\rm in}\subset\Sigma^\pm\cap t^{-1}([t_0-\delta,t_0+\delta])$ of $\cR_{\rm out}^\pm\cap t^{-1}([t_0-\delta,t_0+\delta])$ and $\cR_{\rm in}^\pm\cap t^{-1}([t_0-\delta,t_0+\delta])$, respectively, inside of $\Sigma^\pm\cap t^{-1}([t_0-\delta,t_0+\delta])$ so that the following holds: if $\gamma\colon I\subseteq\R\to\Sigma^\pm$ is an integral curve of $\pm H_{G_\eop}$ with $t(\gamma(s))\in[t_0-\delta,t_0+\delta]$ for all $s\in I$, and if $\gamma(s_0)\in U_{\rm out}$ (resp.\ $\gamma(s_0)\in U_{\rm in}$) for some $s_0\in I$, then $\gamma(s)\notin U_{\rm in}$ (resp.\ $\gamma(s)\notin U_{\rm out}$) for all $s>s_0$ (resp.\ $s<s_0$).

  Furthermore, if $g$ is an invariant metric, i.e.\ $g=-\dd t^2+\dd r^2+h(t_0,\omega;\dd\omega)$, one can take $\delta=\infty$ and the neighborhoods $U_{\rm out},U_{\rm in}$ to be invariant under translations $(t,r,\omega;\sigma,\xi,\eta)\mapsto(t+c,r,\omega;\sigma,\xi,\eta)$, $c\in\R$, and dilations $(t,r,\omega;\sigma,\xi,\eta)\mapsto(\lambda t,\lambda r,\omega;\sigma,\xi,\eta)$, $\lambda>0$.\footnote{These are the lifts to $\Te^*M$ of translations $(t,r,\omega)\mapsto(t+c,r,\omega)$ and dilations $(t,r,\omega)\mapsto(\lambda t,\lambda r,\omega)$.}
\end{lemma}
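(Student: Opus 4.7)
My plan is to use $\hat\xi := \xi/\sigma$ from~\eqref{EqDECoordProj} as a near-monotone observable along the Hamilton flow, exploiting that $\cR_{\rm in}\subset\{\hat\xi=-1\}$ and $\cR_{\rm out}\subset\{\hat\xi=+1\}$. From~\eqref{EqDEHam},
\[
  \sigma^{-1}H_{G_\eop}\hat\xi = 2(1-\hat\xi^2) + E,\qquad \sigma^{-1}H_{G_\eop}\,t = 2r,
\]
where $E$ is smooth and vanishes at $\pa M$ (the contributions $\sigma^{-1}\tilde H$ and the $\pa_t h^{a b}\eta_a\eta_b$ term in the $\pa_\sigma$-component of $H_{G_\eop}$ are both $O(r)$), so $|E|\leq Cr$ on compact pieces of $\overline{\Sigma}^\pm$; moreover $\sigma^{-1}H_{G_\eop}$ is a positive rescaling of $\pm H_{G_\eop}$ on $\Sigma^\pm$ and shares its orbits.

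\emph{Invariant case.} With $\tilde g=0$ and $\pa_t h=0$, $E\equiv 0$; and on $\Sigma$ the exact identity $\hat\xi^2+|\hat\eta|^2_{h^{-1}}=1$ gives $\sigma^{-1}H_{G_\eop}\hat\xi = 2|\hat\eta|^2_{h^{-1}}\geq 0$, vanishing only at the radial sets. Hence $\hat\xi$ is monotone non-decreasing along $\pm H_{G_\eop}$ in $\Sigma^\pm$. Setting
\[
  U_{\rm out}:=\{\hat\xi>\tfrac12\}\cap\Sigma^\pm,\qquad U_{\rm in}:=\{\hat\xi<-\tfrac12\}\cap\Sigma^\pm
\]
yields $t$-translation- and $(t,r)$-dilation-invariant conic neighborhoods, and the non-transit property follows directly with $\delta=\infty$.

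\emph{General case.} Fix $\epsilon_0\in(0,\tfrac14)$ and take, for $r_0,\delta>0$ to be chosen,
\[
  U_{\rm out}:=\{\hat\xi>1-\epsilon_0,\ r<r_0\}\cap\Sigma^\pm\cap t^{-1}\bigl((t_0-\delta,t_0+\delta)\bigr),
\]
and $U_{\rm in}$ analogously with $\hat\xi<-1+\epsilon_0$. Given a trajectory $\gamma$ of $\pm H_{G_\eop}$ with $t(\gamma)\subset[t_0-\delta,t_0+\delta]$ and $\gamma(s_0)\in U_{\rm out}$, I would split its forward portion into \emph{interior segments} ($r\leq r_0$) and \emph{exterior segments} ($r>r_0$), restricting attention to a compact neighborhood of $\pa M$. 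Choose $r_0$ small enough that on $\{r\leq r_0\}\cap\Sigma^\pm$ one has $\sigma^{-1}H_{G_\eop}\hat\xi\geq 2(1-\hat\xi^2)-\epsilon_0$; then at $\hat\xi=1-\epsilon_0$ (where $2(1-\hat\xi^2)\geq 4\epsilon_0-2\epsilon_0^2$), $\sigma^{-1}H_{G_\eop}\hat\xi\geq\epsilon_0>0$, so $\{\hat\xi=1-\epsilon_0\}$ acts as a forward-reflecting barrier inside the interior regime and $\hat\xi$ cannot drop through it there. On exterior segments, $\sigma^{-1}H_{G_\eop}\,t=2r\geq 2r_0$ together with the $t$-window constraint bounds the total exterior parameter time by $\delta/r_0$; as $|\sigma^{-1}H_{G_\eop}\hat\xi|\leq M$ uniformly on the (compact) exterior region, the cumulative decrease in $\hat\xi$ across \emph{all} exterior segments is at most $M\delta/r_0$. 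Choosing $\delta<(2-2\epsilon_0)r_0/M$ then ensures $\hat\xi>1-\epsilon_0-M\delta/r_0>-1+\epsilon_0$ throughout $\gamma$, so $\gamma\notin U_{\rm in}$. The backward statement for trajectories starting in $U_{\rm in}$ follows by reversing the flow parameter.

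The main technical subtlety is handling trajectories with many alternating interior/exterior excursions (and possibly leaving and re-entering the collar): the global bound on total exterior parameter time supplied by the $t$-window is essential, as it caps the cumulative exterior drop of $\hat\xi$ uniformly in the number of excursions, while the interior reflecting barrier at $\hat\xi=1-\epsilon_0$ prevents any net downward crossing during interior segments.
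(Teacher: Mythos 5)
Your core mechanism is sound and is genuinely a different bookkeeping from the paper's: you track the near-monotone quantity $\hat\xi$ along the flow, whereas the paper tracks $r$ itself, using the strict null-bicharacteristic convexity of the $r$-level sets ($H_{G_\eop}r=0$, $r>0$ small $\Rightarrow H_{G_\eop}^2 r>0$) to show that once the trajectory leaves $U_{\rm out}$, $r\circ\gamma$ increases monotonically up to $r_0$, after which the time-accrual bound $\sigma^{-1}H_{G_\eop}t\geq r$ forces $t$ to exceed $t_0+\delta$ before the trajectory can turn around. Both arguments rest on the same quantitative input ($\dd t/\dd s\gtrsim r$ caps what can happen away from $\pa M$ within the time slab), and your invariant-case treatment via the exact identity $\sigma^{-1}H_{G_\eop}\hat\xi=2|\hat\eta|^2_{h^{-1}}\geq 0$ is correct and matches the spirit of the paper's second part.

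There are, however, two gaps to close. First, confinement: $\hat\xi$ is only defined in the collar $\{r<\bar r\}$, and your uniform bounds $|\sigma^{-1}H_{G_\eop}\hat\xi|\leq M$ and $\sigma^{-1}H_{G_\eop}t\geq 2r_0$ on ``the exterior region'' require the trajectory to stay there; the hypothesis only confines $\gamma$ to a time slab. You must rule out excursions past $r=\bar r$, e.g.\ by noting that $|\sigma^{-1}H_{G_\eop}r|\leq 3 r$ on $\Sigma^\pm$ near $\pa M$ forces parameter time $\geq\tfrac13\log(\bar r/r_0)$ to traverse $[r_0,\bar r]$, during which $t$ increases by $\gtrsim r_0\log(\bar r/r_0)$, exceeding $2\delta$ for $\delta$ small; this is exactly the paper's escape argument in disguise. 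Second, your concluding inequality ``$\hat\xi>1-\epsilon_0-M\delta/r_0$ throughout $\gamma$'' does not follow as stated: the interior barrier only gives $\sigma^{-1}H_{G_\eop}\hat\xi>0$ for $\hat\xi\in[-1+\epsilon_0,1-\epsilon_0]$, while for $\hat\xi$ near $\pm 1$ (and note $\hat\xi^2\leq 1+\cO(r)$ can exceed $1$ off $r=0$) the $\cO(r)$ error can make $\hat\xi$ decrease even in the interior. The fix is a crossing argument: if $\hat\xi$ first reaches $-1+\epsilon_0$ at $s_1$, take the last $s_a<s_1$ with $\hat\xi=1-\epsilon_0$; on $(s_a,s_1)$ the trajectory stays in the band, where all decrease is exterior and hence totals at most $M\cdot(\text{exterior time})\lesssim M\delta/r_0<2-2\epsilon_0$, contradicting the required net drop of $2-2\epsilon_0$. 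With these two repairs your proof goes through.
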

\begin{proof}
  We give a qualitative proof which sidesteps the use of the stable/unstable manifold theorem (cf.\ Remark~\ref{RmkDFlowInOut}). It suffices to consider the `$+$' sign. Suppose $H_{G_\eop}r=2\xi r+H_{\tilde G_\eop}r=0$ (using~\eqref{EqDEHamOrig}) and $r>0$, and note that $H_{\tilde G_\eop}r\in r^2 P^{[1]}(\Te^*M)$. Then $H_{G_\eop}^2 r=2 r H_{G_\eop}\xi+H_{G_\eop}H_{\tilde G_\eop}r=4\sigma^2 r\bmod r^2 P^{[2]}(\Te^*M)$. On $\Sigma^+$, we have $\sigma^2\geq\xi^2+|\eta|^2_{h^{-1}}-\cO(r)P^{[2]}(\Te^*M)$, i.e.\ $|\sigma|$ dominates $|\xi|,|\eta|_{h^{-1}}$. Therefore, for any compact interval $J\subset\R$ there exists $r_0>0$ so that $H_{G_\eop}r=0$ at a point in $\Sigma^+\cap t^{-1}(J)$ with $0<r\leq r_0$ implies $H_{G_\eop}^2 r>0$; that is, $r$-level sets are strictly convex. Upon shrinking $r_0$ further if necessary, we moreover have
  \begin{equation}
  \label{EqDEAwayT}
    \sigma^{-1}H_{G_\eop}t\geq r\quad\text{on}\ \Sigma^+\cap t^{-1}(J)\cap r^{-1}([0,r_0)).
  \end{equation}
  (When $g$ is an invariant metric, the error terms in these calculations are absent, and one can take $r_0=\infty$.)

  Now by~\eqref{EqDEHam}, there exist conic neighborhoods $\tilde U_{\rm out}$ and $\tilde U_{\rm in}\subset\Sigma^+$ of $\cR_{\rm out}^+$ and $\cR_{\rm in}^+$, respectively, which we may take to be contained in $r<\frac{r_0}{e}$, so that
  \begin{equation}
  \label{EqDEAwayR}
  \begin{alignedat}{2}
    r&\leq \sigma^{-1}H_{G_\eop}r &&\leq 3 r\quad\text{on}\ \tilde U_{\rm out}, \\
    r&\leq -\sigma^{-1}H_{G_\eop}r &&\leq 3 r\quad\text{on}\ \tilde U_{\rm in}.
  \end{alignedat}
  \end{equation}
  Let $\gamma\subset\Sigma^+\cap r^{-1}(0)$ be an integral curve of $\sigma^{-1}H_{G_\eop}$. Upon quotienting out by the dilation action in the fibers of $\Te^*M\setminus o$, $\gamma(s)$ tends to $\cR_{\rm in}$, resp.\ $\cR_{\rm out}$ as $s\to-\infty$, resp.\ $s\to+\infty$, and indeed $\hat\xi(\gamma(s))$ is monotonically increasing; requiring $\tilde U_{\rm out}$, resp.\ $\tilde U_{\rm in}$ to be contained in a small neighborhood of $\hat\xi=1$, resp.\ $\hat\xi=-1$, the desired conclusion thus holds for such $\gamma$. For integral curves $\gamma\subset\Sigma^+\setminus r^{-1}(0)$ of $\sigma^{-1}H_{G_\eop}$, note that if $\gamma(s_0)\in\tilde U_{\rm out}$ (so in particular $r(\gamma(s_0))\leq\frac{r_0}{e}$), then $\frac{\dd}{\dd s}r(\gamma(s))|_{s=s_0}>0$ and therefore $r\circ\gamma(s)$ is monotonically increasing (and in particular $\gamma(s)$ does not enter $\tilde U_{\rm in}$) as $s\geq s_0$ increases as long as $r(\gamma(s))\leq r_0$; this latter bound is guaranteed to be satisfied if $r(\gamma(s_0))e^{3(s-s_0)}\leq r_0$ by~\eqref{EqDEAwayR}. For $s_1>s_0$ with $r(\gamma(s_1))=\frac{r_0}{e}$, the function $r\circ\gamma$ thus keeps increasing for $s\leq s_1+\frac{1}{3}$. But then $t(\gamma(s))\geq t(\gamma(s_1))+\frac{r_0}{e}(s-s_1)$ by~\eqref{EqDEAwayT}, so for $s=s_1+\frac{1}{3}$ we get $t(\gamma(s))\geq(t_0-\delta)+\frac{r_0}{3 e}$. Taking $\delta<\frac{r_0}{6 e}$, this exceeds $t_0+\delta$. We may then take $U_{\rm out}=\tilde U_{\rm out}\cap\Sigma^+\cap t^{-1}([t_0-\delta,t_0+\delta])$, similarly for $U_{\rm in}$.

  The second part follows from the \emph{global} null-bicharacteristic convexity of $r$ and the fact that the Hamiltonian vector field $H_{G_{\eop,t_0}}$ of the dual metric function of an invariant metric $g_{\eop,t_0}$ is invariant under translations in $t$ (since $G_{\eop,t_0}$ is) and homogeneous of degree $1$ with respect to dilations (since $G_{\eop,t_0}$ is homogeneous of degree $2$), and thus translations and dilations of null-bicharacteristics are again null-bicharacteristics.
\end{proof}

\subsection{Non-refocusing domains}
\label{SsDNrf}

We continue working on $(M,g)$, $g=r^2 g_\eop$, and recall $G_\eop(\zeta)=g_\eop^{-1}(\zeta,\zeta)$ and the notation $\sfH_{G_\eop}$ introduced in~\eqref{EqDRescHam}.

\begin{definition}[Non-refocusing domains]
\label{DefDNrf}
  We call a spacetime domain $\Omega\subset M$ (see Definition~\usref{DefD}) \emph{non-refocusing} (or \emph{$g$-non-refocusing} in order to make the metric explicit) if there does not exist a future null-geodesic $\gamma\colon I\to\bar\Omega\cap M^\circ$, with $I\subseteq\R$ an interval, so that $r(\gamma(s))\to 0$ both as $s\searrow\inf I$ and as $s\nearrow\sup I$.
\end{definition}

Equivalently, there does not exist an integral curve $\gamma\colon\R\to\Te^*_{\bar\Omega}M\cap \Sigma^\pm$ of $\pm\sfH_{G_\eop}$ so that $\gamma(s)$ tends to $\cR_{\rm out}^\pm$ as $s\searrow-\infty$ and to $\cR_{\rm in}^\pm$ as $s\nearrow+\infty$. Note that such curves $\gamma$ must be contained in $r>0$ due to the source-to-sink dynamics of the flow in $\Sigma^\pm\cap r^{-1}(0)$.

\begin{example}[Non-refocusing domains for invariant metrics]
\label{ExDNrfInv}
  If $g$ is an invariant metric (see~\eqref{EqDMetricModel}), then every spacetime domain inside $I_t\times[0,\infty)_r\times\Sph^{n-1}$ is non-refocusing; this follows from the second part of Lemma~\ref{LemmaDEAway}.
\end{example}

\begin{lemma}[Quantitative version of non-refocusing]
\label{LemmaDNrf}
  If $\Omega$ is a non-refocusing spacetime domain, then there exist conic neighborhoods $U_{\rm out}$ and $U_{\rm in}\subset\Sigma^\pm\cap\bar\Omega$ of $\cR_{\rm out}^\pm$ and $\cR_{\rm in}^\pm$, respectively, so that for every integral curve $\gamma=\gamma(s)$ of $\pm H_{G_\eop}$, $\gamma(s_0)\in U_{\rm out}$ (resp.\ $\gamma(s_0)\in U_{\rm in}$) implies $\gamma(s)\notin U_{\rm in}$ for all $s\geq s_0$ (resp.\ $\gamma(s)\notin U_{\rm out}$ for all $s\leq s_0$). One can moreover choose the sets $U_{\rm out},U_{\rm in}$ so that the estimates~\eqref{EqDEAwayR} hold on them.
\end{lemma}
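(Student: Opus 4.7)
My plan is a proof by contradiction via a compactness argument in the compact cosphere bundle $\Se^*_{\bar\Omega}M\cap\Sigma^\pm$, where the radial sets become compact submanifolds on which the rescaled flow vanishes. First, I would apply Lemma~\ref{LemmaDEAway} at each $t_0\in\phi(\bar\Omega\cap\pa M)$ (a compact interval in $I$) and extract a finite subcover, producing initial conic neighborhoods $V_{\rm out}\supset\cR_{\rm out}^\pm\cap\Te^*_{\bar\Omega}M$ and $V_{\rm in}\supset\cR_{\rm in}^\pm\cap\Te^*_{\bar\Omega}M$ on which~\eqref{EqDEAwayR} holds; I shrink so that $V_{\rm out}\cap V_{\rm in}=\emptyset$ and $V_{\rm out}\cup V_{\rm in}\subset\{r<r_0\}$ for some $r_0>0$. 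If the conclusion fails within $(V_{\rm out},V_{\rm in})$, I obtain shrinking conic neighborhoods $U_{\rm out}^{(k)}\subset V_{\rm out}$, $U_{\rm in}^{(k)}\subset V_{\rm in}$ and integral curves $\gamma_k\colon[a_k,b_k]\to\Sigma^\pm\cap\Te^*_{\bar\Omega}M$ of $\pm\sigma^{-1}H_{G_\eop}$ with $\gamma_k(a_k)\in U_{\rm out}^{(k)}$, $\gamma_k(b_k)\in U_{\rm in}^{(k)}$, and I pass everything to the cosphere bundle.

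Next I would establish the lower bound $\liminf_k\max_s r(\gamma_k(s))\geq r_*$ for some fixed $r_*\in(0,r_0)$. Otherwise, a subsequence of $\gamma_k$ accumulates uniformly at $\{r=0\}\cap\Sigma^\pm$, and the limit is a forward-in-time integral curve in the fiber at $r=0$; by the description following~\eqref{EqDEHam} such a curve flows from $\cR_{\rm in}^\pm$ to $\cR_{\rm out}^\pm$, opposite to the direction forced by the endpoints of $\gamma_k$. With this bound secured, I reparameterize so that $\gamma_k(0)$ is the first hitting time of $r=r_*$. The points $\gamma_k(0)$ lie in the compact set $\{r=r_*\}\cap\Sigma^\pm\cap\Se^*_{\bar\Omega}M$ and subsequentially converge to some $\zeta_\infty$. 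By~\eqref{EqDEAwayR} on $V_{\rm out}$, $\gamma_k$ remains in $V_{\rm out}$ throughout $[a_k,0]$ with $r$ monotone (rate in $[r,3r]$), so $r(\gamma_k(s))\leq r_*e^s$ on $s\leq 0$ and $a_k\to-\infty$. Taking the limit via smooth dependence of the ODE flow on initial data, the integral curve $\gamma_\infty$ through $\zeta_\infty$ is defined on $(-\infty,0]$ with $r(\gamma_\infty(s))\to 0$, and its $\alpha$-limit set lies in $\ol{V_{\rm out}}\cap\{r=0\}\cap\Sigma^\pm$; upon shrinking $V_{\rm out}$ further so that the only flow-invariant subset there is $\cR_{\rm out}^\pm$ itself (any other orbit in the fiber at $r=0$ escapes $\ol{V_{\rm out}}$ forward in finite time), I conclude $\gamma_\infty(s)\to\cR_{\rm out}^\pm$ as $s\to-\infty$.

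The symmetric argument, reparameterizing at the last hitting time of $r=r_*$, yields a limiting curve $\gamma'_\infty$ with $\gamma'_\infty(s)\to\cR_{\rm in}^\pm$ as $s\to+\infty$. The main obstacle I anticipate is \emph{matching} the backward- and forward-asymptotic limits into a single trajectory. The key input is that the bulk portion of $\gamma_k$, namely the part with $r\geq r_*$, has uniformly bounded $s$-length: indeed $\sigma^{-1}H_{G_\eop}t=2r\geq 2r_*$ there while $t$ ranges over a bounded interval on $\bar\Omega$. Applying Arzelà-Ascoli on this bounded bulk interval and passing to a diagonal subsequence, I can arrange for $\gamma_\infty$ and $\gamma'_\infty$ to be time-shifts of a single integral curve $\gamma_*\colon\R\to\Sigma^\pm\cap\Se^*_{\bar\Omega}M$ with $\gamma_*(s)\to\cR_{\rm out}^\pm$ as $s\to-\infty$ and $\gamma_*(s)\to\cR_{\rm in}^\pm$ as $s\to+\infty$, contradicting the non-refocusing of $\Omega$. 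The final clause—that~\eqref{EqDEAwayR} holds on the output $U_{\rm out},U_{\rm in}$—is automatic as these may be taken inside $V_{\rm out},V_{\rm in}$.
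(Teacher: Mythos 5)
Your argument is correct and is essentially the paper's proof: both proceed by contradiction, extracting a sequence of integral curves running from shrinking neighborhoods of $\cR_{\rm out}^\pm$ to shrinking neighborhoods of $\cR_{\rm in}^\pm$, anchoring them at a point with $r$ bounded below, and passing to a limiting trajectory that violates the non-refocusing hypothesis (your two-base-point matching via the bounded bulk length is just a mild variant of the paper's single anchor point $\zeta_j\in\gamma_j\cap\{r\geq r_0\}$). The one step you should shore up is the claim that $\gamma_k$ remains in $V_{\rm out}$ with $r$ monotone on all of $[a_k,0]$ and that the $\alpha$-limit set is therefore confined to $\ol{V_{\rm out}}$: the paper instead gets monotonicity of $r\circ\gamma_k$ from the null-bicharacteristic convexity of the $r$-level sets established in the proof of Lemma~\ref{LemmaDEAway}, and identifies the backward limit as $\cR_{\rm out}^\pm$ via the radial source/sink dichotomy stated just before that lemma (if $r$ stayed bounded below backward, $t$ would decrease without bound), which avoids having to control membership in a fixed conic neighborhood.
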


For invariant metrics, we can take $U_{\rm out},U_{\rm in}$ to be the sets of Lemma~\ref{LemmaDEAway}.

\begin{proof}[Proof of Lemma~\usref{LemmaDNrf}]
  The estimates~\eqref{EqDEAwayR} hold as soon as $U_{\rm out}\subset\tilde U_{\rm out}$ and $U_{\rm in}\subset\tilde U_{\rm in}$, i.e.\ they hold for all sufficiently small neighborhoods $U_{\rm out}$ and $U_{\rm in}$ of $\cR_{\rm out}^\pm$ and $\cR_{\rm in}^\pm$, respectively.

  We work at fiber infinity $\Se^*M$, identify conic subsets of $\Te^*M\setminus o$ with subsets of $\Se^*M$, and consider integral curves of $\sfH_{G_\eop}$ in the (compact) set
  \[
    K:=\pa\Sigma^+\cap\Se^*_{\bar\Omega}M.
  \]
  Suppose that no open neighborhoods $U_{\rm out}$ and $U_{\rm in}\subset\Se^*M$ of $\pa\cR_{\rm out}^+$ and $\pa\cR_{\rm in}^+$ satisfy the desired condition. Then there exist sequences $\zeta_{\rm out,j}\in K\setminus\pa\cR_{\rm out}^+$ and $\zeta_{\rm in,j}\in K\setminus\pa\cR_{\rm in}^+$ converging to $\pa\cR_{\rm out}^+$ and $\pa\cR_{\rm in}^+$, respectively, and integral curves $\gamma_j\subset\pa\Sigma^+$ starting at $\zeta_{\rm out,j}$ and ending at $\zeta_{\rm in,j}$; thus, $r>0$ along $\gamma_j$. By the convexity of $r\circ\gamma$ near $\pa M$ established in the proof of Lemma~\ref{LemmaDEAway}, $r\circ\gamma_j$ is monotonically increasing at least until it reaches some value $r_0>0$ (depending on $\bar\Omega$). For each $j$, we may thus choose a point $\zeta_j\in\gamma_j\cap K\cap\{r\geq r_0\}$. Upon shifting the argument of $\gamma_j$, we may assume that $\zeta_j=\gamma_j(0)$; due to~\eqref{EqDEAwayR}, the domain of definition of $\gamma_j$ is then an interval $(a_j,b_j)$ with $a_j\searrow-\infty$ and $b_j\nearrow+\infty$.

  By compactness of $K\cap\{r\geq r_0\}$, we may assume that $\zeta_j\to\zeta\in K$. We claim that the maximal integral curve $\gamma\colon I\subseteq\R\to K$ of $\sfH_{G_\eop}$ with $\gamma(0)=\zeta$ tends to $\pa\cR_{\rm out}^+$ in the past and to $\pa\cR_{\rm in}^+$ in the future direction. If, say, $\gamma(s)$ did not tend to $\pa\cR_{\rm out}^+$ as $s\searrow\inf I$, then (by the discussion before Lemma~\ref{LemmaDEAway}) we would have $\inf_{s\geq 0}r(\gamma(s))=:r_1>0$; but then $\frac{\dd}{\dd s}t(\gamma(s))\geq\eps>0$ for some $\eps>0$ and all $s\geq 0$ by the timelike nature of $t$, which implies that $s_+:=\sup I\leq T/\eps$ where $T=\sup_{\bar\Omega}t-\inf_{\bar\Omega}t$. Consider now $\gamma_j([0,s_+])$ for large $j$ (so that $b_j>s_+$); by the continuous dependence of integral curves on initial conditions, this tends to $\gamma([0,s_+])$, and thus $\gamma_j(s_+)\geq\sup_{\bar\Omega}t-\eps$ and $\gamma_j(s_+)\geq\frac{r_1}{2}$ for any fixed $\eps>0$ when $j$ is sufficiently large. Exploiting the strict monotonicity of $t\circ\gamma_j$ in $r\geq\frac{r_1}{4}$, this implies that $t(\gamma_j(s))\geq\sup_{\bar\Omega}t$ for large enough $j$ and $s$, contradicting the choice of $\gamma_j$. Similarly, one shows that $\gamma(s)$ tends to $\pa\cR_{\rm in}^+$ as $s\nearrow\sup I$. The existence of $\gamma$ contradicts the non-refocusing assumption on $\Omega$. The proof is complete. 
\end{proof}

\begin{lemma}[Openness in the metric]
\label{LemmaDNrfOpen}
  Suppose that $\Omega\subset M$ is a $g$-non-refocusing spacetime domain. Then for all metrics $g'$ of the form~\eqref{EqDMetric} (for possibly different $h$ and $\tilde g$) which are sufficiently close to $g$ in $r^2\cC^1(\bar\Omega;S^2\,\Te^*M)$, the domain $\Omega$ is a spacetime domain in $(M,g')$, and it is $g'$-non-refocusing.
\end{lemma}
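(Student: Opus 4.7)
The plan is to verify separately that $\Omega$ remains a $g'$-spacetime domain in the sense of Definition~\ref{DefD} and that it is $g'$-non-refocusing. For the first assertion, the conditions on the boundary defining functions $t_{\rm ini,j},t_{\rm fin,j}$ are either independent of the metric (the disjointness requirements on their zero sets) or are $\cC^0$-open in the metric on compact subsets of $\bar\Omega$: past timelikeness of the differentials on the compact zero sets, and Lorentzian signature. Near $\pa M$, the structural form~\eqref{EqDMetric} of $g'$ guarantees that the $-\dd t^2$ term dominates, ensuring past timelikeness of $\dd t_{\rm ini,1},\dd t_{\rm fin,1}$ automatically, provided $g'$ is close enough to $g$ in $r^2\cC^1(\bar\Omega;S^2\,\Te^*M)$.

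For the second assertion, closeness of $g$ and $g'$ in $r^2\cC^1$ corresponds to $\cC^1$-closeness of the edge dual metric functions $G_\eop,G'_\eop$ on compact subsets of $\Te^*M$ over $\bar\Omega$, hence $\cC^0$-closeness of their rescaled Hamiltonian vector fields $\sfH_{G_\eop},\sfH_{G'_\eop}$ on the relevant characteristic sets. The radial sets $\cR_{\rm in}^\pm,\cR_{\rm out}^\pm$ are determined by the structural form~\eqref{EqDMetric} alone and coincide for $g$ and $g'$; the convexity identity $H_{G_\eop}^2 r\equiv 4\sigma^2 r$ modulo $r^2 P^{[2]}(\Te^*M)$, together with the estimates~\eqref{EqDEAwayR}--\eqref{EqDEAwayT}, depends continuously on $g_\eop$, so the analogous statements hold uniformly for $g'$ close to $g$ on common open conic neighborhoods $U_{\rm out},U_{\rm in}$ of $\cR_{\rm out}^\pm,\cR_{\rm in}^\pm$ with a common threshold radius $r_0>0$.

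The main step is then a compactness-contradiction argument modelled on the end of the proof of Lemma~\ref{LemmaDNrf}. Assume there is a sequence $g_j\to g$ in $r^2\cC^1$ and refocusing $g_j$-null-bicharacteristics $\gamma_j\subset\pa\Sigma^+_{g_j}\cap\Se^*_{\bar\Omega}M$. By the uniform convexity from the previous paragraph, the function $r\circ\gamma_j$ must reach at least $r_0/e$ at some intermediate parameter, which we reparametrize to be $s=0$. Compactness of $\Se^*_{\bar\Omega}M\cap\{r\geq r_0/e\}$ yields a subsequence with $\gamma_j(0)\to\zeta_\ast$, and since the characteristic sets vary continuously, $\zeta_\ast\in\pa\Sigma^+_g$. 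Continuous dependence of integral curves on the vector field on compact parameter intervals then realizes the maximal $g$-integral curve $\gamma$ of $\sfH_{G_\eop}$ through $\zeta_\ast$ as the locally uniform limit of the $\gamma_j$.

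The hard part is verifying that this limit curve $\gamma$ refocuses in $\bar\Omega$, which would contradict the $g$-non-refocusing hypothesis and close the argument. Adapting the closing paragraph of the proof of Lemma~\ref{LemmaDNrf}: if $\gamma(s)$ failed to tend to $\pa\cR_{\rm in}^+$ as $s\to\sup I$, then $\inf_{s\geq 0}r(\gamma(s))=:r_1>0$, and the strict monotonicity $\sfH_{G_\eop}t\geq r_1/2$ (which, by $\cC^0$-closeness of the Hamiltonian vector fields, also holds along $\gamma_j$ for large $j$) would cap $\sup I$ at $s_+\leq 2 T/r_1$ with $T=\sup_{\bar\Omega}t-\inf_{\bar\Omega}t$, forcing $\gamma_j(s_+)$ to exit $\bar\Omega$ for large $j$ by local uniform convergence---contradicting $\gamma_j\subset\bar\Omega$. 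The analogous argument in the backward direction and for the `$-$' sign completes the proof.
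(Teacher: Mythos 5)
Your proof is correct and follows essentially the same route as the paper: the paper's (very brief) argument likewise notes that timelikeness of the boundary defining differentials is stable under $r^2\cC^0$-perturbation, and establishes non-refocusing by the same contradiction/compactness scheme as in Lemma~\ref{LemmaDNrf}, using continuous dependence of integral curves on the metric and the uniform choice of the convexity radius $r_0$ for $g'$ in an $r^2\cC^1$-neighborhood of $g$. Your write-up simply fills in the details the paper leaves implicit.
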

\begin{proof}
  In the notation of Definition~\ref{DefD}, the timelike nature of $\dd t_{\rm ini}$ and $\dd t_{\rm fin}$ persists for metrics that are close in $r^2\cC^0(\bar\Omega;S^2\,\Te^*M)$. The non-refocusing condition can be established via a proof by contradiction as in the proof of Lemma~\ref{LemmaDNrf}, using now the continuous dependence of integral curves on the metric (in $r^2\cC^1$) and the fact (following by inspection of the first step in the proof of Lemma~\ref{LemmaDEAway}) that the value $r_0>0$ so that the level sets $r=r'\in(0,r_0]$ are null-geodesically convex can be chosen uniformly for $g'$ in an $r^2\cC^1$-neighborhood of $g$.
\end{proof}

\begin{cor}[Small domains are non-refocusing]
\label{CorDNrfSmall}
  Let $p\in\cC$. Then there exists an open neighborhood $U\subset\cM$ of $p$ so that all spacetime domains $\Omega\subset M$ contained in (the preimage in $M$ of) $U$ are non-refocusing.
\end{cor}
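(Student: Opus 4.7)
The plan is to invoke Lemma~\ref{LemmaDEAway} after arranging the spacetime domain to have $t$-extent inside the short time window that the lemma provides. Writing $t_0 := t(p)$, the lemma furnishes $\delta>0$ and conic neighborhoods $U_{\rm out}$, $U_{\rm in} \subset \Sigma^+ \cap t^{-1}([t_0-\delta,t_0+\delta])$ of $\cR_{\rm out}^+$ and $\cR_{\rm in}^+$, respectively, with the property that no forward $H_{G_\eop}$-bicharacteristic staying over $[t_0-\delta,t_0+\delta]$ can start in $U_{\rm out}$ and later enter $U_{\rm in}$. Since $t|_{\pa M}$ is fiber-constant, $t$ descends to a smooth function on $\cM$, so I would take
\[
  U := \{ q \in \cM : |t(q) - t_0| < \delta \},
\]
which is an open neighborhood of $p$. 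Any spacetime domain $\Omega \subset M$ whose image in $\cM$ lies in $U$ then satisfies $t(\bar\Omega) \subset [t_0-\delta, t_0+\delta]$.

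Next, I would argue by contradiction using the equivalent phase-space formulation of non-refocusing given immediately after Definition~\ref{DefDNrf}. If $\Omega$ is not non-refocusing, there exists an integral curve $\hat\gamma\colon \R \to \Te^*_{\bar\Omega}M \cap \Sigma^+$ of $\sfH_{G_\eop}$ (treating the `$+$' case; the `$-$' case is analogous) with $\hat\gamma(s)\to\cR_{\rm out}^+$ as $s\to-\infty$ and $\hat\gamma(s)\to\cR_{\rm in}^+$ as $s\to+\infty$. Because $\sfH_{G_\eop} = \rho_\infty H_{G_\eop}$ with $\rho_\infty>0$ on $\Te^*M\setminus o$, the curve $\hat\gamma$ is a positive reparameterization of an integral curve of $H_{G_\eop}$, and Lemma~\ref{LemmaDEAway} applies to it. The asymptotics at $s\to\pm\infty$ let me select $s_0 < s_1$ with $\hat\gamma(s_0)\in U_{\rm out}$ and $\hat\gamma(s_1)\in U_{\rm in}$, and $t\circ\hat\gamma$ lies in $[t_0-\delta,t_0+\delta]$ throughout since $\hat\gamma$ projects into $\bar\Omega$; Lemma~\ref{LemmaDEAway} then prohibits $\hat\gamma(s_1)\in U_{\rm in}$, a contradiction.

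The essential content is already packaged inside Lemma~\ref{LemmaDEAway}, so the only remaining work is bookkeeping: matching parameters between the $\sfH_{G_\eop}$-flow (in which non-refocusing is naturally phrased) and the $H_{G_\eop}$-flow (in which Lemma~\ref{LemmaDEAway} is stated), and verifying that the $t$-localization hypothesis of the lemma is automatic once $\bar\Omega$ is forced inside $U$. Both are immediate from the positivity of $\rho_\infty$ and the continuity of $t$ on $\cM$, so I anticipate no genuine obstacle beyond writing out this reduction carefully.
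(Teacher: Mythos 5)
Your argument is correct. The reduction you describe works: a refocusing curve over $\bar\Omega\subset t^{-1}([t_0-\delta,t_0+\delta])$ tends to $\cR_{\rm out}^\pm$ in the past and to $\cR_{\rm in}^\pm$ in the future, hence (by compactness of $\bar\Omega$) must enter $U_{\rm out}$ and later $U_{\rm in}$ while remaining in the time slab, which is exactly what Lemma~\ref{LemmaDEAway} forbids; and the passage between the $\sfH_{G_\eop}$- and $H_{G_\eop}$-parameterizations is harmless since the lemma's conclusion only concerns the order of visits. Note that you do not even need to restrict $r$ in your choice of $U$, since the localization $r<r_0/e$ is already built into the neighborhoods $U_{\rm out},U_{\rm in}$ produced by the lemma.

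This matches the construction the paper mentions in one sentence at the start of its proof (there phrased as $U=(t_0-\delta,t_0+\delta)\times[0,r_0)\times\Sph^{n-1}$ with $r_0$ chosen so that the $r$-level sets are null-bicharacteristically convex). The proof the paper actually spells out is different: it conjugates by the scaling map $S_\lambda(t',r',\omega')=(t_0+\lambda t',\lambda r',\omega')$, observes that $S_\lambda^*g_\eop$ converges to the invariant model metric $g_{\eop,t_0}$ as $\lambda\searrow 0$, and then combines Example~\ref{ExDNrfInv} (every spacetime domain is non-refocusing for an invariant metric) with the structural stability statement of Lemma~\ref{LemmaDNrfOpen}. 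Your route is more elementary and self-contained; the paper's rescaling route is chosen deliberately because the same blow-up/rescaling mechanism reappears later (e.g.\ in the proof of Proposition~\ref{PropSUeSmall}), so the corollary doubles as an introduction to that technique. Either proof is acceptable here.
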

\begin{proof}
  We have $p=\phi^{-1}(t_0)$ for some $t_0\in I$. Let $\delta>0$ be such that $(t_0-\delta,t_0+\delta)\subset I$. One possible choice of $U$ is $U=(t_0-\delta,t_0+\delta)\times[0,r_0)\times\Sph^{n-1}$ where $r_0>0$ is small enough such that $r=r_1\in(0,r_0]$ is null-geodesically convex in $t^{-1}([t_0-\delta,t_0+\delta])$.

  We present an alternative construction, which gives us the opportunity to introduce an important rescaling idea. In the coordinates~\eqref{EqDCollar}, consider the scaling map
  \begin{equation}
  \label{EqDNrfSmallScaling}
    S_\lambda \colon M':=\R\times[0,\infty)\times\Sph^{n-1} \ni (t',r',\omega') \mapsto (t,r,\omega)=(t_0+\lambda t',\lambda r',\omega')
  \end{equation}
  for $\lambda>0$. Fixing a precompact subset $U'\subset M'$, the map $S_\lambda|_{U'}\colon U'\to M$ is well-defined for $\lambda\in(0,\lambda_0)$ when $\lambda_0>0$ is sufficiently small, and since $S_\lambda^*\colon\frac{\dd t}{r}\mapsto\frac{\dd t'}{r'}$, $\frac{\dd r}{r}\mapsto\frac{\dd r'}{r'}$, $\dd\omega\mapsto\dd\omega'$, and $S_\lambda^*r=\lambda r'$, $S_\lambda^*(t-t_0)=\lambda t'$, we conclude that
  \[
    S_\lambda^*g_\eop - g_{\eop,t_0} \in \lambda \CI\bigl([0,\lambda_0);\CI(U';S^2\,\Te^*_{U'}M')\bigr).
  \]
  In particular, $r'{}^2 S_\lambda^*g_\eop\to r'{}^2 g_{\eop,t_0}$ in $r'{}^2\cC^1(U';S^2\,\Te^*_{U'}M')$, and the claim now follows from Lemma~\ref{LemmaDNrfOpen} and Example~\ref{ExDNrfInv} for $U$ equal to (the image in $\cM$ of) $S_\lambda(U')$ where $\lambda>0$ is chosen sufficiently small.
\end{proof}

\begin{cor}[Non-refocusing property upon enlarging domains]
\label{CorDNrfEnlarge}
  If $\Omega\subset M$ is non-refocusing, then there exists a non-refocusing spacetime domain $\Omega'\subset M$ containing $\bar\Omega$; and all spacetime domains $\Omega''\subset\Omega'$ are non-refocusing.
\end{cor}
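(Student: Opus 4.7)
The plan is to take $\Omega':=\Omega_{-\delta,\delta}$ (from \eqref{EqDEnlarged}) for sufficiently small $\delta>0$. That this is a spacetime domain for $\delta$ small was already noted after Definition~\ref{DefD}, and it manifestly contains $\bar\Omega$, so the first assertion reduces to showing that $\Omega_{-\delta,\delta}$ is non-refocusing for some small $\delta>0$.

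I would prove this by contradiction, paralleling the compactness/limit-extraction argument from the proof of Lemma~\ref{LemmaDNrf}. If no such $\delta$ exists, pick $\delta_n\searrow 0$ and, writing $\Omega_n:=\Omega_{-\delta_n,\delta_n}$, refocusing future null-bicharacteristics $\gamma_n\colon\R\to\Sigma^+\cap\Te^*_{\bar\Omega_n}M$ of $\sfH_{G_\eop}$, so that $\gamma_n(s)\to\cR_{\rm out}^+$ as $s\to-\infty$ and $\gamma_n(s)\to\cR_{\rm in}^+$ as $s\to+\infty$. Using the null-bicharacteristic convexity of $r$-level sets in some $(0,r_0]$ established in the proof of Lemma~\ref{LemmaDEAway} (where $r_0$ can be chosen uniformly over the fixed compact set $\bar\Omega_{n_0}\supset\bigcup_{n\geq n_0}\bar\Omega_n$) together with \eqref{EqDEAwayR}, each $\gamma_n$ must exit the small conic neighborhood of $\cR_{\rm out}^+$ before entering that of $\cR_{\rm in}^+$; after reparametrizing we may assume the point $\gamma_n(0)$, viewed at fiber infinity, lies in the compact set $\pa\Sigma^+\cap\Se^*_{\bar\Omega_{n_0}}M\setminus(\tilde U_{\rm out}\cup\tilde U_{\rm in})$ where $\tilde U_{\rm out},\tilde U_{\rm in}$ are the neighborhoods of $\pa\cR_{\rm out}^+,\pa\cR_{\rm in}^+$ from that same proof.

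Passing to a subsequence, $\gamma_n(0)\to\zeta$ with $\zeta\in\Se^*_{\bar\Omega}M$ (using $\bigcap_n\bar\Omega_n=\bar\Omega$). Let $\gamma$ denote the maximal integral curve of $\sfH_{G_\eop}$ through $\gamma(0)=\zeta$ inside $\pa\Sigma^+\cap\Se^*_{\bar\Omega}M$, defined on some interval $I\ni 0$. Continuous dependence on initial conditions gives $\gamma_n\to\gamma$ uniformly on compact subintervals of $I$. The endpoint argument from the last part of the proof of Lemma~\ref{LemmaDNrf}---where a uniform positive lower bound on $r\circ\gamma$ near an end of $I$ would force $t\circ\gamma_n$ to eventually exceed $\sup_{\bar\Omega_n}t$ (which tends to $\sup_{\bar\Omega}t$) for large $n$, contradicting $\gamma_n\subset\bar\Omega_n$---then forces $r\circ\gamma\to 0$ at both ends of $I$, exhibiting $\gamma$ as a refocusing bicharacteristic in $\bar\Omega$ and contradicting the non-refocusing of $\Omega$. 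I expect this limit-extraction step to be the main technical point, but it reuses essentially verbatim the argument already developed in Lemma~\ref{LemmaDNrf}.

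The second assertion is immediate from the definition: if $\Omega''\subset\Omega'$ and $\gamma\subset\bar\Omega''\cap M^\circ$ were a refocusing future null-geodesic, then viewing $\gamma$ as a curve in $\bar\Omega'\cap M^\circ$ would contradict the non-refocusing of $\Omega'$ just established.
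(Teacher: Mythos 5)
Your overall strategy---taking $\Omega'=\Omega_{-\delta,\delta}$ and extracting a limiting refocusing curve from a sequence of putative refocusing curves in shrinking enlargements---is workable, but it is a different and considerably longer route than the paper's. The paper disposes of the corollary in two lines: either one notes that the quantitative neighborhoods $U_{\rm out},U_{\rm in}$ produced by Lemma~\ref{LemmaDNrf} are open, so the no-transit property persists on a slightly enlarged domain; or one applies Lemma~\ref{LemmaDNrfOpen} to $\Phi_s^*g$, where $\Phi_s$ is the flow of a b-vector field outward pointing at $\pa\bar\Omega$ and proportional to $\pa_t$ near $\pa M$, so that $\Phi_s^*g\to g$ in $r^2\cC^1$ and $\Phi_s(\Omega)\supset\bar\Omega$ for small $s>0$. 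Your treatment of the second assertion is correct and identical to the paper's.

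The genuine gap is in your normalization. You reparametrize so that $\gamma_n(0)$ lies in the compact set $\pa\Sigma^+\cap\Se^*_{\bar\Omega_{n_0}}M\setminus(\tilde U_{\rm out}\cup\tilde U_{\rm in})$, but this set contains points of $\Se^*_{\pa M}M$, so the subsequential limit $\zeta$ may lie over $r=0$. In that case the maximal integral curve $\gamma$ through $\zeta$ is contained in $\pa\Sigma^+\cap\Se^*_{\pa M}M$ and flows from $\pa\cR_{\rm in}^+$ to $\pa\cR_{\rm out}^+$ (the source-to-sink dynamics over $\pa M$); this is not a refocusing curve in the sense of Definition~\ref{DefDNrf}, which requires the curve to lie in $M^\circ$ and hence to tend to $\cR_{\rm out}^+$ backward and $\cR_{\rm in}^+$ forward. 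Local uniform convergence $\gamma_n\to\gamma$ on compact parameter intervals is perfectly consistent with the $\gamma_n$ having the opposite endpoint behavior as $s\to\pm\infty$, so no contradiction results. (Ruling this degeneration out directly would require knowing that curves emanating from $\cR_{\rm out}^+$ exit $\tilde U_{\rm out}$ only at points with $r$ bounded below, which amounts to the unstable manifold structure of Remark~\ref{RmkDFlowInOut} that the paper deliberately avoids.) The fix is exactly the normalization used in the proof of Lemma~\ref{LemmaDNrf}: by the convexity of the $r$-level sets in $(0,r_0]$ from the proof of Lemma~\ref{LemmaDEAway}, each refocusing curve $\gamma_n$ must pass through $\{r\geq r_0\}$ (otherwise $r\circ\gamma_n$ would be monotone and could not return to $0$), so choose $\gamma_n(0)$ there. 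Then $r(\zeta)\geq r_0>0$, the limit curve lies in $r>0$, and your endpoint argument---combined with the discussion preceding Remark~\ref{RmkDFlowInOut}, which upgrades $\liminf r\to 0$ at the two ends to convergence to $\pa\cR_{\rm out}^+$ and $\pa\cR_{\rm in}^+$ respectively---does produce a refocusing curve in $\bar\Omega$ and the desired contradiction.
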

\begin{proof}
  The final statement is an immediate consequence of the first. This in turn is a simple consequence of the existence of the open neighborhoods $U_{\rm out},U_{\rm in}$ in Lemma~\ref{LemmaDNrf}. One can also deduce this from Lemma~\ref{LemmaDNrfOpen} by considering the time $s$ flow $\Phi_s$ along a vector field $V\in\Vb(M)$ which is outward pointing at $\pa\bar\Omega$ and is a multiple of $\pa_t$ near $\pa M$; then $\Phi_s^*g\to g$ in $r^2\cC^1(\bar\Omega;S^2\,\Te^*M)$, so $\Phi_s^*g$ is non-refocusing for all small $s>0$.
\end{proof}

The null-geodesic flow on a non-refocusing domain has the following global behavior; see also Figure~\ref{FigDNrfFlow}.

\begin{lemma}[Null-geodesic flow on a non-refocusing domain]
\label{LemmaDNrfFlow}
  Let $\Omega\subset M$ be a non-refocusing domain, and let $\gamma\colon I\subseteq\R\to\pa\Sigma^+\cap\Se^*_{\bar\Omega}M$ be a maximal integral curve of $\sfH_{G_\eop}$. If $\gamma$ lies over $\Se^*_{\pa M}M$, then either $\gamma(I)\subset\pa\cR_{\rm out}^+\cup\pa\cR_{\rm in}^+$, or $\gamma(s)$ tends to $\pa\cR_{\rm in}^+$ as $s\searrow-\infty$ and to $\pa\cR_{\rm out}^+$ as $s\nearrow\infty$. Otherwise, $\gamma(I)\subset\Se^*_{M^\circ}M$, in which case either $\sup I=\infty$ and $\gamma(s)\to\pa\cR_{\rm in}^+$ as $s\to\infty$, or $\sup I<\infty$, $\sup I\in I$, and $\gamma(\sup I)$ lies in a final boundary hypersurface of $\Omega$ (but not on $\pa M$). Similarly, either $\inf I=-\infty$ and $\gamma(s)\to\pa\cR_{\rm out}^+$ as $s\to-\infty$, or $\inf I>-\infty$, $\inf I\in I$, and $\gamma(\inf I)$ lies in an initial boundary hypersurface of $\Omega$ (but not on $\pa M$). The same statements hold, mutatis mutandis, for maximal integral curves of $-\sfH_{G_\eop}$ in $\pa\Sigma^-$.
\end{lemma}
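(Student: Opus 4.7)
The plan is to treat Case 1 directly from the description of $\sigma^{-1}H_{G_\eop}$ along $\Se^*_{\pa M}M\cap\pa\Sigma^+$ given between~\eqref{EqDEHam} and~\eqref{EqDERadIn}: restricted to the boundary fiber the fixed point set is $\pa\cR_{\rm in}^+\cup\pa\cR_{\rm out}^+$, and every nontrivial orbit satisfies $\hat\xi(\gamma(s))=\tanh(s-s_0)$ for some $s_0$. Hence any such orbit tends to $\pa\cR_{\rm in}^+$ (where $\hat\xi=-1$) as $s\searrow-\infty$ and to $\pa\cR_{\rm out}^+$ (where $\hat\xi=+1$) as $s\nearrow+\infty$, while trivial orbits are single fixed points in the radial sets.

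For Case 2, I work on $\pa\Sigma^+$ with $+\sfH_{G_\eop}$ and treat the forward direction; the backward direction and the $\pa\Sigma^-$ statements are analogous. Since $\sfH_{G_\eop}$ is a smooth vector field on the compact manifold $\Se^*M$, maximal integral curves in $\Se^*M$ are defined for all $s\in\R$, and maximality of $I$ as the interval on which $\gamma$ remains in $\Se^*_{\bar\Omega}M$ therefore forces $\sup I\in I$ whenever $\sup I<\infty$, with $\gamma(\sup I)$ on the boundary of $\Se^*_{\bar\Omega}M$ in $\Se^*M$. The hypothesis $\gamma(I)\subset\Se^*_{M^\circ}M$ rules out $\gamma(\sup I)\in\Se^*_{\pa M}M$; and since $\dd t_{\rm ini,j}$ is past timelike and $\gamma'$ is future null, the pairing $\dd t_{\rm ini,j}(\gamma')$ is strictly positive, so $t_{\rm ini,j}\circ\gamma$ is strictly increasing and $\gamma(\sup I)$ cannot lie on any initial boundary hypersurface. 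Hence it lies on a final boundary hypersurface.

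Now suppose $\sup I=\infty$. From~\eqref{EqDEHamOrig} and $\sigma>0$ on $\Sigma^+$ one reads off $\sfH_{G_\eop}t=2r$ on $\pa\Sigma^+$, which together with the boundedness of $t$ on $\bar\Omega$ yields $\int_0^\infty r(\gamma(s))\,\dd s<\infty$ and in particular $\liminf_{s\to\infty}r(\gamma(s))=0$. I then consider the forward $\omega$-limit set $\omega(\gamma):=\bigcap_{T\geq 0}\overline{\gamma([T,\infty))}$, which is a nonempty, compact, connected, and bi-invariant subset of $\Se^*_{\bar\Omega}M\cap\pa\Sigma^+$. Monotonicity and boundedness of $t\circ\gamma$ force $t\equiv t_*$ on $\omega(\gamma)$ for some $t_*\in\R$; invariance then gives $\sfH_{G_\eop}t=2r\equiv 0$ on $\omega(\gamma)$, so $\omega(\gamma)\subset\Se^*_{\pa M}M\cap\pa\Sigma^+\cap t^{-1}(t_*)$. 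By Case 1 together with connectedness and bi-invariance of $\omega(\gamma)$, the only possibilities are that $\omega(\gamma)$ is contained in $\pa\cR_{\rm in}^+$, contained in $\pa\cR_{\rm out}^+$, or contains a full nontrivial connecting orbit from $\pa\cR_{\rm in}^+$ to $\pa\cR_{\rm out}^+$.

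The last two possibilities are ruled out using non-refocusing. A connecting orbit contributes points of $\omega(\gamma)$ arbitrarily close to both $\pa\cR_{\rm in}^+$ and $\pa\cR_{\rm out}^+$, so $\gamma$ visits both neighborhoods $U_{\rm in}$ and $U_{\rm out}$ from Lemma~\ref{LemmaDNrf} infinitely often, and in particular in the forbidden order (first $U_{\rm out}$, then $U_{\rm in}$), contradicting Lemma~\ref{LemmaDNrf}. Containment $\omega(\gamma)\subset\pa\cR_{\rm out}^+\cap t^{-1}(t_*)$ combined with a standard compactness argument implies $\gamma(s)\in U_{\rm out}$ for all sufficiently large $s$; then~\eqref{EqDEAwayR} gives $\sfH_{G_\eop}r\geq r>0$ along this tail, so $r\circ\gamma$ is strictly increasing and bounded below by a positive constant there, contradicting $\liminf r=0$. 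Therefore $\omega(\gamma)\subset\pa\cR_{\rm in}^+$, which is precisely $\gamma(s)\to\pa\cR_{\rm in}^+$ as $s\to\infty$. The main subtlety I anticipate is the careful use of bi-invariance (rather than merely forward-invariance) of $\omega$-limit sets to translate the possibilities allowed by the boundary dynamics into honest applications of Lemma~\ref{LemmaDNrf}; everything else is bookkeeping with tools already assembled in~\S\ref{SsDE}--\S\ref{SsDNrf}.
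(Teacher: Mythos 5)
Your proof is correct, and its skeleton coincides with the paper's: reduce to the dichotomy $r\equiv 0$ versus $r>0$ along $\gamma$ (both handled via the explicit boundary dynamics and invariance of $r^{-1}(0)$), treat $\sup I<\infty$ by extendibility plus causality of the boundary hypersurfaces, and treat $\sup I=\infty$ by first deducing $\liminf_{s\to\infty}r(\gamma(s))=0$ from the boundedness of $t$ on $\bar\Omega$. The one place where you genuinely diverge is the final step of the infinite-time case: the paper simply appeals to its earlier discussion preceding Lemma~\ref{LemmaDEAway} (convexity of small $r$-level sets plus the radial source/sink structure of $\pa\cR_{\rm out}^\pm$ and $\pa\cR_{\rm in}^\pm$), which yields $\gamma(s)\to\pa\cR_{\rm in}^+$ \emph{without} invoking non-refocusing, whereas you run an $\omega$-limit-set argument and use Lemma~\ref{LemmaDNrf} to exclude the connecting-orbit and $\pa\cR_{\rm out}^+$ alternatives. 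Both are valid; your route is more self-contained and makes explicit the trichotomy for invariant connected subsets of the boundary flow, at the cost of using the non-refocusing hypothesis for a step that in fact holds for arbitrary spacetime domains (which matters elsewhere in the paper, e.g.\ in the proof of Lemma~\ref{LemmaDNrf} itself, where that convergence statement is used before non-refocusing can be assumed quantitatively). Two cosmetic imprecisions worth fixing: the identity $\sfH_{G_\eop}t=2r$ holds only modulo $O(r^2)$ near $\pa M$ and depends on the normalization of $\rho_\infty$ away from it, so you should state and use only the lower bound $\sfH_{G_\eop}t\gtrsim r$ on $\pa\Sigma^+\cap\Se^*_{\bar\Omega}M$ (which is what \eqref{EqDEAwayT} and the timelike nature of $t$ give); and in the finite-time case one should note explicitly that $r^{-1}(0)$ is backward-invariant, so the limit point $\gamma(\sup I)$ cannot lie over $\pa M$ — you do say this, but it is the load-bearing step replacing the paper's "$\liminf_{s\to\sup I}r(\gamma(s))>0$" argument.
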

\begin{proof}
  In view of the (explicit) description of the $\sfH_{G_\eop}$-flow over $\pa M$, it suffices to consider the case that $\gamma(I)\subset\{r>0\}$. Now, for all $\eps>0$, there exists a constant $c_\eps>0$ so that for $s\in I$ with $r(\gamma(s))\geq\eps$ we have $\frac{\dd}{\dd s}t(\gamma(s))\geq c_\eps$; this follows from the timelike nature of $t$. So if $\sup I=\infty$, we must have $\liminf_{s\to\infty} r(\gamma(s))=0$ since $\sup_{\bar\Omega}t<\infty$ due to the compactness of $\bar\Omega$. As discussed before Lemma~\ref{LemmaDEAway}, this implies that $\gamma(s)\to\pa\cR_{\rm in}^+$. If $\sup I<\infty$, then $\gamma(s)$ cannot tend to $\pa\cR_{\rm in}^+$ as $s\nearrow\sup I$, and $\gamma$ also cannot lie over $\pa M$; it follows that $\liminf_{s\to\sup I} r(\gamma(s))>0$. Therefore, $\gamma$ is extendible as an integral curve on $M$, and the limiting point $p_+=\lim_{s\to\sup I}\gamma(s)$ must lie over $\pa\bar\Omega\setminus\pa M$ (and therefore $\sup I\in I$). Since $\gamma$ is a future null-bicharacteristic, $p_+$ must lie on a final boundary hypersurface.
\end{proof}

\begin{figure}[!ht]
\centering
\includegraphics{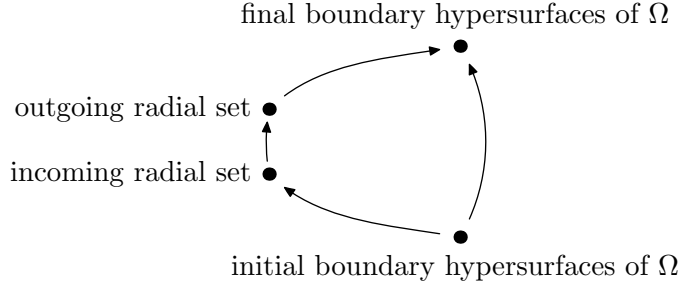}
\caption{Structure of the null-geodesic flow, lifted to $\Se^*M$, on a non-refocusing spacetime domain $\Omega$.}
\label{FigDNrfFlow}
\end{figure}

\subsection{Monotone functions on non-refocusing domains}
\label{SsDFn}

For the microlocal propagation of edge regularity into and out of the curve $\cC\subset\cM$ of conic points, certain threshold and monotonicity requirements must be satisfied for the edge regularity order $\sfs\in\CI(\Se^*M)$. Using the notation of~\S\ref{SsDNrf}, we thus show:

\begin{prop}[Existence of order functions]
\label{PropDFn}
  Let $\Omega\subset M$ be a non-refocusing spacetime domain. Let $\sfs_{\rm in},\sfs_{\rm out}\in\R$. Then there exists a function $\sfs\in\CI(\Se^*_{\bar\Omega}M)$ with the following properties:
  \begin{enumerate}
  \item $\sfs$ is constant near $\pa\cR_{\rm in}^\pm$ and near $\pa\cR_{\rm out}^\pm$;
  \item $\sfs>\sfs_{\rm in}$ at $\pa\cR_{\rm in}^\pm$ and $\sfs<\sfs_{\rm out}$ at $\pa\cR_{\rm out}^\pm$;
  \item $\pm\sfH_{G_\eop}\sfs\leq 0$ on $\pa\Sigma^\pm$.
  \end{enumerate}
\end{prop}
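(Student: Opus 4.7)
The plan is to construct $\sfs$ by assigning constant values near each radial set and interpolating monotonically in between along the $\sfH_{G_\eop}$-flow on $\pa\Sigma^\pm$, then extending smoothly to $\Se^*_{\bar\Omega}M$. The non-refocusing assumption, accessed via Lemmas~\ref{LemmaDNrf} and~\ref{LemmaDNrfFlow}, is what allows these two constants to be chosen independently of one another.

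The first step is to choose constants $c_{\rm in}^\pm, c_{\rm out}^\pm\in\R$ satisfying $c_{\rm in}^\pm>\sfs_{\rm in}$, $c_{\rm out}^\pm<\sfs_{\rm out}$, and $c_{\rm in}^\pm\geq c_{\rm out}^\pm$. The last inequality is unavoidable: on the in-cone trajectory at $r=0$ parameterized by $\hat\xi(s)=\tanh(s-s_0)$ (from the analysis following~\eqref{EqDEHam}), the future null-bicharacteristic flow on $\pa\Sigma^\pm$ runs from $\pa\cR_{\rm in}^\pm$ to $\pa\cR_{\rm out}^\pm$, so the sign condition $\pm\sfH_{G_\eop}\sfs\leq 0$ forces $\sfs(\pa\cR_{\rm in}^\pm)\geq\sfs(\pa\cR_{\rm out}^\pm)$. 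Valid constants exist for arbitrary $\sfs_{\rm in},\sfs_{\rm out}\in\R$: if $\sfs_{\rm in}<\sfs_{\rm out}$, one may take any common value in $(\sfs_{\rm in},\sfs_{\rm out})$ and in fact take $\sfs$ globally equal to this constant; otherwise, one chooses $c_{\rm in}^\pm>\sfs_{\rm in}\geq\sfs_{\rm out}>c_{\rm out}^\pm$, automatically with $c_{\rm in}^\pm>c_{\rm out}^\pm$.

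The second, main step is the interpolation. Fix nested conic neighborhoods $U_\star^{(2)}\Subset U_\star^{(1)}$ of $\pa\cR_\star^\pm$ (for $\star\in\{\mathrm{in},\mathrm{out}\}$) as supplied by Lemma~\ref{LemmaDNrf}, on which the estimates~\eqref{EqDEAwayR} hold and no forward $\sfH_{G_\eop}$-orbit in $\pa\Sigma^+$ starting in $U_{\rm out}^{(1)}$ ever enters $U_{\rm in}^{(1)}$ (symmetrically on $\pa\Sigma^-$). Set $\sfs\equiv c_\star^\pm$ on $U_\star^{(2)}\cap\pa\Sigma^\pm$, and on the in-cone piece of $\pa\Sigma^\pm$ inside $r=0$ let $\sfs$ depend only on $\hat\xi$ via a smooth non-increasing function interpolating the two constants. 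On the remainder of $\pa\Sigma^\pm$, contained in $r>0$, Lemma~\ref{LemmaDNrfFlow} shows that the flow is gradient-like: every maximal orbit either exits through a boundary hypersurface of $\bar\Omega$, enters $U_{\rm in}^{(1)}$, or emerges from $U_{\rm out}^{(1)}$, and non-refocusing precludes orbits running from $U_{\rm out}^{(1)}$ to $U_{\rm in}^{(1)}$. A standard flow-box plus partition-of-unity argument then yields a smooth function $\tau$ on $\pa\Sigma^\pm$ with $\pm\sfH_{G_\eop}\tau\geq 0$ and constant on $U_\star^{(2)}$; composing with a smooth non-increasing function that interpolates $c_{\rm out}^\pm$ to $c_{\rm in}^\pm$ produces $\sfs|_{\pa\Sigma^\pm}$. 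Finally, extend smoothly off $\pa\Sigma^\pm$ to all of $\Se^*_{\bar\Omega}M$ via a cutoff transverse to the characteristic set; since properties (1)--(3) only constrain $\sfs$ on $\pa\Sigma^\pm$, this extension is free. The main obstacle is the Lyapunov construction of $\tau$, which requires matching smoothly to the prescribed constants near the radial sets and avoiding closed level sets; both are guaranteed by the non-refocusing property together with the quantitative source/sink estimates~\eqref{EqDEAwayR}.
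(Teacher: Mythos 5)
Your overall architecture matches the paper's: reduce to constructing a transition function that is constant near $\pa\cR_{\rm in}^\pm$ and $\pa\cR_{\rm out}^\pm$ and monotone along $\pm\sfH_{G_\eop}$ on $\pa\Sigma^\pm$, then take an affine combination of the two target constants; your observation that the in-cone flow from $\pa\cR_{\rm in}^\pm$ to $\pa\cR_{\rm out}^\pm$ over $r=0$ forces $\sfs(\pa\cR_{\rm in}^\pm)\geq\sfs(\pa\cR_{\rm out}^\pm)$, and your disposal of the case $\sfs_{\rm in}<\sfs_{\rm out}$ by a constant order, are both correct and agree with the paper. However, the entire difficulty of the proof is concentrated in the step you dismiss as ``a standard flow-box plus partition-of-unity argument'' producing a smooth $\tau$ with $\pm\sfH_{G_\eop}\tau\geq 0$ and prescribed constants near the radial sets. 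This is a genuine gap. Gluing locally monotone functions $\tau_i$ by a partition of unity $\{\phi_i\}$ gives $\sfH_{G_\eop}\bigl(\sum_i\phi_i\tau_i\bigr)=\sum_i\phi_i\,\sfH_{G_\eop}\tau_i+\sum_i(\sfH_{G_\eop}\phi_i)\tau_i$, and the second sum has no sign; monotonicity along a flow is not preserved under convex combinations with varying weights, and the existence of a \emph{smooth} global Lyapunov-type function for a flow on a compact set with boundary (through which orbits may exit) is a theorem requiring proof, not a flow-box exercise.

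Concretely, the paper constructs the transition function as follows: $\chi(\zeta)$ measures how close the backward orbit $\gamma_\zeta(s)$, $s\leq 0$, comes to $\pa\cR_{\rm out}^+$, using a quadratic defining function $R^2$ of $\pa\cR_{\rm out}^+$ whose level sets are first shown to be strictly convex for the flow. The resulting function $f(\inf_{s\leq 0}d_{R_1}(\gamma_\zeta(s)))$ is automatically monotone with the right values near the radial sets (here non-refocusing enters, to guarantee it vanishes on $U_{\rm in}$), but it is only $\cC^{1,1}$ where the minimum is attained at an interior time, and may fail to be even Lipschitz-differentiable where the minimizing time collides with an initial hypersurface of an enlarged domain. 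The paper then restores smoothness by averaging along the flow, $\tilde\chi(\zeta)=\int\phi(z)\chi_{R_1}(\gamma_\zeta(z))\,\dd z$ --- an operation that preserves monotonicity precisely because it averages time-translates of a monotone function --- and finally excises the residual exceptional set $E$ by multiplying with a flow-invariant cutoff. Your proposal contains no analogue of the convexity argument, the smoothing-along-the-flow step, or the treatment of the exceptional set, and also does not explain how the explicit $\hat\xi$-dependent interpolation over $r=0$ is matched smoothly and monotonically to the construction in $r>0$. These are exactly the points where the quantitative estimates~\eqref{EqDEAwayR} and the non-refocusing hypothesis are actually used, so they cannot be absorbed into a blanket appeal to standardness.
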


\begin{rmk}[Necessity of the non-refocusing assumption]
\label{RmkDFnNec}
  Proposition~\ref{PropDFn} has the following converse. \emph{If, for $\sfs_{\rm in}=0=\sfs_{\rm out}$, a function $\sfs\in\CI(\Se^*M)$ with the stated properties exists, then $\Omega$ is non-refocusing.} Indeed, if $\gamma$ is an integral curve of $\sfH_{G_\eop}$ in $\pa\Sigma^+\cap\Se^*_{\bar\Omega}M$, and $\gamma(s)$ tends to $\pa\cR_{\rm out}^+$ as $s\searrow-\infty$, then $\sfs(\gamma(s))$ is negative for $s\ll -1$; but since $\sfs\circ\gamma$ is monotonically decreasing, $\gamma(s)$ cannot tend to $\pa\cR_{\rm in}^+$ as $s\nearrow\infty$ since this would force $\sfs(\gamma(s))$ to be positive for $s\gg 1$.
\end{rmk}

\begin{rmk}[Explicit choices for invariant metrics]
\label{RmkDFnExplInv}
  For invariant metrics, so $\sigma^{-2}G_\eop=-1+\hat\xi^2+|\hat\eta|_{h(\omega)}^2$ in the notation~\eqref{EqDECoordProj}, one computes using~\eqref{EqDEHam} that $\frac12\sigma^{-1}H_{G_\eop}\hat\xi=1-\hat\xi^2=|\hat\eta|_{h(\omega)}^2$ on $\pa\Sigma^\pm$, so $\hat\xi$ is monotonically increasing along the $\pm\sfH_{G_\eop}$-flow in $\pa\Sigma^\pm$. We may thus take $\sfs=f(\hat\xi)$ where $f\in\CI([-1,1])$ is constant near $\pm 1$, satisfies $f'\leq 0$, and $f(-1)>\sfs_{\rm in}$ as well as $f(1)<\sfs_{\rm out}$.
\end{rmk}

\begin{proof}[Proof of Proposition~\usref{PropDFn}]
  We construct a function $\chi\in\CI(\Se^*_{\bar\Omega}M)$ which equals $0$ near $\pa\cR_{\rm in}^\pm$ and $1$ near $\pa\cR_{\rm out}^\pm$, and so that $\pm\sfH_{G_\eop}\chi\geq 0$ on $\pa\Sigma^\pm$. Then, for any $\eps>0$, the function
  \[
    \sfs := (\sfs_{\rm in}+\eps) - \max(\sfs_{\rm in}-\sfs_{\rm out}+2\eps,0)\chi
    = \begin{cases}
        \sfs_{\rm in}+\eps, & \sfs_{\rm in}\leq\sfs_{\rm out}, \\
        (\sfs_{\rm in}+\eps)(1-\chi) + (\sfs_{\rm out}-\eps)\chi, & \sfs_{\rm in}>\sfs_{\rm out},
      \end{cases}
  \]
  satisfies all requirements. We shall construct $\chi$ only on $\pa\Sigma^+$. The idea behind our construction is that $\chi(\zeta)$ should measure how close the integral curve $\gamma_\zeta(s)=e^{s\sfH_{G_\eop}}\zeta$ gets to $\pa\cR_{\rm out}^+$ for $s\leq 0$. Making this precise requires some care. First of all, we use Corollary~\ref{CorDNrfEnlarge} to pick non-refocusing spacetime domains $\Omega_1\supset\bar\Omega$ and $\Omega_2\supset\ol{\Omega_1}$; henceforth, we work on $\ol{\Omega_2}$. Thus, for example, $\pa\Sigma^+$ denotes the boundary at fiber infinity of $\{\zeta\in\Te^*_{\ol{\Omega_2}}M\setminus o\colon G_\eop(\zeta)=0\}$. We fix $U_{\rm out},U_{\rm in}\subset\Se^*_{\ol{\Omega_2}}M$ according to Lemma~\ref{LemmaDNrf}.

  \pfstep{Distance function; convexity property.} In local coordinates $t,r,\omega$ near $r=0$, and using the coordinates $\hat\xi,\hat\eta$ from~\eqref{EqDECoordProj} near $\pa\cR_{\rm out}^+$, set
  \[
    R^2 := r^2 + (\hat\xi-1)^2 + |\hat\eta|^2.
  \]
  Thus, $R^2$ is a local quadratic defining function of $\pa\cR_{\rm out}^+$. Let us denote by $\cO(R^3)$ smooth terms vanishing cubically at $\pa\cR_{\rm out}^+$; using~\eqref{EqDEHam}, we then find (with $\sfH_{G_\eop}=\sigma^{-1}H_{G_\eop}$ near $\pa\cR_{\rm out}^+$)
  \[
    \sfH_{G_\eop}(R^2) = 4 r^2 - 8(\hat\xi-1)^2 - 4|\hat\eta|^2 + \cO(R^3)
  \]
  Suppose this vanishes; then $|\hat\eta|^2\leq r^2+\cO(R^3)$, and thus
  \begin{align*}
    \sfH_{G_\eop}^2(R^2) &= \sfH_{G_\eop} \bigl(8 r^2-4(\hat\xi-1)^2 - 4 R^2 \bigr) + \cO(R^3) \\
      &= 32 r^2 + 32(\hat\xi-1)^2 + \cO(R^3) \\
      &\geq 16 R^2 + \cO(R^3).
  \end{align*}
  Therefore, there exists $R_0>0$ so that all level sets of $R^2\in(0,R_0^2]$ are strictly convex for the $\sfH_{G_\eop}$-flow. Upon decreasing $R_0$ if necessary, we can ensure that $\{R^2\leq R_0^2\}\subset U_{\rm out}$. For $R_1\in(0,R_0]$, define now
  \[
    d_{R_1} := \min\Bigl(\frac{R^2}{R_1^2},1\Bigr) \in\cC^0(\pa\Sigma^+)
  \]
  defined to be $1$ outside the coordinate chart. Note that $d_{R_1}$ is smooth on $\{d_{R_1}<1\}$.

  \pfstep{A non-smooth proxy $\chi_{R_1}$ for $\chi$.} Fix a smooth function $f\colon[0,1]\to[0,1]$ so that $f|_{[0,\frac14]}=1$, $f|_{[\frac12,1]}=0$, and $f'\leq 0$. Given $\zeta\in\pa\Sigma^+$ and $R_1\in(0,R_0]$, set
  \[
    d_{R_1,\rm min}(\zeta) := \inf_{s\leq 0} d_{R_1}(\gamma_\zeta(s)),\quad
    \chi_{R_1}(\zeta) := f(d_{R_1,\rm min}(\zeta)).
  \]
  Then $\chi_{R_1}(\zeta)=1$ when $d_{R_1}(\zeta)\leq\frac14$, which in particular holds in a neighborhood of $\pa\cR_{\rm out}^+$. If on the other hand $\zeta\in U_{\rm in}$, then since $\gamma_\zeta(s)\notin U_{\rm out}$ for all $s\leq 0$ by the non-refocusing property of $\Omega_2$, we have $d_{R_1,\rm min}(\zeta)=1$ and thus $\chi_{R_1}(\zeta)=0$. Note furthermore that $\chi_{R_1}(\gamma_s(\zeta))$ is an increasing function of $s$ since $d_{R_1,\rm min}(\gamma_s(\zeta))$ is a decreasing function. We proceed to analyze these functions in more detail.

  \pfsubstep{(i)}{Uniqueness of minimizers.} Let $R^2_{\rm in}=r^2+(\hat\xi+1)^2+|\hat\eta|^2$ and fix $\eps>0$ so that the annular region
  \begin{equation}
  \label{EqDFnAin}
    A_{\rm in}:=\{(t,r,\omega;\hat\xi,\hat\eta)\colon R^2_{\rm in}\in(\eps,3\eps)\} \subset \Se^*_{\ol{\Omega_2}}M
  \end{equation}
  is contained in $U_{\rm in}$. Since over $r=0$, the $\sfH_{G_\eop}$-flow in $\pa\Sigma^+$ flows from the source $\pa\cR_{\rm in}^+$ to the sink $\pa\cR_{\rm out}^+$ over a single fiber of $\pa M$, there exists $\delta>0$ so that all backwards null-bicharacteristics starting in the set
  \begin{equation}
  \label{EqDFnAout}
    A_{\rm out} := \bigl\{ d_{R_0} \in \bigl(\tfrac14,\tfrac34\bigr),\ r<\delta \bigr\} \cap \pa\Sigma^+ \cap \Se^*_{\ol{\Omega_1}}M
  \end{equation}
  enter $A_{\rm in}$, and thus $U_{\rm in}$, in finite time. See Figure~\ref{FigDFnBw}.%

  \begin{figure}[!ht]
  \centering
  \includegraphics{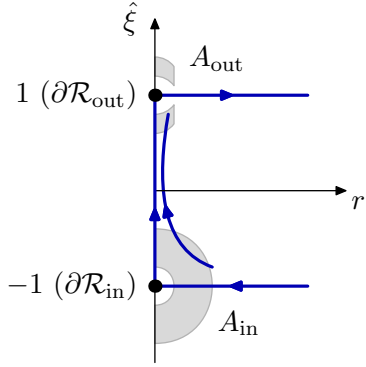}
  \caption{Illustration of the $\sfH_{G_\eop}$-flow near $r=0$. We only draw the variables $r,\hat\xi$; the coordinate $|\hat\eta|$ can be recovered over $r=0$ from $|\hat\eta|^2=1-\hat\xi^2$. Also indicated are the sets $A_{\rm in}$, $A_{\rm out}$ from~\eqref{EqDFnAin}--\eqref{EqDFnAout}.}
  \label{FigDFnBw}
  \end{figure}

  We now claim that if $R_1\in(0,R_0]$ is sufficiently small, then for all $\zeta\in\pa\Sigma^+\cap\Se^*_{\ol{\Omega_1}}M$ with $d_{R_1,\rm min}(\zeta)\in[\frac14,\frac34])$, the minimum of $0\geq s\mapsto d_{R_1}(\gamma_\zeta(s))$ is attained only once. If $d_{R_1}(\gamma_\zeta(s))=\min(\frac{R_0^2}{R_1^2}d_{R_0}(\gamma_\zeta(s)),1)$ attains a minimum value in $[\frac14,\frac34]$ at $s_0\leq 0$, then by the strict convexity of $d_{R_0}$, we have $-\frac{\dd}{\dd s}d_{R_0}(\gamma_\zeta(s))>0$ for $s\in(s_1,s_0)$ for some minimal $s_1<s_0$ and $d_{R_0}(\gamma_\zeta(s_1))=1$ (so in particular $\gamma_\zeta(s_1)\notin U_{\rm out}$)---unless $\gamma_\zeta(s_1)$ lies over an initial boundary hypersurface of $\Omega_2$, in which case there is nothing to prove. Now $r(\gamma_\zeta(s_0))\leq\sqrt{3 R_1/4}<\delta$ for small $R_1$, so by~\eqref{EqDEAwayR} we have $r(\gamma_\zeta(s))<\delta$ also for $s\in(s_1,s_0)$ since $\gamma_\zeta(s)$ remains in $U_{\rm out}$ for such $s$; by the intermediate value theorem and using that $d_{R_0}(\gamma_\zeta(s_0))=\frac{R_1^2}{R_0^2}d_{R_1}(\gamma_\zeta(s_0))\leq\frac{3 R_1^2}{4 R_0^2}<\frac14$ when $R_1$ is small, we have $d_{R_0}(\gamma_\zeta(s))\in(\frac14,\frac34)$ for some $s\in(s_1,s_0)$; so $\gamma_\zeta(s)\in A_{\rm out}$. Since this now implies that $\gamma_\zeta$ enters $U_{\rm in}$ in the past (unless it hits $\pa\ol{\Omega_2}$), the non-refocusing property implies that $\gamma_\zeta$ cannot enter $U_{\rm out}$ again, as desired.

  \pfsubstep{(ii)}{Smoothness of a minimizing argument.} Fix $\delta>0$ so that the maximal interval $(a(\zeta),b(\zeta))\subseteq\R$ of definition of all integral curves $\gamma_\zeta$, $\zeta\in\pa\Sigma^+\cap\Se^*_{\bar\Omega}M$, over $\bar\Omega$ can be extended at least to $(a(\zeta)-3\delta,b(\zeta)+3\delta)$ as integral curves over $\ol{\Omega_1}$, and similarly with $\Omega_2,\Omega_1$ in place of $\Omega_1,\Omega$. We may pick $\delta$ so small that moreover $\sfH_{G_\eop}d_{R_1}<\frac{1}{64\delta}$ on $\pa\Sigma^+\cap\Se^*_{\ol{\Omega_2}}M$, where $R_1$ is fixed according to the previous step.

  Let now $\zeta_0\in\pa\Sigma^+\cap\Se^*_{\ol{\Omega_1}}M$, and suppose that $(-3\delta,3\delta)\ni s\mapsto d_{R_1}(\gamma_{\zeta_0}(s))$ attains a local minimum at $s=0$, with $d_{R_1}(\gamma_{\zeta_0}(0))\in(\frac{1}{16},\frac{15}{16})$. The defining equation $\tilde d(\zeta,s):=\frac{\dd}{\dd s}d_{R_1}(\gamma_\zeta(s))=0$ for the minimum $s=s_{\rm min}(\zeta)$ satisfies $\pa_s\tilde d(\zeta_0,s)|_{s=s_{\rm min}(\zeta_0)}\neq 0$ due to the strict null-bicharacteristic convexity of $d_{R_1}$; by the implicit function theorem, $s_{\rm min}(\zeta)$ is therefore a smooth function of $\zeta$ near $\gamma_{\zeta_0}((-3\delta,3\delta))$. Since $s_{\rm min}(\gamma_\zeta(s))=s_{\rm min}(\zeta)-s$, the level set $s_{\rm min}=0$ is a smooth hypersurface in $\pa\Sigma^+$ transversal to $\gamma_{\zeta_0}$.

  Note that for $\zeta$ near $\gamma_{\zeta_0}([-2\delta,2\delta])$, we have
  \begin{equation}
  \label{EqDFnChiC11}
    \chi_{R_1}(\gamma_\zeta(z))=
      \begin{cases}
        f\bigl(d_{R_1}(\gamma_\zeta(s_{\rm min}(\zeta)))\bigr), & s_{\rm min}(\zeta)\leq z, \\
        f(d_{R_1}(\gamma_\zeta(z))), & s_{\rm min}(\zeta)>z.
      \end{cases}
  \end{equation}
  In particular, while this function is smooth for $s\neq s_{\rm min}(\zeta)$, it is only $\cC^{1,1}$ at $s=s_{\rm min}(\zeta)$.\footnote{As an illustrative example, the function $\chi_{R_1}(s)=\min_{s'\geq s} s'{}^2$, explicitly given by $\chi_{R_1}(s)=s_+^2$, is $\cC^{1,1}$ but no better.}

  \pfstep{Partial smoothing of $\chi_{R_1}$.} Fix $\phi\in\CI((-\delta,\delta);[0,\infty))$ with $\int\phi(z)\,\dd z=1$. For $\zeta\in\pa\Sigma^+\cap\Se^*_{\bar\Omega}M$, we now set
  \[
    \tilde\chi(\zeta) := \int_{-\delta}^\delta \phi(z)\chi_{R_1}(\gamma_\zeta(z))\,\dd z.
  \]

  In view of $\phi\geq 0$, and since $\chi_{R_1}(\gamma_\zeta(s))$ is increasing with $s$, the function $\tilde\chi(\gamma_\zeta(s))$ is an increasing smooth function of $s$. Furthermore, $\tilde\chi=0$ near $\pa\cR_{\rm in}^+$ due to the non-refocusing assumption. Due to $\int\phi(z)\,\dd z=1$ and since integral curves of $\sfH_{G_\eop}$ starting at a point with $d_{R_1}<\frac18$ remain in the set $d_{R_1}\leq\frac14$ for affine parameters in $(-\delta,\delta)$, we have $\tilde\chi=1$ near $\pa\cR_{\rm out}^+$.

  We proceed to analyze the regularity properties of $\chi$ on $\pa\Sigma^+\cap\Se^*_{\bar\Omega}M$. Consider $\zeta_0\in\pa\Sigma^+\cap\Se^*_{\bar\Omega}M$. We consider four (overlapping) cases. \emph{First}, suppose that there exists $s_-\leq 0$ so that $d_{R_1}(\gamma_{\zeta_0}(s_-))<\frac18$; then $\min_{s\leq z}d_{R_1}(\gamma_\zeta(s))<\frac14$ for $z\in(-\delta,\delta)$ and $\zeta=\zeta_0$, and thus for nearby $\zeta$, so $\tilde\chi(\zeta)=1$ for $\zeta$ near $\zeta_0$. \emph{Second}, if $d_{R_1}(\gamma_{\zeta_0}(s))>\frac78$ for all $s\leq 0$, then $d_{R_1}(\gamma_\zeta(s))\geq\frac34$ for $s\leq\delta$ and $\zeta$ near $\zeta_0$; thus $\tilde\chi(\zeta)=0$ for such $\zeta$. \emph{Third}, suppose the minimum of $d_{R_1}(\gamma_{\zeta_0}(s))$, $s\leq 0$, is attained at $s=s_-(\zeta_0)$, and $d_{R_1}(\gamma_{\zeta_0}(s_-(\zeta_0)))\in(\frac{1}{16},\frac{15}{16})$; suppose moreover that $s_-(\zeta_0)<-\delta$. If $\gamma_{\zeta_0}(s_-(\zeta_0))\in\Se^*_{\Omega_2}M$ (i.e.\ it does \emph{not} lie over an initial hypersurface of $\Omega_2$), then by an implicit function theorem argument as above, $s_-(\zeta)$ depends smoothly on $\zeta$ near $\zeta_0$, and in particular satisfies $s_-(\zeta)<-\delta$; therefore $\tilde\chi(\zeta)=f(d_{R_1}(\gamma_\zeta(s_-(\zeta))))$ is smooth for $\zeta$ near $\zeta_0$. We defer the study of the exceptional set
  \begin{equation}
  \label{EqDFnExc}
    E := \bigl\{ \zeta\in\pa\Sigma^+\cap\Se^*_{\bar\Omega}M \colon \gamma_\zeta(s_-(\zeta))\in\Se^*_{\pa\ol{\Omega_2}}M,\ d_{R_1}(\gamma_{\zeta}(s_-(\zeta)))\in\bigl(\tfrac{1}{16},\tfrac{15}{16}\bigr) \bigr\}
  \end{equation}
  to the last step of the construction. (At $E$, the function $\tilde\chi$ may fail to be smooth: $s_-(\zeta)$ may only be Lipschitz at $E$ since small modifications of $\zeta\in E$ to a nearby $\zeta'$ may lead to $\gamma_{\zeta'}(s_-(\zeta'))$ becoming an interior point, whereas other modifications remain in $E$.)

  The \emph{fourth} and final case is that the minimum of $d_{R_1}(\gamma_{\zeta_0}(s))$ is attained at $s=s_-(\zeta_0)\in[-\delta,0]$ and lies in the interval $(\frac{1}{16},\frac{15}{16})$. Denote by $s_{\rm min}(\zeta_0)$ the value of $s\in[-2\delta,2\delta]$ at which $d_{R_1}(\gamma_{\zeta_0}(s))$ is minimal. If $s_{\rm min}(\zeta_0)>\delta$, then $\min_{s\leq z}d_{R_1}(\gamma_{\zeta_0}(s))=d_{R_1}(\gamma_{\zeta_0}(z))$, so $\tilde\chi(\zeta)=\int \phi(z)f(d_{R_1}(\gamma_{\zeta}(z)))\,\dd z$ for $\zeta=\zeta_0$ and also nearby; this is smooth in $\zeta$. If, on the other hand, $s_{\rm min}(\zeta_0)\in[-\delta,\delta]$, then the function $\zeta\mapsto s_{\rm min}(\zeta)\in(-\frac52\delta,\frac52\delta)$ is well-defined and smooth for $\zeta$ in a sufficiently small neighborhood of $\gamma_{\zeta_0}([-\delta,\delta])$, and $\frac{\dd}{\dd s}s_{\rm min}(\gamma_\zeta(s))=-1$. We parameterize points $\zeta$ in such a neighborhood by means of the value $s_{\rm min}(\zeta)$ and the unique point $\zeta^\perp$ along $\gamma_\zeta$ lying on the level set $\{s_{\rm min}=s_{\rm min}(\zeta_0)\}$ (which is transversal to $\gamma_{\zeta_0}((-\frac32\delta,\frac32\delta))$). Define for $(\zeta^\perp,s_{\rm min})$ near $(\gamma_{\zeta_0}(s_{\rm min}(\zeta_0)),s_{\rm min}(\zeta_0))$ (i.e.\ for $\zeta$ near $\zeta_0$) and for $z\in(-\delta,\delta)$ the continuous function
  \[
    q(\zeta^\perp,s_{\rm min},z) := \chi_{R_1}(\gamma_{(\zeta^\perp,s_{\rm min})}(z)) = 
      \begin{cases} 
        f\bigl(d_{R_1}(\gamma_{(\zeta^\perp,s_{\rm min})}(s_{\rm min}))\bigr), & s_{\rm min}\leq z, \\
        f\bigl(d_{R_1}(\gamma_{(\zeta^\perp,s_{\rm min})}(z))\bigr), & s_{\rm min}>z;
      \end{cases}
  \]
  cf.\ \eqref{EqDFnChiC11}. Since $d_{R_1}<\frac{15}{16}+\frac{1}{64\delta}\cdot \frac52\delta<1$ for all arguments at which it is evaluated here, the function $q$ is smooth away from the `diagonal' $s_{\rm min}=z$. Moreover, the derivative of $q$ along any finite power of $\pa_{s_{\rm min}}+\pa_z$ remains bounded as well; therefore,
  \[
    \tilde\chi(\zeta^\perp,s_{\rm min}) = \int_{-\delta}^\delta q(\zeta^\perp,s_{\rm min},z)\phi(z)\,\dd z
  \]
  is smooth, as one can write
  \[
    \pa_{s_{\rm min}}\tilde\chi=\int \phi(z)(\pa_{s_{\rm min}}+\pa_z)q(\zeta^\perp,s_{\rm min},z)\,\dd z - \int \phi'(z)q(\zeta^\perp,s_{\rm min},z)\,\dd z,
  \]
  with both integrals defining continuous functions; similarly for higher derivatives. Smoothness in $\zeta^\perp$ is clear. (This can be viewed as a simple instance of \cite[Theorem~2.5.14]{HormanderFIO1}.)

  \pfstep{Cutting out the exceptional set.} We have shown that $\tilde\chi\in\CI(\pa\Sigma^+\cap\Se^*_{\bar\Omega}M\setminus E)$. Let $E_{\rm ini}=\{\gamma_\zeta(s_-(\zeta))\colon\zeta\in E\}\subset E$ in the notation used in~\eqref{EqDFnExc}. By definition of $E$, this set is disjoint from a neighborhood of $\pa\cR_{\rm out}^+$; but since $E_{\rm ini}\subset U_{\rm out}$, integral curves $\gamma_{\zeta_{\rm ini}}(s)$, $s\geq 0$, starting at points $\zeta_{\rm ini}\in E_{\rm ini}$ do not enter $U_{\rm in}$. Since $\sfH_{G_\eop}$ does not have any critical points in $\pa\Sigma^+\cap\Se^*_{\ol{\Omega_2}}M\setminus(\cR_{\rm out}^+\cup\cR_{\rm in}^+)$, there exists $\bar s<\infty$ so that $\gamma_{\zeta_{\rm ini}}(s)$ lies on a final boundary hypersurface of $\ol{\Omega_2}$ for some $s\leq\bar s$. Therefore, if $U_{\rm ini}\subset\pa\Sigma^+\cap\Se^*_{\pa\ol{\Omega_2}}M$ is a sufficiently small open neighborhood of $E_{\rm ini}$, we can use $(\zeta_{\rm ini},s)$ lying in an open subset of $U_{\rm ini}\times(0,\bar s)$ as smooth coordinates on an open neighborhood $U$ of $E\cap\Se^*_{\Omega_1}M$ inside of $\pa\Sigma^+\cap\Se^*_{\Omega_1}M$. Let $\tilde\psi\in\CIc(U_{\rm ini})$ be $1$ near $E_{\rm ini}$; then the function $\psi\in\CI(\pa\Sigma^+\cap\Se^*_{\bar\Omega})$ defined by $\psi(\zeta_{\rm ini},s):=1-\tilde\psi(\zeta_{\rm ini})$ on $U$ and $\psi=1$ outside of $U$ is smooth, invariant under the $\sfH_{G_\eop}$-flow over $\pa\Sigma^+\cap\Se^*_{\bar\Omega}M$, and therefore
  \[
    \chi := \psi\tilde\chi \in \CI(\pa\Sigma^+\cap \Se^*_{\bar\Omega}M)
  \]
  satisfies all requirements. This completes the construction.
\end{proof}

\section{Solvability and uniqueness theory}
\label{SSU}

Let $(M,g)$ denote a spacetime with a timelike curve $\cC=\pa M/{\sim}$ of cone points (Definition~\ref{DefDSpacetime}); write $r\in\CI(M)$ for a defining function of $\pa M$. Let $\cE\to M$ be a smooth complex vector bundle. We consider (principally scalar) wave operators
\begin{equation}
\label{EqSUOpMem}
  P \in r^{-2}\Diffe^2(M;\cE),\qquad \sigma^2(P)(z,\zeta)=g(z)^{-1}(\zeta,\zeta)\otimes\Id_\cE;
\end{equation}
here $\sigma^2(P)$ is the principal symbol and $z\in M^\circ$, $\zeta\in T_z^*M^\circ$; equivalently, $\sigmae^2(r^2 P)=G_\eop$ where $G_\eop(\zeta)=g_\eop^{-1}(\zeta,\zeta)$, $g_\eop=r^2 g$, is the (edge) dual metric function.

In the collar neighborhood~\eqref{EqDCollar} of $\pa M$, and fixing a bundle isomorphism of $\cE$ over a neighborhood of $r=0$ with the pullback of $\cE|_{\pa M}$ along $(t,r,\omega)\mapsto(t,\omega)$, every operator $P$ satisfying~\eqref{EqSUOpMem} can be written as
\begin{equation}
\label{EqSUOp}
\begin{split}
  P &= -D_t^2 + D_r^2 - \frac{n-1+b(t,\omega)}{r}i D_r + r^{-2}\Delta_{h(t)} \\
    &\quad\qquad + a(t,\omega)r^{-1}i D_t + r^{-2}W(t,\omega;D_\omega) + r^{-2}V(t,\omega) + \tilde P
\end{split}
\end{equation}
where $a,b,V\in\CI(\pa M;\End(\cE))$, and in local coordinates on $\Sph^{n-1}$ and trivializations of $E|_{\pa M}$, the operator $W$ takes the form $W(t,\omega;D_\omega)=\sum W^j(t,\omega)D_{\omega^j}$ where $W^j\in\CI(\pa M;\End(\cE))$, $j=1,\ldots,n-1$. Furthermore, in case $\cE$ is not the trivial bundle, we write $\Delta_{h(t)}$ for any principally scalar operator on $\Sph^{n-1}$ with principal symbol equal to the dual metric function of $h(t)$; different choices can be absorbed into changes of $W,V$. Examples of such operators include $P=\Box_g$ on functions\footnote{In this case, one has $a=b=W=V=0$ due to the chosen normalization of $b$ here.} or tensors; see~\S\ref{SEx}.

Unless specified otherwise, we use the metric volume density $|\dd g|$ and any fixed smooth positive definite fiber inner product\footnote{Different choices lead to the same Sobolev spaces of distributions with support in a fixed compact subset of $M$, up to equivalence of norms.} on $\cE$ to define $L^2$ and edge Sobolev spaces.

We define the normal operators of $P$ in~\S\ref{SsSUN}. In~\S\ref{SsSUPr} we prove microlocal propagation results near the radial sets over $\pa M$. Following the edge-local solvability of $P u=f$ established in~\S\ref{SsSULoc} and the inversion of the reduced normal operator in~\S\ref{SsSUI}, we assemble all ingredients in~\S\ref{SsSUe} to establish the central result of the paper, Theorem~\ref{ThmSUeNonrf}. Its improvement to higher b-regularity is given in~\S\ref{SsSUb}, and applications to initial value problems are presented in~\S\ref{SsSUIVP}.

\subsection{Normal operators and spectral data}
\label{SsSUN}

For $t_0\in I$, we define by
\begin{equation}
\label{EqSUNe}
\begin{split}
  N_{\eop,t_0}(r^2 P) &:= -(r' D_{t'})^2 + r'{}^2\Bigl(D_{r'}^2 - \frac{n-1+b(t_0,\omega)}{r'}i D_{r'}\Bigr) + \Delta_{h(t_0)} \\
    &\qquad + a(t_0,\omega)r' i D_{t'} + W(t_0,\omega;D_\omega) + V(t_0,\omega) \\
      &\in \Diff_{\eop,\rm I}^2({}^+N\phi^{-1}(t_0);\cE_{t_0})
\end{split}
\end{equation}
the edge normal operator of $r^2 P$ at the fiber $\phi^{-1}(t_0)\subset\pa M$; we recall here that ${}^+N\phi^{-1}(t_0)=\R_{t'}\times[0,\infty)_{r'}\times\Sph^{n-1}_\omega$ where $t'=\dd t,r'=\dd r$, and we write $\cE_{t_0}$ for the pullback of $\cE|_{\phi^{-1}(t_0)}$ (which is a vector bundle over the zero section of ${}^+N\phi^{-1}(t_0)\to\phi^{-1}(t_0)$) along the map $(t',r',\omega)\mapsto(0,0,\omega)$. The model of $P$ at $\phi^{-1}(t_0)$ is then
\[
  N_{\eop,t_0}(P) := r'{}^{-2}N_{\eop,t_0}(r^2 P).
\]
The spectral family of $N_{\eop,t_0}(r^2 P)$, formally obtained by acting on sections of $\cE_{t_0}$ of the form $e^{-i\sigma t'}u(r',\omega)$, is denoted
\begin{equation}
\label{EqSUNeFam}
\begin{split}
  N_{\eop,t_0}(r^2 P,\sigma) &:= -(r'\sigma)^2 + r'{}^2\Bigl(D_{r'}^2 - \frac{n-1+b(t_0,\omega)}{r'}i D_{r'}\Bigr) + \Delta_{h(t_0)} \\
    &\qquad - i a(t_0,\omega)r'\sigma + W(t_0,\omega;D_\omega) + V(t_0,\omega) \\
    &\in \Diff^2([0,\infty)_{r'}\times\Sph^{n-1};\cE_{t_0}).
\end{split}
\end{equation}
Exploiting the invariance of this operator under scaling $(r',\sigma)\mapsto(\lambda r',\sigma/\lambda)$ for $\lambda>0$, specifically for $\lambda=|\sigma|$, we introduce $\hat r:=r'|\sigma|$ and $\hat\sigma=\frac{\sigma}{|\sigma|}$; this gives rise to the \emph{reduced normal operator}
\begin{equation}
\label{EqSUNeFamRed}
\begin{split}
  \hat N_{\eop,t_0}(r^2 P,\hat\sigma) &= -(\hat r\hat\sigma)^2 + \hat r^2\Bigl(D_{\hat r}^2-\frac{n-1+b(t_0,\omega)}{\hat r}i D_{\hat r}\Bigr) + \Delta_{h(t_0)} \\
    &\qquad - i a(t_0,\omega)\hat r\hat\sigma + W(t_0,\omega;D_\omega) + V(t_0,\omega) \\
    &\in (1+\hat r)^2 \Diff_{\bop,\scop}^2(\hat X;\cE_{t_0}),\qquad \hat X:=[0,\infty]_{\hat r}\times\Sph^{n-1}.
\end{split}
\end{equation}
Conversely, as in~\eqref{EqEScale1}--\eqref{EqEScale2}, we have
\begin{equation}
\label{EqSUNRedScaleOp}
  N_{\eop,t_0}(r^2 P,\sigma) = (\hat M_{|\sigma|})_* \hat N_{\eop,t_0}\Bigl(r^2 P,\frac{\sigma}{|\sigma|}\Bigr),\qquad
  \hat M_\lambda \colon (\hat r,\omega) \mapsto (\hat r/\lambda,\omega),
\end{equation}
We further set
\begin{equation}
\label{EqSUNRedResc}
  N_{\eop,t_0}(P,\sigma) := r'{}^{-2}N_{\eop,t_0}(r^2 P,\sigma),\qquad
  \hat N_{\eop,t_0}(P,\hat\sigma) := \hat r^{-2}\hat N_{\eop,t_0}(r^2 P,\hat\sigma).
\end{equation}

The b-normal operator of $r^2 P$ is a family (in $t_0$) of dilation-invariant operators\footnote{The usual definition of the b-normal operator of a b-differential operator such as $r^2 P$ is that it is an $r'$-dilation-invariant operator on ${}^+N\pa M=I_t\times[0,\infty)_{r'}\times\Sph^{n-1}$; since $r^2 P$ is an edge operator, this does not involve any $t$-derivatives though.} on $[0,\infty)_{r'}\times\Sph^{n-1}$,
\begin{equation}
\label{EqSUNb}
\begin{split}
  N_{\bop,t_0}(r^2 P) &= r'{}^2\Bigl(D_{r'}^2 - \frac{n-1+b(t_0,\omega)}{r'}i D_{r'}\Bigr) + \Delta_{h(t_0)} + W(t_0,\omega;D_\omega) + V(t_0,\omega) \\
    &\in \Diff_{\bop,\rm I}^2([0,\infty)_{r'}\times\Sph^{n-1};\cE_{t_0});
\end{split}
\end{equation}
it is thus also equal to $N_{\eop,t_0}(r^2 P,0)$. Exploiting its dilation-invariance, we formally pass to the Mellin transform in $r'$, i.e.\ formally replacing $r'\pa_{r'}$ by multiplication by $\xi\in\C$, obtaining
\begin{equation}
\label{EqSUNbMellin}
  N_{\bop,t_0}(r^2 P,\xi) = \Delta_{h(t_0)} - \xi^2 - (n-2+b(t_0,\omega))\xi + W(t_0,\omega;D_\omega) + V(t_0,\omega) \in \Diff^2(\Sph^{n-1};\cE_{t_0}).
\end{equation}
This is a holomorphic family of elliptic operators, and it is invertible for bounded $|\Re\xi|$ when $|\Im\xi|$ is sufficiently large; the inverse (as a family of operators $L^2(\Sph^{n-1};\cE_{t_0})\to H^2(\Sph^{n-1};\cE_{t_0})$, say) is thus meromorphic. The set of its poles in any precompact open subset of $\C$ whose boundary does not contain any poles of $N_{\bop,t_0}(r^2 P,\xi)^{-1}$ varies continuously with $t_0$.

\begin{definition}[Non-indicial weights]
\label{DefSUNInd}
  A number $\ell\in\R$ is called a \emph{$P$-non-indicial weight at $t_0\in I$} if for all $\xi\in\C$ with $\Re\xi=\ell$ the operator $N_{\bop,t_0}(r^2 P,\xi)$ is invertible. If $\Omega\subset M$ is a spacetime domain, we call $\ell$ a \emph{$P$-non-indicial weight on $\Omega$} if it is a $P$-non-indicial weight at all $t_0\in I$ with $\phi^{-1}(t_0)\subset\bar\Omega$.
\end{definition}

Since $\bar\Omega$ is compact, the set of $P$-non-indicial weights is open (though it may be empty) in view of the observations following~\eqref{EqSUNbMellin}. In many applications, the operator $N_{\eop,t_0}(r^2 P)$ is $t_0$-independent, and thus so are the operators~\eqref{EqSUNeFam}, \eqref{EqSUNeFamRed}, \eqref{EqSUNb}, \eqref{EqSUNbMellin}; in this case, the set of $P$-non-indicial weights is the complement of a discrete subset of $\R$.

The poles of $N_{\bop,t_0}(r^2 P,\xi)^{-1}$ are related to weights and asymptotic expansions at $\hat r=0$ or $r=0$ for solutions of equations involving $\hat N_{\eop,t_0}(r^2 P,\hat\sigma)$ or $P$ itself. We also need to capture the decay rates of outgoing/incoming solutions of $\hat N_{\eop,t_0}(r^2 P,\hat\sigma)$ for $\hat\sigma=\pm 1$ at $\hat r=\infty$, as they will be related to scattering decay and edge regularity orders. As motivation, we compute that the action of the $\hat r$-derivatives of $\hat N_{\eop,t_0}(P,\hat\sigma)$, i.e.\ $-1-\pa_{\hat r}^2-\frac{n-1+b}{\hat r}\pa_{\hat r}-\frac{i a\hat\sigma}{\hat r}$, on $e^{\pm i\hat\sigma\hat r}\hat r^{-\alpha}$ (with `$+$' for outgoing and `$-$' for ingoing spherical waves) produces $e^{\pm i\hat\sigma \hat r}(\pm i\hat\sigma C_\pm(t_0,\omega)\hat r^{-\alpha-1}+\cO(\hat r^{-\alpha-2}))$ as $\hat r\to\infty$ where $C_\pm=2\alpha-(n-1+b\pm a)$ vanishes for $\alpha:=\frac{n-1}{2}+\frac{b\pm a}{2}$.

\begin{definition}[Threshold quantities]
\label{DefSUNThr}
  For $t_0\in I$, we set\footnote{These quantities are related to those in \cite[Definition~4.7]{HintzConicProp}, but unlike in the reference we do not require the choice of a fiber inner product here; see Lemma~\ref{LemmaSUPrThr} below.}
  \begin{align*}
    \vartheta_{\rm out}(t_0) &:= \frac12\min_{\omega\in\Sph^{n-1}} \min\bigl(\Re\spec(b(t_0,\omega) + a(t_0,\omega))\bigr), \\
    \vartheta_{\rm in}(t_0) &:=  \frac12\max_{\omega\in\Sph^{n-1}} \max\bigl(\Re\spec(b(t_0,\omega) - a(t_0,\omega))\bigr).
  \end{align*}
  For a spacetime domain $\Omega\subset M$, set $T=\{t_0\in I\colon\phi^{-1}(t_0)\subset\bar\Omega\}$ and
  \begin{equation}
  \label{EqSUNThrDom}
    \vartheta_{\rm out}(\Omega) := \vartheta_{\rm out}(T) := \min_{t_0\in T} \vartheta_{\rm out}(t_0),\qquad
    \vartheta_{\rm in}(\Omega) := \vartheta_{\rm in}(T) := \max_{t_0\in T} \vartheta_{\rm in}(t_0).
  \end{equation}
\end{definition}

\subsection{Microlocal edge propagation estimate}
\label{SsSUPr}

Near $\pa M$, we work in the base coordinates~\eqref{EqDCollar} and the fiber coordinates~\eqref{EqDECovec} on $\Te^*M$; we set $\sfH_{G_\eop}=|\sigma|^{-1}H_{G_\eop}$. We recall the radial sets $\cR_{\rm in}^\pm$, $\cR_{\rm out}^\pm$ from~\eqref{EqDERadIn}--\eqref{EqDERadOut}.

\begin{lemma}[Threshold quantities and symbols of imaginary parts]
\label{LemmaSUPrThr}
  Let $\eps>0$. Then there exists a smooth positive definite fiber inner product on $\cE$ so that
  \begin{subequations}
  \begin{equation}
  \label{EqSUPrThrOut}
    \sigmae^1\Bigl(r^2\frac{P-P^*}{2 i\sigma}\Bigr)\Big|_{\cR_{\rm out}^\pm\cap t^{-1}(t_0)} < -2 \vartheta_{\rm out}(t_0) + \eps
  \end{equation}
  for all $t_0\in I$ as self-adjoint bundle endomorphisms of (the pullback to $\Te^*_{\pa M}M$ of) of $\cE$, where we use the metric volume density $|\dd g|$ and the fiber inner product on $\cE$ to define $P^*$. Similarly, there exists a smooth positive definite fiber inner product on $\cE$ so that
  \begin{equation}
  \label{EqSUPrThrIn}
    \sigmae^1\Bigl(r^2\frac{P-P^*}{2 i\sigma}\Bigr)\Big|_{\cR_{\rm in}^\pm\cap t^{-1}(t_0)} < 2 \vartheta_{\rm in}(t_0) + \eps.
  \end{equation}
  \end{subequations}
\end{lemma}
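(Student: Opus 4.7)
The plan is to compute the radial-set symbol $\sigmae^1(r^2\frac{P-P^*}{2i\sigma})$ directly from the local form \eqref{EqSUOp}, initially for a reference fiber inner product $H_0$, and then to exploit the freedom of replacing $H_0$ by $H = A^*H_0 A$ (with $A$ a smooth invertible bundle automorphism) to make the resulting self-adjoint endomorphism have spectrum within $\eps$ of the real parts of the spectra of the natural combinations $-(b+a)$, respectively $(b-a)$, at $\cR_{\rm out}^\pm$, respectively $\cR_{\rm in}^\pm$.

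For the symbol computation, I would rewrite
\[
  r^2 P = -(rD_t)^2 + (rD_r)^2 - i(n-2+b)(rD_r) + \Delta_{h(t)} + ia(rD_t) + W + V + r^2\tilde P \in \Diffe^2(M;\cE).
\]
Since $\sigmae^2(r^2 P) = G_\eop$ is real and scalar, $\frac{r^2 P - (r^2 P)^*}{2i} \in \Diffe^1(M;\cE)$. At the radial sets, where $\xi = \pm\sigma$, $\eta = 0$, $r = 0$, only the terms involving $(rD_r)^2$, $(rD_r)$, and $(rD_t)$ can contribute: the symbols of $\Delta_{h(t)}$ and $W = W^j D_{\omega^j}$ are supported in $\eta\neq 0$, $V$ is of order zero, and all edge coefficients of $r^2\tilde P$ vanish at $r=0$. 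A direct calculation using $(rD_r)^{*_{H_0}} = rD_r - in$ (from the $r^{n-1}$ factor in $|\dd g|$) and $(rD_t)^{*_{H_0}} \in rD_t + r\cdot\CI$ then yields the radial-set symbol as a self-adjoint-with-respect-to-$H_0$ endomorphism of the form $\tfrac12(T + T^{*_{H_0}})$ for a matrix-valued function $T(t_0, \omega) \in \End(\cE|_{\pa M})$ which, by tracking signs against the decay rates $\alpha_\pm = \tfrac{n-1}{2} + \tfrac{b\pm a}{2}$ identified before Definition~\ref{DefSUNThr}, equals $-(b+a)$ at $\cR_{\rm out}^\pm$ and $b-a$ at $\cR_{\rm in}^\pm$.

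Upon replacing $H_0$ by $H$, matrix-valued multiplication coefficients transform by $f^{*_H} = H^{-1}f^{*_{H_0}}H$; provided $H$ is chosen independent of $r$ near $\pa M$, the commutators $[rD_r,H]$ and $[rD_t,H]$ vanish at $r=0$ up to terms vanishing there, so the radial-set symbol becomes $\tfrac12(T + H^{-1}T^{*_{H_0}}H)$. The key algebraic input is that for every complex matrix $T$ and every $\eps > 0$ there exists a positive definite Hermitian $H$ with
\[
  \max\spec\tfrac12(T + H^{-1}T^{*_{H_0}}H) < \max\Re\spec(T) + \eps.
\]
This follows from Schur triangularization $T = U(D + N)U^*$, with $U$ unitary, $D$ diagonal with $\spec(T)$, and $N$ strictly upper triangular: conjugation by $S_\mu = U\diag(1,\mu,\ldots,\mu^{N-1})U^*$ with $\mu \gg 1$ suppresses $N$, and $H = S_\mu^{-*}S_\mu^{-1}$ then renders the Hermitian symmetrization within $\eps$ of $UDU^*$, whose spectrum is $\Re\spec(T)$.

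Finally, to promote this pointwise construction to a smooth fiber inner product over the compact parameter set $\{t_0 : \phi^{-1}(t_0)\subset\bar\Omega\}\times\Sph^{n-1}$, I would choose local Schur decompositions via smoothly varying unitary frames (a local smooth triangularization exists even when eigenvalues cross globally) and glue via a partition of unity: convex combinations of positive definite Hermitian matrices remain positive definite, and the spectral bound is preserved up to a loss absorbed into $\eps$. Applied with $T = -(b+a)$ and $T = b-a$, Definition~\ref{DefSUNThr} yields the two stated inequalities. The main obstacle I anticipate is the symbol computation of the second paragraph: the interplay between density corrections, Hermitian adjoints of matrix-valued coefficients, and the opposite signs of $\xi/\sigma$ at $\cR_{\rm out}^\pm$ vs.\ $\cR_{\rm in}^\pm$ must be bookkept carefully so that the natural combinations $b+a$ and $b-a$ emerge with the correct signs, rather than some interchanged or shifted variant.
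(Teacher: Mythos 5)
Your proposal is correct and follows essentially the same route as the paper: only the $b(rD_r)$ and $a(rD_t)$ terms contribute to $\sigmae^1(r^2\frac{P-P^*}{2i\sigma})$ at the radial sets (the symbols of $\Delta_{h}$ and $W$ vanish at $\eta=0$, $V$ is order zero, and $r^2\tilde P$ drops out at $r=0$), yielding $-\Re_H(a+b)$ at $\cR_{\rm out}^\pm$ and $\Re_H(b-a)$ at $\cR_{\rm in}^\pm$, after which one adjusts the fiber inner product to bring the Hermitian part's spectrum within $\eps$ of $\Re\spec$. The only difference is that the paper cites this last linear-algebra step from \cite[Proposition~B.1]{HintzNonstat}, whereas you reprove it via Schur triangularization and a partition-of-unity globalization, which is a correct self-contained substitute.
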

\begin{proof}
  The symbol of $r^2\frac{P-P^*}{2 i\sigma}$ over $\phi^{-1}(t_0)$ can be computed entirely in terms of $N_{\eop,t_0}(r^2 P)$, cf.\ \eqref{EqSUNe}. Since $\Delta_{h(t_0)}-\Delta_{h(t_0)}^*$ is a first order differential operator on $\Sph^{n-1}$, its principal symbol vanishes at the zero section of $T^*\Sph^{n-1}$, and thus at $\cR_{\rm out}^\pm$ and $\cR_{\rm in}^\pm$. The same argument applies to $W-W^*$; and also $V$ does not contribute to~\eqref{EqSUPrThrOut}--\eqref{EqSUPrThrIn}, being a zeroth order operator. Since $-D_{t'}^2+D_{r'}^2-\frac{n-1}{r'}i D_{r'}$ is symmetric with respect to the volume density $|r'{}^{n-1}\,\dd t'\,\dd r'\,\dd h(t_0)|$, we compute at a point $(t,r,\omega;\sigma,\xi,\eta)$ with $r=0$, $\eta=0$,
  \[
    \sigmae^1\Bigl(r^2\frac{P-P^*}{2 i\sigma}\Bigr) = -\frac{b(t_0,\omega)+b(t_0,\omega)^*}{2}\frac{\xi}{\sigma} - \frac{a(t_0,\omega)+a(t_0,\omega)^*}{2}.
  \]
  At $\cR_{\rm out}^\pm$, resp.\ $\cR_{\rm in}^\pm$, this equals $-\Re(a+b)$, resp.\ $\Re(b-a)$. By \cite[Proposition~B.1]{HintzNonstat}, there exists a positive definite inner product on the pullback of $\cE$ along $\Se^*M\to M$ so that $-\Re(a+b)$, at each point of $\pa\cR_{\rm out}^\pm$, lies within distance $\eps$ of the convex hull of the real part of $\spec(a+b)$. Since the fibers of $\pa\cR_{\rm out}^\pm\to M$ are points, this inner product can be taken to be a pullback of an inner product on $M$. By definition of $\vartheta_{\rm out}(t_0)$, this implies~\eqref{EqSUPrThrOut}. The same arguments apply to~\eqref{EqSUPrThrIn}.
\end{proof}

\begin{prop}[Edge propagation near the incoming radial set]
\label{PropSUPrIn}
  Let $t_-<t_+$ and $-1<\tau_1<\tau_2<\tau_3$; in the notation~\eqref{EqSUNThrDom}, set $\vartheta_{\rm in}:=\vartheta_{\rm in}([t_-,t_+])$. Let $\psi\in\CI(\R)$ be identically $0$ on $(-\infty,0]$, positive on $(0,1)$, and equal to $1$ on $[1,\infty)$. Write $\psi_1,\psi_2\in\CIc([0,\infty);[0,1])$ for functions which equal $1$ near $0$, with $\psi_2=1$ near $\supp\psi_1$. Put
  \begin{equation}
  \label{EqUPrInCutoff}
    \chi_j(t,r) = \psi\Bigl[\Bigl(\tau_{j+1}-\frac{t-t_+}{r}\Bigr)/(\tau_{j+1}-\tau_j)\Bigr]\psi_j(r),\qquad j=1,2,
  \end{equation}
  For any open neighborhood $U$ of $\pa\cR_{\rm in}^\pm\cap t^{-1}([t_-,t_+])$, one can choose $\psi_1,\psi_2$, resp.\ operators $B,G,E\in\Psie^0(M)$ with supports, resp.\ operator wave front sets contained in (the projection to $M$ of) $U$ so that $B$ is elliptic at $U\cap\pa\cR_{\rm in}^\pm$, furthermore $\pm\sfH_{G_\eop}r<0$ on $\WFe'(E)$, and so that for all $s,s_0,\ell\in\R$ with $s>s_0>-\frac12+\ell+\vartheta_{\rm in}$ there exists a constant $C>0$ so that
  \begin{equation}
  \label{EqSUPrInEst}
    \| B \chi_1 u \|_{\He^{s,\ell}} \leq C\Bigl( \| G \chi_2 P u \|_{\He^{s-1,\ell-2}} + \| E \chi_2 u \|_{\He^{s,\ell}} + \| \chi_2 u \|_{\He^{s_0,\ell}} \Bigr)
  \end{equation}
  for all $u$ which vanish on $\supp\chi_2\cap\{t<t_-\}$. This estimate holds in the strong sense that if for some $u$ (satisfying this support condition) the right hand side is finite, then so is the left hand side, and the estimate holds.
\end{prop}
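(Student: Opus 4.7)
My plan is to run a positive-commutator argument of radial-point-at-a-sink type in Mazzeo's edge pseudodifferential calculus, with the key new feature that the commutant must be edge-microlocally localized to the wedge-shaped region cut out by $\chi_1$. The preliminary observation that makes this approach viable is that although $\tau := (t-t_+)/r$ is singular at $\pa M$, it is \emph{edge-smooth}: $r\pa_t\tau = 1$, $r\pa_r\tau = -\tau$, $\pa_\omega\tau = 0$, so $V_\eop^\alpha\tau\in\CI(M)$ for all $\alpha$, and hence $\chi_1,\chi_2\in\CIb(M)$ are legitimate coefficients for edge pseudodifferential operators.

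After fixing, via Lemma~\ref{LemmaSUPrThr}, a smooth positive-definite fiber inner product on $\cE$ so that $\sigmae^1(r^2(P-P^*)/(2 i\sigma)) < 2\vartheta_{\rm in}+\epsilon_0$ at $\cR_{\rm in}^\pm\cap t^{-1}([t_-,t_+])$ (with $\epsilon_0>0$ chosen small relative to the gap $s_0-(-\tfrac12+\ell+\vartheta_{\rm in})$), I would take as commutant a self-adjoint $A=A^*\in\Psie^{2s-1}(M;\cE)$ with principal symbol
\[
  \sigmae(A) = \rho_\infty^{-(2s-1)}\,r^{-(2\ell-2)}\,\chi_1^2\,\chi_{\rm rad}^2,
\]
where $\chi_{\rm rad}$ is a nonnegative conic cutoff to a small neighborhood of $\pa\cR_{\rm in}^\pm$ within $\pa\Sigma^\pm$, arranged so that $\pm\sfH_{G_\eop}\chi_{\rm rad}^2\leq 0$ near $\pa\cR_{\rm in}^\pm$ with the remaining transition region contained in $\{\pm\sfH_{G_\eop}r<0\}\cap\WFe'(E)$; the operators $B$ and $G$ are then chosen with $B$ elliptic on $\{\chi_{\rm rad}\equiv 1\}\cap\supp\chi_1$ and $\WFe'(G)\supset\WFe'(A)$.

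The principal symbol calculation uses the linearization $\pm\sfH_{G_\eop}\rho_\infty = 2\rho_\infty$ and $\pm\sfH_{G_\eop}r = -2r$ at $\cR_{\rm in}^\pm$ (read off from the formula after~\eqref{EqDEHam}), which gives $\pm\sfH_{G_\eop}(\rho_\infty^{-(2s-1)}r^{-(2\ell-2)}) = -4(s-\ell+\tfrac12)\rho_\infty^{-(2s-1)}r^{-(2\ell-2)}$. Combined with the subprincipal contribution from the fiber inner product bounded by $4\vartheta_{\rm in}+2\epsilon_0$, the principal symbol of the appropriate combination $\pm\tfrac{1}{i}[r^2 P, A^*A]-A^*A\cdot(r^2 P-(r^2 P)^*)$ is strictly negative-definite at $\cR_{\rm in}^\pm$ precisely when $s>-\tfrac12+\ell+\vartheta_{\rm in}+\tfrac{\epsilon_0}{2}$, hence for all $s\geq s_0$ by choice of $\epsilon_0$. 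The cutoff derivatives are handled as follows: $\pm\sfH_{G_\eop}(\chi_{\rm rad}^2)$ vanishes near $\pa\cR_{\rm in}^\pm$ and its remainder contributes to the $E$-term; $\pm\sfH_{G_\eop}(\chi_1^2)$ splits as $2\chi_1\psi_1(r)\psi'(\cdot)\cdot(\mp\sfH_{G_\eop}\tau)/(\tau_2-\tau_1)+2\chi_1\psi(\cdot)\psi_1'(r)(\pm\sfH_{G_\eop}r)$, where the first piece has the \emph{favorable} (negative) sign because $\pm\sfH_{G_\eop}\tau = 2(1+\tau)+O(r)>0$ on $\supp\chi_1\cap\pa M$ (using $\tau_1>-1$) and $\psi'\geq 0$, while the second piece is supported in $\{\psi_1'(r)\neq 0\}\subset\WFe'(E)$. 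Applying sharp G\aa rding to the sign-definite principal part, Cauchy--Schwarz to bound $|2\Im\langle Au, Ar^2 Pu\rangle|$ by $\|G\chi_2 Pu\|_{\He^{s-1,\ell-2}}\|B\chi_1 u\|_{\He^{s,\ell}}$, and an absorption step yields~\eqref{EqSUPrInEst}; the ``strong sense'' is obtained by the standard regularization $A\rightsquigarrow A\Lambda_\delta^2$ with $\Lambda_\delta\in\Psie^{-1}(M)$ uniformly bounded in $\Psie^0(M)$ and tending to $\Id$ as $\delta\to 0$.

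The main obstacle is not the radial-point calculus itself, which is classical, but the accommodation of the wedge-cutoff $\chi_1$: one needs $\chi_1$ to be edge-smooth (true); the sign of $\pm\sfH_{G_\eop}\chi_1$ at $\pa M$ near $\cR_{\rm in}^\pm$ to be favorable (true precisely because $\tau_1>-1$ forces $\pm\sfH_{G_\eop}\tau>0$ there); and the residual $r$-support of $\chi_1$ to lie in a region the sink dynamics control from outside (arranged by choosing $\psi_1,\psi_2$ adapted to $\WFe'(E)$). Without the hypothesis $\tau_1>-1$, the wedge-cutoff commutator error would carry the wrong sign and the argument would fail.
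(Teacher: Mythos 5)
Your proposal follows essentially the same route as the paper: a positive commutator argument at the radial source $\cR_{\rm in}^\pm$ in the edge calculus, with the sharp wedge localizer admitted into the commutant because $\tau=(t-t_+)/r$ is edge-regular, the threshold $s>-\tfrac12+\ell+\vartheta_{\rm in}$ arising from $\pm\sfH_{G_\eop}(\rho_\infty^{-2s+1}r^{-2\ell+2})$ plus the subprincipal term normalized via Lemma~\ref{LemmaSUPrThr}, and the favorable sign of the $\tau$-cutoff derivative coming from $\pm\sfH_{G_\eop}\tau=2-2\hat\xi\tau>0$ on the transition region because $\tau>-1$ there. The weight bookkeeping ($-4(s-\ell+\tfrac12)+4\vartheta_{\rm in}+2\epsilon_0<0$) matches the paper's terms \eqref{EqSUPrInTMain} and \eqref{EqSUPrInSymb}, and your routing of the $\psi_1'(r)$ term into $\WFe'(E)$ matches \eqref{EqSUPrInTr}.

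There is, however, one genuine gap: you never localize the commutant in $t$ from below, and correspondingly you never use the hypothesis that $u$ vanishes on $\supp\chi_2\cap\{t<t_-\}$. Your $\chi_1$ equals $\psi_1(r)$ on all of $\{\tau\le\tau_1\}$, so the symbol is not properly supported toward the past; and if you instead build the past localization into $\chi_{\rm rad}$ (a cutoff near $\pa\cR_{\rm in}^\pm\cap t^{-1}([t_-,t_+])$), then the past-$t$ transition region meets $r=0$, where $\pm\sfH_{G_\eop}r=2\hat\xi r$ vanishes rather than being negative, so it cannot be absorbed into $\WFe'(E)$ as you claim for "the remaining transition region". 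Moreover, since $\pm\sfH_{G_\eop}t=2r\ge 0$, a cutoff switching on as $t$ increases past $t_-$ produces a commutator term of the \emph{unfavorable} sign (this is the paper's term \eqref{EqSUPrIntm}, a nonnegative square of order $(s,\ell-\tfrac12)$); the only way to discard it is to invoke the support condition on $u$, which is exactly why that condition appears in the statement. A secondary, more minor point: your regularization $\Lambda_\delta\in\Psie^{-1}$ only lowers the order by one, whereas the a priori control is merely $\He^{s_0,\ell}$ with $s_0$ possibly far below $s-1$; one must regularize down to order $s_0$ (and this is where the hypothesis $s_0>-\tfrac12+\ell+\vartheta_{\rm in}$ is actually used, to keep the regularizer's contribution sign-definite).
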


The condition on $\WFe'(E)$ means that $\pm\sfH_{G_\eop}$ is ingoing there. In particular, the radius function $r$, due to its null-bicharacteristic convexity (cf.\ the proof of Lemma~\ref{LemmaDEAway}), is monotonically increasing along backwards null-bicharacteristics starting at a point in $\WFe'(E)$ until they enter $r\geq r_0$ for some $r_0>0$ depending only on $t_-,t_+$. The cutoffs $\chi_j$ serve to localize sharply near the fiber $\phi^{-1}(t_+)$; note that they are not smooth or even conormal functions on $M$, though they \emph{are} bounded together with all their derivatives along \emph{edge}-vector fields. (In particular, multiplication by $\chi_j$ defines a bounded operator on all weighted edge Sobolev spaces.) Thus, $B\chi_1,G\chi_2,E\chi_2$ are edge ps.d.o.s with edge regular symbols. See Figure~\ref{FigSUPrIn}.

\begin{figure}[!ht]
\centering
\includegraphics{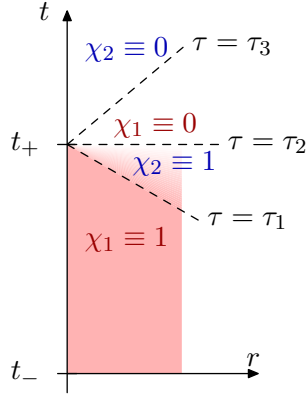}
\caption{Illustration of Proposition~\ref{PropSUPrIn}, including the sharp localization in $\frac{t-t_+}{r}$.}
\label{FigSUPrIn}
\end{figure}

Without the support assumption on $u$, one merely needs to add an additional term to the right hand side which controls $u$ in $\He^{s,\ell}$ near $\pa\cR_{\rm in}^\pm\cap\{t=t_-\}$; this follows by applying the stated estimate to a cutoff version, say $\psi(\frac{t-t_-'}{t_--t_-'})u$, of $u$, where $t_-'<t_-$.

\begin{proof}[Proof of Proposition~\usref{PropSUPrIn}]
  Except for the presence of the sharp localizers $\chi_1,\chi_2$, this is a standard radial point estimate; see \cite[Theorem~8.1]{MelroseWunschConic} and \cite[Theorem~11.1]{MelroseVasyWunschEdge} for closely related results. The sharp localization can be accommodated by working in the edge algebra with edge regular symbols. Thus, considering propagation near $\pa\cR_{\rm in}^+$ (where $\sigma>0$) and using the projective coordinates $\rho_\infty=\sigma^{-1}$, $\hat\xi=\frac{\xi}{\sigma}$, $\hat\eta=\frac{\eta}{\sigma}$ from~\eqref{EqDECoordProj}, we consider a commutant
  \begin{equation}
  \label{EqSUPrInComm}
  \begin{split}
    a &= r^{-2\ell+2} \rho_\infty^{-2 s+1}\chi_r\chi_{\hat\eta}\chi_{\hat\xi}\chi_{t_+}\chi_{t_-}, \\
    &\quad \chi_r=\chi(\digamma r),\quad
           \chi_{\hat\eta}=\chi(|\hat\eta|^2),\quad
           \chi_{\hat\xi}=\chi\bigl((\hat\xi+1)^2\bigr), \\
    &\quad
           \chi_{t_+}=\chi\Bigl(\frac{t-t_+}{r}-\tau_2\Bigr),\quad
           \chi_{t_-}=\chi(t_--t).
  \end{split}
  \end{equation}
  Here $\chi(x)=\chi_0(x/c)$ where $\chi_0\in\CI(\R;[0,1])$ equals $1$ on $(-\infty,\frac12]$, satisfies $\chi'\leq 0$ and $\sqrt\chi,\sqrt{-\chi'}\in\CI$, and equals $0$ on $[1,\infty)$; the constants $0<c<1$ and $\digamma>1$ will be specified below. We shall in particular require $c<\tau_3-\tau_2$ (so that the cutoff $\chi_{t_+}$ vanishes near $\frac{t-t_+}{r}\geq\tau_3$) and $c<\frac12$ (so that the derivatives of the cutoffs in $\hat\eta$ and $\hat\xi+1$ below have the right sign). Using~\eqref{EqDEHam}, and writing $\chi'_{\hat\eta}=\chi'(|\hat\eta|^2)$ etc., we then compute
  \begin{subequations}
  \begin{align}
    r^{-2} H_{G_\eop}a &= \rho_\infty^{-1} r^{-2}\sigma^{-1}H_{G_\eop}a \nonumber\\
    \label{EqSUPrInTMain}
      &= r^{-2\ell}\rho_\infty^{-2 s} \Bigl( (2\hat\xi+\cO(r))\bigl((-2\ell+2)-(-2 s+1)\bigr) \chi_r\chi_{\hat\eta}\chi_{\hat\xi}\chi_{t_+}\chi_{t_-} \\
    \label{EqSUPrInTr}
      &\hspace{6em} + (2\hat\xi+\cO(r))\digamma\chi'_r\chi_{\hat\eta}\chi_{\hat\xi}\chi_{t_+}\chi_{t_-} \\
    \label{EqSUPrInTeta}
      &\hspace{6em} - (4\hat\xi+\cO(r))|\hat\eta|^2\chi_r\chi'_{\hat\eta}\chi_{\hat\xi}\chi_{t_+}\chi_{t_-} \\
    \label{EqSUPrInTxi}
      &\hspace{6em} + (2(1-\hat\xi^2)+\cO(r))2(\hat\xi+1)\chi_r\chi_{\hat\eta}\chi'_{\hat\xi}\chi_{t_+}\chi_{t_-} \\
    \label{EqSUPrIntp}
      &\hspace{6em} + \Bigl(2-2\hat\xi\frac{t-t_+}{r}+\cO(r)\Bigr)\chi_r\chi_{\hat\eta}\chi_{\hat\xi}\chi'_{t_+}\chi_{t_-} \\
    \label{EqSUPrIntm}
      &\hspace{6em} - (2 r+\cO(r^2))\chi_r\chi_{\hat\eta}\chi_{\hat\xi}\chi_{t_+}\chi'_{t_-} \Bigr),
  \end{align}
  \end{subequations}
  where the $\cO(r)$ terms arise from $\sigma^{-1}\tilde H$ in~\eqref{EqDEHam}. In the positive commutator argument involving a quantization $A=A^*\in\Psie^{2 s-1,2\ell+2}(M;\cE)$ of $a$, i.e.\ the computation of the formal $L^2$-pairing
  \[
    2\Im\la P u,A u\ra = \la i(P^*A-A P)u,u\ra,
  \]
  the principal symbol of the operator on the right is $\sigmae^{2 s,2\ell}(i(P^*A-A P))$, which in view of $\sigmae^{2,2}(P)=r^{-2}G_\eop$ is equal to
  \begin{equation}
  \label{EqSUPrInSymb}
    H_{r^{-2}G_\eop}a + 2\cdot\sigmae^{1,2}\Bigl(\frac{P-P^*}{2 i}\Bigr)a = r^{-2}\Bigl(H_{G_\eop}a + 2\cdot\sigmae^1\Bigl(r^2\frac{P-P^*}{2 i\sigma}\Bigr)\rho_\infty^{-1}a + G_\eop H_{r^{-2}}a \Bigr).
  \end{equation}
  By Lemma~\ref{LemmaSUPrThr}, for any fixed $\eps>0$, we can choose a positive definite fiber inner product on $\cE$ so that at $\cR_{\rm in}^+$, the subprincipal symbol appearing in~\eqref{EqSUPrInSymb} is $<2\vartheta_{\rm in}([t_-,t_+])+\eps$, and so the sum of its contribution with that of~\eqref{EqSUPrInTMain} is $r^{-2\ell}\rho_\infty^{-2 s}b\chi_r\chi_{\hat\eta}\chi_{\hat\xi}\chi_{t_+}\chi_{t_-}$ where $b<(4\ell-4-4 s+2)+4 \vartheta_{\rm in}+2\eps$ at $\cR_{\rm in}^+$ (recalling that $\hat\xi=-1$ there); this is negative definite (as an endomorphism of $\cE$) if we fix $\eps>0$ sufficiently small, and thus $b$ equals minus the square of a smooth positive definite bundle endomorphism of $\cE$ over $\supp a$ if we moreover choose the localization parameter $c>0$ sufficiently small. The quantization of $r^{-\ell}\rho_\infty^{-s}\sqrt{-b}(\chi_r\chi_{\hat\eta}\chi_{\hat\xi}\chi_{t_-})^{1/2}$ thus provides control near $\pa\cR_{\rm in}\cap t^{-1}([t_-,t_+])$; it is the operator $B$ in~\eqref{EqSUPrInEst} (up to shifting orders), while the additional factor $\sqrt{\chi_{t_+}}$ further localizes in $\frac{t-t_+}{r}$. The term~\eqref{EqSUPrInTeta}, localizing near the flow-out of $\cR_{\rm in}^+$ over $\pa M$, is likewise the negative of a square, i.e.\ has the same sign as this main term, and can thus be dropped in the final estimate.

  The term~\eqref{EqSUPrInTr} is a nonnegative square; on its support, we have $H_{G_\eop}r<0$. This gives rise to the operator $E\chi_2$ in~\eqref{EqSUPrInEst}. The term~\eqref{EqSUPrIntm} likewise is a nonnegative square (of a symbol of order $s,\ell-\frac12$); but since we assume $u=0$ on $\supp\chi'_{t_-}$, it can be dropped (up to lower order terms captured by the final term in~\eqref{EqSUPrInEst}) in the positive commutator estimate. The term~\eqref{EqSUPrIntp} is a nonpositive square as well (i.e.\ it has the same sign as the main term) since on its support $\frac{t-t_+}{r}-\tau_2\in[c/2,c]$, so $\frac{t-t_+}{r}\geq\tau_2+\frac{c}{2}>-1$ and thus $2-2\hat\xi\frac{t-t_+}{r}$ is bounded from below; we stress again that this is an edge regular symbol (so its quantization and principal symbol are well-defined). This term can thus be dropped in the final estimate.

  We claim that the remaining terms (without the factors of $\chi_{t_+}$) are smooth multiplies of $G_\eop$, i.e.\ the action of their quantization on $u$ is controlled by $P u$. For the last term on the right in~\eqref{EqSUPrInSymb}, this is clear. Consider next the term~\eqref{EqSUPrInTxi}: on $\supp\chi_{\hat\eta}$ over $r=0$, we have $|\hat\eta|\leq\sqrt{c}$ and thus, on the characteristic set, $\hat\xi+1=-(1-|\hat\eta|^2)^{1/2}+1\leq|\hat\eta|^2\leq c$; but on $\supp\chi'_{\hat\xi}$ we have $\hat\xi+1\geq\sqrt{c/2}$, which cannot happen at the same time when $c$ is sufficiently small. For such $c$, we may choose $\digamma$ sufficiently large, thus localizing closely to $r=0$, so that $\supp\chi_{\hat\eta}\chi'_{\hat\xi}$ remains disjoint from $\Sigma^+$ on $\supp a$.

  Having analyzed~\eqref{EqSUPrInSymb}, the proof now proceeds in the standard manner by regularization and quantization of the symbolic calculation; see e.g.\ \cite[\S5.4.7]{VasyMinicourse}.
\end{proof}

\begin{rmk}[Resolution of the edge algebra]
\label{RmkSUPrSystem}
  Recalling Remark~\ref{RmkEBlowup}, one can phrase Proposition~\ref{PropSUPrIn} in a more systematic manner by working on the resolved space $[M;\phi^{-1}(t_+)]$. On this space, the cutoffs $\chi_j$ are smooth; moreover, one can quantize (weighted) edge symbols which are conormal on the pullback of $\Te^*M$ to $[M;\phi^{-1}(t_+)]$ (as discussed at the end of~\S\ref{SsEPsdo}), and one can define elliptic sets and operator wave front sets as subsets of the pullback of $\Se^*M$. The proof of Proposition~\ref{PropSUPrIn} can be rephrased as a standard positive commutator argument in this resolved pseudodifferential algebra. (For example, the operator $B\chi_1$ is an element, with smooth coefficients, of this algebra). See also Remark~\ref{RmkSULocb}.
\end{rmk}

\begin{prop}[Edge propagation near the outgoing radial set]
\label{PropSUPrOut}
  Let $t_-<t_+$ and $\tau_1<\tau_2<\tau_3<1$; in the notation~\eqref{EqSUNThrDom}, set $\vartheta_{\rm out}:=\vartheta_{\rm out}([t_-,t_+])$. Let $\psi\in\CI(\R)$ be identically $0$ on $(-\infty,0]$, positive on $(0,1)$, and equal to $1$ on $[1,\infty)$. For $\psi_j=\psi_j(r)$, $j=1,2$, define $\chi_j$ as in~\eqref{EqUPrInCutoff}. For any open neighborhood $U$ of $\pa\cR_{\rm out}^\pm\cap t^{-1}([t_-,t_+])$, one can choose $\psi_1,\psi_2$, resp.\ operators $B,G,E\in\Psie^0(M)$ with supports, resp.\ operator wave front sets contained in (the projection to $M$ of) $U$ so that $B$ is elliptic at $U\cap\pa\cR_{\rm out}^\pm$, furthermore all backwards null-bicharacteristics starting in $\WFe'(E)$ enter any fixed neighborhood of $\pa\cR_{\rm in}^\pm$, and so that for all $s,s_0,\ell\in\R$ with $s_0<s<-\frac12+\ell+\vartheta_{\rm out}$ there exists a constant $C>0$ so that
  \begin{equation}
  \label{EqSUPrOutEst}
    \| B \chi_1 u \|_{\He^{s,\ell}} \leq C\Bigl( \| G \chi_2 P u \|_{\He^{s-1,\ell-2}} + \| E \chi_2 u \|_{\He^{s,\ell}} + \| \chi_2 u \|_{\He^{s_0,\ell}} \Bigr)
  \end{equation}
  for all $u$ which vanish on $\supp\chi_2\cap\{t<t_-\}$. This estimate holds in the strong sense that if for some $u$ (satisfying this support condition) the right hand side is finite, then so is the left hand side, and the estimate holds.
\end{prop}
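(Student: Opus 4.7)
The plan is to run the positive commutator argument of Proposition~\ref{PropSUPrIn} at $\pa\cR_{\rm out}^\pm$, with two principal sign reversals: at $\cR_{\rm out}^\pm$ we have $\hat\xi=\pm 1$ (rather than $\mp 1$), and the estimate now lies \emph{below} the radial point threshold. I focus on $\cR_{\rm out}^+$; the case $\cR_{\rm out}^-$ follows by complex conjugation. Geometrically, $\cR_{\rm out}^+$ is a source for the $\sigma^{-1}H_{G_\eop}$-flow in the radial ($r$) direction but a sink in the fiber direction (the forward flow over $\pa M$ goes from $\cR_{\rm in}^+$ to $\cR_{\rm out}^+$), so backwards bicharacteristics from a small neighborhood of $\cR_{\rm out}^+$ traverse the fiber at $r=0$ back to $\cR_{\rm in}^+$; this is what the error region $E$ in~\eqref{EqSUPrOutEst} will capture.

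Concretely, I would use the commutant
\[
  a = r^{-2\ell+2}\rho_\infty^{-2s+1}\chi_r\chi_{\hat\eta}\chi_{\hat\xi}\chi_{t_+}\chi_{t_-}
\]
as in~\eqref{EqSUPrInComm}, but with $\chi_{\hat\xi}=\chi((\hat\xi-1)^2)$ localizing near $\hat\xi=+1$. Computing $\sigma^{-1}H_{G_\eop}a$ term by term as in~\eqref{EqSUPrInTMain}--\eqref{EqSUPrIntm}, the weight-differentiation coefficient $(2\hat\xi+\cO(r))(2s-2\ell+1)$ at $\cR_{\rm out}^+$ combines with the subprincipal contribution $2\sigmae^1(r^2\frac{P-P^*}{2i\sigma})\rho_\infty^{-1}a$---bounded above by $-4\vartheta_{\rm out}+2\eps$ by~\eqref{EqSUPrThrOut} and a suitable choice of fiber metric on $\cE$---to yield a main coefficient bounded above by $4(s-\ell+\tfrac12-\vartheta_{\rm out})+2\eps$. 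For $s<-\frac12+\ell+\vartheta_{\rm out}$, and $\eps,c$ small, this is strictly negative definite on $\supp a$, so $b<0$ equals minus the square of a positive definite endomorphism; upon quantization and pairing with $u$, this furnishes the main square $\|B\chi_1 u\|_{\He^{s,\ell}}^2$ on the left-hand side.

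The boundary terms require careful sign tracking. The $\chi'_r$ contribution $(2\hat\xi+\cO(r))\digamma\chi'_r$ is $\leq 0$ at $\hat\xi=+1$ (since $\chi'_r\leq 0$), of the \emph{same} sign as the main term $b$, and is therefore dropped---this is the opposite behavior from the incoming case. By contrast, the $\chi'_{\hat\eta}$ term has coefficient $-(4\hat\xi+\cO(r))|\hat\eta|^2\chi'_{\hat\eta}\geq 0$ at $\hat\xi=+1$ (opposite sign to $b$), and now furnishes the error operator $E$ in~\eqref{EqSUPrOutEst}; on its support the characteristic-set constraint $\hat\xi^2+|\hat\eta|_{h^{-1}}^2=1+\cO(r)$ forces $\hat\xi$ to lie bounded away from $+1$, so by the source-to-sink fiber flow from $\cR_{\rm in}^+$ to $\cR_{\rm out}^+$ over $\pa M$ (and Lemma~\ref{LemmaDEAway}) one arranges, by shrinking $c$, that all backwards bicharacteristics starting in $\WFe'(E)$ enter any preassigned neighborhood of $\pa\cR_{\rm in}^+$. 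The remaining terms are handled as in Proposition~\ref{PropSUPrIn}: $\chi'_{\hat\xi}$ is disjoint from the characteristic set for $c$ small and $\digamma$ large; $\chi'_{t_-}$ is absorbed via the hypothesis $u|_{t<t_-}=0$; and $\chi'_{t_+}$, with coefficient $2(1-\hat\xi\frac{t-t_+}{r})>2(1-\tau_3)>0$ on its support combined with $\chi'_{t_+}\leq 0$, has the favorable sign of $b$ and is dropped.

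The principal technical difficulty specific to this below-threshold setting is the regularization argument needed to justify the positive commutator identity: since $A$ has edge order $2s-1>2s_0-1$, the pairing $\la i(P^*A-AP)u,u\ra$ cannot be applied a priori to $u\in\He^{s_0,\ell}$, and one must insert a family of edge smoothing regularizers, run the symbolic argument above uniformly, and pass to the limit, producing the $\|\chi_2 u\|_{\He^{s_0,\ell}}$ term in~\eqref{EqSUPrOutEst}; this is standard (cf.~\cite[\S5.4.7]{VasyMinicourse}). The remaining obstacle, shared with Proposition~\ref{PropSUPrIn}, is accommodating the sharp edge-conormal cutoffs $\chi_j$ within the edge pseudodifferential calculus, which is handled via the conormal symbol calculus of~\S\ref{SsEPsdo} (equivalently on the blow-up $[M;\phi^{-1}(t_+)]$, as in Remark~\ref{RmkSUPrSystem}).
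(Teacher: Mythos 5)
Your proposal is correct and follows essentially the same route as the paper: the same commutant with $\chi_{\hat\xi}=\chi((\hat\xi-1)^2)$, the same term-by-term sign analysis (main term negative below threshold via~\eqref{EqSUPrThrOut}, $\chi'_r$ and $\chi'_{t_+}$ terms now favorable, $E$ arising from the $\chi'_{\hat\eta}$ term whose backward flow returns to $\pa\cR_{\rm in}^\pm$ by the source-to-sink dynamics over $\pa M$), and the same regularization remark. The only quibbles are cosmetic: at $\cR_{\rm out}^\pm$ one has $\hat\xi=+1$ for both signs (not $\hat\xi=\pm 1$), and the localization of $\WFe'(E)$ near $r=0$ is achieved by taking $\digamma$ large with $c$ fixed rather than by shrinking $c$.
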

\begin{proof}
  We again consider the commutant~\eqref{EqSUPrInComm}, except we now set $\chi_{\hat\xi}=\chi((\hat\xi-1)^2)$ in order to localize near $\cR_{\rm out}^+$. We again have~\eqref{EqSUPrInTMain}--\eqref{EqSUPrIntm}, except $\hat\xi+1$ in~\eqref{EqSUPrInTxi} is replaced by $\hat\xi-1$. Since $\hat\xi=1$ at $\cR_{\rm out}^+$, the various terms comprising~\eqref{EqSUPrInSymb} have different relative signs. The main term, arising from~\eqref{EqSUPrInTMain} and the subprincipal symbol in~\eqref{EqSUPrInSymb}, is $r^{-2\ell}\rho_\infty^{-2 s}b\chi_r\chi_{\hat\eta}\chi_{\hat\xi}\chi_{t_+}\chi_{t_-}$, where now, for sufficiently small $\eps>0$, we have $b<(-4\ell+4+4 s-2)-4\vartheta_{\rm out}+2\eps<0$ by~\eqref{EqSUPrThrOut} if we choose an appropriate fiber inner product on $\cE$ and $a$ is sufficiently localized near $\cR_{\rm out}^+$. The term~\eqref{EqSUPrInTr} has the same sign as this main term, as does the term~\eqref{EqSUPrIntp} since $\frac{t-t_+}{r}\leq\tau_2+c<1$ (for small $c>0$).

  The a priori control term $E u$ in~\eqref{EqSUPrOutEst} arises from~\eqref{EqSUPrInTeta}. Note that, passing to $\Se^*M$, it is supported in the annular neighborhood $|\hat\eta|^2\in[\frac{c}{2},c]$ of $\pa\cR_{\rm out}^+$; if, with $c$ fixed, we localize sufficiently closely to $r=0$ by choosing $\digamma$ large enough in~\eqref{EqSUPrInComm}, then all backwards null-bicharacteristics starting from the support of~\eqref{EqSUPrInTeta} enter any fixed neighborhood of $\pa\cR_{\rm in}^+$ due to the source-sink dynamics over $\pa M$; this is the same argument as already used in~\eqref{EqDFnAin}--\eqref{EqDFnAout}.

  The term~\eqref{EqSUPrIntm} can be dropped by the support assumption on $u$, and~\eqref{EqSUPrInTxi} as well as the last term in~\eqref{EqSUPrInSymb} are controlled by $P u$.
\end{proof}

\subsection{Edge-local solvability near cone points; semi-global regularity estimate}
\label{SsSULoc}

In order to mitigate the fact that microlocal estimates for a distributional solution $u$ of $P u=f$ on some set require (mild) a priori control on $u$ on a larger set, we need to complement microlocal estimates with energy estimates which do not have this deficit; this is delicate mainly near $\pa M$. Since we do not yet have solvability and uniqueness for solutions of $P u=f$ on domains with nonempty intersection with $\pa M$ (which we will only be able to prove in~\S\ref{SsSUe}), these energy estimates must be sharply localized near a \emph{single} fiber of $\pa M$ (that is, near a single cone point in $\cM$), i.e.\ they will take place on domains which near $r=0$ are of the form $\frac{t-t_0}{r}\in[\tau_0,\tau_1]\subset(-1,1)$ (which we refer to as \emph{edge-local}); this is the reason for kepping track of such sharp localizations in Propositions~\ref{PropSUPrIn} and \ref{PropSUPrOut}.

\begin{prop}[Edge-local solvability]
\label{PropSULoc}
  Let $t_0\in I$. Fix $-1<\tau_0<\tau_1<1$ and $\tau_-\in(-1,\tau_0)$, and fix $r_0>0$ with the property that for $r\leq r_1:=\frac{r_0}{\tau_0-\tau_-}$, the functions $\tau:=\frac{t-t_0}{r}$ and $\frac{t-(t_0+r_0)}{\tau_-}$ are timelike when $\tau\in[\tau_0,\tau_1]$.\footnote{In view of the form of the metric~\eqref{EqDMetric}, this is true for all sufficiently small $r_0$.} Set
  \begin{equation}
  \label{EqSULocDom}
    \Omega_{\tau_0,\tau_1,\tau_-,r_0} := \Bigl\{ (t,r,\omega) \in I\times[0,\bar r)\times\Sph^{n-1} \colon \tau\in(\tau_0,\tau_1),\ \frac{t-(t_0+r_0)}{\tau_-}<0,\ r<r_1 \Bigr\}.
  \end{equation}
  Let $\sfs\in\CI(\Se^*M)$ be such that $\pm\sfH_{G_\eop}\sfs\leq 0$ on $\pa\Sigma^\pm$ near $\ol{\Omega_{\tau_0,\tau_1,\tau_-,r_0}}$, and let $\ell\in\R$. Denote by $\He^{\sfs,\ell}(\Omega_{\tau_0,\tau_1,\tau_-,r_0})^{\bullet,-}$ the space of restrictions to $\Omega_{\tau_0,\tau_,\tau_-,r_0}$ of elements of $\He^{\sfs,\ell}(M)$ with supported contained in $\ol{\Omega_{\tau_0,\tau_1',\tau_-,r_0'}}$ where $\tau_1'\in(\tau_1,1)$ and $r_0'>r_0$ are fixed and close to $\tau_1$ and $r_0$, respectively. Then for all $f\in\He^{\sfs-1,\ell-2}(\Omega_{\tau_0,\tau_1,\tau_-,r_0})^{\bullet,-}$, there exists a unique distributional forward solution $u$ of $P u=f$ in $\Omega_{\tau'_0,\tau_1,\tau_-,r_0}\cap\{r>0\}$ (here $\tau'_0\in(-1,\tau_0)$ is close to $\tau_0$, and $u$ vanishes in $\tau<\tau_0$), and it satisfies $u\in\He^{\sfs,\ell}(\Omega_{\tau_0,\tau_1,\tau_-,r_0})^{\bullet,-}$ with an estimate
  \begin{equation}
  \label{EqSULocEst}
    \|u\|_{\He^{\sfs,\ell}(\Omega_{\tau_0,\tau_1,\tau_-,r_0})^{\bullet,-}} \leq C\|f\|_{\He^{\sfs-1,\ell-2}(\Omega_{\tau_0,\tau_1,\tau_-,r_0})^{\bullet,-}}.
  \end{equation}
\end{prop}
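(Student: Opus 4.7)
The plan is to combine a direct energy estimate using a timelike multiplier along $\pa_\tau$ (where $\tau=\frac{t-t_0}{r}$, so that $\pa_\tau=r\pa_t|_{r,\omega}\in\Ve$ is future-timelike on $\ol\Omega_{\tau_0,\tau_1,\tau_-,r_0}$) with microlocal propagation of edge regularity. A crucial geometric observation is that null-bicharacteristics in $\Omega_{\tau_0,\tau_1,\tau_-,r_0}$ cannot reach the radial sets $\cR_{\rm in}^\pm$ or $\cR_{\rm out}^\pm$ over $\phi^{-1}(t_0)$: approach to $r=0$ along a null-geodesic forces $\tau\to\pm 1$ (using the form~\eqref{EqDMetric} of $g$), whereas $\tau$ is strictly confined to $(\tau_0,\tau_1)\subset(-1,1)$ on $\Omega$. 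Thus every forward bicharacteristic in $\Omega$ exits through the future edge boundary $\{\tau=\tau_1\}$ or the spacelike far boundary, and backward ones exit through $\{\tau=\tau_0\}$ or the far boundary; no threshold condition on $\sfs$ at the radial sets is required---only the monotonicity $\pm\sfH_{G_\eop}\sfs\leq 0$, to drive the variable-order propagation.

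First I would derive the basic $\He^{1,\ell}$ estimate by pairing $Pu=f$ against $Vu$ with $V=r^{-2\ell}e^{-\digamma\tau}\pa_\tau$: integration by parts with respect to $|\dd g|$ yields a bulk quadratic form in $\pa_\tau u$, $r\pa_r u$, $\pa_{\omega^j}u$, $r\pa_t u$ weighted by $r^{-2\ell}$, which for $\digamma\gg 1$ becomes coercive (exponential weight absorbing subprincipal contributions from $P$). The boundary contribution on $\{\tau=\tau_0\}$ vanishes by the forward support hypothesis, and those on $\{\tau=\tau_1\}$ and on the far boundary have the correct sign since both hypersurfaces are spacelike. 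This produces
\[
  \|u\|_{\He^{1,\ell}(\Omega_{\tau_0,\tau_1,\tau_-,r_0})^{\bullet,-}}\leq C\|f\|_{\He^{0,\ell-2}(\Omega_{\tau_0,\tau_1,\tau_-,r_0})^{\bullet,-}}.
\]
Existence at this level follows via Hahn--Banach from the analogous energy estimate applied to $P^*$ with reversed time orientation, and uniqueness of distributional forward solutions in $\Omega\cap\{r>0\}$ is classical, since $P$ is a smooth wave operator there with timelike function $\tau$.

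To upgrade to variable order $\sfs$, I would first obtain higher integer edge regularity by commuting $P$ with iterated products of edge vector fields, using $[P,V]\in r^{-2}\Diffe^2$ for $V\in\Ve$, and applying the basic estimate inductively. For general $\sfs$, microlocal propagation of variable-order edge regularity---driven by the monotonicity of $\sfs$ along $\pm\sfH_{G_\eop}$ and starting from the control on a slightly enlarged past/far boundary provided by the integer-order estimate---propagates regularity throughout $\Omega$; only real principal type arguments are needed, in view of the geometric observation avoiding the radial sets. For $\sfs$ below the baseline regularity of the basic energy estimate, the same argument applied to $P^*$ is dualized to yield existence at low regularity. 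The principal obstacle is technical: the sharp $\tau$-localizers and the multiplier $V$ are only edge-regular on $M$ (not smooth, since $\tau$ is edge-smooth on $\Omega$ but singular on $\pa M$), so the positive commutator computation, the quantization, and the composition of variable-order edge symbols must be carried out within the edge calculus with edge-regular symbols---the same framework that underlies Propositions~\ref{PropSUPrIn} and~\ref{PropSUPrOut}.
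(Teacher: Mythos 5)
Your proposal is correct and follows essentially the same route as the paper: the energy estimate with multiplier $r^{-2\ell}e^{-\digamma\tau}\,r\pa_t$ on the wedge, followed by microlocal propagation of (variable-order) edge regularity---which is pure real principal type since $\tau$ is a time function on the wedge and the radial sets are never reached---carried out with edge-regular symbols, and a duality argument for low orders. The only detail to add is that the coercive bulk term controls only the edge gradient of $u$, so recovering the zeroth-order part of the $\He^{1,\ell}$ norm still requires a Poincar\'e-type integration of $r\pa_t u$ in $\tau$ from the initial hypersurface.
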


See Figure~\ref{FigSULoc} for the setup. Note that the domain of intersection of $\Omega_{\tau_0,\tau_1,\tau_-,r_0}$ with $t\leq t_0+\tau_1(r-\delta)$, where $\delta>0$, has spacelike boundary hypersurfaces and its closure is disjoint from $\pa M$, so existence and uniqueness of forward solutions in this domain are standard; taking $\delta\searrow 0$ proves existence and uniqueness of the forward solution $u$.

\begin{figure}[!ht]
\centering
\includegraphics{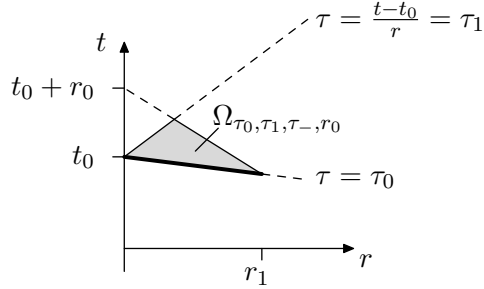}
\caption{Illustration of the domain on which Proposition~\ref{PropSULoc} takes place. The initial boundary hypersurface is drawn with a thick line; the quantity $\tau_-$ determines its slope.}
\label{FigSULoc}
\end{figure}

\begin{rmk}[Blow-up of the cone point; b-regularity]
\label{RmkSULocb}
  Upon blowing up the fiber $\phi^{-1}(t_0)$, one can use the coordinates $\tau\in\R$, $r\geq 0$, and $\omega\in\Sph^{n-1}$ as local coordinates near the interior of the front face. Note then that $r\pa_t$, $r\pa_r$ in $(t,r,\omega)$ coordinates read $\pa_\tau$, $r\pa_r-\tau\pa_\tau$ in $(\tau,r,\omega)$ coordinates, and thus edge vector fields on $M$ lift to b-vector fields on $[M;\phi^{-1}(t_0)]$. (Put differently, over the interior of the front face, $\Tb^*[M;\phi^{-1}(t_0)]$ is naturally isomorphic to the lift of $\Te^*M$ to $[M;\phi^{-1}(t_0)]$.) See also Remark~\ref{RmkEBlowup}. Therefore, elements of edge Sobolev spaces with orders $s\in\N_0$, $\ell\in\R$ with support in $\ol{\Omega_{\tau_0,\tau'_1,\tau_-,r'_0}}$ lift to elements of b-Sobolev spaces of the same orders; we leave it to the interested reader to show that this relationship remains valid also for variable order spaces. On the level of the operator $P$, we only record the principal terms of its leading order part,
  \begin{equation}
  \label{EqSULocb}
    r^2 P \equiv -D_\tau^2 + (r D_r-\tau D_\tau)^2  + \Delta_{h(t_0)} \equiv -(1-\tau^2)D_\tau^2 + (r D_r)^2 - 2 r D_r \tau D_\tau + \Delta_{h(t_0)}.
  \end{equation}
  In conclusion, Proposition~\ref{PropSULoc} is a local-in-time solvability and regularity statement for a wave operator $r^2 P$ associated with a metric for which $r=0$ lies at infinity (cf.\ the appearance of $r\pa_r$) and $\tau$-level sets (for $|\tau|$ bounded away from $1$) are spacelike. See Figure~\ref{FigSULocb}. We also remark that when $h_0$ is the standard metric on $\Sph^{n-1}$, the operator~\eqref{EqSULocb} is the same as the wave operator on Minkowski space $(\R_t\times(0,\infty)_r\times\Sph^{n-1},-\dd t^2+\dd r^2+r^2 g_{\Sph^{n-1}})$ (expressed using $\tau=\frac{t}{r}$), which was analyzed from this perspective in \cite{HintzVasyMink4}. The b-perspective breaks down however when working on nontrivial segments $|t-t_0|<\eps$ of the curve of cone points.
\end{rmk}

\begin{figure}[!ht]
\centering
\includegraphics{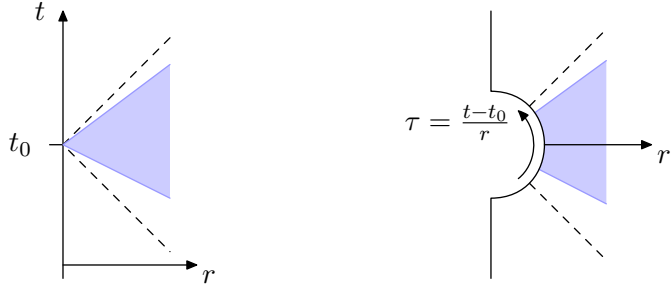}
\caption{Localization in $\frac{t-t_0}{r}$ on $M$ (on the left) and on $[M;\phi^{-1}(t_0)]$ (on the right). The dashed lines are the light cones $|t-t_0|=r$ for the model metric $-\dd t^2+\dd r^2+r^2 h(t_0)$.}
\label{FigSULocb}
\end{figure}

\begin{proof}[Proof of Proposition~\usref{PropSULoc}]
  We work with $r^2 P\in\Diffe^2(M;\cE)$, which is an edge wave operator, i.e.\ its principal symbol is the dual metric function of the Lorentzian edge metric $g_\eop=r^{-2}g$. The structure of our argument is similar to that employed in \cite[\S2.1.3]{HintzVasySemilinear} and in the proof of \cite[Theorem~6.4]{HintzVasyScrieb}, though the present setting is geometrically and microlocally simpler; hence we shall be somewhat brief. The presence of the bundle $\cE$ only requires notational overhead, and hence we consider the case that $\cE$ is trivial, i.e.\ we are working with complex-valued $u,f$.

  We begin with a basic energy estimate using the vector field multiplier $V=e^{-\digamma\tau}V_0$, $V_0:=r^{-2\ell}r\pa_t$, where $\digamma$ is a large constant. Recall the stress-energy-momentum tensor $T[u](V,W)=(V u)(W u)-\frac12 g_\eop(V,W)|\nabla^{g_\eop}u|^2$. Using that $\cL_{e^{-\digamma\tau}V_0}g_\eop=e^{-\digamma\tau}(\cL_{V_0}g_\eop-2\digamma\dd\tau\otimes_s g_\eop(V_0,\cdot))$, we have
  \begin{equation}
  \label{EqSULocDiv}
    \dv_{g_\eop}\bigl(T^{g_\eop}[u](V,\cdot)\bigr) = e^{-\digamma\tau}\Bigl(-(\Box_{g_\eop}u)V_0 u + \frac12\bigl( \la\cL_{V_0}g_\eop,T^{g_\eop}[u]\ra + 2\digamma T^{g_\eop}[u](-(\dd\tau)^\sharp,V_0)\bigr)\Bigr).
  \end{equation}
  Now, $r\pa_t$ is a future timelike edge vector field and $-\dd\tau$ is a future timelike edge 1-form on $\ol{\Omega_{\tau_0,\tau_1,\tau_-,r_0}}$. Therefore, upon choosing $\digamma$ large enough, the second term on the right controls $c\digamma r^{-2\ell}|{}^\eop\nabla u|^2$ for some fixed $c>0$, where we write $|{}^\eop\nabla u|^2=|r\pa_t u|^2+|r\pa_r u|^2+|\pa_\omega u|^2$. Integrating~\eqref{EqSULocDiv} over $\Omega_{\tau_0,\tau_1,\tau_-,r_0}$ (and noting that there the boundary terms from Stokes' theorem have good signs or vanish) thus gives the estimate $\|{}^\eop\nabla u\|_{r^\ell L^2}\leq C(\|r^2 P u\|_{r^\ell L^2}+\|u\|_{r^\ell L^2})$ on the domain $\Omega_{\tau_0,\tau_1,\tau_-,r_0}$. To control $u$ itself, we write $t=t_0+\tau r$ and thus
  \[
    u(t,r,\omega) = \int_{t_0+\tau_- r}^{t_0+\tau r} \pa_t u(s,r,\omega)\,\dd s = \int_{\tau_-}^\tau r\pa_t u(t_0+\sigma r,r,\omega)\,\dd\sigma.
  \]
  This yields $\|u\|_{r^\ell\He^1}\leq C\|r^2 P u\|_{r^\ell L^2}$, i.e.\ \eqref{EqSULocEst} for $\sfs=1$.

  We improve this to general orders $\sfs\geq 1$ using microlocal regularity results. From the b-perspective on $[M;\phi^{-1}(t_0)]$ espoused in Remark~\ref{RmkSULocb}, this relies on standard elliptic estimates and real principal type propagation estimates (note that since $\tau$ is a time function on $\Omega_{\tau_0,\tau_1,\tau_-,r_0}$, this suffices to cover all of phase space starting with propagation at $\tau=\tau_0$); see \cite[Appendix~A]{BaskinVasyWunschRadMink} for the case of variable orders. Since the parametrix or commutator proofs of the relevant b-estimates on $[M;\phi^{-1}(t_0)]$ are, without modifications, the same as edge-estimates on $M$ (cf.\ the phase space relationship in Remark~\ref{RmkSULocb}), we omit the details here. (In the edge-perspective, one needs to work with operators having edge regular symbols---specifically, with functions of $\frac{t-t_0}{r}$, as in~\S\ref{SsSUPr}, to localize in $\tau$.) Now, microlocal estimates on a domain always come with symbolically trivial error terms on a larger domain; thus, given $f\in\He^{\sfs-1,\ell-2}(\Omega_{\tau_0,\tau_1,\tau_-,r_0})^{\bullet,-}$, we pick an extension $\tilde f\in\He^{\sfs-1,\ell-2}(\Omega_{\tau_0,\tau_1',\tau_-,r_0'})^{\bullet,-}$ of $f$ to a larger domain, with norm bounded by a constant times that of $f$, and obtain
  \begin{equation}
  \label{EqSULocImpr}
    \|u\|_{\He^{\sfs,\ell}(\Omega_{\tau_0,\tau_1,\tau_-,r_0})^{\bullet,-}} \lesssim \|\tilde f\|_{\He^{\sfs-1,\ell-2}(\Omega_{\tau_0,\tau_1',\tau_-,r_0'})^{\bullet,-}} + \|\tilde u\|_{\He^{1,\ell}(\Omega_{\tau_0,\tau_1,\tau_-,r_0})^{\bullet,-}}
  \end{equation}
  for the forward solution $\tilde u$ of $P\tilde u=\tilde f$ (which restricts to $\Omega_{\tau_0,\tau_1,\tau_-,r_0}$ to $u$). (For future use, we note that the regularity order $1$ in the norm on $\tilde u$ can be replaced by any real number.) But by the already established case $\sfs=1$ of~\eqref{EqSULocEst}, we can bound the norm on $\tilde u$ by $\|\tilde f\|_{\He^{0,\ell-2}(\Omega_{\tau_0,\tau_1,\tau_-,r_0})^{\bullet,-}}$; since $\sfs\geq 1$, this gives~\eqref{EqSULocEst}.

  Turning to low regularity orders, we use duality. To wit, the arguments thus far prove an analogue of the estimate~\eqref{EqSULocEst} for the operator $P^*$ on `$-,\bullet$'-spaces with orders $\sfs^*\geq 1$ (monotonically decreasing along lifts of \emph{backwards} null-geodesics, i.e.\ $\pm\sfH_{G_\eop}\sfs^*\leq 0$ on $\pa\Sigma^\pm$), $\ell\in\R$, where $\He^{\sfs^*,\ell}(\Omega_{\tau_0,\tau_1,\tau_-,r_0})^{-,\bullet}$ is the space of restrictions of elements of $\He^{s,\ell}(M)$ with support contained in $\ol{\Omega_{\tau_0',\tau_1,\tau_-,r_0}}$ where $\tau_0'\in(-1,\tau_0)$ is close to $\tau_0$. Let now $f\in\He^{-\sfs^*,-\ell}(\Omega_{\tau_0,\tau_1,\tau_-,r_0})^{\bullet,-}$. As in the proof of \cite[Lemma~2.7]{HintzVasySemilinear}, the pairing
  \[
    \He^{\sfs^*-1,\ell-2}(\Omega_{\tau_0,\tau_1,\tau_-,r_0})^{-,\bullet}\ni P^*u^*\mapsto\la f,u^*\ra_{L^2},
  \]
  where $u^*\in\He^{\sfs^*,\ell}(\Omega_{\tau_0,\tau_1,\tau_-,r_0})^{-,\bullet}$, is well-defined and continuous. Therefore, it is given by $\la u,P^*u^*\ra_{L^2}$ for some $u\in\He^{-\sfs^*+1,-\ell+2}(\Omega_{\tau_0,\tau_1,\tau_-,r_0})^{\bullet,-}$ (with norm bounded by a constant times that of $f$), which therefore solves $P u=f$. This proves~\eqref{EqSULocEst} for $\sfs=-\sfs^*+1$, and thus for all (monotone) orders $\sfs\leq 0$.

  Finally, given a monotone order $\sfs$, fix a constant order $s_-<0$ with the property that $s_-<\min_{\Se^*_{\ol{\Omega_{\tau_0,\tau_1,\tau_-,r_0}}}M}\sfs$. Extending $f$ to $\tilde f\in\He^{\sfs-1,\ell-2}(\Omega_{\tau_0,\tau_1',\tau_-,r_0'})^{\bullet,-}$, we solve $P\tilde u=\tilde f$ in $\He^{s_-,\ell}(\Omega_{\tau_0,\tau_1',\tau_-,r'_0})^{\bullet,-}$, and use microlocal regularity results to prove~\eqref{EqSULocImpr} with the norm on $\tilde u$ replaced by the $\He^{s_-,\ell}$-norm. But since $s_-<0$, this norm is in turn bounded by the $\He^{s_--1,\ell}$-norm of $\tilde f$, and thus by the $\He^{\sfs-1,\ell}$-norm of $f$. This completes the proof.
\end{proof}

For domains with closure contained in the smooth part $\cM\setminus\cC$ of $\cM$, the analogous result is classical for constant orders (see e.g.\ \cite[Theorem~23.2.4]{HormanderAnalysisPDE3}), with the extension to variable orders accomplished most easily via the same extension, microlocal regularity, and restriction argument as in the preceding proof. We thus only state the result here:

\begin{lemma}[Local solvability away from the curve of cone points]
\label{LemmaSULocAway}
  Let $\Omega$ be a spacetime domain with $\bar\Omega\cap\pa M=\emptyset$. Let $\sfs\in\CI(\Se^*M)$ be such that $H_G\sfs\leq 0$ on $\pa\Sigma^\pm$ near $\bar\Omega$; here $G\colon T^*M^\circ\to\R$ is the dual metric function of $g$. Write $H^\sfs(\Omega)^{\bullet,-}$ for distributions of Sobolev regularity $\sfs$ with supported, resp.\ extendible character at the initial, resp.\ final boundary hypersurfaces of $\Omega$ in the terminology of \cite[Appendix~B]{HormanderAnalysisPDE3}. Then for all $f\in H^{\sfs-1}(\Omega;\cE)^{\bullet,-}$, there exists a unique distributional forward solution $u$ of $P u=f$, and it satisfies $u\in H^\sfs(\Omega;\cE)^{\bullet,-}$ and an estimate
  \[
    \|u\|_{H^\sfs(\Omega;\cE)^{\bullet,-}} \leq C\|f\|_{H^{\sfs-1}(\Omega;\cE)^{\bullet,-}}.
  \]
\end{lemma}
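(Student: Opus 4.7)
My plan mirrors the proof of Proposition~\ref{PropSULoc}, but is considerably simpler since $\bar\Omega$ is compact and contained in $M^\circ$, so $P$ is a smooth wave operator near $\bar\Omega$ and no edge structure or radial sets over $\pa M$ interfere.

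First, I would establish the constant-order base case $\sfs = 1$ by a classical energy estimate: contracting the stress--energy tensor of $u$ with the multiplier $e^{-\digamma \ft}\pa_\ft$ ($\ft$ a global time function, $\digamma \gg 1$), integrating over $\Omega$, and exploiting the sign of the boundary contributions at each $t_{\rm ini,j}^{-1}(0)$, $t_{\rm fin,j}^{-1}(0)$ via the past timelike assumption, yields
\[
  \|u\|_{H^1(\Omega;\cE)^{\bullet,-}}\leq C\|P u\|_{L^2(\Omega;\cE)^{\bullet,-}}.
\]
Uniqueness follows, and existence for $f\in L^2(\Omega;\cE)^{\bullet,-}$ follows from the symmetric estimate on $P^*$ by a Hahn--Banach/duality argument as in \cite[Theorem~23.2.4]{HormanderAnalysisPDE3}.

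Next, I would handle arbitrary constant orders. For $\sfs \geq 1$, I would extend $f$ to $\tilde f \in H^{\sfs-1}(\Omega';\cE)^{\bullet,-}$ on a slightly enlarged spacetime domain $\Omega' \supset \bar\Omega$ with $\bar{\Omega'} \subset M^\circ$ (obtained by small perturbations of the defining functions $t_{\rm ini,j}$, $t_{\rm fin,j}$), solve $P\tilde u = \tilde f$ in $H^1$ via the base case, and then invoke elliptic regularity off the characteristic set together with real principal type propagation of singularities inside $\Sigma$ (with the vanishing of $\tilde u$ in the past providing the a priori regularity input at the initial endpoints of null-bicharacteristics) to conclude $\tilde u|_\Omega \in H^\sfs(\Omega;\cE)^{\bullet,-}$ with the desired estimate. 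For $\sfs \leq 0$ I would argue by duality exactly as in the last step of the proof of Proposition~\ref{PropSULoc}: the symmetric statement for $P^*$ with $-,\bullet$ character (already available at order $-\sfs+1 \geq 1$) makes
\[
  H^{-\sfs}(\Omega;\cE)^{-,\bullet} \ni P^* v \mapsto \la f, v\ra_{L^2}
\]
a well-defined, continuous functional, producing the desired $u \in H^\sfs(\Omega;\cE)^{\bullet,-}$.

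Finally, for variable $\sfs$ satisfying $H_G\sfs \leq 0$ on $\pa\Sigma^\pm$, I would fix a constant $s_- < \min_{\Se^*_{\bar\Omega}M}\sfs$, extend $f$ to $\tilde f$ on $\Omega'$, solve $P\tilde u = \tilde f$ at order $s_-$ by the constant-order theory just established, and then promote $\tilde u|_\Omega$ to $H^\sfs(\Omega;\cE)^{\bullet,-}$ via variable-order real principal type propagation, for which \cite[Appendix~A]{BaskinVasyWunschRadMink} suffices. The monotonicity hypothesis is precisely what guarantees that the variable-order commutator symbol has a definite sign and allows the estimate to close. There is no real obstacle: the disjointness $\bar\Omega \cap \pa M = \emptyset$ removes every delicate feature (edge algebra, sharp $\tau$-cutoffs, threshold conditions at the radial sets) that complicates the analogous edge-local statement of Proposition~\ref{PropSULoc}.
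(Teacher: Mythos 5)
Your proposal is correct and follows exactly the route the paper intends: the paper gives no proof of Lemma~\ref{LemmaSULocAway}, instead citing \cite[Theorem~23.2.4]{HormanderAnalysisPDE3} for the constant-order case (energy estimate plus duality, as you describe) and noting that the variable-order case follows by the same extension, microlocal propagation, and restriction argument used in the proof of Proposition~\ref{PropSULoc}. Your write-up is simply a fleshed-out version of that sketch, with all steps (base case $\sfs=1$, higher constant orders by propagation, low orders by duality, variable orders via a constant $s_-<\min\sfs$) matching the paper's template.
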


We can now control edge regularity semi-globally on non-refocusing spacetime domains.

\begin{definition}[Admissible orders]
\label{DefSULocAdm}
  Let $\Omega\subset M$ be a non-refocusing spacetime domain. Then $\sfs\in\CI(\Se^*M)$ and $\ell\in\R$ are \emph{$P$-admissible orders on $\Omega$} if
  \begin{enumerate}
  \item $\sfs$ is constant near $\pa\cR_{\rm in}^\pm\cap\Se^*_{\bar\Omega}M$ and near $\pa\cR_{\rm out}^\pm\cap\Se^*_{\bar\Omega}M$;
  \item recalling $\vartheta_{\rm out}(\Omega),\vartheta_{\rm in}(\Omega)$ from Definition~\ref{DefSUNThr}, we have
    \[
      \sfs>-\frac12+\ell+\vartheta_{\rm in}(\Omega)\quad\text{at}\quad \pa\cR_{\rm in}^\pm,\qquad
      \sfs<-\frac12+\ell+\vartheta_{\rm out}(\Omega)\quad\text{at}\quad \pa\cR_{\rm out}^\pm;
    \]
  \item $\pm\sfH_{G_\eop}\sfs\leq 0$ on $\pa\Sigma^\pm\cap\Se^*_{\Omega'}M$ for a spacetime domain $\Omega'\supset\Omega$.
  \end{enumerate}
\end{definition}

Given any $\ell\in\R$, the existence of an order function $\sfs$ so that $\sfs,\ell$ are $P$-admissible is guaranteed by Proposition~\ref{PropDFn}. Moreover, by continuity of $\vartheta_{\rm in}$ and $\vartheta_{\rm out}$, there exists a non-refocusing spacetime domain $\Omega'\supset\bar\Omega$ (cf.\ Corollary~\ref{CorDNrfEnlarge}) so that $P$-admissible orders on $\Omega$ are also $P$-admissible on $\Omega'$, and thus on $\Omega''$ for all $\Omega''\subseteq\Omega'$. Recalling the notation~\eqref{EqDEnlarged}, we use function spaces
\begin{equation}
\label{EqSULocFn}
  \He^{\sfs,\ell}(\Omega)^{\bullet,-} = \bigl\{ \tilde u|_\Omega \colon \tilde u\in \He^{\sfs,\ell}(\Omega_{-2\delta,2\delta}),\ \supp\tilde u\subset\Omega_{-\delta,\delta},\ \tilde u=0\ \text{on}\ \Omega_{-\delta,\delta}\setminus\Omega_{0,\delta} \bigr\}
\end{equation}
where $\delta>0$ is small and fixed (and the space does not depend on $\delta$ up to equivalence of norms); similarly for distributional sections of vector bundles. Elements of $\He^{\sfs,\ell}(\Omega)^{\bullet,-}$ are thus of supported, resp.\ extendible character at the initial, resp.\ final boundary hypersurfaces of $\Omega$.

\begin{prop}[Edge regularity on non-refocusing domains]
\label{PropSULocReg}
  Let $\Omega\subset M$ be a non-refocusing spacetime domain. Let $\sfs\in\CI(\Se^*M)$, $\ell\in\R$ be $P$-admissible orders on $\Omega$, and let $\sfs_0\in\CI(\Se^*M)$ be such that $\sfs_0>-\frac12+\ell+\vartheta_{\rm in}(\Omega)$ at $\pa\cR_{\rm in}^\pm$. Then there exists a constant $C>0$ so that for all $u\in\He^{\sfs_0,\ell}(\Omega;\cE)^{\bullet,-}$ with $f=P u\in\He^{\sfs-1,\ell-2}(\Omega;\cE)^{\bullet,-}$, we have $u\in\He^{\sfs,\ell}(\Omega;\cE)^{\bullet,-}$ and
  \begin{equation}
  \label{EqSULocRegEst}
    \|u\|_{\He^{\sfs,\ell}(\Omega;\cE)^{\bullet,-}} \leq C\Bigl(\|f\|_{\He^{\sfs-1,\ell-2}(\Omega;\cE)^{\bullet,-}}+\|u\|_{\He^{\sfs_0,\ell}(\Omega;\cE)^{\bullet,-}}\Bigr).
  \end{equation}
\end{prop}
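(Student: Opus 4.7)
The proof combines the microlocal edge propagation estimates (elliptic regularity, real-principal-type propagation in the edge calculus, and the radial point estimates of Propositions~\ref{PropSUPrIn} and \ref{PropSUPrOut}) with the edge-local solvability statement of Proposition~\ref{PropSULoc} and its smooth counterpart Lemma~\ref{LemmaSULocAway}; the non-refocusing property of $\Omega$ enters through Lemma~\ref{LemmaDNrfFlow}, which provides the source-to-sink structure of the null-bicharacteristic flow on $\pa\Sigma^\pm\cap\Se^*_{\bar\Omega}M$ and thereby allows the microlocal estimates to be patched consistently. To set up, I first enlarge $\Omega$ to a non-refocusing domain $\Omega'$ on which $(\sfs,\ell)$ remains $P$-admissible (using Corollary~\ref{CorDNrfEnlarge} together with the continuity of $\vartheta_{\rm in},\vartheta_{\rm out}$), and extend $u$ and $f=Pu$ to a slightly larger domain with comparable norms in view of the definition~\eqref{EqSULocFn}.

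For the microlocal part, elliptic regularity in the edge calculus provides $\sfs$-regularity in $\Ell(r^2P)\cap\Se^*_{\bar\Omega}M$, while real-principal-type propagation of edge regularity along $\pm\sfH_{G_\eop}$ (valid with variable order $\sfs$ thanks to the admissibility condition $\pm\sfH_{G_\eop}\sfs\leq 0$) propagates $\sfs$-regularity in the interior of $\pa\Sigma^\pm$ away from the radial sets. Near $\pa\cR_{\rm out}^\pm$ I invoke Proposition~\ref{PropSUPrOut} with target order equal to the constant value of $\sfs$ there (which is below threshold by admissibility) and a priori order $\min(\sfs_0,\sfs)-\eps$ for some small $\eps>0$; near $\pa\cR_{\rm in}^\pm$ I invoke Proposition~\ref{PropSUPrIn} with target order $\sfs$ (above threshold) and a priori order $\sfs_0$, which is above threshold by the hypothesis on $\sfs_0$. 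These estimates are complemented at $\pa\bar\Omega$ in two ways: for initial and final boundary hypersurfaces of $\Omega$ whose closure is disjoint from $\pa M$, Lemma~\ref{LemmaSULocAway} combined with forward/backward uniqueness of the Cauchy problem yields $\sfs$-regularity of $u$ in a collar; and for the unique initial hypersurface $t_{\rm ini,1}^{-1}(0)$ that may meet $\pa M$, I cover $\bar\Omega\cap\pa M\cap\{t_{\rm ini,1}\leq\eps\}$ by finitely many wedge domains $\Omega_{\tau_0,\tau_1,\tau_-,r_0}$ as in~\eqref{EqSULocDom}, centered at various $t_0\in I$, and apply Proposition~\ref{PropSULoc} on each, the resulting forward solution coinciding with $u$ by uniqueness.

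By Lemma~\ref{LemmaDNrfFlow}, every maximal integral curve of $\pm\sfH_{G_\eop}$ in $\pa\Sigma^\pm\cap\Se^*_{\bar\Omega}M$ either lies in $\pa\cR_{\rm in}^\pm\cup\pa\cR_{\rm out}^\pm$ or connects a ``source'' (namely $\pa\cR_{\rm out}^\pm$ or a covector above an initial hypersurface in $r>0$) to a ``sink'' ($\pa\cR_{\rm in}^\pm$ or a covector above a final hypersurface in $r>0$). I cover $\Se^*_{\bar\Omega}M$ by finitely many microlocal open sets on each of which one of the estimates above applies, and assemble them in source-to-sink order along the flow; the quantitative non-refocusing statement of Lemma~\ref{LemmaDNrf} ensures that each ``error term'' in a given microlocal estimate is supported in a region already controlled by an earlier estimate, so that after finitely many steps all errors have been absorbed, leaving only $\|Pu\|_{\He^{\sfs-1,\ell-2}}$ and $\|u\|_{\He^{\sfs_0,\ell}}$ on the right-hand side of~\eqref{EqSULocRegEst}.

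The principal technical delicacy lies in the patching near $\pa M$, where the sharp cutoffs in $\tau=(t-t_+)/r$ built into the radial point estimates~\eqref{EqUPrInCutoff} must dovetail with the wedge structure~\eqref{EqSULocDom} of Proposition~\ref{PropSULoc}; these edge-regular localizations were designed precisely for this purpose, so that once a suitable finite cover of $\bar\Omega\cap\pa M$ by such wedge domains is fixed, the bookkeeping of hand-offs between radial estimates and edge-local solvability is routine.
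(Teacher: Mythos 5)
Your overall architecture (microlocal elliptic/real-principal-type/radial-point estimates in the interior and at $\pa\cR_{\rm in/out}^\pm$, capped off by local solvability near $\pa\bar\Omega$, with the non-refocusing condition guaranteeing that error terms are absorbed in source-to-sink order) is the same as the paper's, and the microlocal part is essentially correct. But the way you deploy Proposition~\ref{PropSULoc} is misplaced, and this leaves a genuine gap.

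First, the place where you invoke the wedge domains --- the initial hypersurface $t_{\rm ini,1}^{-1}(0)$ meeting $\pa M$ --- is exactly where no such device is needed: $u\in\He^{\sfs_0,\ell}(\Omega;\cE)^{\bullet,-}$ has \emph{supported} character there, i.e.\ it extends by zero to the past, and Propositions~\ref{PropSUPrIn}--\ref{PropSUPrOut} are stated precisely for $u$ vanishing on $\supp\chi_2\cap\{t<t_-\}$; propagation simply starts from the region where $u\equiv 0$. Moreover your covering claim fails geometrically: the closure of any wedge $\Omega_{\tau_0,\tau_1,\tau_-,r_0}$ meets $\pa M$ only in the single fiber $\phi^{-1}(t_0)$ (since $\tau_0 r<t-t_0<\tau_1 r$ forces $t\to t_0$ as $r\to 0$), so a finite union of wedges can never contain a neighborhood of the segment $\bar\Omega\cap\pa M\cap\{t_{\rm ini,1}\leq\eps\}$, which has positive length in $t$.

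Second, and more seriously, you never address the \emph{final} boundary hypersurface $t_{\rm fin,1}^{-1}(0)$ where it meets $\pa M$; your two cases cover only hypersurfaces disjoint from $\pa M$ and the initial one meeting $\pa M$. This is the one place where Proposition~\ref{PropSULoc} is indispensable: every microlocal estimate controls $u$ on a set only in terms of (weak norms of) $u$ on a strictly larger set, and at the final fiber $\phi^{-1}(t_+)$ that larger set necessarily leaves $\Omega$, where you have no information. The paper closes this by an extension-and-cutoff argument: extend $f$ continuously to $\Omega'\supset\bar\Omega$, write $P(\chi u)=\chi\tilde f+[P,\chi]u$ with $\chi=\chi(\tau)$ equal to $1$ for $\tau\geq\tau_0^+$, note that $[P,\chi]u\in\He^{\sfs-1,\ell-2}$ by the regularity already established in the region $\tau<\tau_0^+$, and solve forward on a wedge $\Omega_{\tau_0,\tau_1,\tau_-,r_0}$ whose own final hypersurface $\tau=\tau_1$ lies \emph{outside} $\Omega$; the forward solution coincides with $\chi u$ by uniqueness, and Proposition~\ref{PropSULoc} then yields the uniform $\He^{\sfs,\ell}$ bound up to $t_{\rm fin,1}^{-1}(0)$ near $\cC$. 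Your closing remark about the cutoffs in $\tau=(t-t_+)/r$ suggests you sensed that the wedge structure is tied to the final time $t_+$, but the argument as written attaches it to the wrong end of the domain.
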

\begin{proof}
  We drop the bundle $\cE$ from the notation. We use the notation of Definition~\ref{DefD} and write $\Omega$ in terms of $t_{\rm ini,j}$, $t_{\rm fin,j}$ as in~\eqref{EqD}. Let $t_+$ be the (constant) value of $t$ at the (unique) future boundary hypersurface $t_{\rm fin,1}^{-1}(0)\cap\bar\Omega$ which intersects $\pa M$. Let
  \[
    -1<\tau_0<\tau_0^+<\tau_1<1
  \]
  be such that $\tau:=\frac{t-t_+}{r}\in(\tau_0^+,\tau_1)$ on $t_{\rm fin,1}^{-1}(0)$ in a neighborhood $r<r_f$ of $r=0$. 

  We first work on
  \[
    \bar\Omega_{<\rm fin} := \left(\bigcap \{ t_{\rm ini,j}\geq 0 \} \cap \bigcap \{ t_{\rm fin,j}<-\delta \}\right) \cup \{ r<r_f,\ \tau<\tau_0^+ \},
  \]
  where $\delta>0$ is small (so that also $\bigcap\{ t_{\rm ini,j}>0\}\cap\bigcap\{t_{\rm fin,j}<-\delta\}$ is a spacetime domain). Microlocal elliptic regularity in the edge calculus (near $\phi^{-1}(t_+)$ with edge regular symbols) shows that $u$ lies microlocally in $\He^{\sfs+1,\ell}$ away from $\pa\Sigma^\pm$. Working in $\pa\Sigma^+$ for notational clarity, we next use real principal type propagation of edge regularity, starting in the past of the initial boundary hypersurfaces of $\Omega$ where $u$ vanishes (and thus lies in $\He^{\infty,\ell}$): this gives microlocal $\He^{\sfs,\ell}$-regularity at those $\zeta\in\pa\Sigma^+$ so that the backwards null-bicharacteristic starting at $\zeta$ enters $\bigcup\{t_{\rm ini,j}<0\}$. In view of the non-refocusing assumption on $\Omega$, the set of those $\zeta$ in particular contains the operator wave front set of the a priori control operator $E$ in Proposition~\ref{PropSUPrIn}. In the notation of Proposition~\ref{PropSUPrIn}, we thus conclude that $\chi_1 u$ lies in $\He^{\sfs,\ell}$ microlocally near $\pa\cR_{\rm in}^+\cap\bar\Omega$, from where we can propagate $\He^{\sfs,\ell}$-regularity along the flow over $\pa M$ to a punctured neighborhood of $\pa\cR_{\rm out}^+$. We can then apply Proposition~\ref{PropSUPrOut} (for slightly smaller cutoffs $\chi_1^\flat,\chi_2^\flat$) to get control of $\chi_1^\flat u$ at $\pa\cR_{\rm out}^+\cap\bar\Omega$. All $\zeta\in\pa\Sigma^+$ over $\bar\Omega_{<\rm fin}$ not yet covered thus far lie over $r>0$, and the backwards null-bicharacteristics starting at such $\zeta$ must tend to $\pa\cR_{\rm out}^+$; thus, microlocal $\He^{\sfs,\ell}$-regularity of $u$ at $\zeta$ follows again by real principal type propagation. In summary, since $\delta>0$ can be taken to be arbitrarily small, we have shown that $\chi u\in\He^{\sfs,\ell}$ for all $\chi\in\CIc(M)$ with support in $\bigcap\{t_{\rm ini,j}\geq-\delta'\}\cap\bigcap\{t_{\rm fin,j}<0\}$ where $\delta'>0$ is small enough so that $\Omega_{-\delta',0}=\bigcap\{t_{\rm ini,j}>-\delta'\}\cap\bigcap\{t_{\rm fin,j}<0\}$ is a spacetime domain; and also $\chi_1 u\in\He^{\sfs,\ell}$ for cutoffs $\chi_1$ as in Proposition~\ref{PropSUPrIn} which localize near $r=0$ and are supported in the region $\tau<\tau_0^+$.

  In order to obtain uniform regularity down to the final boundary hypersurfaces of $\Omega$, we use an extension and local solvability argument which is local near the final boundary hypersurfaces of $\Omega$ and edge-local near $\{t_{\rm fin,1}^{-1}(0)\}$. Fix a non-refocusing spacetime domain $\Omega'\supset\bar\Omega$ so that $\sfs,\ell$ are $P$-admissible on $\Omega'$, and fix a continuous extension map $E\colon\He^{\sfs-1,\ell-2}(\Omega)^{\bullet,-}\to\He^{\sfs-1,\ell-2}(\Omega')^{\bullet,-}$, e.g.\ the inverse of the restriction of the map
  \[
    \He^{\sfs-1,\ell-2}(\Omega')^{\bullet,-}\ni u\mapsto u|_\Omega\in\He^{\sfs-1,\ell-2}(\Omega)^{\bullet,-}
  \]
  to the orthogonal complement of its kernel. Given $f\in\He^{\sfs-1,\ell-2}(\Omega)^{\bullet,-}$, set $\tilde f=E f$.

  For $\tau_-\in(-1,\tau_0)$ and sufficiently small $r_0>0$ then, the domain $\Omega_{\tau_0,\tau_1,\tau_-,r_0}$ defined in~\eqref{EqSULocDom} is contained in $\Omega'$, its initial hypersurface $\tau=\tau_0$ is contained in $\bar\Omega$. Furthermore, the final hypersurface $\tau=\tau_1$ is disjoint from $\Omega$. Let $\chi=\chi(\tau)$ be a cutoff which is $0$ near $\tau_0$ and $1$ for $\tau\geq\tau_0^+$, and let $\psi=\psi(r)$ be equal to $1$ near $r=0$ and supported in $r<r_1:=\frac{r_0}{\tau_0-\tau}$. Then
  \begin{equation}
  \label{EqSULocReg}
    P(\chi u)=\chi f+[P,\chi]u
  \end{equation}
  on $\Omega\cap\Omega_{\tau_0,\tau_1,\tau_-,r_0}$. But then $\chi u=\tilde u|_{\Omega\cap\Omega_{\tau_0,\tau_1,\tau_-,r_0}}$ where $\tilde u$ is the unique forward solution of $P\tilde u=\chi\tilde f+[P,\chi]u\in\He^{\sfs-1,\ell-2}$ on $\Omega_{\tau_0,\tau_1,\tau_-,r_0}$; the membership $[P,\chi]u\in\He^{\sfs-1,\ell-2}$ follows from the regularity of $u$ established previously. An application of Proposition~\ref{PropSULoc} gives $\tilde u\in\He^{\sfs,\ell}(\Omega_{\tau_0,\tau_1,\tau_-,r_0})^{\bullet,-}$, which upon restriction implies the $\He^{\sfs,\ell}(\Omega)^{\bullet,-}$-membership of the localization of $u$ near the final fiber $\phi^{-1}(t_0)$.

\begin{figure}[!ht]
\centering
\includegraphics{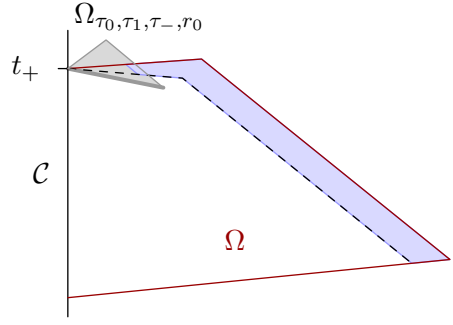}
\caption{Illustration of the local solvability and regularity argument near the final boundary hypersurfaces of $\Omega$: we solve an extended equation in $\Omega_{\tau_0,\tau_1,\tau_-,r_0}$, with control on $\tilde u$ near the thick initial boundary hypersurface provided by the first (microlocal propagation) part of the proof. We can subsequently also solve up to the final boundary hypersurfaces of $\Omega$ away from $\cC$ (drawn in blue) using Proposition~\ref{PropSULoc}.}
\label{FigSULocReg}
\end{figure}

  A similar extension argument gives uniform $H^\sfs$-membership of $u$ also near the complement of $\phi^{-1}(t_0)$ inside the union of the final boundary hypersurfaces of $\Omega$: this now relies on Lemma~\ref{LemmaSULocAway}, which we apply on the complement, inside of $\bigcap\{-\delta<t_{\rm fin,j}<\delta\}\cap\bigcap\{t_{\rm ini,j}>0\}$, of the region $t>t_++\tau_1(r-\delta)$, $r<r_1$, for small $\delta>0$, to a cut-off wave equation as in~\eqref{EqSULocReg}, where now $\chi$ equals $0$ near $t_{\rm fin,j}=-\delta$ and $1$ near $t_{\rm fin,j}\geq 0$. See Figure~\ref{FigSULocReg}.
\end{proof}

\subsection{Spectral admissibility; inversion of the edge normal operator}
\label{SsSUI}

Only using the principal symbol of $P$ (see~\eqref{EqSUOpMem}), or more specifically the structure of the null-geodesic flow on non-refocusing spacetime domains, and at the radial sets over $\pa M$ also subprincipal symbol information (from Definition~\ref{DefSUNThr}), Proposition~\ref{PropSULocReg} provides full control of solutions of $P u=f$ in a weighted edge Sobolev space, \emph{assuming} a priori membership of $u$ in an edge Sobolev space \emph{with the same weight}. In order to control $u$ also in the sense of decay at $\pa M$, we need to take the mapping properties of normal operators of $P$ at $\pa M$ into account. Recall $\hat N_{\eop,t_0}(r^2 P,\hat\sigma)$ and $\hat N_{\eop,t_0}(P,\hat\sigma)=\hat r^{-2}\hat N_{\eop,t_0}(r^2 P,\hat\sigma)$ from~\eqref{EqSUNeFamRed} and~\eqref{EqSUNRedResc}.

\begin{definition}[Spectral admissibility]
\label{DefSUIAdm}
  Let $\ell\in\R$ and $t_0\in I$. On $[0,\infty)_{\hat r}\times\Sph^{n-1}$, fix the volume density $|\hat r^{n-1}\,\dd\hat r\,\dd h(t_0)|$ and any non-degenerate fiber inner product on $\cE_{t_0}$ to define adjoints. We say that $P$ is \emph{spectrally admissible with weight $\ell$ at $t_0$} if $\ell-\frac{n}{2}$ is a non-indicial weight at $t_0$ (see Definition~\usref{DefSUNInd}), and if the following conditions hold for all $\hat\sigma\in\C$, $\Im\hat\sigma\geq 0$, $|\hat\sigma|=1$, and all smooth sections $u=u(\hat r,\omega)$ of $\cE_{t_0}$ on $(0,\infty)\times\Sph^{n-1}$.
  \begin{enumerate}
  \item\label{ItSUIAdmDir}{\rm (Injectivity.)} Assume that $\hat N_{\eop,t_0}(P,\hat\sigma)u=0$, $|(\hat r\pa_{\hat r})^j\pa_\omega^\alpha u|\lesssim\hat r^{\ell-\frac{n}{2}}$ for $\hat r\in(0,1]$, and $|(\hat r\pa_{\hat r})^j\pa_\omega^\alpha(e^{-i\hat\sigma\hat r}u)|\lesssim\hat r^C$ (for $\hat\sigma=\pm 1$), resp.\ $|\pa_{\hat r}^j\pa_\omega^\alpha u|\lesssim\hat r^{-N}$ (for $\Im\hat\sigma>0$) for $\hat r\in[1,\infty)$ and for all $j\in\N_0$, $\alpha\in\N_0^{n-1}$, $N\in\R$, and for some constant $C$. Then $u=0$.
  \item\label{ItSUIAdmAdj}{\rm (Injectivity of the adjoint.)} Assume that $\hat N_{\eop,t_0}(P,\hat\sigma)^*u=0$, $|(\hat r\pa_{\hat r})^j\pa_\omega^\alpha u|\lesssim\hat r^{-\frac{n-4}{2}-\ell}$ for $\hat r\in(0,1]$, and $|(\hat r\pa_{\hat r})^j\pa_\omega^\alpha(e^{i\hat\sigma\hat r}u)|\lesssim\hat r^C$ (for $\hat\sigma=\pm 1$), resp.\ $|\pa_{\hat r}^j\pa_\omega^\alpha u|\lesssim\hat r^{-N}$ (for $\Im\hat\sigma>0$) for $\hat r\in[1,\infty)$ and for all $j,\alpha,N$, for some constant $C$. Then $u=0$.
  \end{enumerate}
  We say that $P$ is \emph{spectrally admissible with weight $\ell$ on the spacetime domain $\Omega\subset M$} if it is spectrally admissible at all $t_0\in I$ with $\phi^{-1}(t_0)\subset\bar\Omega$.
\end{definition}

Recall $\hat X=[0,\infty]_{\hat r}\times\Sph_\omega^{n-1}$; we continue using the volume density $|\hat r^{n-1}\,\dd\hat r\,\dd h(t_0)|$ on $\hat X$. Writing scattering covectors near $\hat r=\infty$ as $\xi_\scop\,\dd\hat r+\eta_\scop\cdot\hat r\,\dd\omega$, set $G_\scop:=\xi_\scop^2+|\eta_\scop|_{h^{-1}(t_0)}^2$ and
\begin{equation}
\label{EqSUIRadsc}
\begin{split}
  {}^\scop\Sigma &:= \{(\hat r,\omega;\xi_\scop,\eta_\scop)\colon \hat r=\infty,\ G_\scop=1\}, \\
  {}^\scop\cR_{\rm in,\pm 1} &:= \{ (\hat r,\omega;\xi_\scop,\eta_\scop) \colon \hat r=\infty,\ \xi_\scop=\mp 1,\ \eta_\scop=0 \}, \\
  {}^\scop\cR_{\rm out,\pm 1} &:= \{ (\hat r,\omega;\xi_\scop,\eta_\scop) \colon \hat r=\infty,\ \xi_\scop=\pm 1,\ \eta_\scop=0 \}.
\end{split}
\end{equation}
Let moreover $\sfH:=\hat r H_{G_\scop}=(\pa_{\xi_\scop}G_\scop)(\hat r\pa_{\hat r}-\eta_\scop\pa_{\eta_\scop})+(\pa_{\eta_\scop}G_\scop)\pa_\omega-((\hat r\pa_{\hat r}-\eta_\scop\pa_{\eta_\scop})G_\scop)\pa_{\xi_\scop}-(\pa_\omega G_\scop)\pa_{\eta_\scop}$; thus, $\pm\sfH|_{\hat r^{-1}(\infty)}$ flows in ${}^\scop\Sigma$ from the source ${}^\scop\cR_{\rm in,\pm 1}$ to the sink ${}^\scop\cR_{\rm out,\pm 1}$. Recall the quantities $\vartheta_{\rm in}(t_0),\vartheta_{\rm out}(t_0)$ from Definition~\ref{DefSUNThr}. Note that ${}^\scop\Sigma$ is the preimage (over $\pa_\infty\hat X$) of $\Sigma\cap\Te^*_{\phi^{-1}(t_0)}M$ under the map $f_{\pm 1}$ in~\eqref{EqEInvPhaseSpace}; similarly for ${}^\scop\cR_{\rm in/out,\pm 1}$ and $\cR_{\rm in/out}\cap\Te^*_{\phi^{-1}(t_0)}M$.

\begin{lemma}[Spectral admissibility as an invertibility statement]
\label{LemmaSUIInv}
  Let $\ell\in\R$, and suppose that $P$ is spectrally admissible at $t_0$ with weight $\ell$. Let $\sfs\in\CI(\Ssc^*\hat X)$ and $\sfr\in\CI(\ol{\Tsc^*_{\pa_\infty\hat X}}\hat X)$; suppose that $\sfr$ is a constant $>-\frac12+\vartheta_{\rm in}(t_0)$ near ${}^\scop\cR_{\rm in,\pm 1}$ and a constant $<-\frac12+\vartheta_{\rm out}(t_0)$ near ${}^\scop\cR_{\rm out,\pm 1}$, as well as $\pm\sfH\sfr\leq 0$ on ${}^\scop\Sigma$. Then there exists a bounded right inverse
  \[
    \hat N_{\eop,t_0}(P,\hat\sigma)^{-1} \colon H_{\bop,\scop}^{\sfs-2,\ell-2,\sfr+1}(\hat X;\cE_{t_0}) \to H_{\bop,\scop}^{\sfs,\ell,\sfr}(\hat X;\cE_{t_0}),
  \]
  and moreover there exists a constant $C$ so that
  \begin{equation}
  \label{EqSUIInvEst}
    \| u \|_{H_{\bop,\scop}^{\sfs,\ell,\sfr}(\hat X;\cE_{t_0})} \leq C\| \hat N_{\eop,t_0}(P,\hat\sigma)u \|_{H_{\bop,\scop}^{\sfs-2,\ell-2,\sfr+1}(\hat X;\cE_{t_0})}
  \end{equation}
  for all $u$ for which both sides are finite, and for all $\hat\sigma=e^{i\theta}$ where $\theta\in[0,\frac{\pi}{4}]$ (for the `$+$' sign), resp.\ $\theta\in[\frac{3\pi}{4},\pi]$ (for the `$-$' sign). For $\hat\sigma=e^{i\theta}$, $\theta\in[\frac{\pi}{4},\frac{3\pi}{4}]$, this remains true for arbitrary orders $\sfs,\sfr$, and $\sfr+1$ on the right in~\eqref{EqSUIInvEst} can be replaced by $\sfr$.
\end{lemma}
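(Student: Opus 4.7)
The strategy is to assemble a microlocal Fredholm estimate for $\hat N_{\eop,t_0}(P,\hat\sigma)$ in the b-scattering calculus on $\hat X$, combine it with the spectral admissibility hypothesis to upgrade to invertibility at each fixed $\hat\sigma$, and then extract the uniform bound~\eqref{EqSUIInvEst} by a continuity argument on the compact arcs of the unit circle under consideration.

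First I would prove the semi-Fredholm estimate
\[
  \|u\|_{H_{\bop,\scop}^{\sfs,\ell,\sfr}(\hat X;\cE_{t_0})} \leq C\bigl(\|\hat N_{\eop,t_0}(P,\hat\sigma) u\|_{H_{\bop,\scop}^{\sfs-2,\ell-2,\sfr+1}(\hat X;\cE_{t_0})} + \|u\|_{H_{\bop,\scop}^{\sfs',\ell',\sfr'}(\hat X;\cE_{t_0})}\bigr)
\]
for any $\sfs'<\sfs$, $\ell'<\ell$, $\sfr'<\sfr$, by combining four microlocal ingredients: (i) elliptic regularity away from the b-scattering characteristic set; (ii) b-propagation together with inversion of the Mellin-transformed indicial family $N_{\bop,t_0}(r^2 P,\xi)$ on the line $\Re\xi=\ell-\tfrac{n}{2}$, which is invertible by the non-indicial weight hypothesis and yields control at $\hat r=0$ (the weight shift $\ell\to\ell-2$ reflects the rescaling $\hat N_{\eop,t_0}(P,\hat\sigma)=\hat r^{-2}\hat N_{\eop,t_0}(r^2 P,\hat\sigma)$); (iii) scattering microlocal estimates at $\hat r=\infty$, whose nature depends on $\theta$. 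The scattering principal symbol at $\pa_\infty\hat X$ equals $\xi_\scop^2+|\eta_\scop|_{h^{-1}(t_0)}^2-\hat\sigma^2$; for $\theta\in[\pi/4,3\pi/4]$ this is elliptic on all real scattering covectors, no radial point analysis is needed, and arbitrary orders $\sfs,\sfr$ work with the estimate improving so that $\sfr$ replaces $\sfr+1$ on the right---this is exactly the second assertion of the lemma. For $\hat\sigma=\pm 1$ the scattering characteristic set is ${}^\scop\Sigma$ of~\eqref{EqSUIRadsc} and positive-commutator radial point estimates at ${}^\scop\cR_{\rm in/out,\pm 1}$ yield the threshold conditions on $\sfr$; (iv) real-principal-type propagation of scattering regularity along $\pm\sfH$ on ${}^\scop\Sigma$, using $\pm\sfH\sfr\leq 0$, connects the scattering radial sets.

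A dual argument---with source/sink swapped under $\sfH\to-\sfH$ and $\vartheta_{\rm in/out}$ thresholds interchanged as in part (2) of Definition~\ref{DefSUIAdm}---yields the analogous semi-Fredholm estimate for $\hat N_{\eop,t_0}(P,\hat\sigma)^*$ on the dual spaces. Combined with the compact embedding $H_{\bop,\scop}^{\sfs,\ell,\sfr}\hookrightarrow H_{\bop,\scop}^{\sfs',\ell',\sfr'}$, these promote $\hat N_{\eop,t_0}(P,\hat\sigma)$ to a Fredholm operator between the stated spaces. Spectral admissibility now enters decisively: any $u$ in the kernel is polyhomogeneous at $\hat r=0$ with leading order at least $\ell-\tfrac{n}{2}$ (by the indicial analysis) and, at $\hat r=\infty$, either decays exponentially (for $\Im\hat\sigma>0$) or has outgoing spherical-wave asymptotics $e^{i\hat\sigma\hat r}$ times a polyhomogeneous expansion (for $\hat\sigma=\pm 1$), the latter extracted from the $\sfr$-regularity near ${}^\scop\cR_{\rm out,\pm 1}$. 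These bounds match the hypotheses in part (1) of Definition~\ref{DefSUIAdm} and therefore force $u=0$; the adjoint case rules out the cokernel analogously. Hence $\hat N_{\eop,t_0}(P,\hat\sigma)$ is invertible for each fixed $\hat\sigma$ in the relevant range.

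Finally, continuity (in fact polynomial dependence) of $\hat\sigma\mapsto\hat N_{\eop,t_0}(P,\hat\sigma)$, together with the uniformity of the Fredholm estimate on each compact arc and a standard perturbation argument, shows that $\hat\sigma\mapsto\|\hat N_{\eop,t_0}(P,\hat\sigma)^{-1}\|$ is continuous and hence bounded on the arc in question; this yields~\eqref{EqSUIInvEst}. The hardest step is (iii): pinpointing the scattering subprincipal symbol of $\hat N_{\eop,t_0}(P,\hat\sigma)$ at ${}^\scop\cR_{\rm in/out,\pm 1}$ sufficiently precisely to match the threshold conditions on $\sfr$ to the quantities $\vartheta_{\rm in/out}(t_0)$ constructed from the non-self-adjoint, bundle-valued data $a(t_0,\omega)$, $b(t_0,\omega)$, via a subprincipal-symbol computation parallel to Lemma~\ref{LemmaSUPrThr}. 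This is essentially the analysis of \cite{HintzConicProp}, adapted to the present context.
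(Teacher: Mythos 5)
Your proposal is correct and follows essentially the same route as the paper: elliptic estimates at fiber infinity, inversion of the indicial family at $\hat r=0$ using the non-indicial weight condition, scattering radial point estimates (with $\Im\hat\sigma$ as complex absorption) and real principal type propagation at $\hat r=\infty$, a dual estimate yielding Fredholmness, and spectral admissibility killing kernel and cokernel via the conormal/outgoing asymptotics of putative kernel elements. The only cosmetic differences are that the paper obtains the $e^{i\hat\sigma\hat r}\times$(conormal) description of kernel elements explicitly via module regularity with respect to $\hat r(\pa_{\hat r}-i\hat\sigma)$ and $\pa_\omega$, and leaves the uniformity in $\hat\sigma$ over the compact arc implicit rather than spelling out your continuity argument.
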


For $\hat\sigma=\pm 1$, this result is analogous to \cite[Lemma~4.8]{HintzConicProp}. It can also be used to show that Definition~\ref{DefSUIAdm} is equivalent, up to shifts in orders due to shifts in weights arising from Sobolev embedding, to \cite[Definition~4.6]{HintzConicProp}.

\begin{proof}[Proof of Lemma~\usref{LemmaSUIInv}]
  On a conceptual level, the phase space relationship~\eqref{EqEInvPhaseSpace} means that the estimate~\eqref{EqSUIInvEst} follows in the cases $\hat\sigma=\pm 1$ near $\pa_\infty\hat X$ from the edge propagation result, Propositions~\ref{PropSUPrIn} and \ref{PropSUPrOut} (without localization in time), near the radial sets over $\pa M$; since the space $H_{\eop,I}^{\sfs,\ell}$ is related (via Lemma~\ref{LemmaEInvFT}) to $H_{\bop,\scop}^{f_{\hat\sigma}^*\sfs,\ell,\sfr}$ with $\sfr=f_{\hat\sigma}^*\sfs-\ell$, the threshold condition $\sfs>-\frac12+\ell+\vartheta_{\rm in}(t_0)$ in the edge setting is equivalent to $\sfr>-\frac12+\vartheta_{\rm in}(t_0)$ in the present scattering setting, similarly at the outgoing radial set. Allowing for $\Im\hat\sigma$ to be nonnegative corresponds to the usual limiting absorption principle estimate, and when $\Im\hat\sigma>0$ the operator $\hat N_{\eop,t_0}(P,\hat\sigma)$ is elliptic. The additional condition on $\ell$ ensures the applicability of b-normal operator estimates near $\pa_0\hat X$, i.e.\ estimates involving the inverse of the indicial family $N_{\bop,t_0}(r^2 P,\xi)$, $\Re\xi=\ell-\frac{n}{2}$, defined in~\eqref{EqSUNbMellin}; note here that $\hat r^\ell L^2(\hat X)=\hat r^{\ell-\frac{n}{2}}L^2(\hat X;|\frac{\dd\hat r}{\hat r}\,\dd h(t_0)|)$, with the latter space being mapped isometrically to $L^2(\{\Re\xi=\lambda-\frac{n}{2}\};L^2(\Sph^{n-1};|\dd h(t_0)|))$ by the Mellin transform.

  In some more detail, the ellipticity at fiber infinity of $\hat N_{\eop,t_0}(P,\hat\sigma)$ implies
  \[
    \|u\|_{H_{\bop,\scop}^{\sfs,\ell,\sfr}} \lesssim \|\hat N_{\eop,t_0}(P,\hat\sigma)u \|_{H_{\bop,\scop}^{\sfs-2,\ell-2,\sfr}} + \|u\|_{H_{\bop,\scop}^{s_0,\ell,\sfr}}
  \]
  for any fixed $s_0<\inf\sfs$. Next, let $\chi\in\CIc([0,2)_{\hat r})$ be identically $1$ on $[0,1]$, then the second term, with $u=\chi u+(1-\chi)u$, can be estimated using the triangle inequality by
  \[
    \|\chi u\|_{\Hb^{s_0,\ell-\frac{n}{2}}(|\frac{\dd\hat r}{\hat r}\,\dd h(t_0)|)} + \|u\|_{H_{\bop,\scop}^{s_0,-N,\sfr}}
  \]
  for any fixed $N<\ell$. Since $\ell-\frac{n}{2}$ is a non-indicial weight at $t_0$, we can estimate
  \[
    \|\chi u\|_{\Hb^{s_0,\ell}} \lesssim \| N_{\bop,t_0}(r^2 P)(\chi u) \|_{\Hb^{s_0-2,\ell}} = \| \hat r^{-2} N_{\bop,t_0}(r^2 P)(\chi u) \|_{\Hb^{s_0-2,\ell-2}}.
  \]
  Replacing $\hat r^{-2}N_{\bop,t_0}(r^2 P)$ by $\hat N_{\eop,t_0}(P,\hat\sigma)$ creates an error bounded by $\|\chi u\|_{\Hb^{s_0-2,\ell-1}}$, and similarly $\|[\hat N_{\eop,t_0}(P,\hat\sigma),\chi]u\|_{\Hb^{s_0-2,\ell-2}}\lesssim\|\tilde\chi u\|_{\Hb^{s_0-1,-N}}$ where $\tilde\chi\in\CIc([0,\infty))$ equals $1$ on $\supp\chi$. Thus, we have
  \[
    \|u\|_{H_{\bop,\scop}^{\sfs,\ell,\sfr}} \lesssim \|\hat N_{\eop,t_0}(P,\hat\sigma)u \|_{H_{\bop,\scop}^{\sfs-2,\ell-2,\sfr}} + \|u\|_{H_{\bop,\scop}^{s_0,\ell_0,\sfr}}
  \]
  where $\ell_0<\ell$. Finally, we weaken the scattering decay order $\sfr$ on the final term using elliptic estimates near $\pa_\infty\hat X$ away from ${}^\scop\Sigma$, and near ${}^\scop\Sigma$ using Melrose's radial point estimates \cite{MelroseEuclideanSpectralTheory} (with $\Im\hat\sigma$ acting as complex absorption when it is nonzero) and real principal type propagation. A detailed account is given in the proof of \cite[Proposition~5.28]{VasyMinicourse}. This gives
  \[
    \|u\|_{H_{\bop,\scop}^{\sfs,\ell,\sfr}} \lesssim \|\hat N_{\eop,t_0}(P,\hat\sigma)u \|_{H_{\bop,\scop}^{\sfs-2,\ell-2,\sfr}} + \|u\|_{H_{\bop,\scop}^{s_0,\ell_0,\sfr_0}}
  \]
  with $s_0<\sfs$, $\ell_0<\ell$, $\sfr_0<\sfr$; so the inclusion of the space on the left into the space on the right is compact. An analogous estimate for $\hat N_{\eop,t_0}(P,\hat\sigma)^*$ on the dual spaces implies the Fredholm property of
  \[
    \hat N_{\eop,t_0}(P,\hat\sigma) \colon \bigl\{ u\in H_{\bop,\scop}^{\sfs,\ell,\sfr} \colon \hat N_{\eop,t_0}(P,\hat\sigma)u \in H_{\bop,\scop}^{\sfs-2,\ell-2,\sfr+1} \bigr\} \to H_{\bop,\scop}^{\sfs-2,\ell-2,\sfr+1}.
  \]

  We proceed to prove the invertibility of this map. If $u\in H_{\bop,\scop}^{\sfs,\ell,\sfr}$ lies in the kernel of $\hat N_{\eop,t_0}(P,\hat\sigma)$, then $u\in H_{\bop,\scop}^{\infty,\ell,\sfr}$ by elliptic regularity; this implies smoothness for bounded $\hat r\in(0,\infty)$, and for $\hat r<1$ via Sobolev embedding the pointwise upper bound $\hat r^{\ell-\frac{n}{2}}$ for $u$ and all its b-derivatives. Consider now the case $\hat\sigma=\pm 1$. Near $\hat r=\infty$, we proceed as in \cite[Proof of Proposition~4.4]{HintzNonstat} and note that $u$ satisfies iterative regularity under any application of any number of operators $\hat r A$ where $A\in\Diffsc^1(\hat X)$ has principal symbol vanishing at ${}^\scop\cR_{\rm out,\hat\sigma}$; see e.g.\ \cite[\S2]{GellRedmanHassellShapiroZhangHelmholtz} for a detailed discussion extending \cite{HassellMelroseVasySymbolicOrderZero}. Taking as these operators for example $\hat r(\pa_{\hat r}-i\hat\sigma)$ and spherical vector fields $\pa_\omega$, one concludes (see \cite[\S3.3]{GellRedmanHassellShapiroZhangHelmholtz} and also \cite[Proposition~12]{MelroseEuclideanSpectralTheory}) that $u=e^{i\hat\sigma\hat r}u_0(\hat r,\omega)$ where $u_0$ is conormal at $\hat r=\infty$. According to Definition~\ref{DefSUIAdm}, the spectral admissibility of $P$ implies that $u$ must vanish. For the cokernel, we note that the $L^2$-dual of $H_{\bop,\scop}^{\sfs-2,\ell-2,\sfr+1}$ is $H_{\bop,\scop}^{\sfs',\ell',\sfr'}$ with $\sfs'=-\sfs+2$, $\ell'=-\ell+2$, $\sfr'=-\sfr-1$; and thus $\ell'-\frac{n}{2}=-\frac{n-4}{2}-\ell$. The triviality of $\ker\hat N_{\eop,t_0}(P,\hat\sigma)^*$ on this space then follows by similar arguments.

  In the case $\Im\hat\sigma>0$, elements of the kernel or cokernel of $\hat N_{\eop,t_0}(P,\hat\sigma)$ are automatically rapidly decaying as $\hat r\to\infty$, and hence invertibility follows directly from Definition~\ref{DefSUIAdm}.
\end{proof}

Using the (inverse) Fourier transform, we turn this into estimates for $N_{\eop,t_0}(r^2 P)$ which will take place on the invariant edge Sobolev spaces $H_{\eop,\rm I}^{\sfs,\ell}(M';\cE_{t_0})$ recalled in~\S\ref{SsEInv}; here $M'={}^+N\phi^{-1}(t_0)$, and we recall that ${}^+N\phi^{-1}(t_0)\cong\R_{t'}\times[0,\infty)_{r'}\times\Sph^{n-1}$. More generally, for $t'_0\in\R$, we consider
\begin{align*}
  M'_{\geq t'_0} &:= [t'_0,\infty) \times [0,\infty)_{r'}\times\Sph^{n-1}, \\
  H_{\eop,\rm I}^{\sfs,\ell}(M'_{\geq t'_0};\cE_{t_0})^\bullet &:= \bigl\{ u\in H_{\eop,\rm I}^{\sfs,\ell}(M';\cE_{t_0}) \colon \supp u\subset\{t'\geq t'_0\} \bigr\}.
\end{align*}

\begin{prop}[Forward solvability and uniqueness for edge normal operators]
\label{PropSUIFwd}
  Suppose that $P$ is spectrally admissible with weight $\ell\in\R$ at $t_0$. Let moreover $\sfs\in\CI(\Se^*M')$ be an invariant order function which is $P$-admissible at $t_0$, by which we mean that the restriction of $\sfs$ to $t'=r'=0$, regarded as an element of $\CI(\Se^*_{\phi^{-1}(t_0)}M)$, satisfies the conditions of Definition~\usref{DefSULocAdm} inside $\Se^*_{\phi^{-1}(t_0)}M$. Then
  \begin{align*}
    N_{\eop,t_0}(P) = r'{}^{-2}N_{\eop,t_0}(r^2 P) &\colon \bigl\{ u\in H_{\eop,\rm I}^{\sfs,\ell}(M';\cE_{t_0}) \colon N_{\eop,t_0}(P)u\in H_{\eop,\rm I}^{\sfs-1,\ell-2}(M';\cE_{t_0}) \bigr\} \\
      &\quad\hspace{13em} \to H_{\eop,\rm I}^{\sfs-1,\ell-2}(M';\cE_{t_0})
  \end{align*}
  is invertible. Moreover, this operator restricts to an invertible map
  \begin{equation}
  \label{EqSUIFwd}
    N_{\eop,t_0}(P) \colon \bigl\{ u\in H_{\eop,\rm I}^{\sfs,\ell}(M'_{\geq t_0'};\cE_{t_0})^\bullet \colon N_{\eop,t_0}(P)u\in H_{\eop,\rm I}^{\sfs-1,\ell-2}(M';\cE_{t_0})^\bullet \bigr\} \to H_{\eop,\rm I}^{\sfs-1,\ell-2}(M'_{\geq t_0'};\cE_{t_0})^\bullet.
  \end{equation}
\end{prop}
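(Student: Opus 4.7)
The plan is to invert the $t'$-translation-invariant operator $N_{\eop,t_0}(P)$ by Fourier transform, reducing everything to the reduced normal operator estimates already supplied by Lemma~\ref{LemmaSUIInv}. Since $N_{\eop,t_0}(P)$ commutes with translations in $t'$, Fourier transform in $t'$ conjugates it to multiplication by the spectral family $N_{\eop,t_0}(P,\sigma)$ for $\sigma\in\R$. Via the scaling relation~\eqref{EqSUNRedScaleOp}, i.e.\ by rescaling $\hat r=|\sigma|r'$ and setting $\hat\sigma=\sigma/|\sigma|$, this spectral family is unitarily conjugate to the reduced normal operator $\hat N_{\eop,t_0}(P,\hat\sigma)$ with $\hat\sigma=\pm 1$. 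The invariant edge Sobolev norms on $M'$ translate, via the dictionary in Lemma~\ref{LemmaEInvFT}, into $\sigma$-integrated b-scattering Sobolev norms on $\hat X$ for $\hat u(\sigma,\hat r,\omega)$, where the scattering decay order takes the form $\sfr_{\pm 1}=f_{\pm 1}^*\sfs-\ell$.

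With this translation in hand, the $P$-admissibility condition on $\sfs$ (Definition~\ref{DefSULocAdm}) is precisely what guarantees that $\sfr_{\pm 1}>-\tfrac12+\vartheta_{\rm in}(t_0)$ near ${}^\scop\cR_{\rm in,\pm 1}$, $\sfr_{\pm 1}<-\tfrac12+\vartheta_{\rm out}(t_0)$ near ${}^\scop\cR_{\rm out,\pm 1}$, and monotonicity $\pm\sfH\sfr_{\pm 1}\leq 0$ on ${}^\scop\Sigma$ —so that Lemma~\ref{LemmaSUIInv} applies. The lemma provides a uniform-in-$\hat\sigma$ bounded two-sided inverse for $\hat\sigma$ on the closed upper half unit circle (the spectral admissibility hypothesis kills the kernel and cokernel). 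Combining this with dilation by $|\sigma|$ gives a bounded inverse $N_{\eop,t_0}(P,\sigma)^{-1}$ with uniform bounds for all $\sigma\in\R\setminus\{0\}$; Plancherel then assembles these into the sought-after bounded two-sided inverse for $N_{\eop,t_0}(P)$ on invariant edge Sobolev spaces, defined by $N_{\eop,t_0}(P)^{-1}f:=\mathcal F_{t'}^{-1}(\sigma\mapsto N_{\eop,t_0}(P,\sigma)^{-1}\hat f(\sigma))$.

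For the restricted map~\eqref{EqSUIFwd}, translate so that $t'_0=0$, and apply Paley--Wiener: Lemma~\ref{LemmaSUIInv} also provides invertibility of $\hat N_{\eop,t_0}(P,\hat\sigma)$ for all $\hat\sigma=e^{i\theta}$ with $\theta\in[0,\pi]$ (with uniform bounds, and with no loss in the scattering order on the angular segment $[\tfrac{\pi}{4},\tfrac{3\pi}{4}]$ where $\hat N_{\eop,t_0}(P,\hat\sigma)$ is elliptic at fiber infinity). Dilating by $|\sigma|$ thus promotes $\sigma\mapsto N_{\eop,t_0}(P,\sigma)^{-1}$ to a holomorphic family of operators on $\{\Im\sigma>0\}$ with polynomial bounds as $|\sigma|\to\infty$ and the same uniform bounds at $\R+i0$. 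Contour-shifting the inverse Fourier transform into the upper half plane, or equivalently applying the standard Paley--Wiener argument on the relevant variable-order edge Sobolev spaces, shows that for $f$ supported in $\{t'\geq 0\}$ the solution $u=N_{\eop,t_0}(P)^{-1}f$ is likewise supported in $\{t'\geq 0\}$; uniqueness in this support class follows by the same holomorphic extension argument applied to the dual operator. The main technical point to handle with care is the dictionary of Lemma~\ref{LemmaEInvFT} for variable orders and the verification that the orders on the two sides of~\eqref{EqSUIFwd} line up compatibly with the scattering-to-edge translation when integrated in $\sigma$; the Paley--Wiener step itself is then routine, since holomorphy and uniform bounds are preserved by the dilation.
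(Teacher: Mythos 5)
Your proposal follows essentially the same route as the paper's proof: Fourier transform in $t'$, reduction to the reduced normal operator via the scaling~\eqref{EqSUNRedScaleOp}, the order dictionary of Lemma~\ref{LemmaEInvFT} combined with the uniform estimates of Lemma~\ref{LemmaSUIInv} to assemble the inverse by Plancherel, and a Paley--Wiener contour shift (using holomorphy and uniform bounds of $N_{\eop,t_0}(P,\sigma)^{-1}$ in $\Im\sigma\geq 0$) for the forward support property. The technical points you flag but defer---the precise uniform bounds in $\sigma$ near $0$ and at infinity, and the continuity/holomorphy of the resolvent family up to the real axis---are exactly the ones the paper spells out, so the argument is sound.
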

\begin{proof}
  We drop the bundle $\cE_{t_0}$ from the notation. We write $\cF u(\sigma;r',\omega')\equiv\hat u(\sigma;r',\omega'):=\int e^{i t'\sigma}u(t',r',\omega')\,\dd t'$ for the Fourier transform of $u$ in $t'$ (with the usual sign convention for Fourier transforms in time). For the first part, we use Lemma~\ref{LemmaEInvFT} with $\alpha=\ell$ and $w=n$ in conjunction with Lemma~\ref{LemmaSUIInv} to deduce, using $\hat M_{|\sigma|}(\hat r,\omega)=(\frac{\hat r}{|\sigma|},\omega)$, that
  \begin{align*}
    \|u\|_{H_{\eop,\rm I}^{\sfs,\ell}(M')} &\sim \int_{\R\setminus\{0\}} \|\hat M_{|\sigma|}^*(\hat u(\sigma;\cdot))\|_{H_{\bop,\scop}^{f_{\sgn\sigma}^*\sfs,\ell,f_{\sgn\sigma}^*\sfs-\ell}(\hat X)}^2\,|\sigma|^{2\ell-n}\,\dd\sigma \\
      &\lesssim \int_{\R\setminus\{0\}} \Bigl\|\hat N_{\eop,t_0}\Bigl(P,\frac{\sigma}{|\sigma|}\Bigr)(\hat M_{|\sigma|}^*\hat u(\sigma;\cdot))\Bigr\|_{H_{\bop,\scop}^{f_{\sgn\sigma}^*\sfs-2,\ell-2,f_{\sgn\sigma}^*\sfs-\ell+1}(\hat X)}^2\,|\sigma|^{2\ell-n}\,\dd\sigma.
  \end{align*}
  Recalling~\eqref{EqSUNRedScaleOp}, this is equal to
  \begin{align*}
    &\int_{\R\setminus\{0\}} \bigl\|\hat M_{|\sigma|}^*\bigl(N_{\eop,t_0}(r^2 P,\sigma)\hat u(\sigma;\cdot)\bigr) \bigr\|_{H_{\bop,\scop}^{f_{\sgn\sigma}^*\sfs-2,\ell,f_{\sgn\sigma}^*\sfs-\ell-1}(\hat X)}^2\,|\sigma|^{2\ell-n}\,\dd\sigma \\
    &\quad \lesssim \int_{\R\setminus\{0\}} \bigl\| \hat M_{|\sigma|}^*\bigl(\cF(N_{\eop,t_0}(r^2 P)u)(\sigma;\cdot)\bigr) \bigr\|_{H_{\bop,\scop}^{f_{\sgn\sigma}^*\sfs-1,\ell,f_{\sgn\sigma}^*\sfs-1-\ell}(\hat X)}^2\,|\sigma|^{2\ell-n}\,\dd\sigma \\
    &\quad \sim \|N_{\eop,t_0}(r^2 P)u\|_{H_{\eop,\rm I}^{\sfs-1,\ell}(M')} \\
    &\quad = \|N_{\eop,t_0}(P)u\|_{H_{\eop,\rm I}^{\sfs-1,\ell-2}(M')}.
  \end{align*}
  Solving $N_{\eop,t_0}(P)u=f$ via $\hat u(\sigma;\cdot)=N_{\eop,t_0}(P,\sigma)^{-1}\hat f(\sigma;\cdot)$ thus proves the first claim.

  We now turn to the proof of the forward solution property~\eqref{EqSUIFwd} by means of a Paley--Wiener argument. It suffices to consider the case $t'_0=0$ and source terms $f\in\CIc((0,\infty)_{t'}\times(0,\infty)_{r'}\times\Sph^{n-1})$. For all $N$, we then have $|\hat f(\sigma;r',\omega')|\lesssim\la|\sigma|\ra^{-N}$ uniformly in $\Im\sigma\geq 0$, and the same estimates are valid for all derivatives of $\hat f$. Concerning $\hat M_{|\sigma|}^*\hat f(\sigma;\cdot)$ then, i.e.\ passing to $\hat r=r'|\sigma|$, this implies
  \begin{equation}
  \label{EqSUIFwdf}
    |\hat r^j(\hat r\pa_{\hat r})^k\pa_{\omega'}^\alpha ( \hat M_{|\sigma|}^*\hat f(\sigma;\cdot) ) | \lesssim \la|\sigma|\ra^{-N},\qquad \Im\sigma\geq 0,
  \end{equation}
  for all $j\in\R$, $k\in\N_0$, and $\alpha$. Let $\sfs_{\hat\sigma}$ be the pullback of $\sfs$ under $f_{\sgn\Re\hat\sigma}$ when $\Im\hat\sigma<\frac12$, and an arbitrary constant otherwise. The bounds on $\hat N_{\eop,t_0}(P,\hat\sigma)^{-1}$ (with $\hat\sigma=\frac{\sigma}{|\sigma|}$ ranging over the closed upper half circle, which is compact) from Lemma~\ref{LemmaSUIInv} imply, upon noting that $\hat r^{-1}|\sigma|\sim 1$ on $\supp\hat f(\sigma;\cdot)$, that
  \begin{align}
    \|\hat M_{|\sigma|}^*(r'{}^{-\ell}N_{\eop,t_0}(P,\sigma)^{-1}\hat f(\sigma;\cdot))\|_{H_{\bop,\scop}^{\sfs_{\hat\sigma},0,\sfs_{\hat\sigma}}} &= \|\hat M_{|\sigma|}^*(N_{\eop,t_0}(P,\sigma)^{-1}\hat f(\sigma;\cdot))\|_{H_{\bop,\scop}^{\sfs_{\hat\sigma},\ell,\sfs_{\hat\sigma}-\ell}}\,|\sigma|^\ell \nonumber\\
      &\lesssim \| \hat M_{|\sigma|}^*\hat f(\sigma;\cdot)\|_{H_{\bop,\scop}^{N',\ell+2,N'}}\,|\sigma|^{\ell-2} \nonumber\\
      &= \|\hat M_{|\sigma|}^*\hat f(\sigma;\cdot)\|_{H_{\bop,\scop}^{N',0,N'+\ell+2}} \nonumber\\
  \label{EqSUIFwdMf}
      &\lesssim \la|\sigma|\ra^{-N}
  \end{align}
  for suitable $N'$ (which we may take to be arbitrarily large, exploiting~\eqref{EqSUIFwdf}). Consider now
  \begin{equation}
  \label{EqSUIFwdInvFT}
    u(t';\cdot) := \frac{1}{2\pi}\int_\R e^{-i t'\sigma} N_{\eop,t_0}(P,\sigma)^{-1}\hat f(\sigma;\cdot)\,\dd\sigma
  \end{equation}
  restricted to a bounded annular region $A_{R_0,R_1}=\{0<R_0\leq r'\leq R_1<\infty\}$. Fix $s_0\in\R$ to be smaller than $\sfs_{\hat\sigma}$ for all $\hat\sigma$; for $|\sigma|\lesssim 1$ then, we have $\hat r=r'|\sigma|\lesssim 1$ and thus
  \begin{equation}
  \label{EqSUIFwdUnif}
    \|N_{\eop,t_0}(P,\sigma)^{-1}\hat f(\sigma;\cdot)\|_{H^{s_0}(A_{R_0,R_1})} \lesssim \| \hat M_{|\sigma|}^*(r'{}^{-\ell}N_{\eop,t_0}(P,\sigma)^{-1}\hat f(\sigma;\cdot)) \|_{H_{\bop,\scop}^{\sfs_{\hat\sigma},0,\sfs_{\hat\sigma}}(\hat X)}
  \end{equation}
  (where the scattering decay order on the right is irrelevant since only the norm upon localization to $\hat r\lesssim 1$ enters here). For $|\sigma|\gtrsim 1$ on the other hand, so $\hat r\gtrsim 1$ and $\pa_{r'}=|\sigma|\pa_{\hat r}$, we have the same estimate except with an additional power $|\sigma|^{s_1}$ on the right where we can take $s_1=\lceil|s_0|\rceil$.

  For $\sigma\neq 0$, $\Im\sigma>0$, we have $N_{\eop,t_0}(P,\sigma)^{-1}\colon H_{\bop,\scop}^{\sfs-2,\ell-2,\sfr}\to H_{\bop,\scop}^{\sfs,\ell,\sfr}$ for arbitrary $\sfs,\sfr$. Since $\pa_\sigma N_{\eop,t_0}(P,\sigma)\in(\frac{r'}{r'+1})^{-1}\Diff_{\bop,\scop}^1(\hat X)$ (by~\eqref{EqSUNeFam} and \eqref{EqSUNRedResc}) maps $H_{\bop,\scop}^{\sfs,\ell,\sfr}\to H_{\bop,\scop}^{\sfs-1,\ell-1,\sfr}$ and thus back into the domain of $N_{\eop,t_0}(P,\sigma)^{-1}$, we conclude that $N_{\eop,t_0}(P,\sigma)^{-1}$ is continuous in the operator topology, and in fact holomorphic. For $\sigma_0\in\R\setminus\{0\}$, we note that $N_{\eop,t_0}(P,\sigma)^{-1}\colon H_{\bop,\scop}^{\sfs_{\hat\sigma_0}-2,\ell-2,\sfs_{\hat\sigma_0}-\ell+1}\to H_{\bop,\scop}^{\sfs_{\hat\sigma_0}-\eps,\ell,\sfs_{\hat\sigma_0}-\ell-\eps}$ is continuous for $\sigma\in\C$, $\Im\sigma\geq 0$, near $\sigma_0$ in the operator norm topology for any fixed $\eps>0$, as follows from the functional analytic arguments used in \cite[\S2.7]{VasyMicroKerrdS}. In view of~\eqref{EqSUIFwdMf} and the uniform bounds~\eqref{EqSUIFwdUnif} near $\sigma=0$, we may thus shift the integration contour in~\eqref{EqSUIFwdInvFT} and infer
  \[
    u(t';\cdot) = \frac{1}{2\pi}\int_{\Im\sigma=C} e^{-i t'\sigma}N_{\eop,t_0}(P,\sigma)^{-1}\hat f(\sigma;\cdot)\,\dd\sigma,\qquad C\geq 0;
  \]
  indeed, our above arguments imply the equality of both sides on any annular region $A_{R_0,R_1}$. Combining~\eqref{EqSUIFwdUnif} with~\eqref{EqSUIFwdMf}, the $H^{s_0}(A_{R_0,R_1})$-norm of the right hand side is bounded from above by $C' e^{C t'}$ for some $C'$ which is independent of $C$. For $t'<0$, this implies $u(t';\cdot)=0$ upon letting $C\to\infty$. The proof is complete.
\end{proof}

\begin{rmk}[Sharpness of the conditions on $\sfs,\ell$]
\label{RmkSUISharp}
  Suppose $\sfs$ violates the threshold condition at the outgoing radial set (i.e.\ $\sfs$ is too large); then also $\sfr=f_{\pm 1}^*\sfs-\ell$ is too large at ${}^\scop\cR_{\rm out,\pm 1}$. In explicit examples, one can then see that $\hat N_{\eop,t_0}(P,\pm 1)^{-1}$ does \emph{not} map $H_{\bop,\scop}^{f_{\pm 1}^*\sfs-2,\ell-2,f_{\pm 1}^*\sfs-\ell+1}(\hat X)$ into $H_{\bop,\scop}^{f_{\pm 1}^*\sfs,\ell,f_{\pm 1}^*\sfs-\ell}(\hat X)$ since the scattering decay rate of the target space is too strong. (For example, when $P=-D_t^2+D_r^2$ in $1+1$ dimensions, then the inverse of $\hat N_{\eop,t_0}(r^2 P,\hat\sigma)=-\pa_{\hat r}^2-\hat\sigma^2$ produces $e^{i\hat\sigma\hat r}$ behavior for large $\hat r$, which does not lie in a Sobolev space on $(1,\infty)_{\hat r}$ with weight $>-\frac12+\vartheta_{\rm out}=-\frac12$ at infinity.) Therefore, $N_{\eop,t_0}(P)^{-1}$ does not map $H_{\eop,\rm I}^{\sfs-1,\ell-2}$ into $H_{\eop,\rm I}^{\sfs,\ell}$, but merely into $\la r'D_{t'}\ra^\delta H_{\eop,\rm I}^{\sfs,\ell}$ where $\delta>0$ is any number larger than the difference of $\sfr$ and the outgoing radial point threshold. (On the Fourier transform side, this amounts to allowing extra growth $\la\hat r\ra^\delta$ at $\hat r=\infty$.) On the other hand, the nullspace of $N_{\eop,t_0}(P)$ on $H_{\eop,\rm I}^{\sfs,\ell}(M'_{\geq t'_0};\cE_{t_0})^\bullet$ is trivial for \emph{all} $\sfs$; this follows from the fact that the Fourier transform $\hat u(\sigma)$ of elements $u$ in this nullspace is holomorphic in $\Im\sigma>0$, and we have $N_{\eop,t_0}(r^2 P,\sigma)\hat u(\sigma)=0$; but since $N_{\eop,t_0}(r^2 P,\sigma)$ is elliptic at $r'=\infty$ for $\Im\sigma>0$, this implies that $\hat u(\sigma)$ must be Schwartz and thus $0$ due to the spectral admissibility assumption. In summary, when $P$ is spectrally admissible with weight $\ell$, the map~\eqref{EqSUIFwd} is injective for all $\sfs$, but its range in general does not even contain $\CIc((M'_{\geq t'_0})^\circ)$ unless $\sfs,\ell$ are $P$-admissible.
\end{rmk}

\begin{rmk}[Sharpness of the condition on $\ell$]
\label{RmkSUISharpl}
  For the scalar wave operator $\Box_g$ on Minkowski space $g=-\dd t^2+\dd r^2+r^2 g_{\Sph^{n-1}}$ in dimensions $n\neq 2$, Lemma~\ref{LemmaExCSpec} below computes the interval of weights $\ell$ for which $P$ is spectrally admissible as $(\ell_-,\ell_+)=(1-|\frac{n-2}{2}|,1+|\frac{n-2}{2}|)$. If $\ell<\ell_-$, then~\eqref{EqSUIFwd} is not injective; indeed, a convolution of the forward fundamental solution with a $\CIc(\R_{t'})$ function lies in its kernel. (Explicitly, for $n=3$, say, we have $\Box_g(r'{}^{-1}v(t'-r'))=0$ in $r'>0$ for any function $v\in\CIc(\R)$, and we then note that $r'{}^{-1}\in r'{}^\ell L^2((0,1);r'{}^2\,\dd r')$ for all $\ell<\ell_-=\frac12$. As an edge regularity order $\sfs$ for which $\sfs,\ell$ are $P$-admissible, one can take a suitable function $\sfs<-\frac12+\ell_-=0$, so indeed $r'{}^{-1}v(t'-r')\in H_{\eop,\rm I}^{\ell,\ell}\subset H_{\eop,\rm I}^{\sfs,\ell}$ in this case.) Dually, \eqref{EqSUIFwd} fails to be surjective for $\ell>\ell_+$.
\end{rmk}

\subsection{Solvability and uniqueness on non-refocusing domains}
\label{SsSUe}

By comparing $P$ with its edge normal operator at $\phi^{-1}(t_0)$, we shall now solve $P u=f$ on small domains; we use the coordinates $t,r,\omega$ from~\eqref{EqDCollar}.

\begin{prop}[Solvability and uniqueness on small domains]
\label{PropSUeSmall}
  Let $t_0\in\R$, and suppose that $\sfs\in\CI(\Se^*M)$ and $\ell\in\R$ are $P$-admissible orders on a spacetime domain containing $\phi^{-1}(t_0)$, and $P$ is spectrally admissible with weight $\ell$ at $t_0$. Define the map
  \[
    S_\lambda \colon M'=\R\times[0,\infty)\times\Sph^{n-1} \ni (t',r',\omega') \mapsto (t,r,\omega) = (t_0+\lambda t',\lambda r',\omega') \in M.
  \]
  Fix $t'_-<t'_+$, $r'_+\geq 0$, and $\kappa>1$, and set
  \begin{equation}
  \label{EqSUeSmallOmega}
    \Omega' = \Omega'_{t'_-,t'_+,r'_+,\kappa} := \{ (t',r',\omega') \in M' \colon t'_-<t'<t'_+,\ r'<r'_+ + \kappa(t'_+-t') \}.
  \end{equation}
  Then for small $\lambda>0$, the image $\Omega_\lambda:=S_\lambda(\Omega')\subset M$ is a non-refocusing spacetime domain, and for all $f\in\He^{\sfs-1,\ell-2}(\Omega_\lambda;\cE)^{\bullet,-}$, there exists a unique $u\in\He^{\sfs,\ell}(\Omega_\lambda;\cE)^{\bullet,-}$ solving $P u=f$ in $\Omega_\lambda$.
\end{prop}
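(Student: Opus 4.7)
The strategy is to view $P$ on $\Omega_\lambda$ as a small perturbation of the normal operator $N_{\eop,t_0}(P)$, whose forward invertibility is guaranteed by Proposition~\ref{PropSUIFwd}.

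\emph{Rescaling.} Via the scaling argument~\eqref{EqDNrfSmallScaling}, Corollary~\ref{CorDNrfSmall} shows that $\Omega_\lambda$ is non-refocusing for all sufficiently small $\lambda>0$; by the continuity of $\vartheta_{\rm in},\vartheta_{\rm out}$ along $\cC$, the orders $\sfs,\ell$ remain $P$-admissible on $\Omega_\lambda$ for such $\lambda$. Since $S_\lambda$ preserves edge vector fields, $S_\lambda^*(r^2 P)\in\Diffe^2(M';\cE_{t_0})$ is a smooth family in $\lambda\in[0,\lambda_0)$ (with coefficients supported in a fixed compact subset of $M'$ containing $\ol{\Omega'}$), which at $\lambda=0$ equals $N_{\eop,t_0}(r^2 P)$. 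Taylor expansion therefore gives
\[
  \tilde P_\lambda := \lambda^2 S_\lambda^* P = N_{\eop,t_0}(P) + \lambda R_\lambda,
\]
with $R_\lambda\in r'{}^{-2}\Diffe^2(M';\cE_{t_0})$ depending smoothly on $\lambda$. Setting $v=S_\lambda^*u$ and $g=\lambda^2 S_\lambda^*f$, the equation $P u = f$ on $\Omega_\lambda$ is equivalent to $\tilde P_\lambda v = g$ on $\Omega'$.

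\emph{Existence via Neumann series.} By Proposition~\ref{PropSUIFwd}, $N_{\eop,t_0}(P)^{-1}$ is a bounded map $H_{\eop,\rm I}^{\sfs-1,\ell-2}(M'_{\geq t'_-};\cE_{t_0})^\bullet\to H_{\eop,\rm I}^{\sfs,\ell}(M'_{\geq t'_-};\cE_{t_0})^\bullet$, so $K_\lambda := N_{\eop,t_0}(P)^{-1}R_\lambda$ defines a uniformly (in $\lambda$) bounded endomorphism of $H_{\eop,\rm I}^{\sfs,\ell}(M'_{\geq t'_-};\cE_{t_0})^\bullet$. Hence for $\lambda$ small, $\tilde P_\lambda = N_{\eop,t_0}(P)(I+\lambda K_\lambda)$ is invertible between the corresponding spaces by Neumann series. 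Given $f\in\He^{\sfs-1,\ell-2}(\Omega_\lambda;\cE)^{\bullet,-}$, extend $g$ continuously to $\tilde g\in H_{\eop,\rm I}^{\sfs-1,\ell-2}(M'_{\geq t'_-};\cE_{t_0})^\bullet$, solve $\tilde P_\lambda\tilde v=\tilde g$, and push $(\tilde v|_{\Omega'})$ back by $(S_\lambda^{-1})^*$ to obtain the desired $u\in\He^{\sfs,\ell}(\Omega_\lambda;\cE)^{\bullet,-}$ with $P u = f$.

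\emph{Uniqueness via duality; main obstacle.} Applying the same construction to the formal adjoint $P^*$ acting on the time-reversed, dually-weighted spaces $\He^{-\sfs+1,-\ell+2}(\Omega_\lambda;\cE)^{-,\bullet}$ yields backward solvability for $P^*$; the required spectral admissibility of $P^*$ is encoded in Definition~\ref{DefSUIAdm}, and the admissible backward order function for $P^*$ is produced by Proposition~\ref{PropDFn} with reversed monotonicity. The standard duality/Hahn--Banach pairing argument (as in the proof of Proposition~\ref{PropSULoc}) then upgrades the existence for $P^*$ to uniqueness of the forward solution $u$ for $P$. The main technical obstacle throughout is the uniform-in-$\lambda$ boundedness of $K_\lambda$, which reduces to tracking the variable edge order $\sfs$ and the coefficients of $R_\lambda$ under the scaling $S_\lambda^*$; this is a standard bookkeeping computation in the edge calculus, relying on the compactness of the support of $R_\lambda$ and the smooth dependence of its coefficients on $\lambda\in[0,\lambda_0)$.
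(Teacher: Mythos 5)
Your first step (rescaling, non-refocusing of $\Omega_\lambda$, smooth dependence of $\tilde P_\lambda$ on $\lambda$) matches the paper. The central gap is in the Neumann series: $K_\lambda=N_{\eop,t_0}(P)^{-1}R_\lambda$ is \emph{not} a bounded endomorphism of $H_{\eop,\rm I}^{\sfs,\ell}$. Indeed $R_\lambda\in r'{}^{-2}\Diffe^2(M';\cE_{t_0})$ is genuinely second order (the scaling perturbs the principal part of the metric), so $R_\lambda\colon H_{\eop,\rm I}^{\sfs,\ell}\to H_{\eop,\rm I}^{\sfs-2,\ell-2}$, whereas Proposition~\ref{PropSUIFwd} only gives the hyperbolic gain of \emph{one} edge derivative, $N_{\eop,t_0}(P)^{-1}\colon H_{\eop,\rm I}^{\sfs-1,\ell-2}\to H_{\eop,\rm I}^{\sfs,\ell}$. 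Hence $K_\lambda$ maps $H_{\eop,\rm I}^{\sfs,\ell}\to H_{\eop,\rm I}^{\sfs-1,\ell}$, each iterate of the Neumann series loses a further derivative, and the series does not converge in any fixed space. This is the standard obstruction to treating hyperbolic problems by naive operator perturbation, and it is exactly what the paper's proof is designed to circumvent: one first proves the semi-global microlocal regularity estimate $\|u\|_{\He^{\sfs,\ell}}\leq C(\|P_\lambda u\|_{\He^{\sfs-1,\ell-2}}+\|u\|_{\He^{\sfs_0,\ell}})$ with uniform constants (Proposition~\ref{PropSULocReg}), applies the normal operator estimate only to the \emph{weak} error term $\|u\|_{\He^{\sfs_0,\ell}}$ with $\sfs_0<\sfs-1$, and then absorbs the resulting $O(\lambda)\|u\|_{\He^{\sfs_0+1,\ell}}\leq O(\lambda)\|u\|_{\He^{\sfs,\ell}}$ discrepancy into the left-hand side. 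The regularity theory absorbs the derivative loss; the normal operator only needs to supply the weight $\ell$ and remove the relative compact error.

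Two further points you pass over. First, your existence argument extends $g$, inverts on $M'_{\geq t'_-}$, and restricts to $\Omega'$, but for this to produce a well-defined solution operator on $\He^{\sfs-1,\ell-2}(\Omega')^{\bullet,-}$ (and for the subsequent absorption argument) one must show the restriction is independent of the chosen extension; the paper devotes its first step to this, via an iterated finite-speed-of-propagation plus Paley--Wiener argument on time slabs (the regions I, II, I$'$, II$'$ of Figure~\ref{FigSUeSmallIt}). Second, the direct a priori estimate requires the auxiliary hypothesis that $\sfs-1,\ell$ (for uniqueness) resp.\ $\sfs+1,\ell$ (for solvability) also be $P$-admissible, since the weak norm must itself sit above the incoming threshold; for general admissible $\sfs,\ell$ one must add a final step that first solves with a lowered auxiliary order $\tilde\sfs$ and then recovers $\He^{\sfs,\ell}$ by propagation of edge regularity. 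Your duality step for uniqueness is the right idea but inherits both defects.
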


See Figure~\ref{FigSUeSmall} for the setup.

\begin{figure}[!ht]
\centering
\includegraphics{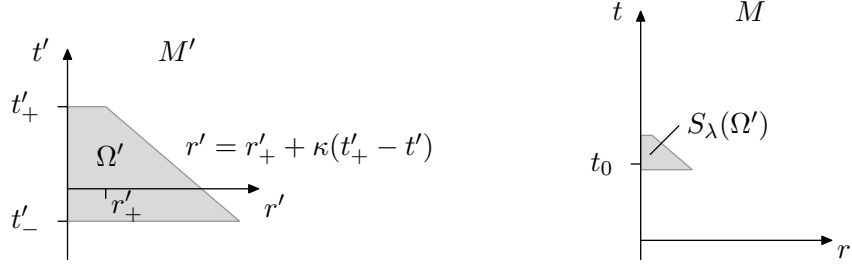}
\caption{\textit{On the left:} the domain $\Omega'=\Omega'_{t'_-,t'_+,r'_+,\kappa}$ inside of $M'$. \textit{On the right:} its image in $M$ under $S_\lambda$ with $0<\lambda\ll 1$.}
\label{FigSUeSmall}
\end{figure}

\begin{proof}[Proof of Proposition~\usref{PropSUeSmall}]
  We drop the bundle from the notation. Since $\Omega'$ is a non-refocusing spacetime domain in $M'$ for the model metric $r'{}^2 g_{\eop,t_0}$, this is true also with respect to the metric $S_\lambda^*g$ for sufficiently small $\lambda$ since
  \[
    \lambda^{-2}S_\lambda^*g-r'{}^2 g_{\eop,t_0}\in\lambda r'{}^{-2}\CI\bigl([0,\lambda_0);\CI(M';S^2\,\Te^*M')\bigr),
  \]
  cf.\ the proof of Corollary~\ref{CorDNrfSmall}. Therefore, $\Omega_\lambda$ is a non-refocusing domain inside of $(M,g)$ for small $\lambda$. For all such $\lambda$, we identify a neighborhood $U'$ of $\ol{\Omega'}$ in $M'$ with a subset of $M$ via $S_\lambda$, and correspondingly pull back $\sfs$ to a smooth function $\sfs_{(\lambda)}\in\CI([0,\lambda_0)_\lambda;\CI(\Se^*_{U'}M'))$ on $\Se^*_{U'}M'$; its limit $\sfs_{(0)}$ at $\lambda=0$ is the invariant extension of $\sfs|_{\Se^*_{\phi^{-1}(t_0)}M}$.

  \pfstep{Inversion of $N_{\eop,t_0}(P)$ on $(\bullet,-)$-spaces.} We apply Proposition~\ref{PropSUIFwd}, specifically~\eqref{EqSUIFwd}, with $\sfs_{(0)}$, $t'_-$ in place of $\sfs$, $t'_0$. Set $M'_{[t'_-,t'_+)}=M'_{\geq t'_-}\setminus M'_{\geq t'_+}$. Given $f\in H_{\eop,\rm I}^{\sfs_{(0)}-1,\ell-2}(M'_{[t'_-,t'_+)})^{\bullet,-}$, i.e.\ $f=\tilde f|_{M'_{[t'_-,t'_+)}}$ where $\tilde f\in H_{\eop,\rm I}^{\sfs_{(0)}-1,\ell-2}(M'_{\geq t'_-})$, we can then set $u=\tilde u|_{M'_{[t'_-,t'_+)}}\in H_{\eop,\rm I}^{\sfs_{(0)},\ell}(M'_{[t'_-,t'_+)})^{\bullet,-}$ where $\tilde u=N_{\eop,t_0}(P)^{-1}\tilde f$; in view of~\eqref{EqSUIFwd}, now with $t'_+$ in place of $t'_0$, this solution $u$ of $N_{\eop,t_0}(P)u=f$ does not depend on the choice of extension $\tilde f$.

  Next, given $f\in\He^{\sfs_{(0)}-1,\ell-2}(\Omega')^{\bullet,-}$ pick an extension $\tilde f\in H_{\eop,\rm I}^{\sfs_{(0)}-1,\ell-2}(M'_{[t'_-,t'_+)})^{\bullet,-}$ of $f$ with controlled norm. Setting $\tilde u:=N_{\eop,t_0}(P)^{-1}\tilde f\in H_{\eop,\rm I}^{\sfs_{(0)},\ell}(M'_{[t'_-,t'_+)})^{\bullet,-}$, we need to show that $u:=\tilde u|_{\Omega'}$ does not depend on the choice of extension $\tilde f$. By linearity, it suffices to consider the case that $N_{\eop,t_0}(P)\tilde u=\tilde f$ vanishes on $\Omega'$. Then so does $\tilde u$ in the region $t'-t'_-<r'<r'_++\kappa(t'_+-t')$ by finite speed of propagation (in the smooth part $r'>0$ of the spacetime) due to its vanishing in $t'<t'_-$. (This is region I in Figure~\ref{FigSUeSmallIt}.) Define by $\bar t'=(r'_++\kappa t'_++t'_-)/(1+\kappa)$ the unique time at which $t'-t'_-=r'_++\kappa(t'_+-t')$, and set $t'_0:=\min(\bar t',t'_+)$. Set $\tilde u_0:=\tilde u|_{M'_{[t'_-,t'_0)}}$, which solves $N_{\eop,t_0}(P)\tilde u_0=0$ for $r'<r'_++\kappa(t'_+-t')$; but since for all $t'\in[t'_-,t'_0)$, $\tilde u_0$ vanishes in the nonempty annular region $t'-t'_-<r'<r'_++\kappa(t'_+-t')$, we may define a new function $u_0$ to be equal to $\tilde u_0$ for $r'<r'_++\kappa(t'_+-t')$, and $u_0:=0$ for $r'\geq r'_++\kappa(t'_+-t')$, which then satisfies
  \begin{equation}
  \label{EqSUeSmallSlice}
    u_0 \in H_{\eop,\rm I}^{\sfs_{(0)},\ell}(M'_{[t'_-,t'_0)})^{\bullet,-},\qquad
    N_{\eop,t_0}(P)u_0 = 0.
  \end{equation}
  We can extend $u_0$ to a solution $\tilde u_0\in H_{\eop,\rm I}^{\sfs_{(0)},\ell}(M'_{\geq t'_-})^\bullet$ of $N_{\eop,t_0}(P)\tilde u_0=0$ by writing $\tilde u_0=\chi u_0-N_{\eop,t_0}(P)^{-1}([N_{\eop,t_0}(P),\chi]u_0)$ where $\chi\in\CI(\R)$ equals $1$ on $t\leq t'_0-\delta$ (where $\delta\in(0,t'_0-t'_-)$) and $0$ near $t\geq t'_0$. Proposition~\ref{PropSUIFwd} implies $\tilde u_0=0$ and thus $u_0=0$ on $t\leq t'_0-\delta$; since $\delta>0$ is arbitrary, this gives $u_0=0$ on $M'_{[t'_-,t'_0)}$. Therefore, $\tilde u=0$ for $t'\in[t'_-,t'_0)$, $r'<r'_++\kappa(t'_+-t')$. (This proves the vanishing of $\tilde u$ in region II in Figure~\ref{FigSUeSmallIt}.)

  Repeating this argument with $t'_0$ in place of $t'_-$ allows us to deduce the vanishing of $\tilde u$ on $\Omega'$ in increasingly large intervals of $t'$; and we obtain $\tilde u=0$ on $\Omega'$ in finitely many steps when $r'_+>0$, and $\tilde u=0$ on $\Omega'\cap\{t'<t'_+-\eps\}$ for any fixed $\eps>0$ in finitely many steps when $r'_+=0$ (and thus again $\tilde u=0$ on $\Omega'$). We have thus proved the invertibility of the operator
  \[
    N_{\eop,t_0}(P) \colon \bigl\{ u \in \He^{\sfs_{(0)},\ell}(\Omega')^{\bullet,-} \colon N_{\eop,t_0}(P)u\in\He^{\sfs_{(0)}-1,\ell-2}(\Omega')^{\bullet,-} \bigr\} \to \He^{\sfs_{(0)}-1,\ell-2}(\Omega')^{\bullet,-}.
  \]

  \begin{figure}[!ht]
  \centering
  \includegraphics{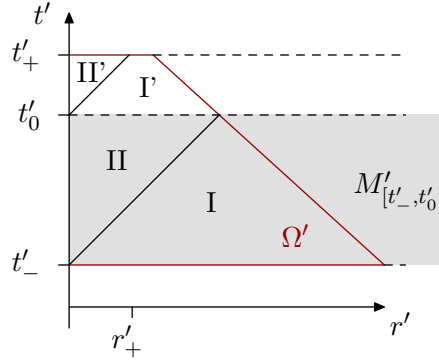}
  \caption{Illustration of the injectivity argument for $N_{\eop,t_0}(P)$ given around~\eqref{EqSUeSmallSlice}: if $N_{\eop,t_0}(P)\tilde u$ vanishes in $\Omega'$ and $\tilde u$ vanishes in $t'_-<0$, then $\tilde u$ vanishes in region I by finite speed of propagation, and then in II due to the uniqueness part of Proposition~\ref{PropSUIFwd} for the extension by $0$ of $\tilde u|_{M'_{[t'_-,t'_0)}}$ to the complement of regions I and II. ($M'_{[t_-',t_0')}$ is shaded in gray.) One then repeats this argument in regions I' and II'.}
  \label{FigSUeSmallIt}
  \end{figure}

  \pfstep{A priori estimate for $P$, restricted orders.} We next turn to the operator $P$ on $S_\lambda(\Omega')$, or equivalently to the operator $P_\lambda:=\lambda^2 S_\lambda^*P$ on $\Omega'$; note that
  \begin{equation}
  \label{EqSUePlambda}
    P_\lambda\in\CI([0,\lambda_0)_\lambda;r'{}^{-2}\Diffe^2(U')),\qquad
    P_0=N_{\eop,t_0}(P).
  \end{equation}
  \textit{We first assume that $\sfs,\ell$ and $\sfs-1,\ell$ are $P$-admissible}, so $\sfs$ exceeds the threshold value at the incoming radial set by more than $1$. Proposition~\ref{PropSULocReg} applies to $P_\lambda$ with uniform constants for all small $\lambda$; we take $\sfs$ there to be $\sfs_{(\lambda)}$, and $\sfs_0$ there to be $N_{\eop,t_0}(P)$-admissible and so that $\sfs_0<\sfs_{(0)}-1$ (and thus $\sfs_0<\sfs_{(\lambda)}-1$ on $U'$ for small $\lambda$). We may then estimate
  \begin{align*}
    &\|u\|_{\He^{\sfs_{(\lambda)},\ell}(\Omega')^{\bullet,-}} \\
    &\qquad \leq C\Bigl( \| P_\lambda u \|_{\He^{\sfs_{(\lambda)}-1,\ell-2}(\Omega')^{\bullet,-}} + \|u\|_{\He^{\sfs_0,\ell}(\Omega')^{\bullet,-}} \Bigr) \\
    &\qquad\leq C\|P_\lambda u\|_{\He^{\sfs_{(\lambda)}-1,\ell-2}(\Omega')^{\bullet,-}} + C C' \| N_{\eop,t_0}(P)u \|_{\He^{\sfs_0-1,\ell-2}(\Omega')^{\bullet,-}} \\
    &\qquad\leq (C+C C')\|P_\lambda u\|_{\He^{\sfs_{(\lambda)}-1,\ell-2}(\Omega')^{\bullet,-}} + C C'\|(P_\lambda-N_{\eop,t_0}(P))u\|_{\He^{\sfs_0-1,\ell-2}(\Omega')^{\bullet,-}}.
  \end{align*}
  The final term is bounded by $C C'\cdot C''\lambda\|u\|_{\He^{\sfs_0+1,\ell}(\Omega')^{\bullet,-}}\leq C'''\lambda\|u\|_{\He^{\sfs_{(\lambda)},\ell}(\Omega')^{\bullet,-}}$ for small $\lambda$ in view of~\eqref{EqSUePlambda}; if $\lambda<(2 C''')^{-1}$, this term can be absorbed into the left hand side, yielding
  \[
    \|u\|_{\He^{\sfs_{(\lambda)},\ell}(\Omega')^{\bullet,-}} \leq C\| P_\lambda u \|_{\He^{\sfs_{(\lambda)}-1,\ell-2}(\Omega')^{\bullet,-}}.
  \]
  Returning to the notation used in the statement of the Proposition, this implies the uniqueness of solutions of $P u=f$ on $\Omega_\lambda$ (and by duality the existence of solutions for the adjoint, i.e.\ backwards, problem on dual function spaces).

  \pfstep{Solvability for $P$, restricted orders.} Similar arguments, \textit{now only requiring that $\sfs,\ell$ and $\sfs+1,\ell$ be $P$-admissible} (so not only $-\sfs+1,-\ell+2$ are $P^*$-admissible for the backwards problem, but also $-(\sfs+1)+1=-\sfs,-\ell+2$, or equivalently $\sfs$ is more than $1$ below the threshold value at the outgoing radial set), imply the adjoint estimate
  \[
    \|u^*\|_{\He^{-\sfs_{(\lambda)}+1,-\ell+2}(\Omega')^{-,\bullet}} \leq C\| P_\lambda^* u^* \|_{\He^{-\sfs_{(\lambda)},-\ell}(\Omega')^{-,\bullet}}
  \]
  and thus the \emph{solvability} of $P u=f\in\He^{\sfs_{(\lambda)}-1,\ell-2}(\Omega_\lambda)^{\bullet,-}$ on $\Omega_\lambda$ with $u\in\He^{\sfs_{(\lambda)},\ell}(\Omega_\lambda)^{\bullet,-}$, as claimed.

  \pfstep{Solvability and uniqueness, general orders.} We now remove the additional assumptions, i.e.\ we only assume the $P$-admissibility of $\sfs,\ell$. We work on $\Omega_\lambda$. For uniqueness, we simply note that $P u=0$ for $u\in\He^{\sfs,\ell}(\Omega_\lambda)^{\bullet,-}$ implies $u\in\He^{\sfs',\ell}(\Omega_\lambda)^{\bullet,-}$ for all $P$-admissible order functions $\sfs'$; in particular, we may take $\sfs'$ to exceed the threshold value at the incoming radial set by $1$, thus $\sfs'-1,\ell$ are admissible, and therefore $u=0$ by what we have already shown. For existence, let $f\in\He^{\sfs-1,\ell-2}(\Omega_\lambda)^{\bullet,-}$. Fix an order function $\tilde\sfs<\sfs$ near $\ol{\Omega_\lambda}$ so that $\tilde\sfs,\ell$ and $\tilde\sfs+1,\ell$ are $P$-admissible. (Such a function $\tilde\sfs$ can easily be found when $P$ is replaced by its edge normal operator and $\tilde\sfs$ is invariant in the coordinates $(t-t_0,r,\omega)$, and then the same $\tilde\sfs$ works near $\ol{\Omega_\lambda}$ for small $\lambda$.) Then by what we have already shown, we can solve $P u=f$ on $\Omega_\lambda$ with $u\in\He^{\tilde\sfs,\ell}(\Omega)^{\bullet,-}$; but propagation of edge regularity (Proposition~\ref{PropSULocReg}) implies $u\in\He^{\sfs,\ell}(\Omega)^{\bullet,-}$, as desired. (Similar arguments are used also in \cite[Proof of Proposition~4.1]{VasyLowEnergy}.)
\end{proof}

We next piece these small pieces (near $\cC$) together with the standard solvability theory away from $\cC$ to obtain the main result of this paper:

\begin{thm}[Solvability and uniqueness on non-refocusing domains]
\label{ThmSUeNonrf}
  Let $\Omega\subset M$ be a non-refocusing spacetime domain (Definition~\usref{DefDNrf}). Suppose that $\sfs\in\CI(\Se^*M)$ and $\ell\in\R$ are $P$-admissible orders (Definition~\usref{DefSULocAdm}), and $P$ is spectrally admissible with weight $\ell$ on $\Omega$ (Definition~\usref{DefSUIAdm}). Then there exists a constant $C$ so that the following holds: for all\footnote{See~\eqref{EqSULocFn} for the definition of the function space.} $f\in\He^{\sfs-1,\ell-2}(\Omega;\cE)^{\bullet,-}$ there exists a solution $u\in\He^{\sfs,\ell}(\Omega;\cE)^{\bullet,-}$ of $P u=f$, and it satisfies the estimate
  \[
    \|u\|_{\He^{\sfs,\ell}(\Omega;\cE)^{\bullet,-}} \leq C\|f\|_{\He^{\sfs-1,\ell-2}(\Omega;\cE)^{\bullet,-}}.
  \]
  Moreover, $u$ is the unique solution of $P u=f$ in the space $\He^{-\infty,\ell}(\Omega;\cE)^{\bullet,-}$.
\end{thm}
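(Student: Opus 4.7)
The plan is to combine the local-in-time solvability and uniqueness results from Proposition~\ref{PropSUeSmall} (for small rescaled neighborhoods near $\cC$) with the classical local theory from Lemma~\ref{LemmaSULocAway} (away from $\cC$) via a finite time-stepping procedure, and then to use the semi-global edge regularity estimate of Proposition~\ref{PropSULocReg} to upgrade regularity and close the estimate. The non-refocusing hypothesis on $\Omega$ is what makes both the edge regularity statement and the globalization of the local solvability results available.

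First I would construct a finite cover of $\bar\Omega$ by non-refocusing subdomains of two types: \emph{cone patches} of the form $S_{\lambda_j}(\Omega'_j)$ centered at points $t_{0,j}\in\cC\cap\bar\Omega$, as in Proposition~\ref{PropSUeSmall}, covering a neighborhood of $\bar\Omega\cap\pa M$; and \emph{interior patches} whose closures avoid $\pa M$, to which Lemma~\ref{LemmaSULocAway} applies. Compactness of $\bar\Omega$ and Corollary~\ref{CorDNrfSmall} guarantee that finitely many such patches suffice, and I would order them by time so that the final boundary hypersurfaces of each $\Omega_j$ lie in the causal past of the final boundaries of all later $\Omega_k$. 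For existence, starting from $f\in\He^{\sfs-1,\ell-2}(\Omega;\cE)^{\bullet,-}$, I would inductively build partial solutions $u^{(k)}$ of $P u^{(k)}=f$ on $\Omega\cap(\Omega_1\cup\cdots\cup\Omega_k)$: at step $k$, solve $P v_k=\chi_k(f-P u^{(k-1)})$ on $\Omega_k$ using Proposition~\ref{PropSUeSmall} or Lemma~\ref{LemmaSULocAway}, where $\chi_k$ is a time cutoff whose support begins where $u^{(k-1)}$ is already known. Finite speed of propagation (classical away from $\cC$, and the edge-version built into the injectivity argument inside Proposition~\ref{PropSUeSmall}) ensures that $v_k$ and $u^{(k-1)}$ coincide on their overlap up to something supported in the future of the cutoff, so gluing yields $u^{(k)}\in\He^{\sfs_k,\ell}(\Omega_1\cup\cdots\cup\Omega_k;\cE)^{\bullet,-}$ for a suitable order function $\sfs_k$ (possibly $\le\sfs$ due to accumulated losses). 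After $N$ steps, $u:=u^{(N)}$ solves $P u=f$ on $\Omega$ in some edge Sobolev space.

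The a priori regularity estimate of Proposition~\ref{PropSULocReg}, applied with $\sfs_0$ equal to the worst order appearing in $u$, then promotes $u$ to $\He^{\sfs,\ell}(\Omega;\cE)^{\bullet,-}$ and yields the estimate once the a priori term is absorbed; one way to do this is to observe that $P$ is injective on $\He^{\sfs_0,\ell}(\Omega;\cE)^{\bullet,-}$ (by the uniqueness argument below), so the estimate
\[
\|u\|_{\He^{\sfs,\ell}}\le C\bigl(\|Pu\|_{\He^{\sfs-1,\ell-2}}+\|u\|_{\He^{\sfs_0,\ell}}\bigr)
\]
upgrades, by a standard compactness/functional-analytic argument, to the desired loss-free bound. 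For uniqueness in $\He^{-\infty,\ell}(\Omega;\cE)^{\bullet,-}$, suppose $u\in\He^{\sfs_0,\ell}(\Omega;\cE)^{\bullet,-}$ satisfies $P u=0$; by shrinking if necessary I may assume $\sfs_0$ lies below the incoming threshold uniformly, and by Proposition~\ref{PropSULocReg} I can then upgrade $u$ to any $P$-admissible order $\sfs$. The same slab-by-slab procedure, now invoking the uniqueness halves of Proposition~\ref{PropSUeSmall} and Lemma~\ref{LemmaSULocAway} on each patch in time order, forces $u=0$ on $\Omega_1$, then on $\Omega_2$, and so on.

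The main obstacle is engineering the iteration so that at each step the residual source $\chi_k(f-P u^{(k-1)})$ lies in the correct $(\bullet,-)$ space on $\Omega_k$ with controlled norm, and so that the gluing of $v_k$ with $u^{(k-1)}$ across the overlap is well-defined as a distribution on the enlarged union; this requires the cutoffs $\chi_k$ to be placed carefully in $t$ (as in the proof of Proposition~\ref{PropSULocReg}, using the timelike character of initial and final boundary hypersurfaces away from $\pa M$), and requires Corollary~\ref{CorDNrfEnlarge} to ensure that the inductively enlarged union remains non-refocusing and admits the regularity propagation needed for the final cleanup. Once this bookkeeping is set up, the rest of the proof is a direct assembly of the already established pieces.
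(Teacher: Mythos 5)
Your overall architecture---time-stepping with the rescaled small domains of Proposition~\ref{PropSUeSmall} near $\cC$, the classical theory of Lemma~\ref{LemmaSULocAway} away from $\cC$, cutoff-and-glue to advance in time, and Proposition~\ref{PropSULocReg} to recover the full order $\sfs$---is essentially the paper's proof, and the existence and $\He^{\sfs,\ell}$-uniqueness parts go through along the lines you describe (modulo one bookkeeping point: near the top fiber $\phi^{-1}(t_+)$ a cone patch $S_\lambda(\Omega')$ with $r'_+>0$ sticks out of $\Omega$, so the final step needs the degenerate patch with $r'_+=0$ together with the edge-local energy estimate of Proposition~\ref{PropSULoc}).

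However, your argument for uniqueness in $\He^{-\infty,\ell}(\Omega;\cE)^{\bullet,-}$ has a genuine gap. You write that after shrinking you may assume $\sfs_0$ lies \emph{below} the incoming threshold and that Proposition~\ref{PropSULocReg} then upgrades $u$ to any admissible order. This is backwards: Proposition~\ref{PropSULocReg} (via the radial point estimate of Proposition~\ref{PropSUPrIn}) requires the a priori order $\sfs_0$ to satisfy $\sfs_0>-\frac12+\ell+\vartheta_{\rm in}(\Omega)$ at $\pa\cR_{\rm in}^\pm$, i.e.\ to lie \emph{above} the incoming threshold; for $u\in\He^{s_0,\ell}$ with $s_0$ arbitrarily negative this hypothesis fails and no regularity can be propagated into $\cR_{\rm in}^\pm$. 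The missing ingredient is the interpolation device of \cite[\S8]{MelroseWunschConic}: one first lowers the weight to $\ell_0\leq\ell$ so that $s_0>-\frac12+\ell_0+\vartheta_{\rm in}$, applies Proposition~\ref{PropSULocReg} to obtain arbitrarily high edge regularity near $\pa\cR_{\rm in}^\pm$ \emph{relative to the weight $\ell_0$}, interpolates the memberships $B u\in\He^{\infty,\ell_0}\cap\He^{s_0,\ell}$ to get $B u\in\He^{\infty,\ell-\eps}$, and only then has the above-threshold a priori control needed to run the regularity and injectivity argument with weight $\ell-\eps$ (using openness of the hypotheses in $\ell$). Separately, your fallback for the loss-free estimate---absorbing the error term $\|u\|_{\He^{\sfs_0,\ell}}$ via injectivity plus "a standard compactness argument"---does not work as stated, since the inclusion $\He^{\sfs,\ell}\hookrightarrow\He^{\sfs_0,\ell}$ with the \emph{same} weight $\ell$ is not compact (there is no gain in decay); you should instead carry quantitative bounds through the step-by-step construction, as each local solve already provides a loss-free estimate on its patch.
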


The sharpness of the conditions on $\sfs,\ell$ on the scale of function spaces used here is discussed already for invariant operators in Remarks~\ref{RmkSUISharp} and \ref{RmkSUISharpl}.

\begin{proof}[Proof of Theorem~\usref{ThmSUeNonrf}]
  We begin with the proof of uniqueness in $\He^{\sfs,\ell}(\Omega)^{\bullet,-}$. Let $t_-=\min_{\bar\Omega}t$ and $t_+=\max_{\bar\Omega}t$. For $\kappa>1$ to be chosen momentarily, let $\lambda_0>0$ be so small that the local solvability and uniqueness statement of Proposition~\ref{PropSUeSmall} holds for all $t_0\in[t_-,t_+]$ and $0<\lambda\leq\lambda_0$, where we fix $\Omega'=\Omega'_{-1,1,1,\kappa}$ as in~\eqref{EqSUeSmallOmega}; let us write $\Omega_{t_0,\lambda}=S_{t_0,\lambda}(\Omega')$ where $S_{t_0,\lambda}(t',r',\omega')=(t_0+\lambda t',\lambda r',\omega)$. We choose $\kappa$ such that for all $\lambda<\lambda_0$, the initial hypersurface of $\Omega_{t_-,\lambda}$ is disjoint from the initial boundary hypersurfaces of $\Omega$. This holds when $\kappa-1$ is sufficiently small, since, in view of the timelike nature of $t_{\rm ini,1}$ in the notation of Definition~\ref{DefD} and the form~\eqref{EqDMetric} of the metric, we have $t\geq t_- - \frac{r}{\kappa}$ on $t_{\rm ini,1}^{-1}(0)$ near $\phi^{-1}(t_-)$.

  Consider now $u\in\He^{\sfs,\ell}(\Omega;\cE)^{\bullet,-}$ with $P u=0$. The region $\Omega\cap\{t<t_--\lambda_0\}$ is a spacetime domain whose closure is disjoint from the curve $\cC$ of cone points, and thus local uniqueness for wave equations (cf.\ Lemma~\ref{LemmaSULocAway}) implies the vanishing of $u$ there. Consider next $u_0:=u|_{\Omega_{t_-,\lambda_0}}$, defined to be $0$ on the complement of $\Omega$ inside of $\Omega_{t_-,\lambda_0}$; this thus solves $P u_0=0$ in $\Omega_{t_-,\lambda_0}$, and therefore $u_0=0$. We conclude that $u$ vanishes in $\Omega_{t_-,\lambda_0}\cup\{t\leq t_--\lambda_0\}$. From this we can infer the vanishing of $u$ in $\{t\leq t_-+\lambda_0\}$ using local uniqueness away from $\cC$. We can now repeat this argument with the domain $\Omega_{t_-^1,\lambda_0}$ where $t_-^1=t_-+\lambda_0$. After finitely many iterations, we conclude that $u=0$ in $\Omega\cap\{t<t_\flat\}$ where $t_+-\lambda_0<t_\flat<t_+$.

  Next, we consider the domain $\Omega'':=\Omega'_{-1,0,0,\kappa}$, where $\kappa>1$ is chosen so close to $1$ that $S_{t_+,\lambda_0}(\Omega'')\subset\Omega$. We may assume (by taking $\lambda_0$ smaller from the outset if necessary) that Proposition~\ref{PropSUeSmall} applies to $S_{t_+,\lambda_0}(\Omega'')$. The vanishing of $u$ near $\{t_+-\lambda_0\leq t\leq t_+\}$ then follows by an edge-local energy estimate (Proposition~\ref{PropSULoc}). Finally, we can conclude $u=0$ on $\Omega$ using standard local uniqueness away from $\cC$. See Figure~\ref{FigSUeNonrfUniq}.

  For the stronger uniqueness statement, we use an interpolation idea from \cite[\S8]{MelroseWunschConic}. Suppose that $u\in\He^{s_0,\ell}(\Omega;\cE)^{\bullet,-}$ (with arbitrary $s_0$), $P u=0$. Choosing $\ell_0\leq\ell$ so that $s_0>-\frac12+\ell_0+\vartheta_{\rm in}$, Proposition~\ref{PropSULocReg} applies to $u\in\He^{s_0,\ell_0}(\Omega;\cE)^{\bullet,-}$ and gives $u\in\He^{\sfs',\ell_0}(\Omega;\cE)^{\bullet,-}$ for admissible orders $\sfs'$ which, in particular, we can choose to be arbitrarily large near $\pa\cR_{\rm in}^\pm$. With $B\in\Psie^0(M)$ having Schwartz kernel with support disjoint from the final boundary hypersurfaces of $\Omega$ in both factors, and with elliptic and wave front set contained in a neighborhood of $\pa\cR_{\rm in}^\pm$, the statements $B u\in\He^{\infty,\ell_0}\cap\He^{s_0,\ell}$ imply, by interpolation, $B u\in\He^{\infty,\ell-\eps}$ for all $\eps>0$. This provides the a priori above-threshold membership for $u$ near $\pa\cR_{\rm in}^\pm$, and we conclude that $u\in\He^{\sfs,\ell-\eps}(\Omega';\cE)^{\bullet,-}$ for all $\Omega'\subset\Omega$ with closure disjoint from the final boundary hypersurfaces of $\Omega$. But since the assumptions on $\ell$ are open, we can appeal to the injectivity of $P$ on $\He^{\sfs,\ell-\eps}(\Omega';\cE)^{\bullet,-}$ to conclude that $u=0$ on $\Omega'$. Letting $\Omega'\to\Omega$ completes the argument.

  \begin{figure}[!ht]
  \centering
  \includegraphics{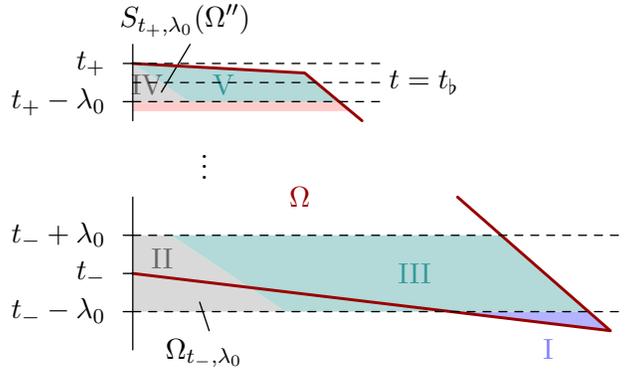}
  \caption{Illustration of the proof of uniqueness. \textit{On the bottom:} uniqueness in I by finite speed of propagation and in II using Proposition~\ref{PropSUeSmall}, and then in III by finite speed of propagation using to the now known vanishing on the bottom and left boundary hypersurfaces of region III. Subsequently, steps II and III are repeated around $t_-+\lambda_0$, $t_-+2\lambda$, etc. \textit{On the top:} Once uniqueness is known in the region $t<t_+-\lambda_0$, we obtain it in region IV from Proposition~\ref{PropSUeSmall} and then in region V by finite speed of propagation.}
  \label{FigSUeNonrfUniq}
  \end{figure}

  We can \emph{solve} $P u=f$ in a similar manner by constructing $u$ step by step in the same regions as those used in the uniqueness proof above. Thus, we first construct $u$ in $\Omega\cap\{t<t_--\lambda_0\}$ using Lemma~\ref{LemmaSULocAway} and in $\Omega_{t_-,\lambda_0}$ using Proposition~\ref{PropSUeSmall}; and to continue the solution further, we use a cutoff argument: if $\chi\in\CI(M)$, then $P(\chi u)=\chi f+[P,\chi]u$, so if we choose $\chi$ to be $1$ in the region into which we wish to extend $u$ next, and to transition to $0$ in the region where we have already constructed $u$, then by solving $P u'=\chi f+[P,\chi]u$ (with the right hand side already determined), we construct $u=u'$ on the set where $\chi=1$. This completes the proof.
\end{proof}

\begin{rmk}[Results on general spacetime domains]
\label{RmkSUeGeneral}
  Under suitable assumptions on $P$, one can construct solutions of $P u=f$ on a general spacetime domain $\Omega$ by partitioning $\Omega$ into a finite number of non-refocusing domains. As a simple example (and dropping the bundle $\cE$ from the notation), if $\vartheta_{\rm in}=\vartheta_{\rm out}$ are equal and constant along $\cC$, and if $P$ is spectrally admissible with weight $\ell$ at all points of $\cC$, then we can construct a unique solution
  \begin{subequations}
  \begin{equation}
  \label{EqSUeGeneral1}
    u \in \bigcap_{\eps>0} \He^{s-\eps,\ell-\eps}(\Omega)^{\bullet,-}
  \end{equation}
  of the equation
  \begin{equation}
  \label{EqSUeGeneral2}
    P u = f \in \He^{s-1,\ell-2}(\Omega)^{\bullet,-}
  \end{equation}
  \end{subequations}
  for suitable orders $s$ in the following manner: by writing $\Omega$ as a union of a finite number of non-refocusing domains $\Omega_1,\ldots,\Omega_N$, we can solve $P u=f$ on the first domain $\Omega_1$, with an admissible order function $\sfs_1$ which is $\delta$-close to the constant value $-\frac12+\ell+\vartheta$ (where $\vartheta=\vartheta_{\rm in}=\vartheta_{\rm out}$). To continue this solution to the next domain $\Omega_2$, note that $\min(\sfs_1-\delta)>-\frac12+(\ell-2\delta)+\vartheta$; we may thus take an order function $\sfs_2\leq\min(\sfs_1-\delta)$ so that $\sfs_2,\ell-2\delta$ are $P$-admissible on $\Omega$; and so on. Since $\delta>0$ can be taken to be arbitrarily small, we obtain~\eqref{EqSUeGeneral1}--\eqref{EqSUeGeneral2} for any $s>-\frac12+\ell+\vartheta$.
\end{rmk}

\subsection{Higher b-regularity; polyhomogeneity}
\label{SsSUb}

There is a fixed upper bound on the edge regularity order $\sfs$ for which Theorem~\ref{ThmSUeNonrf} applies, even for source terms $f\in\CIc(M^\circ;\cE)$, since $\ell$ is restricted to a bounded interval (via the requirement of spectral admissibility) and $\sfs$ is bounded above in terms of $\ell$ on the flow-out of the outgoing radial set via Definition~\ref{DefSULocAdm}. We surpass this limitation by proving additional b-regularity of solutions, provided the source terms have the same additional regularity.

Working momentarily on a compact manifold $M$ with fibered boundary $\pa M$, we define mixed edge-b-Sobolev spaces by
\[
  H_{\eop;\bop}^{(\sfs;k),\ell}(M) := \{ u\in \He^{\sfs,\ell}(M) \colon A u\in\He^{\sfs,\ell}(M)\ \forall\,A\in\Diffb^k(M) \};
\]
here $\sfs\in\CI(\Se^*M)$, $\ell\in\R$. These spaces can be given the structure of Hilbert spaces, with squared norm given by the sum of the squared $\He^{\sfs,\ell}(M)$-norms of $A u$ where $A$ lies in a fixed finite subset of $\Diffb^k(M)$ which spans $\Diffb^k(M)$ over $\CI(M)$. The mapping properties of edge-ps.d.o.s on such spaces can be analyzed using a mixed algebra of differential-pseudodifferential operators. (Such algebras appeared implicitly in \cite{MelroseEuclideanSpectralTheory} and explicitly in \cite{VasyPropagationCorners,VasyWaveOnAdS,MelroseVasyWunschDiffraction,HintzVasyScrieb}.) For $\sfs\in\CI(\Se^*M)$, $k\in\N_0$, we consider the space $\Diffb^k\Psie^\sfs(M)$ of all operators which are finite sums $\sum_i Q_i P_i$ where $Q_i\in\Diffb^k(M)$ and $P_i\in\Psie^\sfs(M)$ are operators \emph{with smooth coefficients}. (It suffices to require b-regular coefficients; but one \emph{cannot} allow $P_i$ to merely have edge-regular coefficients.) The key lemma states that one can write any such operator as $\sum_i P_i'Q_i'$ for some $Q_i'\in\Diffb^k(M)$, $P'_i\in\Psie^\sfs(M)$; see \cite[Lemma~5.8]{HintzVasyScrieb} for the (more general) constant order edge-b setting, and \cite[Lemma~2.9]{Hintz3b} for how to treat variable orders (the proof there carrying over \emph{mutatis mutandis} to the present setting). This relies on the iterated regularity of Schwartz kernels of edge-ps.d.o.s with smooth (or b-regular) symbols under application of b-vector fields on the edge double space which are tangent to the edge-diagonal.\footnote{A concrete instance demonstrating the necessity of b-regular coefficients is the following. If $V\in\Vb(M)$ and $W=a W_0$ where $W_0\in\Ve(M)$ and $a\in L^\infty(M)$ is edge regular, i.e.\ bounded together with all derivatives along edge vector fields, then $W V=V W-[V,W]=V W-a[V,W_0]-V(a)W_0$; the second term is an edge vector field with edge regular coefficients, but typically one only has $V(a)\in r^{-1}L^\infty$. One may for example take $V=\pa_t=W_0$ and $a=\chi((t-t_0)/r)$.} This then implies the algebra property $\Diffb^k\Psie^\sfs(M)\circ\Diffb^{k'}\Psie^{\sfs'}(M)\subset\Diffb^{k+k'}\Psie^{\sfs+\sfs'}(M)$; we also deduce that for $P\in\Diffb^k\Psie^\sfs(M)$ and $u\in H_{\eop;\bop}^{(\sfs';k'),\ell}(M)$ with $k'\geq k$ one has $P u\in H_{\eop;\bop}^{(\sfs'-\sfs;k'-k),\ell}(M)$.

One can microlocalize edge-regularity in such mixed edge-b-Sobolev spaces. A more general situation is discussed in \cite[\S5.2]{HintzVasyScrieb}, and thus we shall be brief. If $B,G\in\Psie^0(M)$ are such that $\WFe'(B)\subset\Elle(G)\cap\Elle(P)$ where $P\in r^{-\alpha}\Psie^m(M)$, then for all $\sfs,k,\ell,N$ we have the estimate
\[
  \|B u\|_{H_{\eop;\bop}^{(\sfs;k),\ell}} \leq C\Bigl( \| G P u \|_{H_{\eop;\bop}^{(\sfs-m;k),\ell-\alpha}} + \|u\|_{H_{\eop;\bop}^{(-N;k),\ell}} \Bigr);
\]
this follows from the microlocal elliptic parametrix construction (which uses only the principal symbol map on $\Psie(M)$).

Similarly, one has real principal type propagation estimates: if $B,E,G\in\Psie^0(M)$ are such that $\WFe'(B)\subset\Elle(G)$, and so that all backward null-bicharacteristics from $\WFe'(B)\cap\Char(P)$ reach $\Elle(E)$ in finite time while remaining in $\Elle(G)$, then we have
\begin{equation}
\label{EqSUbRealpr}
  \|B u\|_{H_{\eop;\bop}^{(\sfs;k),\ell}} \leq C\Bigl( \|G P u\|_{H_{\eop;\bop}^{(\sfs-m+1;k),\ell-\alpha}} + \|E u\|_{H_{\eop;\bop}^{(\sfs;k),\ell}} + \|u\|_{H_{\eop;\bop}^{(-N;k),\ell}} \Bigr).
\end{equation}
The proof proceeds by induction on $k$; one applies this estimate for $k-1$ in place of $k$ to $V u$ where $V\in\Vb(M)$ is a spanning set of the space of \emph{commutator b-vector fields}, i.e.\ all vector fields with $[V,W]\in\Ve(M)$ for all $W\in\Ve(M)$. (The space of such $V$ spans $\Vb(M)$ over $\CI(M)$; see~\cite[\S5.1]{HintzVasyScrieb}. In the setting~\eqref{EqDCollar}, one can take all $V\in\Ve(M)$ and in addition $V=\chi(r)\pa_t$ where $\chi\in\CIc([0,\bar r))$ equals $1$ near $0$; recall $\bar r$ from~\eqref{EqDCollar}.) In the term $G P(V u)=G V P u+G[P,V]u$, one must then estimate $\|G[P,V]u\|_{H_{\eop;\bop}^{(\sfs-m+1;k-1),\ell-\alpha}}\leq\|G'u\|_{H_{\eop;\bop}^{(\sfs+1;k-1),\ell}}+\|u\|_{H_{\eop;\bop}^{(-N;k-1),\ell}}$ where $G'\in\Psie^0(M)$ has $\Elle(G')\supset\WFe'(G)$; and the term $G' u$ is then estimated using~\eqref{EqSUbRealpr} again (if, as one may, one chooses $G'$ to have wave front set supported in a small neighborhood of $\WFe'(B)$), now with $k-1,\sfs+1$ in place of $k,\sfs$.

Edge-microlocal estimates at radial points are proved similarly. In the setting of the incoming radial point estimate (Proposition~\ref{PropSUPrIn}), we thus obtain the estimate~\eqref{EqSUbRealpr} when $s>s_0>-\frac12+\ell+\vartheta_{\rm in}$ and $-N=s_0$, further $\WFe'(B),\pa\cR_{\rm in}^\pm\subset\Elle(G)$, and all backward integral curves of $\pm\sfH_{G_\eop}$ from $\WFe'(B)\cap\Sigma^\pm$ either reach $\Elle(E)$ in finite time or tend to $\pa\cR_{\rm in}^\pm$, all while remaining in $\Elle(G)$. (Carefully note that the estimate~\eqref{EqSUbRealpr} only uses microlocalizers with smooth symbols; near $\pa M$, this means concretely that one can localize in $t$, but \emph{not} in $\frac{t-t_+}{r}$.)

An analogous generalization holds also in the outgoing radial point estimate (Proposition~\ref{PropSUPrOut}). The only caveat is that since there is an upper threshold $s_{\rm max}$ for the edge regularity order $s$ for propagation, as in Proposition~\ref{PropSUPrOut}, the threshold condition one obtains from the above argument for edge propagation in $H_{\eop;\bop}^{(s;k),\ell}$ is $s+k<s_{\rm max}$ (cf.\ the final step of the previous argument where the \emph{sum} $s+k=(s+1)+(k-1)$ is unchanged in the inductive argument). We argue that this can be relaxed to the $k$-independent condition $s<s_{\rm max}$ by adapting the idea of \emph{module regularity} from \cite{HassellMelroseVasySymbolicOrderZero} in a manner closely related to \cite[Lemma~5.31 and Proposition~5.32]{HintzNonstat}. We work in the collar neighborhood~\eqref{EqDCollar}.

\begin{lemma}[Commutators with $P$]
\label{LemmaSUbComm}
  Let $V_1=\chi\pa_t$, $V_2=\chi r(\pa_t+\pa_r)$, and $V_j=\chi\Omega_j$, $j=3,\ldots,N$, where the $\Omega_j$ span $\cV(\Sph^{n-1})$ over $\CI(\Sph^{n-1})$. Let $X_i\in\Diffb^1(M;\cE)$, $i=1,\ldots,N$, be an operator with scalar principal symbol equal to $V_i$. Then for $i=1,\ldots,N$, we can write
  \begin{equation}
  \label{EqSUbComm}
    [P,X_i] = a_i P + \sum_{j=1}^N Y_{i,j}X_j + R_i,
  \end{equation}
  where $a_i\in\CI(M)$, $Y_{i,j},R_i\in r^{-2}\Diffe^1(M;\cE)$, and $\sigmae^1(r^2 Y_{i,j})=0$ at $\cR_{\rm out}^\pm$.
\end{lemma}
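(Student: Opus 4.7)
The plan is a direct symbolic computation organized around the algebraic structure of the outgoing radial set $\cR_{\rm out}^\pm$, which in the edge covariables $(\sigma,\xi,\eta)$ of~\eqref{EqDECovec} is cut out by $\{r=0,\ \xi=\sigma,\ \eta=0\}$.

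First I would reduce to commutators with the scalar vector fields $V_i$ themselves: writing $X_i = V_i + Z_i$ with $V_i$ acting diagonally via the trivialization of $\cE$ underlying~\eqref{EqSUOp}, the fact that $\sigmae^2(P)$ is scalar forces $[P, Z_i] \in r^{-2}\Diffe^1(M;\cE)$, which I would absorb into $R_i$. Using that $rV_i \in \Ve(M)$ for each $i$, together with $[r^{-2}, V_1] = [r^{-2}, V_j] = 0$ for $j \geq 3$ (since $r^{-2}$ is both $t$- and $\omega$-independent) and $[r^{-2}, V_2] = 2\chi r^{-2}$ (by direct computation using $V_2 = \chi r(\pa_t+\pa_r)$), one obtains $[P, V_i] \in r^{-2}\Diffe^2(M;\cE)$, with an explicit additive term $2\chi P$ appearing when $i=2$.

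The heart of the proof is then a case-by-case matching of principal edge symbols. The rescaled edge principal symbols of $rV_1$, $V_2$, $V_j$ ($j\geq 3$) are proportional to $-\chi\sigma$, $\chi(\xi-\sigma)$, and $\chi\cdot(\text{linear in }\eta)$, respectively; all except the first vanish on $\cR_{\rm out}^\pm$. For $i \geq 3$, the symbol $\sigmae^2(r^2[P, V_i])$ comes from $[\Delta_{h(t)}, \Omega_{i-2}]$ and is a pure quadratic form in $\eta$; it decomposes as $\sum_k (\text{linear in }\eta)\cdot\eta_k$, so the resulting $r^2 Y_{i,k+2}$ are linear in $\eta$ and vanish on $\cR_{\rm out}^\pm$, with no $a_i P$ term needed. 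For $i=1$, $[r^2 P, \pa_t]$ differentiates the coefficients of $r^2 P$ in $t$ (the top coefficient $r^2$ of $(rD_t)^2$ being $t$-independent), producing the leading symbol $-\pa_t h^{ab}(t,\omega)\eta_a\eta_b$, handled identically. For $i=2$, the decisive algebraic identity
\[
  -2\sigma(\xi-\sigma) = -G_\eop + (\xi-\sigma)^2 + |\eta|_{h(t,\omega)^{-1}}^2
\]
rewrites the top-order piece $\chi H_{G_\eop}(\xi-\sigma) = -2\chi\sigma(\xi-\sigma) + O(\eta^2) + O(r)$ (computed from~\eqref{EqDEHamOrig}) as $-\chi G_\eop + \chi(\xi-\sigma)^2 + \chi h^{ab}\eta_a\eta_b$. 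The first piece combines with the additive $2\chi P$ to give $a_2 = \chi$; the second yields $Y_{2,2}V_2$ with $r^2 Y_{2,2}$ having edge symbol $\chi(\xi-\sigma)$; and the third yields $\sum_k Y_{2,k+2}V_{k+2}$ with $r^2 Y_{2,k+2}$ having edge symbol linear in $\eta$. All three factors $r^2 Y_{2,j}$ vanish on $\cR_{\rm out}^\pm$ by construction.

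All subprincipal and lower-order contributions, namely $[P, Z_i]$, the subprincipal endomorphism terms of $P$, the remainder $\tilde P \in r\,\Diffe^2$ in~\eqref{EqSUOp}, and any commutator terms involving derivatives of $\chi$ (which are supported away from $\pa M$ and hence have edge symbols automatically vanishing on $\cR_{\rm out}^\pm \subset \Se^*_{\pa M}M$), lie in $r^{-2}\Diffe^1(M;\cE)$ and are collected into $R_i$. The main obstacle is confirming that no terms of the form $\sigma\cdot\eta_j$ or $\sigma^2$ appear at the leading edge symbol level of $r^2[P, V_i]$---such terms would obstruct a decomposition whose $Y_{i,j}$ vanish on $\cR_{\rm out}^\pm$, since $\sigma$ does not vanish there---but this is guaranteed by the case analysis: in each case the leading symbol lives in the ideal generated by $\{\xi-\sigma, \eta_1,\ldots,\eta_{n-1}\}$ (or by $G_\eop$), that is, in the ideal generated by the principal symbols of those $V_j$ ($j \geq 2$) whose own symbols vanish on $\cR_{\rm out}^\pm$.
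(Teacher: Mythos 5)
Your proposal follows the paper's proof essentially step for step: the reduction to a principal-symbol computation for the scalar vector fields $V_i$, the observation that $\sigmae^2([r^2 P,V_i])$ for $i=1$ and $i\geq 3$ is quadratic in $\eta$ modulo lower-order and $\cO(r)$ terms, and, for $i=2$, the identity $2\sigma^2-2\sigma\xi=-G_\eop+(\sigma-\xi)^2+|\eta|^2$ are exactly the arguments used in the paper. Your explicit bookkeeping of the $2\chi P$ term arising from $[r^{-2},V_2]$ is a slightly more careful version of the paper's remark that the presence of $a_i P$ in \eqref{EqSUbComm} allows one to work with $[r^2 P,V_i]$ directly.

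One step does not work as written: you cannot collect the $\cO(r)$ remainders---the contribution of $\tilde P$ (for which $r^2\tilde P\in r\Diffe^2$), and the $\cO(r)$ terms in $H_{G_\eop}(\xi-\sigma)$ coming from $\tilde G_\eop$---into $R_i$, because these are genuinely \emph{second order} edge operators (carrying an extra factor of $r$), whereas $R_i$ is required to lie in $r^{-2}\Diffe^1(M;\cE)$. Nor do their symbols lie in the ideal generated by $\xi-\sigma$ and $\eta$, so your closing remark does not cover them; and unlike the $\chi'$ terms they are not supported away from $\pa M$, they merely vanish to first order there. The missing observation, which is the final step of the paper's proof, is that every element of $r\Diffe^2(M;\cE)$ can be written as $\sum_j\tilde Y_j^\flat X_j+R^\flat$ with $\tilde Y_j^\flat,R^\flat\in r\Diffe^1(M;\cE)$, since $rV_1,V_2,\ldots,V_N$ span $\Ve(M)$ over $\CI(M)$ where $\chi=1$ (away from which everything is trivial); the resulting contributions to $Y_{i,j}$ then have $\sigmae^1(r^2 Y_{i,j})$ vanishing at $r=0$, hence in particular at $\cR_{\rm out}^\pm$. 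With this one-line fix your argument is complete.
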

\begin{proof}
  The addition of any zeroth order term to $X_i$ adds an element of $r^{-2}\Diffe^1$ to the commutator $[P,X_i]$, which can be absorbed into $R_i$. Thus,~\eqref{EqSUbComm} can be checked via a principal symbol calculation. We drop the bundle and work with $V_i$ instead of $X_i$. Since $V_i\in\Ve(M)$ for $i\geq 2$, we have $[P,V_i]\in r^{-2}\Diffe^2$; and also\footnote{One can check this by direct computation using~\eqref{EqSUOp}. More conceptually, we have $[\pa_t,W]\in\Ve(M)$ for all $W\in\Ve(M)$ due to the fact that $\pa_t$ pushes forward to a well-defined vector field on the base $\R_t$ of the fibration of $\pa M$; cf.\ \cite[\S5.1]{HintzVasyScrieb}.} $[V_1,P]\in r^{-2}\Diffe^2(M)$.

  Due to the presence of $a_i$ in~\eqref{EqSUbComm}, it suffices to check that one can write
  \begin{equation}
  \label{EqSUbCommCheck}
    [r^2 P,V_i] \equiv \tilde a_i P + \sum_{j=0}^N \tilde Y_{i,j}V_j \bmod \Diffe^1,
  \end{equation}
  where $\tilde Y_{i,j}\in\Diffe^1$ with $\sigmae^1(\tilde Y_{i,j})|_{\cR_{\rm out}}=0$. For $i=2$, we note that using the coordinates~\eqref{EqDECovec} on the fibers of $\Te^*M$, we have $\sigmae^1(i^{-1}V_2)=\xi-\sigma$. Thus, recalling the expression for $H_{G_\eop}$ (with $G_\eop=r^{-2}G$) from~\eqref{EqDEHamOrig}, we have
  \[
    \sigmae^2([r^2 P,V_2]) = H_{G_\eop}(\xi-\sigma) \equiv 2\sigma^2 - 2\sigma\xi = -G_\eop + (\sigma-\xi)^2 + |\eta|^2 \bmod r P^{[2]}(\Te^*M).
  \]
  Since $\eta=0$ and $\sigma-\xi=0$ at $\cR_{\rm out}$ by~\eqref{EqDERadOut}, and since every function vanishing at $\eta=0$ can be written as a linear combination of the symbols of $V_j$ (with smooth coefficients) near $r=0$, this guarantees~\eqref{EqSUbCommCheck} modulo $\Diffe^1+r\Diffe^2$. Note then that every element of $r\Diffe^2$ can be written in the form $\sum_{j=1}^N \tilde Y_j^\flat X_j+R^\flat$ where $\tilde Y_j^\flat,R^\flat\in r\Diffe^1$ since $r V_1$ and $V_j$, $j\geq 2$, span $\Ve(M)$ over $\CI(M)$; so in particular $\sigmae^1(\tilde Y_j^\flat)|_{\cR_{\rm out}}=0$.

  For $i=3,\ldots,N$, we note that $\sigmae^1(V_i)$ is a linear function in the spherical momentum variable $\eta$, and thus~\eqref{EqDEHamOrig} shows that $\sigmae^2([r^2 P,V_i])$ is a quadratic expression in $\eta$ modulo $r P^{[2]}(\Te^*M)$; arguing as before, $[r^2 P,V_i]$ is therefore of the required form. For $i=1$ finally, we note that the principal symbol of $[\chi\pa_t,r^2 P]\equiv\chi[\pa_t,\Delta_{h(t)}]\bmod\Diffe^1+r\Diffe^2$ is a quadratic expression in $\eta$ as well.
\end{proof}

We then have the following variant of Proposition~\ref{PropSUPrOut}, phrased here for better readability under more geometric assumptions on the microlocalizers.

\begin{prop}[Edge propagation near $\cR_{\rm out}$]
\label{PropSUbRout}
  Suppose that $\chi\in\CIc(M)$ and $B,E,G\in\Psie^0(M)$ are such that the Schwartz kernels of $B,E,G$ are supported in the interior of $\supp\chi\times\supp\chi$, moreover $\WFe'(B),\pa\cR_{\rm out}^\pm\subset\Elle(G)$, and suppose that all backward integral curves of $\pm\sfH_{G_\eop}$ from $\WFe'(B)\cap\pa\Sigma^\pm$ either reach $\Elle(E)$ in finite time or tend to $\pa\cR_{\rm out}^\pm$, all while remaining in $\Elle(G)$. Let $s,s_0,\ell\in\R$ be such that $s_0<s<-\frac12+\ell+\vartheta_{\rm out}$, and let $k\in\N_0$. Then there exists a constant $C>0$ so that
  \begin{equation}
  \label{EqSUbRout}
    \|B u\|_{H_{\eop;\bop}^{(\sfs;k),\ell}} \leq C\Bigl( \|G P u\|_{H_{\eop;\bop}^{(\sfs-1;k),\ell-2}} + \|E u\|_{H_{\eop;\bop}^{(\sfs;k),\ell}} + \|\chi u\|_{H_{\eop;\bop}^{(s_0;k),\ell}} \Bigr).
  \end{equation}
  This holds in the strong sense that if the right hand side is finite, then so is the left hand side and the estimate holds.
\end{prop}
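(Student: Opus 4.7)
The plan is to argue by induction on $k$, with the base case $k=0$ being essentially Proposition~\ref{PropSUPrOut} (restated for smooth microlocalizers, since the sharp localizations $\chi_j$ there were only needed for the edge-local solvability argument; the positive commutator proof carries over verbatim when $B,E,G$ are replaced by edge ps.d.o.s with smooth symbols supported in $\supp\chi$). Note that the microlocal elliptic and real principal type estimates in mixed edge-b spaces, as sketched just before Lemma~\ref{LemmaSUbComm}, already show that the hard part is propagation through $\pa\cR_{\rm out}^\pm$.

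For the inductive step $k\to k+1$, I would control the $H_{\eop;\bop}^{(\sfs;k+1),\ell}$-norm of $Bu$ by the $H_{\eop;\bop}^{(\sfs;k),\ell}$-norms of $Bu$ and of $X_i B u$ for the spanning b-vector fields $X_1,\ldots,X_N$ of Lemma~\ref{LemmaSUbComm}. The norm of $Bu$ is covered by the inductive hypothesis, and writing $X_i B u = B X_i u + [X_i,B]u$, the commutator lies in $\Diffb^0\Psie^0(M)=\Psie^0(M)$ with b-regular coefficients and contributes a term controlled by the inductive hypothesis applied to a microlocalizer with slightly larger wave front set than $B$. Applied to $X_i u$, the inductive hypothesis in the form~\eqref{EqSUbRout} at level $k$ yields
\[
  \|B X_i u\|_{H_{\eop;\bop}^{(\sfs;k),\ell}} \leq C\Bigl( \|G'P(X_i u)\|_{H_{\eop;\bop}^{(\sfs-1;k),\ell-2}} + \|E'X_i u\|_{H_{\eop;\bop}^{(\sfs;k),\ell}} + \|\chi u\|_{H_{\eop;\bop}^{(s_0;k+1),\ell}} \Bigr),
\]
with $G',E'$ microlocalizers adapted to $B$. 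The $E'X_i u$ term is absorbed into the a priori control term on the right of~\eqref{EqSUbRout} upon enlarging $E$, since by assumption backward null-bicharacteristics from $\WFe'(B)$ reach $\Elle(E)$ or tend to $\pa\cR_{\rm out}^\pm$ while remaining in $\Elle(G)$.

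The crucial step is estimating $G'P(X_i u)$. By Lemma~\ref{LemmaSUbComm},
\[
  P(X_i u) = X_i f + a_i f + \sum_j Y_{i,j} X_j u + R_i u,\qquad f=Pu.
\]
The $X_i f + a_i f$ contribution is the desired $\|G''f\|_{H_{\eop;\bop}^{(\sfs-1;k+1),\ell-2}}$ term, and $R_i u\in r^{-2}\Diffe^1(M)u$ is handled directly by the inductive hypothesis with $\sfs-1$ in place of $\sfs$ (still admissible because the threshold condition is an open upper bound on $\sfs$). The genuinely delicate terms are the $Y_{i,j}X_j u$, which naively would force the restriction $\sfs+k<-\frac12+\ell+\vartheta_{\rm out}$. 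Here we invoke \emph{module regularity}: since $\sigmae^1(r^2 Y_{i,j})$ vanishes at $\pa\cR_{\rm out}^\pm$, and since $G'$ is microlocalized in a small neighborhood of $\pa\cR_{\rm out}^\pm$ modulo $\Elle(E')$, we may write (microlocally near $\pa\cR_{\rm out}^\pm$)
\[
  G' Y_{i,j} \equiv \sum_k c_{i,j,k} X_k + S_{i,j}\quad\text{modulo operators microsupported in }\Elle(E'),
\]
with $c_{i,j,k}\in r^{-2}\CI$ and $S_{i,j}\in r^{-2}\Psie^0(M)$; this is possible because the $X_k$ span the edge vector fields whose symbols vanish at $\pa\cR_{\rm out}^\pm$, up to elements whose contributions can be absorbed in the a priori term. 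Consequently $G'Y_{i,j}X_j u$ becomes a sum of $c_{i,j,k}X_k X_j u$ (of b-order $k+1$ in the improved-edge-order space $H_{\eop;\bop}^{(\sfs-1;k-1),\ell-2}$, which by commuting the $X$'s and applying the inductive hypothesis with $\sfs-1$ in place of $\sfs$ is controlled by the right hand side of~\eqref{EqSUbRout}) plus a lower-edge-order piece $S_{i,j}X_j u$, treated analogously.

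The main obstacle is the module-regularity rewriting in the last step: making Lemma~\ref{LemmaSUbComm} pay off in a way that avoids the undesirable dependence of the threshold on $k$. The point is precisely that $\sigmae^1(r^2 Y_{i,j})|_{\pa\cR_{\rm out}^\pm}=0$ lets each $Y_{i,j}$ be absorbed into the $X_k$-module at the cost of only one order of edge regularity (together with factors of $r^{-2}$), so that the induction closes with $\sfs$ decreased by $1$ rather than $\sfs+k$ decreased by $1$. This is the direct analogue of the argument in \cite[Lemma~5.31 and Proposition~5.32]{HintzNonstat} and is modeled on \cite{HassellMelroseVasySymbolicOrderZero}.
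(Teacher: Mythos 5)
Your base case, the reduction to $B X_i u$ via Lemma~\ref{LemmaSUbComm}, and your treatment of the terms $X_i f+a_i f$, $R_i u$, $[X_i,B]u$, and $E'X_i u$ all match the paper's argument. The gap is in how you dispose of the terms $Y_{i,j}X_j u$. You keep them on the right-hand side as source terms and try to absorb them via the rewriting $G'Y_{i,j}\equiv\sum_k c_{i,j,k}X_k+S_{i,j}$; but this does not close the induction. To place $c_{i,j,k}X_k X_j u$ in $H_{\eop;\bop}^{(\sfs-1;k),\ell-2}$ you need $X_k X_j u\in H_{\eop;\bop}^{(\sfs-1;k),\ell}$ microlocally, i.e.\ $u\in H_{\eop;\bop}^{(\sfs-1;k+2),\ell}$: the module rewriting trades one order of edge regularity for one \emph{additional} b-derivative, which is the wrong direction here. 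B-regularity of order $k+2$ is available neither from the level-$k$ inductive hypothesis nor from the level-$(k+1)$ statement being proved (invoking the latter at edge order $\sfs-1$ is circular, and a double induction on $(k,\sfs)$ hits the same $k+2$ obstruction at every stage). Without the rewriting, the term $Y_{i,j}X_j u$ requires $u\in H_{\eop;\bop}^{(\sfs;k+1),\ell}$ outright, which is again the conclusion.

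The correct way to exploit $\sigmae^1(r^2 Y_{i,j})|_{\cR_{\rm out}^\pm}=0$ is to move these terms to the \emph{left}: the paper sets $U=(X_1 u,\ldots,X_N u)$ and observes that $\cP U=\cF:=(X_i f+a_i f+R_i u)_i$ with $\cP=(\delta_{i j}P-Y_{i,j})_{i,j}$ a principally scalar system having the same principal symbol as $P$, and---precisely because the $Y_{i,j}$ have vanishing edge principal symbol at $\cR_{\rm out}^\pm$---with the principal symbol of $\cP-\cP^*$ there equal to $\Id_{\C^N}$ tensored with that of $P-P^*$. The scalar outgoing radial point estimate (your $k=0$ case, applied to $\cP$ acting on $U$) therefore holds with the unchanged threshold $s<-\frac12+\ell+\vartheta_{\rm out}$, and only the genuinely lower-order terms $X_i f+a_i f+R_i u$ appear as sources, all controlled by the hypothesis on $f$ and the inductive hypothesis for $u$. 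This system (equivalently, simultaneous positive-commutator) formulation is what makes Lemma~\ref{LemmaSUbComm} pay off; the source-term absorption you propose does not.
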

\begin{proof}
  The case $k=0$ is a standard radial point estimate, proved as in Proposition~\ref{PropSUPrOut} (but with localization only in $t$ and not in $\frac{t-t_0}{r}$) for specific choices of $B,E,G$ with operator wave front sets contained in any fixed neighborhood of $\pa\cR_{\rm out}^\pm$; for general $B,E,G$ under the stated assumptions, one combines such a specific estimate with real principal type propagation away from $\pa\cR_{\rm out}^\pm$ and microlocal elliptic estimates away from $\pa\Sigma^\pm$.

  Consider the case $k=1$. Using the notation of Lemma~\ref{LemmaSUbComm}, set $U:=(X_1 u,\ldots,X_N u)$. Write $f=P u$. Then $P X_i u=X_i f+[P,X_i]u$ implies that
  \[
    \cP U=\cF:=(X_i f+a_i f+R_i u)_{i=1,\ldots,N},
  \]
  where $\cP=(\delta_{i j}\cP-Y_{i,j})_{i,j=1,\ldots,N}$ is a principally scalar operator (\emph{with the same principal symbol} as $P$). By Lemma~\ref{LemmaSUbComm}, the principal symbol of $\cP-\cP^*$ at $\cR_{\rm out}^\pm$ is equal to $\Id_{\C^N}$ tensored with that of $P-P^*$; therefore, we can apply the outgoing radial point estimate to $\cP$ \emph{with the same threshold condition} on the edge regularity order; the source terms $R_i u$ are controlled by the inductive hypothesis. Write now $\|B u\|_{H_{\eop;\bop}^{(s;k),\ell}}\lesssim\|B u\|_{\He^{s,\ell}}+\sum_{i=1}^N\|B X_i u\|_{\He^{s,\ell}}+\|[B,X_i]u\|_{\He^{s,\ell}}\sim\|B U\|_{\He^{s,\ell}}+\|\tilde B u\|_{\He^{s,\ell}}$ where $B$ on the right acts component-wise, and $\tilde B\in\Psie^0(M)$ has $\Ell(\tilde B)\supset\WFe'(B)$ and $\WFe'(\tilde B)$ contained in a small neighborhood of $\WFe'(B)$. One can then estimate the first term as in~\eqref{EqSUbRout} but with $\cP U$, $E U$, $\chi U$ on the right, and the second term can be estimated directly using~\eqref{EqSUbRout} with $\tilde B$ in place of $B$. Commuting in a similar manner the operators $X_i$ through $G P$, resp.\ $E$ when estimating the norm of $G P X_i u$, resp.\ $E X_i u$, we obtain~\eqref{EqSUbRout} for $k=1$. The case of general $k\geq 2$ can be handled inductively.
\end{proof}

On the spacetime $M$ and a spacetime domain $\Omega\subset M$ inside it, we shall now prove estimates for forward solutions of $P$ on the Hilbert spaces
\[
  H_{\eop;\bop}^{(\sfs;k),\ell}(\Omega)^{\bullet,-}
\]
which are defined analogously to~\eqref{EqSULocFn} or, equivalently, directly via iterated regularity of elements of $\He^{\sfs,\ell}(\Omega)^{\bullet,-}$ under application of up to $k$ b-vector fields.

\begin{thm}[Higher b-regularity]
\label{ThmSUb}
  Let $\Omega\subset M$ be a non-refocusing spacetime domain, and let $\sfs\in\CI(\Se^*M)$, $\ell\in\R$ be $P$-admissible orders so that also $\sfs-1,\ell$ are $P$-admissible, and so that $P$ is spectrally admissible with weight $\ell$ on $\Omega$. Let $k\in\N_0$. Then there exists a constant $C$ so that for all $f\in H_{\eop;\bop}^{(\sfs-1;k),\ell-2}(\Omega;\cE)^{\bullet,-}$ the unique solution $u\in\He^{-\infty,\ell}(\Omega;\cE)^{\bullet,-}$ of $P u=f$ from Theorem~\usref{ThmSUeNonrf} satisfies
  \[
    u\in H_{\eop;\bop}^{(\sfs;k),\ell}(\Omega;\cE)^{\bullet,-},\qquad
    \|u\|_{H_{\eop;\bop}^{(\sfs;k),\ell}(\Omega;\cE)^{\bullet,-}} \leq C\|f\|_{H_{\eop;\bop}^{(\sfs-1;k),\ell-2}(\Omega;\cE)^{\bullet,-}}.
  \]
\end{thm}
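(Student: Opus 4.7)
The plan is to induct on $k$. The base case $k=0$ is Theorem \ref{ThmSUeNonrf}. For the inductive step, assume the conclusion for $k-1$; given $f \in H_{\eop;\bop}^{(\sfs-1;k),\ell-2}(\Omega;\cE)^{\bullet,-}$, the inductive hypothesis (viewing $f$ in the weaker space $H_{\eop;\bop}^{(\sfs-1;k-1),\ell-2}$) produces the forward solution $u \in H_{\eop;\bop}^{(\sfs;k-1),\ell}$. What remains is to promote this to $H_{\eop;\bop}^{(\sfs;k),\ell}$ with the quantitative estimate.

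The strategy is to reprove Theorem \ref{ThmSUeNonrf} verbatim, but now in the mixed edge-b calculus $\Diffb^k\Psie^\sfs$ introduced at the start of \S\ref{SsSUb}. All the microlocal ingredients are available in this setting: elliptic regularity and real principal type propagation \eqref{EqSUbRealpr}; the incoming radial estimate (Proposition \ref{PropSUPrIn}), which promotes to mixed edge-b spaces by the same inductive commutator argument indicated after \eqref{EqSUbRealpr} and whose threshold $\sfs > -\tfrac12+\ell+\vartheta_{\rm in}$ is independent of $k$; and the outgoing radial estimate in mixed edge-b spaces, Proposition \ref{PropSUbRout}. Combining these with b-regular propagation away from $\cC$ (Lemma \ref{LemmaSULocAway} extends by standard commutator arguments) yields the mixed edge-b analog of Proposition \ref{PropSULocReg}. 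For the edge-local solvability near final boundary hypersurfaces meeting $\pa M$, I would argue as in Remark \ref{RmkSULocb}: after blowing up $\phi^{-1}(t_0)$, both the edge vector fields of $M$ and the ambient b-vector fields lift to b-vector fields on $[M;\phi^{-1}(t_0)]$, so mixed edge-b regularity on $\Omega_{\tau_0,\tau_1,\tau_-,r_0}$ becomes ordinary b-regularity for the operator \eqref{EqSULocb}, and the $V = e^{-\digamma\tau}r^{-2\ell}\pa_\tau$ energy estimate of Proposition \ref{PropSULoc} readily commutes with b-vector fields on the blow-up. The extend-solve-restrict template at the end of the proof of Proposition \ref{PropSULocReg} then closes the estimate near those hypersurfaces.

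The main obstacle is the outgoing radial point estimate: naively commuting a b-vector field $V$ through $P$ contributes $[P,V]u \in r^{-2}\Diffe^2 u$ with a nonzero principal symbol at $\cR_{\rm out}^\pm$, which would raise the threshold on $\sfs$ by $k$ and violate admissibility once $k$ is large. Proposition \ref{PropSUbRout} is precisely the tool that avoids this: by Lemma \ref{LemmaSUbComm}, for the specific spanning family $X_i$ of commutator b-vector fields, the commutator terms $Y_{i,j}$ vanish principally at $\cR_{\rm out}^\pm$, so the system $\cP U = \cF$ has subprincipal symbol there equal to $\Id_{\C^N}$ tensor that of $P$, and Lemma \ref{LemmaSUPrThr} keeps the threshold $k$-independent. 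Granting this $k$-uniform radial estimate, the remaining assembly---microlocal propagation of the higher b-regularity from the region where $u$ vanishes (past of the initial hypersurfaces) through phase space to the outgoing radial set, followed by the edge-local solvability at final hypersurfaces meeting $\cC$---proceeds exactly as in Theorem \ref{ThmSUeNonrf}.
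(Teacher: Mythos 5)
Your proposal correctly assembles the microlocal half of the argument (the $k$-uniform outgoing radial estimate of Proposition~\ref{PropSUbRout} via Lemma~\ref{LemmaSUbComm}, together with elliptic, real principal type, and incoming radial estimates in the mixed edge-b calculus); this is essentially the paper's second step, the ``recovery of edge regularity''. But there is a genuine gap in how you pass from $k-1$ to $k$ b-derivatives of the \emph{solution}. Every mixed edge-b estimate you invoke---\eqref{EqSUbRealpr}, \eqref{EqSUbRout}, and their analogues---carries an a priori control term such as $\|\chi u\|_{H_{\eop;\bop}^{(s_0;k),\ell}}$ on the right-hand side: it already requires $k$ b-derivatives of $u$ at the weight $\ell$ and merely upgrades the \emph{edge} order at fixed $k$. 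The inductive hypothesis supplies only $k-1$ b-derivatives, so these estimates are vacuous for the given $u$; and ``reproving Theorem~\ref{ThmSUeNonrf} verbatim'' to \emph{construct} a solution in $H_{\eop;\bop}^{(\sfs;k),\ell}$ does not go through either, since the existence half of Proposition~\ref{PropSUeSmall} is a duality argument whose dual spaces for $k\geq 1$ have \emph{negative} b-regularity and are not covered by the estimates you list. The naive fix---commute $V=\chi(r)\pa_t$ through the equation and identify $V u$ with the forward solution of $P v=V f+[P,V]u$---fails because $V u$ is a priori only in $\He^{\sfs-1,\ell-1}$ (applying $\pa_t=r^{-1}\cdot r\pa_t$ costs a power of $r$), and uniqueness of forward solutions at weight $\ell-1$ is in general unavailable, as $\ell-1$ need not be spectrally admissible.

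The paper supplies the missing $k$-th b-derivative at the correct weight $\ell$ by a difference-quotient argument: with $T_\eps(t,r,\omega)=(t+\eps\chi(r),r,\omega)$, the quotients $\eps^{-1}(T_\eps^*\tilde u-\tilde u)$ lie in $H_{\eop;\bop}^{(\sfs-C\eps_0;k-1),\ell}$ with \emph{no weight loss}, solve equations with uniformly bounded sources, hence are uniformly bounded by the $(k-1)$-level theory, and converge weakly to $\chi\pa_t\tilde u$; only after this step does the microlocal recovery of edge regularity at fixed $k$ apply. You would need to add this (or an equivalent device) to close the induction. Separately, your reduction of mixed edge-b regularity to ordinary b-regularity on $[M;\phi^{-1}(t_0)]$ is incorrect: $\pa_t$ lifts to $r^{-1}\pa_\tau$, which is not a b-vector field on the blow-up.
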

\begin{proof}
  Using Corollary~\ref{CorDNrfEnlarge}, fix a non-refocusing spacetime domain $\Omega'\subset M$ containing $\bar\Omega$. Given $f$, pick an extension $\tilde f\in H_{\eop;\bop}^{(\sfs-1;k),\ell-2}(\Omega';\cE)^{\bullet,-}$ of $f$ with controlled norm. Let $\Omega''\supset\bar\Omega$ be a spacetime domain with closure contained in $\Omega'$. Suppose the Theorem is proved for $k-1\geq 0$ b-derivatives. Denote by $\tilde u\in H_{\eop;\bop}^{(\sfs;k-1),\ell}(\Omega';\cE)^{\bullet,-}$ the solution of $P\tilde u=\tilde f$ on $\Omega'$. (In the case $k-1=0$, this solution is provided by Theorem~\ref{ThmSUeNonrf}.)

  \pfstep{Gaining one b-derivative.} We shall prove that $\tilde u\in H_{\eop;\bop}^{(\sfs-1-\delta;k),\ell}(\Omega'';\cE)^{\bullet,-}$ for all $\delta>0$, that is,
  \begin{equation}
  \label{EqSUbVu}
    V\tilde u\in \He^{(\sfs-1-\delta;k-1),\ell}(\Omega'';\cE)^{\bullet,-}
  \end{equation}
  for all $V\in\Diffb^1(M;\cE)$. For $V\in\Diffe^1(M;\cE)$, this membership is clear. Since $\Vb(M)$ is spanned over $\CI(M)$ by $\Ve(M)$ and $\chi(r)\pa_t$, where $\chi\in\CIc([0,\bar r))$ equals $1$ near $0$, we only need to prove~\eqref{EqSUbVu} for $V\in\Diffb^1(M;\cE)$ with scalar principal part $\chi(r)\pa_t$.

  \textit{From now on, we drop the bundle $\cE$ from the notation.} The basic idea for proving~\eqref{EqSUbVu} for $V=\chi(r)\pa_t$ and in the case $k=1$ is the following. Note that $P(V\tilde u)=\tilde f':=V\tilde f+[P,V]\tilde u$, with $V f\in\He^{\sfs-1,\ell-2}(\Omega')^{\bullet,-}$ and $[P,V]\in r^{-2}\Diffe^2$, so $\tilde f'\in\He^{\sfs-2,\ell-2}(\Omega')^{\bullet,-}$; thus, there exists a unique solution $\tilde u'\in\He^{\sfs-1,\ell}(\Omega')^{\bullet,-}$ (since $\sfs-1,\ell$ are $P$-admissible) of the equation $P\tilde u'=\tilde f'$. However, since a priori only $V\tilde u\in\He^{\sfs-1,\ell-1}(\Omega')^{\bullet,-}$, we cannot immediately conclude that $V\tilde u=\tilde u'$ (which would give~\eqref{EqSUbVu}).\footnote{This \emph{is} possible if the indicial gap, i.e.\ the interval of allowed $\ell$, has length exceeding $1$, and $\ell$ is more than $1$ above the lower endpoint.} To get around this issue, we use a regularization argument. Fix $\eps_0>0$ so that (with $\bar r>0$ from~\eqref{EqDCollar})
  \[
    \eps<\eps_0,\ (t,r,\omega)\in\Omega'',\ r<\bar r \implies T_\eps(t,r,\omega) := (t+\eps\chi(r),r,\omega)\in\Omega'.
  \]
  Then the function
  \[
    \tilde u'_\eps := \eps^{-1}(T_\eps^*\tilde u-\tilde u)
  \]
  is well-defined on $\Omega''$, and for $C>\|\pa_t\sfs\|_{L^\infty(\Se^*_{\Omega'}M)}$ we have $\tilde u'_\eps\in H_{\eop;\bop}^{(\sfs-C\eps_0;k-1),\ell}(\Omega'')^{\bullet,-}$ for all $\eps\in(0,\eps_0)$. Furthermore, on $\Omega'$ we compute
  \begin{equation}
  \label{EqSUbPup}
    P\tilde u'_\eps = \tilde f'_\eps := \eps^{-1}(T_\eps^*\tilde f-\tilde f) + \eps^{-1}\bigl(P(T_\eps^*\tilde u)-T_\eps^*(P\tilde u)\bigr) \in H_{\eop;\bop}^{(\sfs-2-C\eps_0;k-1),\ell-2}(\Omega')^{\bullet,-}.
  \end{equation}
  The first summand of $\tilde f'_\eps$ is uniformly bounded in $H_{\eop;\bop}^{(\sfs-C\eps_0-2;k-1),\ell-2}(\Omega')^{\bullet,-}$ and converges to $\chi\pa_t\tilde f'$. The second summand is equal to $\eps^{-1}(P-T_\eps^*P T_{-\eps}^*)(T_\eps^*\tilde u)=:P'_\eps(T_\eps^*\tilde u)$ where $P'_\eps\in\CI([0,\eps_0);r^{-2}\Diffe^2(M))$; since $T_\eps^*\tilde u$ is uniformly bounded in $H_{\eop;\bop}^{(\sfs-C\eps_0;k-1),\ell}(\Omega'')^{\bullet,-}$, we obtain the \emph{uniform} membership
  \[
    \tilde f'_\eps \in L^\infty\bigl((0,\eps_0)_\eps;H_{\eop;\bop}^{(\sfs-2-C\eps_0;k-1),\ell-2}(\Omega'')^{\bullet,-}\bigr).
  \]
  For sufficiently small $\eps_0>0$ (so that $\sfs-1-C\eps_0,\ell$ are $P$-admissible), the solution of~\eqref{EqSUbPup} therefore satisfies
  \[
    \tilde u'_\eps\in L^\infty((0,\eps_0)_\eps;H_{\eop;\bop}^{(\sfs-1-C\eps_0;k-1),\ell}(\Omega'')^{\bullet,-})
  \]
  by the inductive hypothesis (or Theorem~\ref{ThmSUeNonrf} in the case $k=1$). In view of the distributional convergence $\tilde u'_\eps\to\chi\pa_t\tilde u$, every weak subsequential limit $\tilde u'\in H_{\eop;\bop}^{(\sfs-1-C\eps_0;k-1),\ell}(\Omega'')^{\bullet,-}$ of $\tilde u_\eps'$ as $\eps\searrow 0$ must satisfy $\tilde u'=\chi\pa_t\tilde u$. Since $\eps_0>0$ is arbitrary, this establishes~\eqref{EqSUbVu} for $V=\chi\pa_t$ and thus, as argued before, in general.

  \pfstep{Recovery of edge regularity.} We shall now prove that
  \begin{equation}
  \label{EqSUbEdgeReg}
    \tilde u\in H_{\eop;\bop}^{(\sfs';k),\ell}(\Omega'')^{\bullet,-},\ P\tilde u=\tilde f\in H_{\eop;\bop}^{(\sfs-1;k),\ell-2}(\Omega'')^{\bullet,-} \implies \tilde u\in H_{\eop;\bop}^{(\sfs;k),\ell}(\Omega)^{\bullet,-}
  \end{equation}
  when $\sfs',\ell$ (with $\sfs'\leq\sfs$) and $\sfs,\ell$ are $P$-admissible. But this follows from the edge microlocal regularity estimates discussed above, i.e.\ microlocal elliptic edge regularity, propagation near radial points near the radial sets (using Proposition~\ref{PropSUbRout} near the outgoing radial set), and real principal type propagation in order to control $\tilde u$ microlocally in a neighborhood of $\bar\Omega$ inside $\Omega''$, similarly to (but simpler than) the proof of Proposition~\ref{PropSULocReg}. Applying~\eqref{EqSUbEdgeReg} with $\sfs'=\sfs-1-\delta$, this finishes the inductive step in view of~\eqref{EqSUbVu} and thus completes the proof.
\end{proof}

\begin{rmk}[Results on general spacetime domains]
\label{RmkSUbGeneral}
  The considerations of Remark~\usref{RmkSUeGeneral} on general spacetime domains $\Omega$ (which need not satisfy the non-refocusing condition) extend to the case of higher b-regularity, provided the lower bound required on $s$ there is increased by $1$ order. Importantly, rather than successively applying Theorem~\ref{ThmSUb}---which due to the requirement that also $\sfs-1,\ell$ be $P$-admissible would result in a loss of 1 order of edge regularity for each application of Theorem~\ref{ThmSUb}---one first applies the method of Remark~\ref{RmkSUeGeneral} to obtain an edge-regular solution on $\Omega$; and then an application of Theorem~\ref{ThmSUb} to a cut-off version of the equation $P u=f$ gives the desired improved b-regularity.
\end{rmk}

\begin{rmk}[Coisotropic regularity and diffractive improvements]
\label{RmkSUbCoisotropic}
  Using Remark~\ref{RmkDFlowInOut}, one can consider spaces encoding iterated (coisotropic) regularity, relative to $\He^{\sfs,\ell}$, under application of first order edge pseudodifferential operators with symbol vanishing on the flow-in of $\pa\cR_{\rm in}^\pm$ and on the flow-out of $\pa\cR_{\rm out}^\pm$ under $\pm\sfH_{G_\eop}$. One then has higher coisotropic regularity, relative to $\He^{\sfs,\ell}$, of solutions of $P u=f$. Sharpened to a microlocalized propagation estimate akin to \cite[Theorem~11.1]{MelroseVasyWunschEdge}, one can use this to prove a diffractive improvement for singularities (of waves satisfying a non-focusing condition) hitting the curve of cone points: null-geodesics which are \emph{not} geometric (distance $\pi$ propagation along the fibers) continuations of null-geodesics carrying incoming singularities carry only weaker singularities. We leave the details to the interested reader.
\end{rmk}

\begin{rmk}[Fundamental solutions]
  In order to describe the fundamental solution starting at a point sufficiently close to $\pa M$ (or other solutions with highly singular forcing), one needs to use a refined solution theory, e.g.\ using a non-focusing condition as mentioned in Remark~\ref{RmkSUbCoisotropic}. More simply, one can instead argue via duality using Theorem~\ref{ThmSUb}: the high regularity (by choosing $k$ large) existence and uniqueness statement for \emph{backwards} evolution, i.e.\ on spaces $H_{\eop;\bop}^{(-\sfs+1;k),-\ell+2}(\Omega)^{-,\bullet}$, gives existence and uniqueness for the forward problem on the dual low regularity spaces $\{u_0+\sum_j A_j u_j\colon u_0,u_j\in\He^{\sfs,\ell}(\Omega)^{\bullet,-},\ A_j\in\Diffb^k\}$.
\end{rmk}

In the special case of operators $P$ with $t$-independent b-normal operators~\eqref{EqSUNb}, one can extract asymptotics at $\pa M$:

\begin{thm}[Polyhomogeneity]
\label{ThmSUbPhg}
  Suppose the b-normal operators $N_{\bop,t}(r^2 P)$ of $r^2 P$ are $t$-independent. Let $\Omega\subset M$ be a non-refocusing spacetime domain, and let $\sfs\in\CI(\Se^*M)$, $\ell\in\R$ be $P$-admissible so that also $\sfs-1,\ell$ are $P$-admissible. Suppose $u\in\He^{\sfs,\ell}(\Omega)^{\bullet,-}$ satisfies $P u=f\in\CIdot(M)$, i.e.\ $f$ vanishes to infinite order at $\cC$. Then $u\in\cA_\phg^\cF(\Omega;\cE)^{\bullet,-}$ (i.e.\ $u$ is the restriction to $\Omega$ of an element of $\cA_\phg^\cF(M;\cE)$ vanishing in the past of the initial boundary hypersurfaces of $\Omega$); here, the index set $\cF$ satisfies $\cF\subset\{(z+j,k)\colon z\in\specb(r^2 P),\ j,k\in\N_0\}$ where $\specb(r^2 P)$ is the set of $\xi\in\C$ with $\Re\xi>\ell-\frac{n}{2}$ for which $N_{\bop,t}(r^2 P,\xi)$ is not invertible.
\end{thm}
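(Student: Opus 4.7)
The plan is to upgrade $u$ to full conormality at $\pa M$ via Theorem~\ref{ThmSUb}, and then extract the polyhomogeneous expansion iteratively by Mellin-inverting the ($t$-independent) b-normal operator $N_{\bop}(r^2 P)$.

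Since $f \in \CIdot(M)$ belongs to $H_{\eop;\bop}^{(\sfs-1;k),\ell-2}(\Omega;\cE)^{\bullet,-}$ for every $k \in \N_0$, Theorem~\ref{ThmSUb} gives $u \in H_{\eop;\bop}^{(\sfs;k),\ell}(\Omega;\cE)^{\bullet,-}$ for every $k$; a b-Sobolev embedding applied to iterated b-derivatives of $u$ (each lying in $\He^{\sfs,\ell}$) then shows that $u$ is conormal at $\pa M\cap\bar\Omega$ of weight $\ell$, i.e.\ $Q u \in r^\ell L^\infty_\loc$ for every $Q \in \Diffb(M;\cE)$. Next, direct comparison of~\eqref{EqSUOp} with~\eqref{EqSUNb}, using the $t$-independence hypothesis, produces the explicit identity
$$
  E := r^2 P - N_{\bop}(r^2 P) = -r^2 D_t^2 + a r i D_t + r^2 \tilde P,
$$
each summand of which carries at least one extra factor of $r$ relative to the basic edge structure. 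Consequently $E$ improves the conormal weight by $1$, and the equation rearranges to $N_{\bop}(r^2 P) u = F := r^2 f - E u$ with $F$ conormal of weight $\ell + 1$.

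The dilation-invariance and $t$-independence of $N_{\bop}(r^2 P)$ now permit a Mellin transform in $r$: the indicial family $N_{\bop}(r^2 P,\xi)$ is a holomorphic family of elliptic operators on $\Sph^{n-1}$, with meromorphic inverse whose poles are exactly $\specb(r^2 P)$, and it satisfies standard high-frequency elliptic bounds along vertical lines $\Re \xi = \lambda$ avoiding $\Re\specb$ as $|\Im\xi| \to \infty$. Writing $\hat u(\xi) = N_{\bop}(r^2 P,\xi)^{-1} \hat F(\xi)$ and shifting the inversion contour from $\Re\xi = \ell - \tfrac{n}{2}$ upward by $1$, we collect residues at the points $z \in \specb(r^2 P)$ in the intermediate strip, producing terms $r^z \sum_k (\log r)^k u_{z,k}(t,\omega)$, each individually in $\ker N_{\bop}(r^2 P)$, plus a conormal remainder $u^{(1)}$ of weight $\ell + 1 - \eps$. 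Applying the same reduction to $u^{(1)}$, and observing that $E$ maps polyhomogeneous functions with indices in $\{(z+j,k) : z \in \specb(r^2 P),\ j,k \in \N_0\}$ to similar polyhomogeneous functions with exponents shifted by $+1$, while $N_{\bop}(r^2 P,\xi)^{-1}$ introduces new poles only in $\specb(r^2 P)$, one sees that all indices extracted at every stage remain in this set. Hence $u$ admits a formal polyhomogeneous expansion with index set contained in $\{(z+j,k) : z \in \specb(r^2 P),\ j,k \in \N_0\}$, and Borel summation together with the uniqueness assertion in Theorem~\ref{ThmSUeNonrf} identifies the Borel sum with $u$.

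The main obstacle is the careful bookkeeping of logarithmic multiplicities across the iteration: higher-order poles of $N_{\bop}(r^2 P,\xi)^{-1}$, together with coincidences between shifted indices $z + j$ and other points of $\specb(r^2 P)$, can boost log powers at each step. Matching this with the uniform operator-norm estimates on $N_{\bop}(r^2 P,\xi)^{-1}$ needed to justify the contour shifts and control the remainder is standard in b-type Mazzeo--Melrose analysis, but requires explicit care.
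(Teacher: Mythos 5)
Your proposal follows essentially the same route as the paper: conormality of $u$ from Theorem~\ref{ThmSUb} plus Sobolev embedding, rewriting the equation as $N_\bop(r^2P)u = r^2f-(r^2P-N_\bop(r^2P))u$ with the error term gaining one power of $r$ on conormal functions, then Mellin inversion and contour shifting, iterated to all orders. Two small points: with the metric volume density the conormal weight obtained from $\Hb^{\infty,\ell}$ is $\ell-\frac{n}{2}$ (as in the statement of the theorem), not $\ell$; and the final Borel-summation-plus-uniqueness step is both unnecessary and, as stated, does not quite close (the Borel sum solves the equation only modulo $\CIdot(M)$, so the uniqueness assertion of Theorem~\ref{ThmSUeNonrf} does not identify it with $u$) — but the iteration itself already yields $u-u_N^{\rm phg}\in\cA^{\ell-\frac{n}{2}+N-\eps}$ for each $N$ with consistent coefficients, which is precisely the definition of $u\in\cA_\phg^\cF$, so no summation argument is needed.
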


More generally, if $f$ is polyhomogeneous and $r^{-\ell+1}f$ is square-integrable near $r=0$, then $u$ is polyhomogeneous; this follows from Theorem~\ref{ThmSUbPhg} by first solving away the polyhomogeneous expansion of $f$ to infinite order at $r=0$ in generalized Taylor series, and then applying Theorem~\ref{ThmSUbPhg} to solve away the remaining $\CIdot(M)$ error.

\begin{proof}[Proof of Theorem~\usref{ThmSUbPhg}]
  We extend $f$ to a larger non-refocusing spacetime domain $\Omega'\supset\bar\Omega$, and using Theorem~\ref{ThmSUeNonrf} we also extend $u$. Theorem~\ref{ThmSUb} implies that $u\in H_{\eop;\bop}^{(\sfs;k),\ell}(\Omega';\cE)^{\bullet,-}$ for all $k\in\N_0$, which means that $u\in\Hb^{\infty,\ell}\subset\cA^{\ell-\frac{n}{2}}$ by Sobolev embedding. Rewrite now the equation $P u=f$ as
  \begin{equation}
  \label{EqSUbPhg}
    N_\bop(r^2 P)(\chi u)=\chi f + [P,\chi]u-(P-N_\bop(r^2 P))(\chi u),
  \end{equation}
  where $\chi\in\CIc([0,\bar r))$ equals $1$ near $0$. The right hand side lies in $\cA^{\ell-\frac{n}{2}+1}$. Solving this equation using the (inverse) Mellin transform in $r$ shows that $\chi u$ is the sum of a term which is polyhomogeneous at $\pa M$ near $\bar\Omega$, and a term in $\cA^{\ell-\frac{n}{2}+1-\eps}$ for all $\eps>0$. Plugging this information back into~\eqref{EqSUbPhg} and repeating the argument gives polyhomogeneity of $u$ modulo $\cA^{\ell-\frac{n}{2}+2-\eps}$. Iterating yields the full polyhomogeneity of $u$. (The resulting index set $\cF$ is contained in the extended union of all shifts $\cF_0+j$, $j\in\N_0$, where $\cF_0$ is the smallest index set containing all $(z,k)$ in the divisor of $N_{\bop,t}(r^2 P,\xi)^{-1}$ which satisfy $\Re z>\ell-\frac{n}{2}$.)
\end{proof}

\subsection{Initial value problems}
\label{SsSUIVP}

Via a reduction of initial value problems to forcing problems, we now extend Theorems~\ref{ThmSUeNonrf} and \ref{ThmSUb} to show:

\begin{thm}[Initial value problems]
\label{ThmSUIVP}
  Let $\Omega\subset M$ be a non-refocusing spacetime domain with a single initial boundary hypersurface $X=\{t_{\rm ini,1}=0\}\cap\bar\Omega$. Let $\sfs\in\CI(\Se^*M)$, $\ell\in\R$ be admissible orders near $\bar\Omega$, and assume that $P$ is spectrally admissible in $\Omega$. Let $s_0\geq 1$ be such that $s_0\geq\sup_{\Se^*_\Omega M}\sfs$, and let $k\in\N_0$. If $k\geq 1$ assume moreover that also $\sfs-1,\ell$ are spectrally admissible. Then for all
  \[
    u_0 \in r^{\ell+k}\Hb^{s_0+k}(X;\cE), \quad u_1 \in r^{\ell+k-1}\Hb^{s_0+k-1}(X;\cE),
  \]
  there exists a unique solution\footnote{Here we write $\bar H_{\eop;\bop}^{(\sfs;k)}(\Omega)$ for the space of restrictions of elements of $H_{\eop;\bop}^{(\sfs;k)}(M)$ to $\Omega$. The space $\bar H_\bop^s(X)$ is similarly defined as the space of restrictions to $X$ of elements of $\Hb^s$ on hypersurface containing $X$; that is, away from $X\cap\pa\Omega$ this space is the same as $H^s$.} $u\in r^\ell\bar H_{\eop;\bop}^{(\sfs;k)}(\Omega;\cE)$ of the initial value problem
  \begin{equation}
  \label{EqSUIVP}
    P u = 0,\quad
    u|_X=u_0,\quad
    \pa_t u|_X=u_1.
  \end{equation}
\end{thm}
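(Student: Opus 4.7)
The plan is to reduce the initial value problem~\eqref{EqSUIVP} to a forward forcing problem to which Theorems~\ref{ThmSUeNonrf} and~\ref{ThmSUb} apply. Writing $t_1 := t_{\rm ini,1}$ (which coincides with $t$ near $\pa M$), I will construct an approximate solution $\tilde u$, supported in a tubular neighborhood of $X$, with $\tilde u|_X = u_0$, $\pa_{t_1}\tilde u|_X = u_1$, and $\tilde u \in r^\ell\bar H_{\eop;\bop}^{(\sfs;k)}(\Omega;\cE)$. Concretely, I extend $u_0, u_1$ from $X$ to the collar $\{|t_1| < \delta_0\}$ by translation along $\pa_{t_1}$, and set $\tilde u := \chi(t_1)u_0^{\rm ext} + t_1\chi(t_1)u_1^{\rm ext}$ for a cutoff $\chi \in \CIc$ equal to $1$ near $0$. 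The shifted weights $\ell+k$ and $\ell+k-1$ in the hypotheses (rather than the sharper $\ell$ and $\ell-1$) are tailored precisely so that these extensions land in the edge-b-Sobolev target space: using $\Hb^m \hookrightarrow \He^m$ and that $\pa_{t_1}$ is a b-vector field near $\pa M$ preserving the $r^\ell$ weight, each commutation with a b-vector field consumes one order of $\Hb$-regularity on the trace but does not weaken the weight.

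Once $\tilde u$ is in hand, the forcing $f := -P\tilde u \in H_{\eop;\bop}^{(\sfs-1;k),\ell-2}(\Omega_+;\cE)^{\bullet,-}$ (with $\Omega_+ := \Omega \cap \{t_1 > 0\}$) extends by zero across $X$ to a distribution in the same class on $\Omega$, since $\tilde u$ is $C^1$ across $X$ and therefore the second-order operator $P$ applied to $\tilde u \cdot \mathbf{1}_{t_1 > 0}$ produces no $\delta$-contributions from $X$. Theorem~\ref{ThmSUb} for $k \geq 1$ — invoking the hypothesis that $\sfs-1,\ell$ are also $P$-admissible — or Theorem~\ref{ThmSUeNonrf} for $k = 0$ then yields a unique $v \in H_{\eop;\bop}^{(\sfs;k),\ell}(\Omega;\cE)^{\bullet,-}$ with $Pv = f$, and $u := (\tilde u + v)|_{\Omega_+}$ is the desired solution of~\eqref{EqSUIVP}. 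For uniqueness, any candidate $u'$ with vanishing Cauchy data extends by zero to a distribution on a slightly enlarged non-refocusing spacetime domain satisfying $Pu' = 0$ (the vanishing data suppress the $\delta$-artifacts at $X$), and the uniqueness clause of Theorem~\ref{ThmSUeNonrf} in $\He^{-\infty,\ell}$ then forces $u' = 0$.

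The main technical obstacle will be the bookkeeping of traces and extensions near $\pa X := X \cap \pa M$: specifically, one must verify that $u \mapsto u|_X$ and $u \mapsto \pa_{t_1}u|_X$ are well-defined and continuous from $r^\ell\bar H_{\eop;\bop}^{(\sfs;k)}(\Omega;\cE)$ to the asserted weighted b-Sobolev spaces on $X$, and surjective onto them; this amounts to a trace/extension theorem for edge-b-Sobolev spaces at a fibered boundary, which should reduce to classical b-Sobolev trace results combined with $\Hb^m \hookrightarrow \He^m$. Symmetrically, one must confirm that extension by zero across $X$ of a solution with vanishing data produces no singular boundary terms in the edge-b scale, which is automatic provided the trace identifications above are set up consistently.
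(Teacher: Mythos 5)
Your reduction to a forcing problem breaks down at the step where you place $f=-P\tilde u$, extended by zero across $X$, into $H_{\eop;\bop}^{(\sfs-1;k),\ell-2}(\Omega;\cE)^{\bullet,-}$. Membership in the supported space requires precisely that the extension by zero lie in the corresponding Sobolev space on a two-sided neighborhood of $X$ (cf.\ \eqref{EqSULocFn}), and your first-order Taylor extension only makes $P\tilde u$ bounded near $X$ — it does not make it vanish there. Its zero-extension therefore has a jump across $X$, which lies in $H^s$ only for $s<\frac12$ conormally to $X$; and applying the $k$ b-derivatives (which include $\pa_t$, transversal to $X$) produces $\delta$-distributions on $X$. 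So for $k\geq 1$, and already for $k=0$ when $\sfs-1\geq\frac12$ conormally to $X$, the hypotheses of Theorems~\ref{ThmSUeNonrf} and \ref{ThmSUb} are not met. Your justification (``$\tilde u$ is $C^1$ across $X$, hence no $\delta$-contributions'') applies to the \emph{correction} $v=u-\tilde u$, whose Cauchy data vanish, not to the forcing: $P(\tilde u\,\mathbf{1}_{\{t_1>0\}})$ does contain terms $u_1\delta(t_1)+u_0\delta'(t_1)$ exactly because $\tilde u$ does not vanish to second order at $X$. A second, independent problem sits at the corner $X\cap\pa M$: with $u_1\in r^{\ell+k-1}\Hb^{s_0+k-1}(X)$, the term $t_1 u_1^{\rm ext}$ has weight $r^{\ell+k}$ only in the wedge $|t_1|\lesssim r$; outside it, one power of $r$ is lost, and the $r^{-2}$ coefficients of $P$ then leave $P\tilde u$ at weight $\ell-3$ rather than the required $\ell-2$. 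A cutoff at constant $t_1$ cannot confine the construction to that wedge.

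The paper's proof is built precisely to avoid both issues. It first produces a \emph{genuine} local solution $u'$ of $P u'=0$ with the given Cauchy data on the wedge $|\tau|<\frac12$, $\tau=t/r$: there $\pa_\tau$ is timelike, the problem is a standard hyperbolic IVP on the blow-up $[M;\phi^{-1}(0)]$ (Remark~\ref{RmkSULocb}), and energy methods give $u'\in r^{\ell+k}\Hb^{s_0+k}$ with \emph{no} restriction on the weight. One then sets $u=\chi u'-P^{-1}[P,\chi]u'$ with $\chi=\chi(\tau)$ equal to $1$ for $\tau\leq\frac18$ and supported in $\tau\leq\frac14$; since $P u'=0$, the only forcing is $[P,\chi]u'$, supported in $\tau\in[\frac18,\frac14]$, strictly in the causal future of $X$, so its zero-extension is trivially of supported character and lands in $H_{\eop;\bop}^{(s_0-1;k),\ell-2}(\Omega)^{\bullet,-}$ after trading the $k$ extra powers of $r$ and orders of b-regularity for $k$ b-derivatives relative to the edge scale. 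To repair your argument you would either have to replicate this local solve, or construct $\tilde u$ as a formal solution in $\tau$ to order $\gtrsim k+\sup\sfs$ at $X$ (each order costing two spatial b-derivatives and two powers of $r$), which is substantially more than the one-term extension you propose. Your uniqueness sketch, by contrast, is essentially sound.
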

\begin{proof}
  We drop the bundle $\cE$ from the notation. We only consider the case that $u_0,u_1$ are supported in a small neighborhood of $r=0$. We assume that $t_{\rm ini,1}=t$; the general statement follows by minor (notational) modifications. We blow up the fiber $\phi^{-1}(0)\subset\pa M$ in $M$ and pass freely between edge-notions on $M$ and b-notions near the interior of the front face of $[M;\phi^{-1}(0)]$; cf.\ Remark~\ref{RmkSULocb}. Let $\tau=\frac{t}{r}$. Since $\pa_\tau$ is timelike near $r=0$ for $|\tau|\leq\frac12$, we have local solvability of $r^2 P u=0$, $u|_{\tau=0}=u_0$, $\pa_\tau u|_{\tau=0}=r u_1$ on weighted b-Sobolev spaces on $(-\frac12,\frac12)\times X$ (without any conditions on the weight at $r=0$) by a variant of the arguments in \cite[\S{23.2}]{HormanderAnalysisPDE3}. This produces a local solution $u'\in r^{\ell+k}\Hb^{s_0+k}((-\frac12,\frac12)\times X)$. We then solve~\eqref{EqSUIVP} via
  \[
    u = \chi u' - P^{-1} [P,\chi]u'
  \]
  where $\chi\in\CIc(\R\times X)$ equals $1$ for $\tau\leq\frac18$ and vanishes for $\tau\geq\frac14$. To analyze $u$, note that $\chi u'\in r^{\ell+k}\bar H_\eop^{s_0+k}(\Omega)\subset\bar H_{\eop;\bop}^{(s_0;k),\ell}(\Omega)$ (since $\Vb(M)$ is spanned by $\pa_t\in r^{-1}\Ve(M)$ and $\Ve(M)$) and $[P,\chi]u'\in r^{\ell+k-2}H_\eop^{s_0+k-1}(\Omega)^{\bullet,-}\subset H_{\eop;\bop}^{(s_0-1;k),\ell-2}(\Omega)^{\bullet,-}$. Theorem~\ref{ThmSUb} thus gives $P^{-1}[P,\chi]u'\in H_{\eop;\bop}^{(\sfs;k),\ell}(\Omega)^{\bullet,-}$ and finishes the proof.
\end{proof}

\section{Applications}
\label{SEx}

Our first set of examples in~\S\ref{SsExC} is geometric in nature and concerns wave equations on time-dependent backgrounds which feature conic singularities (with possibly time-dependent cone angles). In the special, well-studied, case of wave equations on ultrastatic spacetimes, we shall relate our solution theory to classical approaches via spectral theory; see~\S\ref{SssExCUStat}. As another special case, we study wave equations on \emph{smooth} spacetimes with artificial curves of cone points; see~\S\ref{SssExCGHyp}. These arise naturally in gluing problems (as discussed in~\S\ref{SI}).

The second set of examples in~\S\ref{SsExSc} concerns wave equations with scaling critical singularities. We discuss time-dependent inverse square potentials in~\S\ref{SssExScV} and the wave equation associated with the Dirac--Coulomb operator in~\S\ref{SssExScC}.

\subsection{Wave equations on spacetimes with timelike conic singularities}
\label{SsExC}

We consider spatial dimensions
\[
  n \in \N,\quad n\neq 2.
\]
Let $h=h(t,\omega;\dd\omega)$ be a smooth family (in $t\in\R$) of Riemannian metrics on $\Sph^{n-1}$. On the interior $M^\circ$ of a manifold with boundary $M$, we then consider a Lorentzian metric, with global time function $t\in\CI(M)$, which in a collar neighborhood $\R_t\times[0,\infty)_r\times\Sph^{n-1}$ of the boundary $\pa M$ takes the form
\begin{equation}
\label{EqExCMetric}
  g = -\dd t^2 + \dd r^2 + r^2 h(t,\omega;\dd\omega) + \tilde g(t,r,\omega;\dd t,\dd r,r\,\dd\omega)
\end{equation}
where $\tilde g$ is a linear combination of symmetric tensor products of $\dd t,\dd r,r\,\dd\omega$ with coefficients in $r\CI(\R_t\times[0,\infty)_r\times\Sph^{n-1})$. We shall study the scalar wave equation
\[
  \Box_g u = f.
\]

The edge normal operators of $t^2\Box_g$ (see~\eqref{EqSUNe}) are
\[
  N_{\eop,t_0}(r^2 \Box_g) = -(r'D_{t'})^2 + r'^2\Bigl(D_{r'}^2-\frac{n-1}{r'}i D_{r'}\Bigr) + \Delta_{h(t_0)},
\]
and correspondingly the threshold quantities introduced in Definition~\ref{DefSUNThr} are $\vartheta_{\rm in}(t_0)=\vartheta_{\rm out}(t_0)=0$. The reduced normal operator (see~\eqref{EqENeRed} and~\eqref{EqSUNRedResc}) is
\[
  \hat N_{\eop,t_0}(\Box_g,\hat\sigma) = D_{\hat r}^2 - \frac{n-1}{\hat r}i D_{\hat r} + \hat r^{-2}\Delta_{h(t_0)} - \hat\sigma^2.
\]

The following is a special case of Lemma~\ref{LemmaExScV} below.

\begin{lemma}[Spectral admissibility]
\label{LemmaExCSpec}
  Let $\ell\in(1-|\frac{n-2}{2}|,1+|\frac{n-2}{2}|)$. Then the operator $\Box_g$ is spectrally admissible with weight $\ell$ at all $t_0\in\R$ (see Definition~\usref{DefSUIAdm}).
\end{lemma}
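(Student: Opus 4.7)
The plan is to verify each of the three requirements of Definition~\ref{DefSUIAdm} at a fixed $t_0\in\R$. Since $\hat N_{\eop,t_0}(\Box_g,\hat\sigma)$ carries no tangential first-order terms and no zeroth-order tangential coupling, it commutes with $\Delta_{h(t_0)}$, so I would reduce everything to a mode-by-mode analysis along an orthonormal basis of eigenfunctions $Y_k$ of the Laplacian on $(\Sph^{n-1},h(t_0))$ with eigenvalues $0=\mu_0<\mu_1\leq\mu_2\leq\cdots$. Setting $\nu_k:=\sqrt{(\tfrac{n-2}{2})^2+\mu_k}$, the indicial roots of $N_{\bop,t_0}(r^2\Box_g,\xi)=\Delta_{h(t_0)}-\xi^2-(n-2)\xi$ on the $\mu_k$-eigenspace are $\alpha_k^\pm=-\tfrac{n-2}{2}\pm\nu_k$. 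Since $\nu_k\geq\nu_0=|\tfrac{n-2}{2}|$, all real parts of these roots lie outside the open interval $(1-n/2-|\tfrac{n-2}{2}|,\,1-n/2+|\tfrac{n-2}{2}|)$ about the value $1-n/2$; the hypothesis on $\ell$ is exactly that $\ell-n/2$ sits in this interval, so $\ell-n/2$ is a non-indicial weight.

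In each mode $k$, substituting $u_k(\hat r)=\hat r^{-(n-1)/2}v_k(\hat r)$ reduces $\hat N_{\eop,t_0}(\Box_g,\hat\sigma)u_k=0$ to the Bessel-type ODE
\begin{equation*}
  -v_k''+\Bigl(\frac{\nu_k^2-1/4}{\hat r^2}-\hat\sigma^2\Bigr)v_k=0,
\end{equation*}
with Frobenius exponents $\tfrac12\pm\nu_k$ at $\hat r=0$ (corresponding to $u_k\sim\hat r^{\alpha_k^\pm}$). The pointwise bound $|u|\lesssim\hat r^{\ell-n/2}$ together with $\alpha_k^-<1-n/2-|\tfrac{n-2}{2}|\leq\ell-n/2$ and $\alpha_k^+\geq|\tfrac{n-2}{2}|+1-n/2\geq\ell-n/2$ (strict at least on one side by the open interval hypothesis) forces the singular Frobenius branch to vanish, leaving only the regular branch; up to a scalar, $u_k$ therefore equals the analytic continuation of $\hat r^{-(n-2)/2}J_{\nu_k}(\hat\sigma\hat r)$. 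The injectivity then reduces to showing that this regular-at-zero solution is \emph{not} outgoing, respectively not decaying, at $\hat r=\infty$. For $\hat\sigma=\pm 1$, one has $\sqrt{\hat r}\,J_{\nu_k}(\hat\sigma\hat r)\sim\sqrt{2/\pi}\cos(\hat\sigma\hat r-\theta_{\nu_k})$, which is a nontrivial balanced sum of incoming and outgoing spherical waves, so the outgoing requirement $|(\hat r\partial_{\hat r})^j\partial_\omega^\alpha(e^{-i\hat\sigma\hat r}u)|\lesssim\hat r^C$ forces $u_k=0$; this is the classical statement $W(J_{\nu_k},H^{(1)}_{\nu_k})\neq 0$. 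For $\hat\sigma=e^{i\theta}$, $\theta\in(0,\pi)$, the regular solution grows like $I_{\nu_k}(|\Im\hat\sigma|\hat r)\sim e^{|\Im\hat\sigma|\hat r}$, incompatible with the required rapid decay, so again $u_k=0$.

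For the adjoint injectivity, I would observe that $\Box_g$ is formally self-adjoint with respect to $|\dd g|$, so after the standard conjugation by the ratio of the given fiber inner product and the induced metric density on $\hat X$ (which is a smooth positive function), $\hat N_{\eop,t_0}(\Box_g,\hat\sigma)^*$ has the same shape as $\hat N_{\eop,t_0}(\Box_g,\bar{\hat\sigma})$. The dual weight $-\tfrac{n-4}{2}-\ell$ equals $\ell'-n/2$ with $\ell'=2-\ell$, and the interval $(1-|\tfrac{n-2}{2}|,1+|\tfrac{n-2}{2}|)$ is symmetric about $1$, so $\ell'$ satisfies the same hypothesis as $\ell$; the conjugate spectral parameter $\bar{\hat\sigma}$ ranges over the closed \emph{lower} unit half circle, but since the mode-by-mode argument above is symmetric under $\hat\sigma\mapsto\bar{\hat\sigma}$ (only $|\hat\sigma|$ and the sign of $\Im\hat\sigma$ entered), the same argument applies verbatim.

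The main obstacle is purely bookkeeping: matching the variable-order scattering/b-Sobolev membership in Lemma~\ref{LemmaSUIInv} with the classical Frobenius/Bessel asymptotics for each $\mu_k$, uniformly in $k$, and verifying that the single strict inequality built into the open interval $(1-|\tfrac{n-2}{2}|,1+|\tfrac{n-2}{2}|)$ rules out the singular indicial branch in every mode—the $k=0$ case, with Frobenius exponents $0$ and $2-n$, being the pinching case that dictates the endpoints. Once these are in place, the substance of the argument is the nonvanishing of the Wronskian between the regular Bessel and Hankel solutions, which is classical.
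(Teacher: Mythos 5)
Your proposal is correct and takes essentially the same route as the paper, which obtains Lemma~\ref{LemmaExCSpec} as the $V=0$ case of Lemma~\ref{LemmaExScV}: decomposition into eigenmodes of the cross-sectional Laplacian, the substitution $u=\hat r^{-(n-1)/2}v$ reducing to a Bessel-type ODE with Frobenius exponents $\tfrac12\pm\nu_k$, exclusion of the singular branch by the weight hypothesis, and non-outgoingness (resp.\ exponential growth) of the regular branch, with the adjoint handled by the symmetric argument at weight $2-\ell$. The only cosmetic differences are that for $\hat\sigma=\pm1$ the paper's primary argument is a Wronskian-of-$v$-and-$\bar v$ flux computation (your explicit $J_{\nu_k}$/$H^{(1)}_{\nu_k}$ asymptotics are the paper's own fallback for complex potentials, and equivalent), and that your working directly with the eigenvalues of the general $\Delta_{h(t_0)}$ rather than the round sphere is, if anything, the more careful reading of what ``special case'' requires here.
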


The threshold conditions for two orders $\sfs\in\CI(\Se^*M)$, $\ell\in\R$ to be $\Box_g$-admissible (see Definition~\ref{DefSULocAdm}) on a non-refocusing spacetime domain $\Omega\subset M$ are
\begin{equation}
\label{EqExCThr}
  \sfs > -\frac12+\ell \ \text{at}\ \pa\cR_{\rm in}^\pm, \qquad
  \sfs < -\frac12+\ell \ \text{at}\ \pa\cR_{\rm out}^\pm.
\end{equation}
We thus obtain the solvability and uniqueness in $\He^{\sfs,\ell}(\Omega)^{\bullet,-}$ of solutions of
\begin{equation}
\label{EqExCBox}
  \Box_g u=f \in \He^{\sfs-1,\ell-2}(\Omega)^{\bullet,-}
\end{equation}
by Theorem~\ref{ThmSUeNonrf}. Furthermore, by Theorem~\ref{ThmSUb}, $u$ has $k$ orders of b-regularity (relative to this edge Sobolev space) if $f$ does. Since $\Box_g$ fits into the setting of Remarks~\ref{RmkSUeGeneral} and \ref{RmkSUbGeneral}, we also obtain a solvability result on general spacetime domains.

\begin{example}[A simple special class of metrics]
\label{ExExCDilInv}
  If $\tilde g=0$, $M=\R_t\times[0,\infty)_r\times\Sph^{n-1}$ in~\eqref{EqExCMetric}, examples of non-refocusing spacetime domains are
  \[
    \Omega=\{t_-<t<t_+,\ r<r_+ + \kappa(t_+-t)\}
  \]
  for arbitrary $t_-<t_+$, $r_+>0$, and sufficiently large $\kappa$ (to guarantee the timelike nature of the second final boundary hypersurface $r=r_++\kappa(t_+-t)$ in $\Omega$). Indeed, the curves $s\mapsto(t_0+s,s,\omega_0)$ are null-geodesics for all $t_0\in\R$, $\omega_0\in\Sph^{n-1}$ (as can be directly checked using~\eqref{EqDEHamOrig}) and thus, for fixed $t_0$ and varying $\omega_0$, are the projection to the base of the flow-out (in the causal future direction) of the outgoing radial set $\pa\cR_{\rm out}^\pm$; since $r$ is monotonically increasing along them, they do not intersect the curve of cone points $r=0$ at any later time.
\end{example}

\subsubsection{Ultrastatic metrics}
\label{SssExCUStat}

In order to relate of our solvability, uniqueness, and regularity theory with the standard approach using spectral theory for the spatial Laplacian, we must restrict to the domain of applicability of the spectral approach and thus consider \emph{ultrastatic metrics}
\[
  g=-\dd t^2+g_X(x,\dd x),
\]
where $g_X$ is a Riemannian metric with a conic singularity on a compact $n$-dimensional manifold $X$ (locally near the cone point given by $[0,\bar r)_r\times\Sph^{n-1}$). This is the special case of~\eqref{EqExCMetric} where $h$ and $\tilde g$ are independent of $t$, and $\tilde g=\tilde g(r,\omega;\dd r,r\,\dd\omega)$ (i.e.\ there are no cross terms involving $\dd t$). Examples of non-refocusing domains $\Omega\subset M=\R_t\times X$ are all domains of the form $[t_-,t_+]\times X$ for all $t_-<t_+$ for which $t_+-t_-$ is less than the length of the shortest geodesic in $(X,g_X)$ starting and ending at the cone point. Write
\begin{equation}
\label{EqExCUStatEll}
  (\ell_-,\ell_+):=\Bigl(1-\Bigl|\frac{n-2}{2}\Bigr|,1+\Bigl|\frac{n-2}{2}\Bigr|\Bigr)
\end{equation}
for the interval of spectrally admissible weights.

We write $\cD^{\tilde s}$ for the domain of the operator $\Delta_{g_X}^{\tilde s/2}$, defined using the functional calculus for the Friedrichs extension of $\Delta_{g_X}$. If $f\in\CI(\R;\cD^{\tilde s-1})$ (or $f\in\sD'(\R;\cD^{\tilde s-1})$) vanishes for $t\leq 0$, then $u\in\CI(\R;\cD^{\tilde s})$, $u|_{t\leq 0}=0$, given by Duhamel's formula
\begin{equation}
\label{EqCDUDuhamel}
  u(t)=\int_0^t \frac{\sin\bigl((t-s)\sqrt{\Delta_{g_X}}\,\bigr)}{\sqrt{\Delta_{g_X}}}f(s)\,\dd s,
\end{equation}
solves $(-D_t^2+\Delta_{g_X})u=f$ in $\CI(\R;\cD^{\tilde s-1})$. The microlocal study of the propagation of singularities for such $u$ in the case $n\geq 2$ is the subject of \cite{MelroseWunschConic}. 

\begin{prop}[Ultrastatic metrics and admissible solutions]
\label{PropCDUStat}
  Let $n\geq 3$ and consider a non-refocusing domain of the form $\Omega=[t_-,t_+]\times X$ with $t_-<0<t_+$. Then the solution $u\in\He^{\sfs,\ell}(\Omega)^{\bullet,-}$ of the scalar wave equation $\Box_g u=f$ produced by Theorem~\usref{ThmSUeNonrf} for a source term $f\in\He^{\sfs-1,\ell-2}(\Omega)^{\bullet,-}$ has a representative which is admissible in the sense of \cite[Definition~4.1]{MelroseWunschConic}. That is, for some $\tilde s$, there exist $u_D\in\dot\sD'([t_-,t_+);\cD^{\tilde s})$ (the space of $\cD^{\tilde s}$-valued distributions on $(-\infty,t_+)$ with support in $t\geq t_-$) and $f_D\in\dot\sD'([t_-,t_+);\cD^{\tilde s-1})$ so that\footnote{For $\tilde s<-\frac{n}{2}$, $\cD^{\tilde s}$ is not a space of distributions in that the map $\cD^{\tilde s}\to\sD'(X^\circ)$ is not injective. See \cite[Remark~6.5]{HintzConicPowers}.} $u_D|_{\Omega\cap M^\circ}=u$ and $f_D|_{\Omega\cap M^\circ}=f$, and $\Box_g u_D=f_D$ holds in $\dot\sD'([t_-,t_+);\cD^{\tilde s-1})$.
\end{prop}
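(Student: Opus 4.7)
The approach is to take $u_D$ and $f_D$ as the extensions by zero of $u$ and $f$ across $t=t_-$ (using the forward support property), viewed as $\cD^{\tilde s}$- and $\cD^{\tilde s-1}$-valued distributions in $t$ for a sufficiently negative $\tilde s \in \R$. Once a continuous embedding
\begin{equation*}
  \He^{\sfs,\ell}(\Omega)^{\bullet,-} \hookrightarrow \dot\sD'\bigl([t_-,t_+);\cD^{\tilde s}\bigr)
\end{equation*}
is established (and analogously for the source), the identity $\Box_g u_D = f_D$ in $\dot\sD'([t_-,t_+);\cD^{\tilde s-1})$ follows automatically from $\Box_g u = f$ in $\sD'(\Omega \cap M^\circ)$, using the density of $\CIc((t_-,t_+)\times X^\circ)$ in the space of test functionals and the mapping property $\Box_g = -\pa_t^2 + \Delta_{g_X} \colon \dot\sD'(\cdot;\cD^{\tilde s}) \to \dot\sD'(\cdot;\cD^{\tilde s-2})$ (which is clear because $-\pa_t^2$ differentiates in $t$ and $\Delta_{g_X}\colon \cD^{\tilde s}\to\cD^{\tilde s-2}$ by construction of the Friedrichs powers).

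I would construct the embedding by duality. An element $u \in \He^{\sfs,\ell}(\Omega)^{\bullet,-}$ is a continuous linear functional on a dual edge Sobolev space $\He^{-\sfs,-\ell+c}(M)^{-,\bullet}$, where the shift $c$ accounts for the edge volume density. To define a pairing $\la u, \phi \otimes \psi\ra$ with $\phi \in \CIc((t_-,t_+))$ and $\psi \in \CIc(X^\circ)$, it therefore suffices to bound $\|\phi\otimes\psi\|_{\He^{-\sfs,-\ell+c}(M)}$ by $C\|\phi\|_{C^k}\, \|\psi\|_{\cD^{-\tilde s}(X)}$ for some $k$ and some $\tilde s$. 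For $\tilde s$ sufficiently negative (equivalently, $-\tilde s$ sufficiently positive), this reduces to a continuous inclusion $\cD^{-\tilde s}(X) \hookrightarrow r^{\ell-c}\,\Hb^{|\sfs|}(X)$, valid because the weight $\ell \in (\ell_-,\ell_+)$ lies strictly between the nearest indicial roots of $N_{\bop,t_0}(r^2\Delta_{g_X})$. The resulting $\cD^{\tilde s}$-valued distribution on $(t_-,t_+)$ extends by zero across $t=t_-$ to give the required $u_D \in \dot\sD'([t_-,t_+);\cD^{\tilde s})$, and the analogous construction applied to $f$ yields $f_D$.

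The main obstacle is the spectral-theoretic identification of the Friedrichs domains $\cD^{\tilde s}$ with concrete weighted b-Sobolev spaces on $X$ for arbitrary real $\tilde s$, and specifically the continuous inclusion referenced above. For $\tilde s \in \N_0$ this is classical iterated elliptic regularity for the Friedrichs extension on a conic manifold; extension to non-integer and negative $\tilde s$ requires complex interpolation and duality arguments, which hinge crucially on the hypothesis $\ell \in (\ell_-,\ell_+)$ ensuring the absence of indicial poles in the relevant strip---precisely the underlying content of the spectral admissibility statement Lemma~\ref{LemmaExCSpec}. A secondary issue to handle carefully is the passage of the wave equation through the extension map, but this is routine given continuity of $\Box_g$ on the relevant $\dot\sD'$ spaces and the support condition.
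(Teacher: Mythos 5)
Your step ``the identity $\Box_g u_D=f_D$ in $\dot\sD'([t_-,t_+);\cD^{\tilde s-1})$ follows automatically\dots using the density of $\CIc((t_-,t_+)\times X^\circ)$'' is a genuine gap, and it is not a technicality: it is the entire content of the proposition. For the values of $\tilde s$ your construction forces (sufficiently negative), pairing with $\CIc(X^\circ)$ does not separate points of $\cD^{\tilde s-1}$ --- the footnote in the statement records that $\cD^{\tilde s}\to\sD'(X^\circ)$ fails to be injective for $\tilde s<-\frac{n}{2}$ --- and dually $\CIc(X^\circ)$ is not dense in $\cD^{-\tilde s+1}$ (for $n=3,4$ it is not even an operator core for the Friedrichs extension). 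Concretely, to verify $-u_D(\phi'')+\Delta_{g_X}u_D(\phi)=f_D(\phi)$ in $\cD^{\tilde s-1}$, where $\Delta_{g_X}$ is the functional-calculus map $\cD^{\tilde s}\to\cD^{\tilde s-1}$, you must pair against arbitrary $\psi\in\cD^{-\tilde s+1}$ and move $\Delta_{g_X}$ onto $\psi$; this amounts to testing $\Box_g u=f$ against $\phi\otimes\psi$ with $\psi$ \emph{not} vanishing near the cone point, and the resulting integration by parts produces boundary terms at $r=0$ whose vanishing is precisely the admissibility assertion. A distributional solution on $M^\circ$ need not be admissible (the cone point can carry a hidden source), so this cannot follow from the equation on $M^\circ$ alone. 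A secondary problem is the inclusion $\cD^{-\tilde s}\hookrightarrow\Hb^{|\sfs|,-\ell}(X)$ for large $-\tilde s$: the identification $\cD^{\tilde s/2}=\Hb^{\tilde s,\tilde s}(X)$ is only available for $|\tilde s|<\frac{n}{2}$, and for larger powers the Friedrichs domains involve partial polyhomogeneous expansions rather than pure weighted b-regularity, so ``interpolation and duality'' do not suffice as stated. (There is also a sign slip: the spatial inclusion should target $r^{-\ell+c}\Hb^{|\sfs|}$, not $r^{\ell-c}\Hb^{|\sfs|}$.)

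The paper's proof avoids both issues by never testing the equation against non-compactly-supported spatial test functions and by only ever invoking the identification of Friedrichs domains in the safe range $|\tilde s|<\frac{n}{2}$. After localizing (finite speed of propagation away from $r=0$, plus a cutoff in $r$), one convolves $u$ and $f$ in $t$ with $\chi_N(t)=\frac{1}{N!}t^N H(t)\psi(t)$; the Fourier characterization of invariant edge spaces (Lemma~\ref{LemmaEInvFT}) converts the factor $\la\sigma\ra^{-N-1}$ into b-regularity and continuity in $t$, giving $\tilde u\in\cC^2(\R;\Hb^{s,\ell})$. One then applies $\Delta_{g_X}^{-k}$ parametrically in $t$ to land in $\cC^2(\R;\Hb^{s_+,\ell_+-\eps})=\cC^2(\R;\cD^{s_+/2})$, where the distributional and Friedrichs interpretations of $\Delta_{g_X}$ coincide, verifies $\Box_g v=h$ there, and finally \emph{defines} $u_D:=\pa_t^N\Delta_{g_X}^k v$ and $f_D:=\pa_t^N\Delta_{g_X}^k h$; the equation $\Box_g u_D=f_D$ then holds by construction because $\pa_t$ and the functional-calculus powers of $\Delta_{g_X}$ commute with $-\pa_t^2+\Delta_{g_X}$. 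You should either restructure your argument along these lines or supply an actual justification of the integration by parts at $r=0$ for test functions in $\cD^{-\tilde s+1}$.
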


Therefore, the results of \cite{MelroseWunschConic} apply to $u_D$ and yield, upon restriction to $M^\circ$, propagation results for $u$. This includes the statements \cite[Theorems~I.2 and 4.4]{MelroseWunschConic} on the propagation of singularities through $r=0$ which, unlike the results in the present paper (see also Remark~\ref{RmkSUbCoisotropic}) do not lose an (arbitrarily small) amount $\eps>0$ of regularity. Other results in \cite{MelroseWunschConic}, such as the description of the fundamental solution is concerned, lose arbitrarily small positive amounts of regularity just like the present paper (even though we do not give the details of a diffractive theorem here, and thus shall not give a more detailed comparison with \cite{MelroseWunschConic}); note here that there is an automatic loss when working with $L^2$-based Sobolev spaces since $\delta$ at a spacetime point lies in the spacetime Sobolev space $H_\cp^{-\frac{n+1}{2}-\eps}(M^\circ)$ for all $\eps>0$ but not in $H_\cp^{-\frac{n+1}{2}}(M^\circ)$.

\begin{proof}[Proof of Proposition~\usref{PropCDUStat}]
  Let $\psi\in\CIc([t_-,t_+))$ be equal to $1$ near $t_-$; note that $\Box_g(\psi u)=f':=\psi f+[\Box_g,\psi]u\in\dot H_\eop^{\sfs-1,\ell-2}(\Omega)$ and $\psi u\in\dot H_\eop^{\sfs,\ell}(\Omega)$ are supported in $\{t_-\leq t<t_+\}$ (i.e.\ they have supported character at both endpoints $t=t_\pm$, hence the dot on top of $\He$). It thus suffices, by finite speed of propagation, to consider the case that $u,f$ are supported in $\{t_-\leq t_+<t\}$.

  Let $\chi\in\CIc([0,\bar r))$ be equal to $1$ near $0$, and note that $\tilde u=(1-\chi)u$ and $\Box_g((1-\chi)u)=\tilde f:=(1-\chi)f-[\Box_g,\chi]u$ vanish near $r=0$ and thus lie in $H^s_\cp(\R\times X^\circ)$ where $s\leq\inf\sfs-1\in\R$. Integrating $\tilde u$ $N>-s$ times in $t$ from $t=t_-$ produces an element $\tilde u_N\in\cC^0(\R;H^s_\cp(X^\circ))$ vanishing for $t<t_-$. If $s\geq 0$, this lies in $\cD^0$ at all times; otherwise we choose $k\in\N_0$ so that $s+2 k\geq 0$ and solve, using standard b-theory for the elliptic b-operator $\Delta_{g_X}\in r^{-2}\Diffb^2(X)$, the equation $\Delta^k v(t,\cdot)=\tilde u_N(t,\cdot)$ parametrically in $t$ for $v\in\cC^0(\R;\Hb^{s+2 k,\ell_+-\eps}(X))$ where we can fix any $\eps\in(0,\ell_+)$. Thus $v\in\cC^0(\R;\cD^0)$, therefore $\tilde u_N\in\cC^0(\R;\cD^{-k})$, which in turn gives $\tilde u\in\dot\sD'([t_-,t_+];\cD^{-k})$. The equality $\Box_g\tilde u=\tilde f$ holds in distributions and thus (by the vanishing of $\tilde u,\tilde f$ near $r=0$) in $\sD'([t_-,t_+];\cD^{-k-1})$.

  It remains to consider $u,f$ with support in $[t_-,t_+]\times[0,\bar r)_r\times\Sph^{n-1}$. Integrating $N+1$ times in $t$ from $t=t_-$ is the same as convolving with $\frac{1}{N!}t^N H(t)$, where $H$ is the Heaviside function. If we fix $\psi\in\CIc(\R)$ to be equal to $1$ for $|t|\leq t_+-t_-$ and set $\chi_N(t):=\frac{1}{N!}t^N H(t)\psi(t)$, then $\tilde u:=\chi_N*u$ and $\tilde f:=\chi_N*f$ have compact support in $t$, satisfy
  \begin{equation}
  \label{EqCDUStatRecover}
    \Box_g \tilde u = \tilde f;\qquad
      u=\pa_t^N\tilde u,\quad
      f=\pa_t^N\tilde f\quad\text{near}\ \ [t_-,t_+]\times X.
  \end{equation}
   The Fourier transform of $\tilde u$ in $t$ is given by $\wh{\chi_N}(\sigma)\hat u(\sigma)$ where $|\wh{\chi_N}(\sigma)|\lesssim\la\sigma\ra^{-N-1}$ and $\hat u\in L^2(\R_\sigma;r^{\ell-\frac{n}{2}}(\hat M_{|\sigma|})_*(H_{\bop,\scop}^{s,0,s}))$ by Lemma~\ref{LemmaEInvFT}; here we work with the unweighted b-density $|\frac{\dd\hat r}{\hat r}\,\dd h|$ on $[0,\infty]_{\hat r}\times\Sph^{n-1}$, and we recall $\hat M_{|\sigma|}(\hat r,\omega)=(\frac{\hat r}{|\sigma|},\omega)$. But since $r=\frac{\hat r}{|\sigma|}\lesssim 1$, we have $\la\sigma\ra^{-N'}\lesssim\la\hat r\ra^{-N'}$ for $N'\geq 0$. Fix now $N\geq 2$ so that $s+N-2\geq 0$. Then $\la\sigma\ra^{-N-1}H_{\bop,\scop}^{s,0,s}\subset\la\sigma\ra^{-3}H_{\bop,\scop}^{s,0,s+N-2}\subset\la\sigma\ra^{-3}\Hb^s$. Note that $(\hat M_{|\sigma|})_*\Hb^{s'}=\Hb^{s'}$ if one uses the unweighted b-densities $|\frac{\dd\hat r}{\hat r}\,\dd h|$ on $[0,\infty]_{\hat r}\times\Sph^{n-1}$ and $|\frac{\dd r}{r}\,\dd h|$ on $[0,\infty)_r\times\Sph^{n-1}$ (which contains the collar neighborhood of $X$). Therefore,
  \[
    \wh{\chi_N}\hat u \in \la\sigma\ra^{-3}L^2\bigl(\R_\sigma; r^{\ell-\frac{n}{2}}\Hb^s\bigr).
  \]
  Upon taking the inverse Fourier transform and using Sobolev embedding in $\R_t$, and arguing similarly for $\tilde f$, this gives
  \[
    \tilde u\in\cC^2(\R;\Hb^{s,\ell}(X,|\dd g_X|)),\qquad
    \tilde f\in\cC^0(\R;\Hb^{s-1,\ell-2}(X,|\dd g_X|));
  \]
  and $\tilde u,\tilde f$ vanish for $t\leq t_-$ and have compact support in $t$. Choose $k\in\N_0$ so that $\min(s-1+2 k,\ell-2+2 k)\geq s_+:=\max(\ell_+,2)$. We solve $\Delta_{g_X}^k v=\tilde u$, $\Delta_{g_X}^k h=\tilde f$ parametrically in $t$ and obtain
  \[
    v \in \cC^2(\R;\Hb^{s_+,\ell_+-\eps}),\qquad
    h \in \cC^0(\R;\Hb^{s_+,\ell_+-\eps})\quad\forall\,\eps>0;
  \]
  these solve $\Box_g v=h$, as follows from $k$-fold application of $\Delta_{g_X}^{-1}$ to $D_t^2\tilde u=\tilde f+\Delta_{g_X}\tilde u$. But by \cite[Lemma~3.2]{MelroseWunschConic} or \cite[Theorem~6.3(2)]{HintzConicPowers}, we have
  \begin{equation}
  \label{EqCDUStatDom}
    \cD^{\tilde s/2}=\Hb^{\tilde s,\tilde s}(X),\qquad |\tilde s|<\frac{n}{2},
  \end{equation}
  which covers the values $\tilde s=\ell_+-\eps$ and $\ell_+-1-\eps$ for all $\eps>0$. Thus, for $v\in\cC^2(\R;\cD^{s_+/2})$ we have $\Box_g v=h$ also in the sense of $\cC^0(\R;\cD^{s_+/2-1})$. Set then $u_D=\pa_t^N\Delta_{g_X}^k v$ and $f_D=\pa_t^N\Delta_{g_X}^k h$, where $\Delta_{g_X}\colon\cD^{\tilde s}\to\cD^{\tilde s-1}$ is defined via the functional calculus. Restricted to $M^\circ$, we have $u_D=u$ and $f_D=f$ by construction and recalling~\eqref{EqCDUStatRecover}. The proof is complete.
\end{proof}

We prove a converse result only for regular source terms.

\begin{prop}[Ultrastatic metrics and spectral theory]
\label{PropCDUStatSpec}
  Let $n\geq 3$. Let $\ell\in(\ell_-,\ell_+)$, and suppose $f\in\cC^k(\R;\cD^{\ell/2-1})$ vanishes for $t\leq 0$; here $k\in\N_0$, $k>\frac12+\lceil|\ell-2|\rceil$. Then on any non-refocusing domain of the form $\Omega=[t_-,t_+]\times X$, Theorem~\usref{ThmSUeNonrf} is applicable and produces a solution $u\in\He^{\sfs,\ell}(\Omega)^{\bullet,-}$ (for any $\sfs$ satisfying the assumptions of Theorem~\usref{ThmSUeNonrf})) which agrees with the solution defined via spectral theory by equation~\eqref{EqCDUDuhamel}.
\end{prop}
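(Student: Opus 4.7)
The plan is threefold: (i) show that $f\in\He^{\sfs-1,\ell-2}(\Omega)^{\bullet,-}$ so that Theorem~\ref{ThmSUeNonrf} applies and yields an edge-solution $u_e\in\He^{\sfs,\ell}(\Omega)^{\bullet,-}$; (ii) verify that the Duhamel solution $u_D$ defined by~\eqref{EqCDUDuhamel} lies in $\He^{-\infty,\ell}(\Omega)^{\bullet,-}$; (iii) invoke the uniqueness clause of Theorem~\ref{ThmSUeNonrf} in $\He^{-\infty,\ell}$ to conclude $u_e=u_D$.

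The key algebraic input is the identification~\eqref{EqCDUStatDom}: since $\ell\in(\ell_-,\ell_+)$ and $n\geq 3$, both $|\ell|<n/2$ and $|\ell-2|<n/2$, so $\cD^{\ell/2}=\Hb^{\ell,\ell}(X)$ and $\cD^{\ell/2-1}=\Hb^{\ell-2,\ell-2}(X)$. In particular, the standard energy estimate applied to~\eqref{EqCDUDuhamel} gives $u_D\in\cC^0(\R;\Hb^{\ell,\ell}(X))$, while $f\in\cC^k(\R;\Hb^{\ell-2,\ell-2}(X))$ by hypothesis. Multiplying by a cutoff $\psi\in\CIc(\R_t)$ equal to $1$ on $[t_-,t_+]$ yields compactly-supported-in-$t$ versions; since $\psi u_D\in\Hb^{0,\ell}(M)\subset\He^{0,\ell}(M)$ and $u_D$ vanishes for $t\leq 0$, one obtains $\psi u_D|_\Omega\in\He^{0,\ell}(\Omega)^{\bullet,-}\subset\He^{-\infty,\ell}(\Omega)^{\bullet,-}$, which is step (ii).

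For step (i), I would test $\psi f$ against edge monomials $V_\eop^\alpha$ and exploit the identity $(r\pa_t)^{\alpha_t}=r^{\alpha_t}\pa_t^{\alpha_t}$: the $t$-derivatives (for $\alpha_t\leq k$) are controlled by the $\cC^k$-regularity of $f$, and each factor of $r\pa_t$ contributes an $r$-weight that compensates for the loss of spatial b-regularity incurred by applying $(r\pa_r)^{\alpha_r}\pa_\omega^{\alpha_\omega}$ to an element of $\Hb^{\ell-2,\ell-2}(X)$. Combined with Sobolev embedding in the $t$-variable, the hypothesis $k>\tfrac12+\lceil|\ell-2|\rceil$ is calibrated so that any edge regularity order $\sfs-1$ compatible with the thresholds $\sfs>-\tfrac12+\ell$ at the incoming radial set (and $\sfs<-\tfrac12+\ell$ at the outgoing one) can be accommodated, yielding $\psi f\in\He^{\sfs-1,\ell-2}(\Omega)^{\bullet,-}$ and hence $u_e\in\He^{\sfs,\ell}(\Omega)^{\bullet,-}$ from Theorem~\ref{ThmSUeNonrf}. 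Step (iii) is then immediate: $u_D$ satisfies $\Box_g u_D=f$ distributionally on $M^\circ$, so $u_e-u_D\in\He^{-\infty,\ell}(\Omega)^{\bullet,-}$ solves $\Box_g(u_e-u_D)=0$ with vanishing past data, and the uniqueness clause of Theorem~\ref{ThmSUeNonrf} forces $u_e=u_D$. The principal obstacle is step (i): translating the anisotropic $t$-versus-spatial regularity of $\cC^k(\R;\Hb^{\ell-2,\ell-2})$ into a single edge Sobolev order requires a careful trade-off in which the $r$-weight gain from $r\pa_t$ converts $t$-derivatives into spatial b-regularity, and it is precisely the arithmetic of this trade that pins down the sharp condition $k>\tfrac12+\lceil|\ell-2|\rceil$.
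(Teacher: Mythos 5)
Your overall architecture matches the paper's: put $f$ into $\He^{\sfs-1,\ell-2}(\Omega)^{\bullet,-}$ so that Theorem~\ref{ThmSUeNonrf} applies, put the Duhamel solution $u_D$ into the uniqueness class $\He^{-\infty,\ell}(\Omega)^{\bullet,-}$, and conclude by the uniqueness clause. The problem is that the mechanism you propose for step (i) does not work. Gaining a power of $r$ does not gain differentiability: $r\,\Hb^{s-1,\alpha}\not\subset\Hb^{s,\alpha}$ and a fortiori $\not\subset\He^{s,\alpha}$, so the "$r$-weight from $r\pa_t$ compensates for the loss of spatial b-regularity" trade is not a valid Sobolev inclusion. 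This matters because the spatial order $\ell-2$ of $f(t,\cdot)\in\cD^{\ell/2-1}=\Hb^{\ell-2,\ell-2}(X)$ is \emph{negative} (for $n=3$ it lies in $(-\tfrac32,-\tfrac12)$), and for negative orders the b/edge inclusion goes the wrong way: $\Hb^{s,\alpha}\subset\He^{s,\alpha}$ only for $s\geq 0$, while $\He^{s,\alpha}\subset\Hb^{s,\alpha}$ for $s\leq 0$. So negative spatial b-regularity gives you no edge membership at all, at any negative edge order. The paper's proof uses two devices you are missing: (a) an elliptic parametrix for $\pa_t^2+\sum_i V_i^2$ (with $\pa_t$ and $V_i\in\Ve(M)$ spanning $\Vb(M)$) to write $f=\sum_{|\beta|\leq k_0}V^\beta f_\beta$ with $f_\beta$ of \emph{nonnegative} b-order, at the cost of $k_0=\lceil|\ell-2|\rceil$ of the available $t$-derivatives; only then does $\Hb\subset\He$ apply; (b) the remaining control of $\pa_t^j f$ is converted into edge regularity \emph{microlocally near the characteristic set} using that $r D_t$ is elliptic on $\Sigma$ — not via any weight gain. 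The resulting edge order is $s-1$ with $s=\ell-1+k-\lceil|\ell-2|\rceil$, and the condition $k>\tfrac12+\lceil|\ell-2|\rceil$ is exactly $s>-\tfrac12+\ell$, i.e.\ the incoming radial-point threshold at $\pa\cR_{\rm in}^\pm$; the $\tfrac12$ does not come from Sobolev embedding in $t$. Relatedly, this only yields membership of $f$ at the \emph{finite} edge order $s-1$ near $\Sigma$, so your claim that "any" threshold-compatible $\sfs$ can be accommodated overstates what the argument delivers.

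Step (ii) has a smaller but genuine gap of the same nature: the inclusion $\cC^0(\R;\Hb^{\ell,\ell}(X))\cap\{\text{compact support}\}\subset\Hb^{0,\ell}(M)$ fails when $\ell<0$, which occurs for $n\geq 5$ since $\ell_-=2-\tfrac n2<0$; one needs the same parametrix decomposition for $u_D$ (the paper simply "repeats the above arguments" to place $u_D$ in $\He^{\sfs,\ell}$). Step (iii) is fine once (i) and (ii) are repaired.
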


For $f$ which are only distributions in $t$, one can apply Theorem~\ref{ThmSUeNonrf} to the $N$-fold integral $\tilde f$ of $f$ in $t$ for sufficiently large $N$, and conclude from Proposition~\ref{PropCDUStatSpec} that the $N$-th derivative of the resulting solution $\tilde u$ agrees with the solution defined via~\eqref{EqCDUDuhamel}.

\begin{proof}[Proof of Proposition~\usref{PropCDUStatSpec}]
  The main task is to relate the function space for $f$ to a weighted edge Sobolev space. Upon localizing in $t$, we may assume that $f$ vanishes for $t\geq t_+$. By~\eqref{EqCDUStatDom}, we have, a fortiori, $f\in\dot H^k([t_-,t_+];\Hb^{\ell',\ell'}(X))$ where $\ell':=\ell-2$. For $k':=k-\max(\lceil\ell'\rceil,0)$, this implies that
  \begin{equation}
  \label{EqCDUStatSpecDtjf}
    \pa_t^j f\in\dot H_\bop^{\ell',\ell'}([t_-,t_+]\times X),\qquad 0\leq j\leq k'.
  \end{equation}

  We shall further relate this to membership in a weighted edge Sobolev space. To wit, for $\ell'\geq 0$ we have $\pa_t^j f\in\dot H_\eop^{\ell',\ell'}([t_-,t_+]\times X)$. (For $\ell'\in\N_0$, this follows from $\Ve(M)\subset\Vb(M)$, and then for general $\ell'\geq 0$ by interpolation.) To treat the case $\ell'<0$, fix a spanning set $\{V_j\colon 1\leq j\leq N\}\subset\Ve(M)$ of $\Ve(M)$, and note that for any $\eps>0$
  \begin{equation}
  \label{EqCDUStatSpecHb}
  \begin{split}
    &v \in \dot H_\bop^{\sigma,\rho}([t_-,t_+]\times X) \\
    &\qquad \implies \exists\,v_0,v_0^{(0)},v_0^{(i)}\in\dot H_\bop^{\sigma+1,\rho}([t_--\eps,t_++\eps]\times X),\quad v=v_0+\pa_t v_0^{(0)} + \sum_{i=1}^N V_i v_0^{(i)}.
  \end{split}
  \end{equation}
  Indeed, $\pa_t$ and the $V_i$ span $\Vb(M)$, so if $Q\in\Psib^{-2}(M)$ is a parametrix of the elliptic operator $A:=\pa_t^2+\sum_{i=1}^N V_i^2$ and $R=A Q-I\in\Psib^{-1}(M)$, then $v=A Q v-R v$ gives the above decomposition with $v_0=-R v$, $v_0^{(0)}=\pa_t Q v$, $v_0^{(i)}=V_i Q v$. The desired support property can be arranged by multiplication of this decomposition with a cutoff $\chi\in\CIc((t_--\eps,t_++\eps))$ which equals $1$ on $[t_-,t_+]$, and using that $\chi \pa_t v_0^{(0)}=\pa_t(\chi v_0^{(0)})+[\pa_t,\chi]v_0^{(0)}$, where $[\pa_t,\chi]v_0^{(0)}\in\dot H_\bop^{\sigma+1,\rho}([t_--\eps,t_++\eps]\times X)$ can be absorbed into $v_0$; likewise for the terms $\chi V_i v_0^{(i)}$. One can iterate~\eqref{EqCDUStatSpecHb} by applying it to $v_0,v_0^{(0)},v_0^{(j)}$ with $\sigma+1$ in place of $\sigma$. Let $k_0\leq k'$. Applying the $k_0$-th iteration of~\eqref{EqCDUStatSpecHb} to $v=\pa_t^{k_0}f\in\dot H_\bop^{\ell',\ell'}([t_-,t_+]\times X)$, we obtain (using the notation $V^\beta=V_1^{\beta_1}\cdots V_N^{\beta_N}$)
  \[
    \pa_t^{k_0} f = \sum_{p+|\beta|\leq k_0} \pa_t^p V^\beta f_{p,\beta},\qquad f_{p,\beta}\in\dot H_\bop^{\ell'+k_0,\ell'}([t_--\eps,t_++\eps]\times X),
  \]
  which we can integrate from $t=t_--\eps$ to obtain a representation
  \[
    f = \sum_{|\beta|\leq k_0} V^\beta f_\beta,\qquad f_\beta\in\Hb^{\ell'+k_0,\ell'}([t_--\eps,t_++\eps)\times X)^{\bullet,-}.
  \]
  Choosing $k_0\leq k'$ so that $\ell'+k_0\geq 0$ (the choice $k_0=\max(\lceil-\ell'\rceil,0)$ works), we have $f_\beta\in\Hb^{\ell'+k_0,\ell'}\subset\He^{\ell'+k_0,\ell'}$ and therefore $f\in\He^{\ell',\ell'}([t_--\eps,t_++\eps)\times X)^{\bullet,-}$; but since $f$ vanishes for $t\leq 0$ and $t\geq t_+$, and since these arguments apply to up to $k'-k_0$ $t$-derivatives of $f$, we in fact have
  \[
    \pa_t^j f \in \dot H_\eop^{\ell',\ell'}([t_-,t_+]\times X),\qquad 0\leq j\leq k'-k_0.
  \]
  A fortiori, this implies the same membership for $(r D_t)^j f$, $0\leq j\leq k'-k_0$; and since $r \pa_t$ is elliptic on the characteristic set $\Sigma\subset\Te^*M\setminus o$ of $\Box_g$, we conclude that $f\in\He^{s-1,\ell-2}$, $s:=\ell'+k'-k_0+1=\ell-1+k-\lceil|\ell-2|\rceil$, microlocally near $\Sigma$. If $k$ is such that $s>-\frac12+\ell$ (which gives the condition on $k$ in the statement of the Proposition), we may thus find $\sfs\in\CI(\Se^*M)$ so that $\sfs,\ell$ are $\Box_g$-admissible and
  \[
    f\in\dot H_\eop^{\sfs-1,\ell-2}([t_-,t_+]\times X) \subset \He^{\sfs-1,\ell-2}(\Omega)^{\bullet,-}.
  \]
  We can therefore indeed apply Theorem~\ref{ThmSUeNonrf} to solve $\Box_g u=f$ with $u\in\He^{\sfs,\ell}(\Omega)^{\bullet,-}$.

  On the other hand, the solution $u_D$ given by Duhamel's formula~\eqref{EqCDUDuhamel} is a distributional solution of class $\cC^{k+2}(\R;\Hb^{\ell,\ell}(X))$ for the values of $\ell$ considered here. Repeating the above arguments shows that $u_D\in\He^{\sfs,\ell}(\Omega)^{\bullet,-}$; and therefore $\Box_g(u-u_D)=f-f=0$ implies $u=u_D$ by the uniqueness part of Theorem~\ref{ThmSUeNonrf}.
\end{proof}

A very special case is given by Minkowski space $\cM=\R_t\times\R^n_x$ with metric $g=-\dd t^2+\dd x^2$, in which we blow up the curve $\cC=\R_t\times\{0\}$ to obtain $M=\R_t\times X$, $X=[\R^n;\{0\}]$. (The non-compactness of $X$ is irrelevant when working with source terms $f$ which vanish for sufficiently large negative $t$ and have compact spatial support on any bounded $t$-interval.)

\begin{prop}[Edge solvability theory on Minkowski space with artificial curve of cone points]
\label{PropExCDUMink}
   Let $\Omega=[t_-,t_+]\times X$. Suppose $f\in\CI(\Omega)^{\bullet,-}$ (i.e.\ $f$ is the restriction to $\Omega$ of a smooth function which vanishes for $t\leq t_-$) vanishes to infinite order at $\pa M=\R\times\pa X$, and denote by $u\in\He^{\sfs,\ell}(\Omega)^{\bullet,-}$ the solution given by Theorem~\usref{ThmSUeNonrf}, with orders satisfying $\ell\in(\ell_-,\ell_+)=(1-|\frac{n-2}{2}|,1+|\frac{n-2}{2}|)$ and~\eqref{EqExCThr}. Then:
   \begin{enumerate}
   \item for $n\geq 3$, $u$ is equal to the restriction to $\Omega\cap M^\circ$ of the forward solution $u_F\in\CI(\R\times\R^n)$ of $\Box_g u_F=f$ on $\R\times\R^n$;
   \item for $n=1$, $u$ is equal to the restriction to $x\neq 0$ of the forward solution $u_F$ of the initial-boundary value problem $\Box_g u_F=f$, $u_F|_{x=0}=0$.
   \end{enumerate}
\end{prop}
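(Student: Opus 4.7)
The plan is to show that the classical forward solution $u_F$, when restricted to $\Omega\cap M^\circ$, already lies in the space $\He^{-\infty,\ell}(\Omega)^{\bullet,-}$ and solves $\Box_g(\cdot)=f$ there in the distributional sense; the uniqueness statement of Theorem~\ref{ThmSUeNonrf} will then force $u=u_F|_{\Omega\cap M^\circ}$. Thus the real task is the edge-Sobolev membership of $u_F$ on $M$.

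For $n\geq 3$, I would first observe that since $f$ vanishes to infinite order at $\pa M$, it is the pullback under the blow-down $X\to\R^n$ of a function $\tilde f\in\CI(\R\times\R^n)$ supported in $t\geq t_-$. Standard hyperbolic theory on Minkowski space produces $u_F\in\CI(\R\times\R^n)$ with $\Box_g u_F=\tilde f$ and, by finite speed of propagation, $u_F\equiv 0$ for $t\leq t_-$. The pullback of $u_F$ to $M=\R\times X$ is then a smooth function, so every iterated edge-derivative $V_\eop^\alpha u_F$ with $V_\eop\in\Ve(M)$ is smooth and bounded on $\bar\Omega$. Since the volume density is $r^{n-1}\,\dd t\,\dd r\,\dd\omega$, writing $u_F=r^\ell\cdot(r^{-\ell}u_F)$ shows $u_F\in\He^{\infty,\ell}(\Omega)^{\bullet,-}$ for every $\ell$ satisfying $\int_0^{\bar r}r^{n-1-2\ell}\,\dd r<\infty$, i.e.\ $\ell<n/2=\ell_+$. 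The supported character at the initial hypersurface comes from $u_F|_{t\leq t_-}=0$, and the extendible character at the final hypersurfaces is automatic from smoothness.

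For $n=1$, $X=[\R;\{0\}]$ is the disjoint union $(-\infty,0]\sqcup[0,\infty)$; I would work on one component, say $r=x\geq 0$. Since $f$ vanishes to infinite order at $\pa X$, its odd reflection $\tilde f(t,x):=\mathrm{sgn}(x)\cdot f(t,|x|)$ is smooth on $\R\times\R$, and the corresponding free forward solution $\tilde u_F\in\CI(\R\times\R)$ is odd in $x$; by oddness $\tilde u_F|_{x=0}=0$, so $\tilde u_F|_{x>0}$ is the Dirichlet IBVP solution $u_F$. Writing $\tilde u_F(t,x)=x\cdot w(t,x)$ with $w\in\CI(\R\times\R)$ even in $x$, we obtain $u_F=r\cdot w$ on the component $r\geq 0$. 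The extra factor of $r$ is precisely what is needed: with the one-dimensional density $\dd t\,\dd r$, one has $r^{-\ell}u_F=r^{1-\ell}w$, and membership in $L^2$ requires $\ell<3/2=\ell_+$, which is exactly the hypothesized range. All edge-derivatives of $u_F=rw$ remain bounded since $w$ and its derivatives are bounded, so $u_F\in\He^{\infty,\ell}(\Omega)^{\bullet,-}$ for every admissible $\ell$; the same argument applies on the other component of $X$.

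Having $u_F|_{\Omega\cap M^\circ}\in\He^{\infty,\ell}(\Omega)^{\bullet,-}$ in both cases, the distributional equation $\Box_g u_F=f$ holds throughout $\Omega\cap M^\circ$ by smoothness, so the uniqueness assertion in Theorem~\ref{ThmSUeNonrf} applied in $\He^{-\infty,\ell}(\Omega)^{\bullet,-}$ identifies $u$ with $u_F|_{\Omega\cap M^\circ}$. The main (and essentially only) subtle point is matching the allowed weight range: the endpoint $\ell_+$ coming from spectral admissibility coincides with $n/2$ for $n\geq 3$, which is exactly the threshold at which a bounded function lies in $r^\ell L^2(r^{n-1}\,\dd r)$; for $n=1$, $\ell_+=3/2>1/2$ would fail for a merely bounded $u_F$, and it is precisely the Dirichlet vanishing $u_F=O(r)$ obtained via odd reflection that supplies the extra power of $r$ needed to reach the upper endpoint.
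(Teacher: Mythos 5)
Your proposal is correct and follows essentially the same route as the paper: one checks directly that the classical forward solution lies in $\He^{\infty,\ell}(\Omega)^{\bullet,-}$ by integrating the weight ($\ell<\tfrac{n}{2}=\ell_+$ for $n\geq 3$, and $\ell<\tfrac32=\ell_+$ for $n=1$ using the first-order vanishing at $x=0$), and then invokes the uniqueness part of Theorem~\ref{ThmSUeNonrf}. Your odd-reflection argument merely makes explicit the $O(r)$ vanishing that the paper asserts for the Dirichlet solution when $n=1$; just note that the edge derivatives of $r w$ are again of the form $r\times(\text{bounded})$, not merely bounded, which is what the weighted membership actually requires.
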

\begin{proof}
  If $\ell\in(\ell_-,\ell_+)$, then for compact $I\subset\R$ and $\bar r<\infty$, one has, for $n\geq 3$,
  \[
    \int_{\Sph^{n-1}}\int_I\int_0^{\bar r} |r^{-\ell} u(t,r,\omega)|^2\,r^{n-1}\,\dd t\,\dd r\,\dd\omega \lesssim \int_0^{\bar r} r^{n-2\ell}\,\frac{\dd r}{r} < \infty
  \]
  since $n-2\ell>n-2\ell_+=0$. In the case $n=1$, the vanishing of $u$ at $r=0$ gives the bound $\int_I\int_0^{\bar r}|r^{-\ell}u(t,r)|^2\,\dd t\,\dd r\lesssim\int_0^{\bar r}r^{3-2\ell}\,\frac{\dd r}{r}<\infty$ since $3-2\ell>3-2\ell_+=0$. Since $u$ is smooth, this implies $u\in\He^{\infty,\ell}(\Omega)^{\bullet,-}$. We trivially have $f\in\He^{\infty,\ell-2}(\Omega)^{\bullet,-}$. The uniqueness part of Theorem~\ref{ThmSUeNonrf} now implies the claim.
\end{proof}

\subsubsection{Globally hyperbolic spacetimes with artificial curves of cone points}
\label{SssExCGHyp}

Let $(\cM,g)$ be an $(n+1)$-dimensional globally hyperbolic spacetime, and let $\cC\subset M$ be an inextendible timelike curve. In Fermi normal coordinates $t\in\R$, $x=(x^1,\ldots,x^n)\in\R^n$ around $\cC$, in which $\cC=\{(t,0)\colon t\in\R\}$, we have
\[
  g=-\dd t^2+\dd x^2+\tilde g
\]
where $\tilde g\in\CI(M;S^2 T^*M)$ vanishes at $x=0$, and all Christoffel symbols $\Gamma_{\mu\nu}^\kappa(t,x)$ vanish at $x=0$ except possibly $\Gamma_{0 0}^j$, $1\leq j\leq n$. Therefore,
\[
  \Box_g = \pa_t^2 - \sum_{j=1}^n \pa_{x^j}^2 - \Gamma_{0 0}^j\pa_{x^j} + \tilde P,
\]
where the coefficients of $\tilde P\in\Diff^2(M)$ vanish at $x=0$. The passage to
\[
  M = [\cM;\cC]
\]
amounts to introducing polar coordinates $x=r\omega$, $\omega\in\Sph^{n-1}$, around $\cC$, in which $r^2\Box_g=r^2(-D_t^2+D_r^2-\frac{n-1}{r}i D_r+r^{-2}\Delta_{\Sph^{n-1}})+r^2\tilde P$. The term $r^2\tilde P\in r\Diffe^2(M)$ does not contribute to the edge normal operator, and therefore Lemma~\ref{LemmaExCSpec} applies to $\Box_g$. Correspondingly, Theorems~\ref{ThmSUeNonrf}, \ref{ThmSUb}, and \ref{ThmSUbPhg} give a \textit{solvability and uniqueness theory on edge Sobolev spaces on non-refocusing spacetime domains $\Omega\subset M$} as in~\eqref{EqExCThr}--\eqref{EqExCBox} with $\ell\in(\ell_-,\ell_+)$ (see~\eqref{EqExCUStatEll}).

By the same arguments as in Proposition~\ref{PropExCDUMink}, the solutions of $\Box_g u=f$ given by this theory for smooth $f$ coincide with the usual forward solution (subject to Dirichlet boundary conditions at $r=0$ in the case $n=1$). The point however is that we can handle source terms $f$ which are highly singular at $\cC$ in that they have only limited edge regularity there.\footnote{The solution of equations $\Box_g u=f$ where $f$ is a \emph{conormal} distribution on $\cM$ at $\cC$---which in $M$ amounts to \emph{infinite b-regularity} at $\pa M$---is easily accomplished using the symbol calculus for conormal distributions to reduce to the case that $f$ vanishes to infinite order at $\cC$ and thus is smooth, in which case solutions of $\Box_g u=f\in\CI(\cM)$ are given by the standard smooth theory.} We expect this to play an important role in applications to spacetime gluing problems.

\begin{example}[Spacetime domains which are not non-refocusing]
\label{ExExCDNonrf}
  Consider a globally hyperbolic spacetime $(M,g)$, a point $p\in M$, and a future null-geodesic $\gamma\colon I\subseteq\R\to M$ with $\gamma(0)=p$ which has a conjugate point at $s_0>0$. Let $s>s_0$ and $q:=\gamma(s)$, then by \cite[Theorem 10.51]{ONeillSemi}, there exists a timelike geodesic through $p$ and $q$; take $\cC$ to be a maximal timelike geodesic passing through $p$ and $q$. A spacetime domain $\Omega\subset M$ which contains both $p$ and $q$ cannot be non-refocusing, as the backward timelike curve segment of $\cC$ starting at $q$ must remain in $\Omega$ until hits $p$; otherwise it would have to exit through an initial boundary hypersurface, which however is impossible since then it cannot reach $p$ afterwards. This situation arises for example in the mass $\bhm>0$ Schwarzschild spacetime
  \[
    M = \R_t\times(2\bhm,\infty)\times\Sph^2,\qquad
    g = -\Bigl(1-\frac{2\bhm}{r}\Bigr)\,\dd t^2 + \Bigl(1-\frac{2\bhm}{r}\Bigr)^{-1}\,\dd r^2 + r^2 g_{\Sph^2},
  \]
  when $\gamma$ is a suitable null-geodesic. A concrete example is $\gamma\colon s\mapsto(t,r,\theta,\phi)=(s,r_0,\frac{\pi}{2},\phi+\alpha s)$ with $r_0=3\bhm$ and $\alpha=\sqrt{\bhm/r_0^3}$; the points $\gamma(0)$ and $\gamma(2\pi/\alpha)$ can be joined by a timelike geodesic segment, and we can take for $\cC$ the maximal extension of this segment.
\end{example}

\subsection{Wave equations with scaling critical singularities}
\label{SsExSc}

Let $n\geq 1$. For expositional simplicity, we restrict the geometry to Minkowski space $(M,g)$ here, so
\begin{gather*}
  M=[\R\times\R^n;\cC],\qquad \cC=\R\times\{0\}; \\
  \Box_g=-D_t^2+D_x^2 = -D_t^2 + D_r^2 - \frac{n-1}{r}i D_r + r^{-2}\Delta_{\Sph^{n-1}},
\end{gather*}
though our results easily generalize to conic geometries as studied in~\S\ref{SsExC}. Fix any spacetime domain $\Omega\subset M=[\R\times\R^n;\cC]$; it is non-refocusing in $(M,g)$.

\subsubsection{Inverse square potentials}
\label{SssExScV}

Consider an (asymptotically as $r\to 0$) inverse square potential
\[
  V(t,r,\omega) = r^{-2}V_0(t,r,\omega),\qquad
  V_0\in\CI(\R_t\times[0,\infty)_r\times\Sph^{n-1}).
\]
We do not require $V$ or $V_0$ to be real-valued.

\begin{lemma}[Spectral admissibility]
\label{LemmaExScV}
  Suppose that $V_0(t,0,\omega)=V_0(t)$, and $t_0\in\R$ is such that $V_0(t_0)\in\C\setminus(-\infty,-(\frac{n-2}{2})^2]$. Then $\Box_g+V$ is spectrally admissible with weight $\ell$ at $t_0$ for
  \begin{equation}
  \label{EqExScVEll}
    \ell \in \bigl(1-\mu(t_0),1+\mu(t_0)\bigr),\qquad \mu(t_0):=\Re\sqrt{\Bigl(\frac{n-2}{2}\Bigr)^2+V_0(t_0)}\,.
  \end{equation}
\end{lemma}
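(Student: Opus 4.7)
The plan is to separate variables in $\omega$, reduce each spherical harmonic mode to a Bessel equation in $\hat r$, and verify the three conditions of Definition~\ref{DefSUIAdm} by classical asymptotic analysis.

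Concretely, the reduced normal operator is
\[
  \hat N_{\eop,t_0}(\Box_g+V,\hat\sigma) = -\pa_{\hat r}^2 - \frac{n-1}{\hat r}\pa_{\hat r} + \hat r^{-2}\bigl(\Delta_{\Sph^{n-1}} + V_0(t_0)\bigr) - \hat\sigma^2,
\]
which commutes with $\Delta_{\Sph^{n-1}}$. Writing $u=\sum_k u_k(\hat r)Y_k(\omega)$ with $\Delta_{\Sph^{n-1}}Y_k=\lambda_k Y_k$, $\lambda_k=k(k+n-2)$, the substitution $u_k=\hat r^{-(n-1)/2}w_k$ converts the radial equation to the Bessel form
\[
  -w_k'' + (\nu_k^2-\tfrac14)\hat r^{-2}w_k - \hat\sigma^2 w_k = 0,\qquad \nu_k:=\sqrt{\bigl(\tfrac{n-2}{2}\bigr)^2+\lambda_k+V_0(t_0)},
\]
with the principal branch $\Re\nu_k\geq 0$. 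The hypothesis $V_0(t_0)\notin(-\infty,-(\tfrac{n-2}{2})^2]$ keeps $(\tfrac{n-2}{2})^2+V_0(t_0)$ off the non-positive real axis; together with $\lambda_k\geq 0$ and the strict monotonicity of $\lambda\mapsto\Re\sqrt{\lambda+w}$ on $\lambda\geq 0$ for $w\in\C\setminus(-\infty,0]$, this yields $\Re\nu_0=\mu(t_0)$ and $\Re\nu_k>\mu(t_0)$ whenever $\lambda_k>0$.

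The indicial roots of the b-normal operator \eqref{EqSUNbMellin} are $\xi=-\tfrac{n-2}{2}\pm\nu_k$, so the indicial weights (Definition~\ref{DefSUNInd}) are $\ell=1\pm\Re\nu_k$; for $\ell\in(1-\mu(t_0),1+\mu(t_0))$ these are all strictly avoided by the order relations $1-\Re\nu_k\leq 1-\mu(t_0)<\ell<1+\mu(t_0)\leq 1+\Re\nu_k$. For the injectivity assertions, I would use that the two linearly independent solutions of the radial Bessel ODE behave as $\hat r^{1/2\pm\nu_k}$ near $\hat r=0$, so $u_k\sim\hat r^{-(n-2)/2\pm\nu_k}$. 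The growth bound $|u|\lesssim\hat r^{\ell-n/2}$ together with $\ell>1-\Re\nu_k$ excludes the singular branch, forcing $u_k$ to be a multiple of the regular solution $\sqrt{\hat r}J_{\nu_k}(\hat\sigma\hat r)$.

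At infinity, the classical expansion $J_{\nu_k}=\tfrac12(H^{(1)}_{\nu_k}+H^{(2)}_{\nu_k})$ shows that the regular solution carries both outgoing and incoming spherical-wave contributions; the outgoing condition ($u_k=e^{\pm i\hat r}\cdot\text{conormal}$ for $\hat\sigma=\pm 1$), resp.\ rapid decay (for $\Im\hat\sigma>0$, where one Hankel function decays and the other grows exponentially), therefore forces $u_k=0$. The adjoint statement follows from $\hat N(\Box_g+V,\hat\sigma)^*=\hat N(\Box_g+\bar V,\overline{\hat\sigma})$ (using formal self-adjointness of $\Box_g$ with respect to the density $\hat r^{n-1}\,\dd\hat r\,\dd h$), together with the observations that $\Re\nu_k$ is invariant under $V_0\to\overline{V_0}$ and that the adjoint weight $-(n-4)/2-\ell$ corresponds to $2-\ell\in(1-\mu(t_0),1+\mu(t_0))$ by symmetry of the interval about $1$. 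The one non-routine point is the exceptional case $\nu_k\in\N_0$, where the basis $\{J_{\pm\nu_k}\}$ degenerates and one must replace it by $\{J_{\nu_k},Y_{\nu_k}\}$; since the Hankel functions $H^{(1,2)}_{\nu_k}$ retain their outgoing/incoming interpretation at infinity for all $\nu_k$, the preceding argument carries over with only notational changes.
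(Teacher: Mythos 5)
Your proof is correct and follows essentially the same route as the paper's: separation into spherical harmonics, reduction to the Bessel equation of order $\nu_k=\sqrt{(\tfrac{n-2}{2})^2+\lambda_k+V_0(t_0)}$, identification of the indicial roots $\tfrac12\pm\nu_k$, and the incompatibility between the regular branch at $\hat r=0$ (selected by the weight condition $\ell>1-\Re\nu_k$) and the purely outgoing, resp.\ exponentially decaying, Hankel solution at infinity. The only difference is cosmetic: for real $V_0(t_0)$ the paper substitutes a more elementary Wronskian/integration-by-parts argument, while your Bessel-function analysis---which is exactly the paper's argument for general complex $V_0(t_0)$---covers both cases uniformly and is slightly more explicit about the adjoint and the integer-order degeneration.
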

\begin{proof}
  Denote by $0=\lambda_0^2<\lambda_1^2\leq\lambda_2^2\leq\cdots\to\infty$ the eigenvalues of $\Delta_{\Sph^{n-1}}$ (explicitly, $\lambda_j^2=j(j+n-2)$). Restricted to the $\lambda_j^2$-eigenspace of $\Delta_{\Sph^{n-1}}$, we have
  \begin{equation}
  \label{EqExCSpecNj}
    N_j(\hat\sigma)u := -\hat N_{\eop,t_0}(\Box_g+V,\hat\sigma)u = u'' + \frac{n-1}{\hat r}u' - \frac{\lambda_j^2}{\hat r^2}u + \hat\sigma^2 u.
  \end{equation}
  Writing $u=\hat r^{-\frac{n-1}{2}}v$, the equation for $v$ reads
  \begin{equation}
  \label{EqExCSpecEqv}
    v'' - \Bigl[\Bigl(\frac{(n-1)(n-3)}{4}+\lambda_j^2+V_0(t_0)\Bigr)\hat r^{-2}-\hat\sigma^2\Bigr]v = 0.
  \end{equation}
  The indicial roots at $\hat r=0$ are $\frac12\pm\nu_j$, $\nu_j:=\sqrt{((n-2)/2)^2+\lambda_j^2+V_0(t_0)}$, with $\Re\nu_j>0$ by our assumption on $V_0(t_0)$. For $\hat\sigma=\pm 1$, consider the Wronskian $W$ of $v$ and $\bar v$. The root $\frac12-\nu_j$ is excluded and $|\hat r^{\frac12+\nu_j}\cdot\hat r^{\frac12+\nu_j-1}|=\cO(|\hat r^{2\nu_j}|)=o(1)$ as $\hat r\to 0$, so the Wronskian must be identically $0$. On the other hand, we must have $v=v_\infty e^{i\hat\sigma\hat r}+\tilde v$ where $|\tilde v|$, $|\tilde v'|=o(1)$ as $\hat r\to\infty$, so $W=-2 i\hat\sigma|v_\infty|^2$. Therefore $v_\infty=0$, and this implies $v=0$ and thus $u=0$.

  For $|\hat\sigma|=1$, $\Im\hat\sigma>0$, we multiply~\eqref{EqExCSpecEqv} by $\bar v$ and integrate by parts; for $\hat\sigma\neq i$, so $\hat\sigma^2\notin\R$, taking imaginary parts gives $\int|v|^2\,\dd\hat r=0$ and thus $v=0$; otherwise taking real parts implies $v=0$.

  The adjoint of $-\hat N_{\eop,t_0}(\Box_g+V,\hat\sigma)$ acts on the $\lambda_j^2$-eigenspace of $\Delta_{\Sph^{n-1}}$ as $N_j(\bar{\hat\sigma})$, and its injectivity as required in Definition~\ref{DefSUIAdm}\eqref{ItSUIAdmAdj} follows from completely analogous considerations.

  In the general case of complex $V_0(t_0)$, we instead need to argue similarly to \cite[Lemma~5.10]{HintzConicProp}. To wit, $v$ is a linear combination of $v_1(\hat r)=\hat r^{\frac12}H_{\nu_j}^{(1)}(\hat\sigma\hat r)$ and $v_2(\hat r)=\hat r^{\frac12}H_{\nu_j}^{(2)}(\hat\sigma\hat r)$. Of these two, only $v_1$ is outgoing at infinity when $\hat\sigma=\pm 1$ and exponentially decaying when $\Im\hat\sigma>0$ by \cite[\S{7.4.1}, equation~(4.03)]{OlverSpecial}. But given the required lower bound $\ell>1-\mu(t_0)$, $|v_1|\gtrsim|\hat r^{\frac12-\nu_j}|$ (using the combination of \cite[\S{7.4.2}, equation~(4.12)]{OlverSpecial} and \cite[\S{2.9.3}, equation~(9.09)]{OlverSpecial}) is too large at $\hat r=0$ in the context of Definition~\ref{DefSUIAdm}.
\end{proof}

If $V_0(t)\in\C\setminus(-\infty,-(\frac{n-2}{2})^2]$ for all $t$ so that $(t,0)\in\bar\Omega$, we can therefore apply Theorem~\ref{ThmSUeNonrf} to the equation
\[
  (\Box_g+V)u=f \in \He^{\sfs-1,\ell-2}(\Omega)^{\bullet,-}
\]
and obtain a unique solution $u\in\He^{\sfs,\ell}(\Omega)^{\bullet,-}$; here $\ell$ lies in the interval given by Lemma~\ref{LemmaExScV}, and $\sfs$ satisfies~\eqref{EqExCThr}. By Theorem~\ref{ThmSUb}, $u$ enjoys $k$ additional degrees of b-regularity (i.e.\ differentiability along $\pa_t$, $r\pa_r$, and spherical vector fields) if $f$ has the same additional degree of b-regularity.

\begin{rmk}[Mild damping: Theorem~\ref{ThmIBaby}]
\label{RmkExScVMild}
  In the context of Theorem~\ref{ThmIBaby}, we consider an additional term $a(t,r,\omega)\pa_t$ where $a=r^{-1}a_0$ with $a_0(t,0,\omega)=a_0(t)\geq 0$; this has the effect of introducing a term in~\eqref{EqSUNe} with $a(t_0,\omega)$ there given by $a_0(t_0)$ in present notation, and thus $\vartheta_{\rm out}(t_0)=\frac12 a_0(t_0)=-\vartheta_{\rm in}(t_0)$ in Definition~\ref{DefSUNThr}. If we choose $V_0(t_0)$ so that $\mu(t_0)>\frac12$ in~\eqref{EqExScVEll}, then the weight $\ell=\frac32$ is spectrally admissible, and then the conditions on the edge regularity order $\sfs$ in Definition~\ref{DefSULocAdm} are satisfied by the \emph{constant} order $\sfs=1$ as long as $a_0(t)>0$ for all $t$ so that $(t,0)\in\bar\Omega$. The spectral admissibility of $\Box_g+V+a\pa_t$ at $t_0$ persists for small $|a_0(t_0)|$ by a simple perturbative argument utilizing the Fredholm theory in Lemma~\ref{LemmaSUIInv}.
\end{rmk}

\subsubsection{Dirac--Coulomb operator}
\label{SssExScC}

We let $X=[\R^3;\{0\}]=[0,\infty)_r\times\Sph^2$ and $M=\R\times X$. The Dirac--Coulomb equation for a massive fermion in Minkowski space, as considered in \cite{BaskinWunschDiracCoulomb}, is
\begin{align*}
  &(i\slpa_\bfA-m)\psi=0,\qquad
  \slpa_\bfA = \gamma^\mu(\pa_\mu+i A_\mu), \\
  &\quad \gamma^0 = \begin{pmatrix} I & 0 \\ 0 & -I \end{pmatrix},\quad
    \gamma^j = \begin{pmatrix} 0 & \sigma_j \\ -\sigma_j & 0 \end{pmatrix}, \\
  &\quad \bfA=(A_0,A_1,A_2,A_3),\quad A_0=\frac{\sfZ}{r}+V,\quad V,A_1,A_2,A_3\in\CI(X),
\end{align*}
where $\psi\colon\R_t\times\R^3_x\to\C^4$, and
\[
  \sigma_1=\begin{pmatrix} 0 & 1 \\ 1 & 0 \end{pmatrix},\quad
  \sigma_2=\begin{pmatrix} 0 & -i \\ i & 0 \end{pmatrix},\quad
  \sigma_3=\begin{pmatrix} 1 & 0 \\ 0 & -1 \end{pmatrix}
\]
are the Pauli matrices. Following \cite[\S4.3]{BaskinWunschDiracCoulomb}, $i\slpa_\bfA-m$ gives rise to the wave operator
\begin{equation}
\label{EqExScCOp}
\begin{split}
  P &:= -(i\slpa_\bfA+m)(i\slpa_\bfA-m) = -\Bigl(D_t+\frac{\sfZ}{r}\Bigr)^2 + \Delta + m^2 + i\frac{\sfZ}{r^2}\alpha_r + r^{-1}\bfR, \\
    &\quad \alpha_r=\sum_{j=1}^3 \frac{x_j}{r}\gamma^0\gamma^j,\quad \bfR\in\Diffb^1(X;\C^4).
\end{split}
\end{equation}
This form of the operator is valid even when $\sfZ=\sfZ(t)$ is allowed to be time-dependent; terms involving $\sfZ'$ can be put into the operator $\bfR$. The operator $P$ fits into the general setup of~\S\ref{SSU}. The reduced normal operator is
\[
  \hat N_{\eop,t_0}(P,\hat\sigma) = D_{\hat r}^2 - \frac{2}{\hat r}i D_{\hat r} - \Bigl(\hat\sigma-\frac{\sfZ(t_0)}{\hat r}\Bigr)^2 + \hat r^{-2}\bigl(\Delta_{\Sph^2} + i\sfZ(t_0)\alpha_r\bigr).
\]
The proof of \cite[Lemma~5.3]{HintzConicProp} generalizes to show:

\begin{lemma}[Spectral admissibility]
\label{LemmaExScCAdm}
  Let $t_0\in\R$, and suppose that\footnote{Using a special function analysis, one can also treat the case of suitable $\sfZ(t_0)\in\C$; we shall not pursue this further here.} $\sfZ(t_0)\in\R$ satisfies $|\sfZ(t_0)|\neq\sqrt{\kappa^2-1/4}$ for all $\kappa\in\Z\setminus\{0\}$. Then the operator $P$ in~\eqref{EqExScCOp} is spectrally admissible with weight $\ell$ at $t_0$ for all $\ell\in(1-\delta,1+\delta)$ where $\delta=\min_{\kappa\in\Z\setminus\{0\}}|\frac12-\sqrt{\kappa^2-\sfZ(t_0)^2}|$.
\end{lemma}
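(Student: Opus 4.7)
The plan is to reduce the two injectivity statements in Definition~\usref{DefSUIAdm} to a family of one-dimensional ODEs by decomposing sections of $\C^4$ over $\Sph^2$ into joint eigenspaces of the angular Dirac-type operator $\slK:=\Delta_{\Sph^2}+i\sfZ(t_0)\alpha_r$. First, I would verify that $\slK$ is an elliptic, formally self-adjoint (with respect to a suitable inner product on $\C^4$) operator on $\Sph^2$ which commutes with $\hat N_{\eop,t_0}(P,\hat\sigma)$ in the sense that the latter preserves the eigenspaces of $\slK$. Following the standard spinor spherical harmonic decomposition used for the Dirac--Coulomb problem (see the discussion in \cite{BaskinWunschDiracCoulomb}), the spectrum of $\slK$ consists of the numbers $\kappa^2-\sfZ(t_0)^2-\tfrac14$ indexed by $\kappa\in\Z\setminus\{0\}$ (with finite multiplicities); the hypothesis $|\sfZ(t_0)|\neq\sqrt{\kappa^2-1/4}$ ensures these eigenvalues avoid the critical value and that $\nu_\kappa:=\sqrt{\kappa^2-\sfZ(t_0)^2}$ has $\Re\nu_\kappa>0$, with $|\Re\nu_\kappa-\tfrac12|\geq\delta$.

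Next, on each $\slK$-eigenspace with eigenvalue $\kappa^2-\sfZ(t_0)^2-\tfrac14$, the equation $\hat N_{\eop,t_0}(P,\hat\sigma)u=0$ becomes, after the substitution $u=\hat r^{-1}v$, the one-dimensional Coulomb-type equation
\[
  v'' + \Bigl(\hat\sigma^2-m^2\Bigr)v + \frac{2\sfZ(t_0)\hat\sigma}{\hat r}v - \frac{\kappa^2-\sfZ(t_0)^2-\tfrac14}{\hat r^2}v = 0
\]
(the $m^2$ term being inherited from~\eqref{EqExScCOp}). Its indicial roots at $\hat r=0$ are $\tfrac12\pm\nu_\kappa$; the weight condition $\ell\in(1-\delta,1+\delta)$ together with Sobolev embedding in the threshold given in Definition~\usref{DefSUIAdm}\eqref{ItSUIAdmDir} forces any candidate element of the nullspace to lie in the subdominant branch $v\sim\hat r^{1/2+\nu_\kappa}$ at $\hat r=0$. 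At $\hat r=\infty$, the asymptotics are prescribed as outgoing ($v\sim e^{i\hat\sigma\hat r}\hat r^{i\sfZ(t_0)\hat\sigma}$) for $\hat\sigma=\pm 1$ and as exponentially decaying for $\Im\hat\sigma>0$; these are the unique such asymptotic solutions up to scalar multiples, realized in terms of Whittaker functions.

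The injectivity is then shown exactly as in \cite[Lemma~5.3]{HintzConicProp}: for $\Im\hat\sigma>0$, the ODE is elliptic at infinity and the subdominant-at-$0$/decaying-at-$\infty$ solutions cannot coincide unless $v\equiv 0$, as can be seen by multiplying by $\bar v$, integrating, and taking imaginary parts (the $\tfrac{2\sfZ\hat\sigma}{\hat r}$ term contributes imaginarily and controls sign), or equivalently by the absence of $L^2$-eigenvalues of the self-adjoint realization. For $\hat\sigma=\pm 1$, one computes the Wronskian $W(v,\bar v)$; the $\hat r\to 0$ behavior gives $|W|=o(\hat r^{2\Re\nu_\kappa-1})\to 0$ since $\Re\nu_\kappa>\tfrac12$ is excluded by the weight range (more precisely, the threshold forces $v$ to lie in the branch with $\Re(1/2+\nu_\kappa)>\ell-\tfrac12$, yielding the decay of $W$), while the outgoing asymptotics at infinity give $W=-2 i\hat\sigma|v_\infty|^2$, hence $v_\infty=0$ and then $v\equiv 0$ by uniqueness. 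The adjoint statement Definition~\usref{DefSUIAdm}\eqref{ItSUIAdmAdj} is handled symmetrically, replacing $\hat\sigma$ by $\bar{\hat\sigma}$ and exchanging incoming/outgoing asymptotics; the weight $-\tfrac{n-4}{2}-\ell=-\tfrac12-\ell$ (for $n=3$) together with the same gap $\delta$ likewise excludes the dominant branch.

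The main obstacle I anticipate is the verification of the precise angular spectrum of $\slK$ and the matching of thresholds in Definition~\usref{DefSUIAdm}: while the substitution $u=\hat r^{-1}v$ and indicial analysis are routine, one must carefully track how the radial Clifford matrix $\alpha_r$ combines with $\Delta_{\Sph^2}$ to yield the eigenvalues $\kappa^2-\sfZ(t_0)^2-\tfrac14$ (rather than $\kappa^2$ alone), and confirm that the choice of a non-degenerate (but possibly indefinite) fiber inner product on $\C^4$ for which adjoints are defined is compatible with the Wronskian argument. Once these identifications are made, the rest of the argument is a direct transcription of \cite[Lemma~5.3]{HintzConicProp}.
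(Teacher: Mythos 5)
Your approach is the intended one: the paper itself offers no detailed argument here (it simply asserts that the proof of \cite[Lemma~5.3]{HintzConicProp} generalizes), and the natural expansion is exactly what you do — diagonalize the angular operator, reduce to a one-dimensional Coulomb-type ODE via $u=\hat r^{-1}v$, read off the indicial roots $\tfrac12\pm\sqrt{\kappa^2-\sfZ(t_0)^2}$, and run the Wronskian/limiting-absorption argument from Lemma~\ref{LemmaExScV} (the Coulomb phase $\hat r^{i\theta}$ with $\theta$ real does not affect the modulus, so $W=-2i\hat\sigma|v_\infty|^2$ still holds).

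Two concrete corrections. First, the $m^2$ term should not appear in your radial ODE: $r^2\cdot m^2\in r^2\Diffe^0$ vanishes at $r=0$ and so is killed when passing to the edge normal operator — indeed the paper's displayed $\hat N_{\eop,t_0}(P,\hat\sigma)$ contains no mass term. As written, your ODE is inconsistent with the outgoing condition $|(\hat r\pa_{\hat r})^j\pa_\omega^\alpha(e^{-i\hat\sigma\hat r}u)|\lesssim\hat r^C$ from Definition~\ref{DefSUIAdm}, which is phrased with $e^{i\hat\sigma\hat r}$ and not $e^{i\sqrt{\hat\sigma^2-m^2}\hat r}$. Second, your eigenvalue bookkeeping for $\slK=\Delta_{\Sph^2}+i\sfZ(t_0)\alpha_r$ is internally inconsistent: since you (correctly) expand $-(\hat\sigma-\sfZ/\hat r)^2$ to produce the $2\sfZ\hat\sigma/\hat r$ term, the $-\sfZ^2/\hat r^2$ from that same expansion must be added to the eigenvalue of $\slK$; for your displayed $\hat r^{-2}$ coefficient $\kappa^2-\sfZ^2-\tfrac14$ (which is the one matching the lemma's $\delta$) to come out, the spectrum of $\slK$ itself must be $\{\kappa^2-\tfrac14\}$, not $\{\kappa^2-\sfZ^2-\tfrac14\}$. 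You flag this verification as your anticipated obstacle, which is fair, but the statement as written would give indicial roots $\tfrac12\pm\sqrt{\kappa^2-2\sfZ^2}$ and the wrong weight interval. (There is also a small arithmetic slip: for $n=3$ the adjoint weight is $-\tfrac{n-4}{2}-\ell=\tfrac12-\ell$, not $-\tfrac12-\ell$.) With these repairs the argument goes through.
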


If $\sfZ_0(t)\neq\pm\sqrt{\kappa^2-1/4}$ for all $\kappa\in\Z\setminus\{0\}$ and for all $t$ so that $(t,0)\in\bar\Omega$, we can then apply Theorem~\ref{ThmSUeNonrf} to the equation $P u=f \in \He^{\sfs-1,\ell-2}(\Omega)^{\bullet,-}$. This gives a unique solution $u\in\He^{\sfs,\ell}(\Omega)^{\bullet,-}$, and we have thus solved
\[
  (i\slpa_\bfA-m)\psi = f,\qquad \psi:=-(i\slpa_\bfA+m)u\in\He^{\sfs-1,\ell-1}(\Omega;\C^4)^{\bullet,-}.
\]
By Theorem~\ref{ThmSUb}, $u$ and $\psi$ enjoy $k$ additional degrees of b-regularity (i.e.\ differentiability along $\pa_t$, $r\pa_r$, and spherical vector fields) if $f$ has the same additional degree of b-regularity. (Following Remark~\ref{RmkSUbCoisotropic}, one may then entertain the proof of diffractive improvements for solutions of $P u=f$, though we shall not do so here.)

\appendix
\section{Edge geometry and analysis}
\label{SE}

We begin in~\S\ref{SsEBasic} by collecting basic notions of b- and edge-analysis before recalling edge pseudodifferential operators (with variable differential orders) in~\S\ref{SsEPsdo}. In~\S\ref{SsEInv}, we discuss invariant edge Sobolev spaces in some detail; this is used in the analysis of the edge normal operator in~\S\ref{SsSUI}. Apart from Remark~\ref{RmkEBlowup} and~\S\ref{SsEInv}, the material discussed here is essentially standard, or at least a variation on well-known themes. See also~\cite[\S2]{HintzVasyScrieb}.

\subsection{Basic notions of geometric singular analysis}
\label{SsEBasic}

Let $M$ be a smooth $n$-dimensional manifold with boundary $\pa M$. Then the Lie algebra $\Vb(M)\subset\cV(M)=\CI(M;T M)$ of \emph{b-vector fields} \cite{MelroseMendozaB,MelroseAPS} consists of all vector fields which are tangent to $\pa M$. In local coordinates $x\geq 0$, $y\in\R^{n-1}$ near a point in $\pa M$, such vector fields are linear combinations of $x\pa_x$, $\pa_{y^j}$ ($j=1,\ldots,n-1$) with coefficients in $\CI([0,\infty)\times\R^{n-1})$; thus, there is a vector bundle $\Tb M\to M$ (called the \emph{b-tangent bundle}), equipped with a bundle map $\Tb M\to T M$ which is an isomorphism over $M^\circ$, so that $\CI(M;\Tb M)=\Vb(M)$: a local frame of $\Tb M$ is $x\pa_x$, $\pa_{y^j}$. The dual bundle $\Tb^*M$, with local frame $\frac{\dd x}{x}$, $\dd y^j$, is the \emph{b-cotangent bundle}.

Suppose now that $\pa M$ is the total space of a fibration
\begin{equation}
\label{EqEFibration}
  Z - \pa M \xra{\phi} Y
\end{equation}
where $Z,Y$ are smooth manifolds without boundary. Following \cite{MazzeoEdge}, we can then define the Lie subalgebra $\Ve(M)\subset\Vb(M)$ of \emph{edge vector fields} to consist of all smooth vector fields which are tangent to the fibers of $\phi$. In local coordinates $x\geq 0$, $y\in\R^{n_Y}$, $z\in\R^{n_Z}$ (with $n=1+n_Y+n_Z$), such \emph{edge vector fields} are smooth linear combinations of the vector fields $x\pa_x$, $x\pa_{y^j}$, $\pa_{z^k}$, which thus provide a local frame for the \emph{edge tangent bundle} $\Te M\to M$.

Note that b-vector fields define continuous linear maps on $\CI(M)$ and $\CIdot(M)$ (smooth functions vanishing to infinite order at $\pa M$), and by duality also on the space of tempered distributions $\CmI(M)=\CIdot(M)^*$. The space of locally finite linear combinations of up to $m$-fold compositions of b-vector fields is denoted $\Diffb^m(M)$; if $\cE,\cF\to M$ are smooth vector bundles, then $\Diffb^m(M;\cE,\cF)$ (and $\Diffb^m(M;\cE)=\Diffb^m(M;\cE,\cF)$) consists of $(\rank\cF)\times(\rank\cE)$-matrices of elements of $\Diffb^m$ in local trivializations of $\cE,\cF$. The spaces $\Diffe^m(M)$ and $\Diffe^m(M;\cE,\cF)$ are defined analogously.

Let $\rho\in\CI(M)$ denote a \emph{boundary defining function}, i.e.\ $\pa M=\rho^{-1}(0)$, $\dd\rho\neq 0$ on $\pa M$, and $\rho>0$ on $M^\circ$. Then $\Vsc(M):=\rho\Vb(M)=\{\rho V\colon V\in\Vb(M)\}$ is the Lie algebra of \emph{scattering vector fields} \cite{MelroseEuclideanSpectralTheory}.

\textbf{Differential operators and their normal operators.} Let now $P\in\Diffe^m(M)$, so in local coordinates as above
\[
  P=\sum_{j+|\alpha|+|\beta|\leq m}p_{j\alpha}(x,y,z)(x D_x)^j(x D_y)^\alpha D_z^\beta
\]
where $D=\frac{1}{i}\pa$. We define its \emph{(edge) principal symbol} by
\[
  \sigmae^m(P)(x,y,z;\xi,\eta,\zeta):=\sum_{j+|\alpha|+|\beta|=m}p_{j\alpha\beta}(x,y,z)\xi^j\eta^\alpha\zeta^\beta;
\]
upon identifying $\xi,\eta,\zeta$ with linear coordinates on the fibers of $\Te^*M$ via $\xi=x\pa_x(\cdot)$, $\eta_j=x\pa_{y^j}(\cdot)$, $\zeta_j=\pa_{z^j}(\cdot)$ (or equivalently: writing the canonical 1-form as $\xi\frac{\dd x}{x}+\eta\cdot\frac{\dd y}{x}+\zeta\cdot\dd z$), this is a well-defined homogeneous polynomial in the fibers of $\Te^*M$.

While $\sigmae^m(P)$ captures $P$ modulo $\Diffe^{m-1}(M)$, i.e.\ to leading order in the sense of differential order, the \emph{(edge) normal operators} of $P$ capture $P$ modulo $\rho\Diffe^m(M)$, i.e.\ to leading order at $\pa M$. In local coordinates, the edge normal operator at the fiber $\phi^{-1}(y_0)$ of $\pa M$ is the operator
\begin{equation}
\label{EqENe}
  N_{\eop,y_0}(P) := \sum_{j+|\alpha|+|\beta|=m} p_{j\alpha\beta}(0,y_0,z)(x D_x)^j (x D_y)^\alpha D_z^\beta
\end{equation}
on $[0,\infty)_x\times\R^{n_Y}_y\times\R^{n_Z}_z$; this is invariant under translations $(x,y,z)\mapsto(x,y+c,z)$, $c\in\R^{n_Y}$, and dilations $(x,y,z)\mapsto(\lambda x,\lambda y,z)$, $\lambda>0$. More precisely (and globally in the fibers), we can regard
\[
  N_{\eop,y_0}(P) \in \Diff_{\eop,\rm I}^m({}^+N\phi^{-1}(y_0))
\]
as an invariant edge operator on the (non-strictly) inward pointing normal bundle
\[
  {}^+N\phi^{-1}(y_0)={}^+T_{\phi^{-1}(y_0)}M/T\phi^{-1}(y_0)
\]
which is equipped with a translation action of $T_{\phi^{-1}(y_0)}\pa M/T\phi^{-1}(y_0)$ and a dilation action (descending from dilations in the fibers of $T M$), and whose boundary (the zero section) is the total space of a fibration with base $T_{y_0}Y$ induced by $\phi_*\colon T_{\phi^{-1}(y_0)}M\to T_{y_0}Y$. Continuing the discussion of $N_{\eop,y_0}(P)$ in local coordinates as in~\eqref{EqENe}, we take the Fourier transform in $y$, i.e.\ formally replacing $D_y$ by multiplication with $\eta\in\R^{n_Y}$ (or $\C^{n_Y}$) to obtain the `spectral family'
\begin{equation}
\label{EqENeSpec}
  N_{\eop,y_0}(P,\eta) = \sum_{j+|\alpha|+|\beta|=m} p_{j\alpha\beta}(0,y_0,z)(x D_x)^j (x\eta)^\alpha D_z^\beta.
\end{equation}
Then, taking advantage of the dilation invariance $(x,\eta)\mapsto(\lambda x,\eta/\lambda)$, we take $\lambda=|\eta|$ (using the Euclidean norm, say) and introduce $\hat x=|\eta|x$, $\hat\eta=\frac{\eta}{|\hat\eta|}$, yielding the \emph{reduced normal operator}
\begin{equation}
\label{EqENeRed}
  \hat N_{\eop,y_0}(P,\hat\eta) = \sum_{j+|\alpha|+|\beta|=m} p_{j\alpha\beta}(0,y_0,z)(\hat x D_{\hat x})^j (\hat x\hat\eta)^\alpha D_z^\beta
\end{equation}
which contains the same information as $N_{\eop,y_0}(P,\eta)$. Indeed, if we set
\begin{subequations}
\begin{equation}
\label{EqEScale1}
  \hat M_\lambda \colon (\hat x,z) \mapsto (\hat x/\lambda,z),
\end{equation}
then
\begin{equation}
\label{EqEScale2}
  N_{\eop,y_0}(P,\eta) = (\hat M_{|\eta|})_*\hat N_{\eop,y_0}\Bigl(P,\frac{\eta}{|\eta|}\Bigr).
\end{equation}
\end{subequations}
On the space $\hat X:=[0,\infty]_{\hat x}\times Z_z$, we thus have, by inspection of~\eqref{EqENeRed},
\begin{equation}
\label{EqEDiffbsc}
  \hat N_{\eop,y_0}(P,\hat\eta) \in (1+\hat x)^m\Diff_{\bop,\scop}^m(\hat X),
\end{equation}
with smooth dependence on $\hat\eta\in\Sph^{n_Y-1}$, where $\Diff_{\bop,\scop}(\hat X)$ is defined in terms of the space $\cV_{\bop,\scop}(\hat X)$ of vector fields which are b-vector fields in $\hat x<\infty$ and scattering vector fields in $\hat x^{-1}<\infty$. (These are thus spanned, in local coordinates on $Z$, by $\frac{\hat x}{1+\hat x}\pa_{\hat x}$ and $(1+\hat x)^{-1}\pa_z$.) We write $\pa_0\hat X=\hat x^{-1}(0)$ and $\pa_\infty\hat X=\hat x^{-1}(\infty)$.

When $P\in\Diffe^m(M;\cE,\cF)$, the operator $N_{\eop,y_0}(P)$ is an invariant edge operator acting between sections of the bundles $\cE_{y_0}$, $\cF_{y_0}$ which are defined as the pullbacks of $\cE|_{\phi^{-1}(y_0)}$, $\cF|_{\phi^{-1}(y_0)}$ along the projection ${}^+N\phi^{-1}(y_0)\to\phi^{-1}(y_0)$; and we have $\hat N_{\eop,y_0}(P,\hat\eta)\in(1+\hat x)^m\Diff_{\bop,\scop}^m(\hat X;\cE_{y_0},\cF_{y_0})$.

Consider now the case $P\in\Diffb^m(M)$; then in coordinates $x\geq 0$, $y\in\R^{n-1}$, we have $P=\sum_{j+|\alpha|\leq m} p_{j\alpha}(x,y)(x D_x)^j D_y^\alpha$. We define its b-normal operator by
\[
  N_\bop(P) := \sum_{j+|\alpha|\leq m} p_{j\alpha}(0,y) (x D_x)^j D_y^\alpha;
\]
invariantly, this is an element of $\Diff_{\bop,\rm I}^m({}^+N\pa M)$, where the subscript `$\rm I$' indicates invariance under dilations in the fibers (i.e.\ $(x,y)\mapsto(\lambda x,y)$, $\lambda>0$, in local coordinates). Exploiting the dilation-invariance by passing to the Mellin transform, i.e.\ replacing $x\pa_x$ by multiplication by $\xi\in\C$, produces the indicial family
\[
  N_\bop(P,\xi) := \sum_{j+|\alpha|\leq m} p_{j\alpha}(0,y) (-i\xi)^j D_y^\alpha.
\]
When $x$ is a boundary defining function, then this defines a holomorphic (polynomial) family of elements of $\Diff^m(\pa M)$.

\textbf{Sobolev spaces.} Let $\rho\in\CI(M)$ denote a boundary defining function. A \emph{weighted b-density} $\mu$ (with weight $w\in\R$) is then an element of $\rho^w\CI(M;\Omegab M)$ where $\Omegab M$ is the density bundle associated with $\Tb M$; that is, $\mu$ is a smooth density on $M^\circ$ which near a boundary point and in coordinates $x\geq 0$, $y\in\R^{n-1}$ is given by $\rho^w|\frac{\dd x}{x}\,\dd y^1\cdots\dd y^{n-1}|$. Suppose that $M$ is compact. For $s\in\N_0$, $\alpha\in\R$, we can then define the weighted b-Sobolev space $\Hb^{s,\alpha}(M;\mu)=\rho^\alpha\Hb^s(M;\mu)$ to consist of all $u$ so that $\rho^{-\alpha}A u\in L^2(M;\mu)$ for all $A\in\Diffb^m(M)$. For $s\in\R$, the space $\Hb^{s,\alpha}$ can be defined using interpolation and duality. When $M$ is non-compact, the spaces $H_{\bop,\cp}^{s,\alpha}(M;\mu)$ and $H_{\bop,\loc}^{s,\alpha}(M;\mu)$, defined relative to $L^2_\cp(M;\mu)$ and $L^2_\loc(M;\mu)$, are well-defined; in practice, we will only be concerned with distributions having support in a fixed compact subset of $M$. Weighted edge Sobolev spaces $\He^{s,\alpha}(M;\mu)$ are defined analogously. When the density $\mu$ is clear from the context, we omit it from the notation. Spaces of sections of a vector bundle $\cE$ with weighted edge Sobolev regularity are denoted $\He^{s,\alpha}(M;\cE)$.

\textbf{Conormal functions and symbols.} Let $M$ be a manifold with corners; thus, $M$ is locally diffeomorphic to $[0,\infty)^k\times\R^{n-k}$, and we require each of its boundary hypersurfaces (i.e.\ the closures of the connected components of the set of points having neighborhoods diffeomorphic to $[0,\infty)\times\R^{n-1}$) to be embedded submanifolds. (Equivalently, each such boundary hypersurface $H\subset M$ admits a defining function $\rho_H\in\CI(M)$ which vanishes only at $H$ with $\dd\rho|_H\neq 0$ and is positive on $M^\circ$.) Write $\cH$ for the collection of boundary hypersurfaces. Let $\alpha=(\alpha_H)_{H\in\cH}$, $\alpha_H\in\R$; then $\cA^\alpha(M)$ is the space of all $u\in\CI(M^\circ)$ so that $P u\in w L^\infty_\loc(M)$ (i.e.\ $w^{-1}P u\in L^\infty_\loc(M)$) for all $P\in\Diffb^k(M)$, $k\in\N_0$, where $w=\prod_{H\in\cH}\rho_H^{\alpha_H}$. We call elements of $\cA^\alpha(M)$ \emph{conormal} with weight $\alpha_H$ at $H$. More generally, given a parameter $\delta\geq 0$, we can define $\cA^\alpha_\delta(M)$ to consist of all $u\in\CI(M^\circ)$ so that for all $k\in\N_0$ and $P\in\Diffb^k(M)$ we have $P u\in w(\prod_{H\in\cH}\rho_H^{-k\delta})L^\infty_\loc(M)$. (The space $\cA^\alpha_\delta(M)$ becomes smaller when $\delta$ decreases.)

If $\cH=\cH_1\sqcup\cH_2$ and we are given $\alpha=(\alpha_H)_{H\in\cH_1}$, one can define the space $\cA_{\cH_1}^\alpha(M)$ in the same manner, except now $w=\prod_{H\in\cH_1}\rho_H^{\alpha_H}$ and we allow $P$ to be any composition of smooth vector fields which are tangent to all elements of $\cH_1$ but not necessarily to the remaining boundary hypersurfaces; we say that the elements of $\cA_{\cH_1}^\alpha(M)$ are conormal at all $H\in\cH_1$ and smooth at the boundary hypersurfaces in $\cH_2$. When $\cH=\cH_1\sqcup\cH_2\sqcup\cH_3$, elements of the more general spaces $\cA_{\cH_1,\delta;\cH_2}^\alpha(M)$ are allowed to incur a loss of $\prod_{H\in\cH_1}\rho_H^{-\delta}$ for the application of each vector field tangent to all elements of $\cH_1$ and $\cH_2$. The main motivation for allowing for the loss $\delta>0$ is that this allows one to accommodate variable weights (or orders) at the hypersurfaces in $\cH_1$: given bounded functions $\upalpha_H\in\CI(H)$, $H\in\cH_1$, denote an arbitrary extension of $\upalpha_H$ to a smooth function on $M$ by the same symbol; we can then define
\[
  \cA^\upalpha(M) := \left(\prod_{H\in\cH_1} \rho_H^{\upalpha_H}\right) \bigcap_{\delta>0} \cA^0_{\cH_1,\delta}(M).
\]
The inclusion of $\delta$ here ensures that the resulting space is independent both of the choices of extensions of the $\upalpha_H$ to $M$ and of the choices of boundary defining functions.

If $\cE\to M$ is a smooth real vector bundle of rank $k$, then the \emph{fiber-radial compactification} $\bar\cE\to M$ is a fiber bundle with typical fiber a closed $k$-ball. To define it, we first recall the radial compactification of $\R^k$,
\[
  \ol{\R^k} := \Bigl(\R^k \sqcup \bigl( [0,\infty)_\rho\times\Sph^{n-1}_\omega \bigr) \Bigr) / \sim,\qquad (0,\infty)\times\Sph^{n-1} \ni (\rho,\omega) \sim \rho^{-1}\omega \in \R^n;
\]
thus $\rho^{-1}(0)=\pa\ol{\R^k}\cong\Sph^{k-1}$ is the boundary at infinity, and $\R^k$ is the interior of $\ol{\R^k}$. Since invertible linear maps on $\R^k$ extend to diffeomorphisms of $\ol{\R^k}$, we can define the fiber bundle $\bar\cE$ via its local trivialization $U\times\ol{\R^k}$ when $U\times\R^k$ is a local trivialization of $M$, with $U\subset M$ open. Fiber infinity $S\cE\to M$ is a sphere bundle (and a fiber subbundle of $\bar\cE\to M$), locally given by $U\times\Sph^{k-1}$ where $\Sph^{k-1}=\pa\ol{\R^k}$. Given $s\in\R$, we can now define the space of symbols $S^s(\cE)=\cA_{S\cE}^{-s}(\bar\cE)$ and $S^s_{1-\delta,\delta}(\cE)=\cA_{S\cE,\delta}^{-s}(\bar\cE)$. In the special case $\cE=T^*\R^n=\R^n\times\R^n\to\R^n$, $S^s(T^*\R^n)$ is the standard space of Kohn--Nirenberg symbols, i.e.\ functions $a=a(x,\xi)$ so that
\begin{equation}
\label{EqESymb}
  |\pa_x^\alpha\pa_\xi^\beta a(x,\xi)|\leq C_{K\alpha\beta}\la\xi\ra^{s-|\beta|}
\end{equation}
for all $\alpha,\beta\in\N_0^n$, $x\in K$ with $K$ compact, and $\xi\in\R^n$, while $S^s_{1-\delta,\delta}(T^*\R^n)$ is H\"ormander's $(\rho,\delta)$-class with $\rho=1-\delta$ \cite{HormanderFIO1}.

\textbf{Polyhomogeneity.} Let $M$ be a manifold with boundary, and consider a collar neighborhood $[0,1)_\rho\times\pa M$ of $\pa M$. Let $\cE\subset\C\times\N_0$ be an \emph{index set}, i.e.\ $(z,k)\in\cE$ implies $(z+1,k)\in\cE$ and also $(z,k-1)\in\cE$ when $k\geq 1$, and for all $C$ there are only finitely many $(z,k)\in\cE$ with $\Re z<C$. Let $\alpha<\min_{(z,k)\in\cE}\Re z$. Then the space
\[
  \cA_\phg^\cE(M) \subset \cA^\alpha(M)
\]
of \emph{polyhomogeneous conormal distributions} (at $\pa M$) consists of all $u\in\CI(M^\circ)$ so that there exist $u_{(z,k)}\in\CI(\pa M)$, $(z,k)\in\cE$, with the property that for all $C\in\R$, we have
\[
  u(\rho,y) - \sum_{\genfrac{}{}{0pt}{}{(z,k)\in\cE}{\Re z\leq C}} \chi(\rho)\rho^z(\log\rho)^k u_{(z,k)}(y) \in \cA^C(M);
\]
here $\chi\in\CIc([0,1))$ is identically $1$ near $0$. We write $u\sim\sum_{(z,k)\in\cE}\rho^z(\log\rho)^k u_{(z,k)}$.

\textbf{Blow-ups.} Let $M$ be a manifold with corners. We say that $S\subset M$ is a \emph{p-submanifold} if near each point in $S$ there exist local coordinates
\begin{equation}
\label{EqEpCoord}
  x^1,\ldots,x^k\geq 0,\quad
  y^1,\ldots,y^{n-k}\in\R,
\end{equation}
so that $S$ is defined by the vanishing of a subset of these coordinates. We are only concerned with the case of boundary p-submanifolds: these are contained in a boundary hypersurface of $M$, i.e.\ at least one of the coordinates $x^1,\ldots,x^k$ vanishes along it. Following \cite{MelroseDiffOnMwc}, the \emph{(real) blow-up} of $M$ along $S$ is then defined as
\[
  [M;S] := (M\setminus S) \sqcup S\,{}^+NS,
\]
where $S\,{}^+N S=({}^+N S\setminus o)/\R_+$ is the quotient by dilations of the inward pointing normal bundle ${}^+N S={}^+T_S M/T S$, where ${}^+T_p M\subset T_p M$ for $p\in S\subset M$ consists of all (non-strictly) inward pointing tangent vectors; thus $S\,{}^+N S$ is a bundle of intersections of spheres with orthants. The space $[M;S]$ can be given a natural smooth structure, defined by declaring polar coordinates around $S$ to be valid down to the polar coordinate origin; the blow-down map $\upbeta\colon[M;S]\to M$, which is the identity on $M\setminus S$ and the base projection on $S\,{}^+N S$, is then smooth, and a diffeomorphism from the complement of the \emph{front face} $S\,{}^+N S\subset[M;S]$ to $M\setminus S$. For a closed set $T\subset M$, we define its \emph{lift} $\upbeta^*T$ to $[M;S]$ to be
\[
  \upbeta^*T := \begin{cases} \upbeta^{-1}(T), & T\subset S, \\ \ol{\upbeta^{-1}(T\setminus S)}, & T\not\subset S. \end{cases}
\]
The lift of a vector field $V$ on $M^\circ$ to $[M;S]$ is simply $V$ again (note that $S\subset\pa M$ is disjoint from $M^\circ$). The lift of a b-vector field is smooth when $S$ is a boundary face (i.e.\ an intersection of boundary hypersurfaces), but not for general $S$.

\begin{rmk}[Edge-vector fields under blow-ups of fibers]
\label{RmkEBlowup}
  Consider a manifold $M$ with fibered boundary as in~\eqref{EqEFibration}. Consider the blow-up of $M$ along a fiber $\phi^{-1}(y_0)$, $y_0\in Y$; then the lift of $\pa M$ to $[M;\phi^{-1}(y_0)]$ is $[\pa M;\phi^{-1}(y_0)]$, and the map $\phi$ lifts (i.e.\ extends from the complement of the front face) to a fibration $Z-[\pa M;\phi^{-1}(y_0)]\to[Y;\{y_0\}]$. (The other boundary hypersurface of $[M;\phi^{-1}(y_0)]$, namely the front face of the blow-up, is equipped only with the trivial fibration with base $\{y_0\}$.) On the manifold with corners $[M;\phi^{-1}(y_0)]$, we can thus consider the space of edge-b-vector fields, i.e.\ all smooth vector fields which are tangent to the fibers of the boundary hypersurface $[\pa M;\phi^{-1}(y_0)]$. One can then check that this space is equal to the span, over $\CI([M;\phi^{-1}(y_0)])$, of the lift of $\Ve(M)$. We check only part of this in the special case that $\dim Y=1$: in local coordinates $x\geq 0$, $y\in\R$, $z\in\R^{n_Z}$ on $M$, with $y_0=0$, smooth coordinates near the interior of the front face are $x\geq 0$, $\hat y=\frac{y}{x}\in\R$, $z\in\R^{n_Z}$, and the edge vector fields $x\pa_x$, $x\pa_y$, $\pa_z$ become $x\pa_x-\hat y\pa_{\hat y}$, $\pa_{\hat y}$, $\pa_z$, which have the same span as $x\pa_x$, $\pa_{\hat y}$, $\pa_z$, as claimed.
\end{rmk}

\subsection{Edge pseudodifferential operators; variable orders}
\label{SsEPsdo}

We consider a manifold $M$ with fibered boundary $\pa M$ as in~\eqref{EqEFibration}, and focus on operators acting on complex-valued functions; the extension to operators acting between sections of vector bundles requires only notational modifications. Following \cite{MazzeoEdge}, we define the space $\Psie^s(M)$ of edge-ps.d.o.s of order $s\in\R$ to consist of all linear operators on $\CI(M)$ whose Schwartz kernels are properly supported and lift to the \emph{edge double space}
\[
  M^2_\eop := [M^2; \diag_\phi],\qquad \diag_\phi := \{ (p,p') \in (\pa M)^2 \colon \phi(p)=\phi(p') \},
\]
to conormal distributions of class $I^s(M^2_\eop,\diag_\eop;\pi_R^*\Omegae M)$ which vanish to infinite order at the left and right boundaries of $M^2_\eop$ (the lifts of $\pa M\times M$ and $M\times\pa M$). Here $\diag_\eop$ is the lift of the diagonal in $M^2$, and $\pi_R\colon M^2_\eop\to M$ is the right projection composed with the blow-down map; and $\Omegae M$ is the density bundle associated with $\Te M$. (The space $\Diffe^m(M)\subset\Psie^m(M)$ consists of all operators whose Schwartz kernels are Dirac distributions at $\diag_\eop$.) For the purposes of the present paper, we only consider operators whose Schwartz kernels are supported in a small neighborhood of $\diag_\eop$ in $M^2_\eop$. Such operators are sums of local quantizations: given a symbol $a\in S^s(\Te^*M)$ with support in $\Te^*_K M$ where $K$ is a compact subset of a coordinate chart $[0,\infty)_x\times\R^{n_Y}_y\times\R^{n_Z}_z$, we define $\Ope(a)\in\Psie^s(M)$ as follows: let $\chi\in\CIc$ be $1$ near $K$, and let $\phi\in\CIc((-\frac12,\frac12))$ be equal to $1$ near $0$. Then
\begin{equation}
\label{EqEPsdoOp}
\begin{split}
  &(\Ope(a)u)(x,y,z) \\
  &\qquad := (2\pi)^{-n} \int \exp\Bigl[i\Bigl(\frac{x-x'}{x}\xi+\frac{y-y'}{x}\cdot\eta+(z-z')\cdot\zeta\Bigr)\Bigr] \\
  &\qquad \hspace{6em} \times \phi\Bigl(\frac{x-x'}{x}\Bigr)\phi\Bigl(\Bigl|\frac{y-y'}{x}\Bigr|\Bigr)\chi(x',y',z') \\
  &\qquad \hspace{6em} \times a(x,y,z;\xi,\eta,\zeta) u(x',y',z')\,\dd\xi\,\dd\eta\,\dd\zeta\,\frac{\dd x'}{x'}\,\frac{\dd y'}{x'{}^{n_Y}}\,\dd z'.
\end{split}
\end{equation}
The size of the support of $\phi$ controls the localization near $\diag_\eop$. Compositions of two such operators (for possibly different charts) are again (sums of) operators of this form upon enlarging the supports of the cutoffs appropriately. The formula for the full symbol $c$ of $\Ope(a)\circ\Ope(b)\equiv\Ope(c)\bmod\Psie^{-\infty}(M)$ can be obtained by changing coordinates in the formula valid in the interior $x>0$, so
\begin{equation}
\label{EqEPsdoFullSymb}
  c \sim a b + \Bigl((\pa_\xi a) \bigl((x\pa_x + \xi\pa_\xi + \eta\pa_\eta)b\bigr) + \pa_\eta a\cdot x\pa_y b + \pa_\zeta a\cdot\pa_z b\Bigr) + \cdots
\end{equation}

Using a partition of unity, we can define a quantization map $\Ope\colon S^s(\Te^*M)\to\Psie^s(M)$ (which is surjective onto $\Psie^s(M)/\Psie^{-\infty}(M)$). More generally, one can allow $a$ to be a symbol of class $S^{\sfs}(\Te^*M)$ where $\sfs\in\CI(\Se^*M)$ (with $\Se^*M$ being the boundary at fiber infinity of $\ol{\Te^*}M$). We define the elliptic set $\Elle^\sfs(A)$ of $A=\Ope^\sfs(a)$ to be the elliptic set of the symbol $a$, i.e.\ in local coordinates the set of all $(x_0,y_0,z_0;\xi_0,\eta_0,\zeta_0)$ with $(\xi_0,\eta_0,\zeta_0)\neq 0$ so that $|a(x,y,z;\xi,\eta,\zeta)|\gtrsim(1+|\xi|+|\eta|+|\zeta|)^{\sfs(x,y,z;\xi,\eta,\zeta)}$ for all $(x,y,z)$ near $(x_0,y_0,z_0)$ and $(\xi,\eta,\zeta)$ in a conic neighborhood of $(\xi_0,\eta_0,\zeta_0)$. The operator wave front set $\WFe'(A)$ is the essential support of $a$, i.e.\ the set of all $(x_0,y_0,z_0;\xi_0,\eta_0,\zeta_0)$ with $(\xi_0,\eta_0,\zeta_0)\neq 0$ so that $|a(x,y,z;\xi,\eta,\zeta)|\lesssim(1+|\xi|+|\eta|+|\zeta|)^{-N}$ for all $N$ does \emph{not} hold in any conic neighborhood of $(x_0,y_0,z_0;\xi_0,\eta_0,\zeta_0)$. Both $\Elle(A)$ and $\WFe'(A)$ are conic subsets of $\Te^*M\setminus o$, and we may thus equivalently regarding them as subsets of $\ol{\Te^*M}\setminus o$ or $\Se^*M$.

Classes of weighted edge-ps.d.o.s are defined by
\[
  \Psie^{\sfs,\ell}(M) = \rho^{-\ell}\Psie^\sfs(M)
\]
where $\rho\in\CI(M)$ is a boundary defining function. The principal symbol map is
\[
  \sigmae^{\sfs,\ell}\colon\Psie^{\sfs,\ell}(M)\to\left(S^{\sfs,\ell}/\bigcap_{\delta>0}S^{\sfs-1+2\delta,\ell}\right)(\Te^*M),
\]
where $S^{\sfs,\ell}(\Te^*M)=\rho^{-\ell}S^\sfs(\Te^*M)$. (For constant orders $\sfs=s\in\R$, $\sigmae^{s,\ell}$ takes values in $S^{s,\ell}/S^{s-1,\ell}$.) We have the usual formula $\sigmae^{\sfs+\sfs'-1+2\delta,\ell+\ell'}(i[P,P'])=H_{\sigmae^{\sfs,\ell}(P)}\sigmae^{\sfs',\ell'}(P')$ for the principal symbol of the commutator of $P\in\Psie^{\sfs,\ell}(M)$ and $P'\in\Psie^{\sfs',\ell'}(M)$. For a detailed discussion of variable order operators in the scattering algebra, we refer the reader to \cite{VasyMinicourse}; see \cite[Appendix~A]{BaskinVasyWunschRadMink} for the b-case.

An element $A=\Ope(a)$ of $\Psie^\sfs(M)$ is an edge-operator with smooth coefficients on $M$ in that the underlying symbol $a$ is smooth at the boundary $\ol{\Te^*_{\pa M}}M$. One can allow for $a$ to be conormal there, i.e.\ $a\in\cA^{-\sfs,-\ell}_{\Se^*M,\delta;\ol{\Te^*_{\pa M}}M}(\ol{\Te^*}M)$; the weight $\ell$ is required to be constant. The composition formula $\Psie^{\sfs,\ell}(M)\circ\Psie^{\sfs',\ell'}(M)\subset\Psie^{\sfs+\sfs',\ell+\ell'}(M)$ can be deduced also for such conormal symbols from the corresponding result for constant orders (which can be proved using the techniques of \cite{MazzeoEdge}) and an inspection of the full symbol in local coordinates. Alternatively, one can use \cite[\S{1.2.4}, example~(4)]{HintzScaledBddGeo} to define $\Psie$ for such conormal symbols.

For notational simplicity, we now restrict to the case that $M$ is compact. (In this paper, when working on non-compact manifolds $M$ we always work in a fixed compact subsets of $M$.) Fixing a smooth weighted b-density (or equivalently a smooth weighted edge density) on $M$, we can then define variable order Sobolev spaces
\begin{equation}
\label{EqEHe}
  \He^{\sfs,\ell}(M),\qquad \sfs\in\CI(\Se^*M),\ \ell\in\R,
\end{equation}
via testing with elliptic elements of $\Psi^{\sfs,\ell}(M)$: namely, $u\in\He^{\sfs,\ell}(M)$ if and only if $u\in\He^{s_0,\ell}(M)$ where $s_0\leq\inf\sfs$, and $A u\in L^2(M)$ for some $A\in\Psie^{\sfs,\ell}(M)$ with elliptic principal symbol (or equivalently: for all $A\in\Psie^{\sfs,\ell}(M)$).

\textbf{Edge-ps.d.o.s with edge regular symbols.} In this paper, we shall encounter edge operators whose coefficients merely have edge regularity in the base. We define these as uniform pseudodifferential operators on $M^\circ$ upon equipping $M^\circ$ with the structure of a manifold with bounded geometry \cite{ShubinBounded}. (This idea is essentially used already in \cite[(3.23) Corollary]{MazzeoEdge} to deduce the boundedness properties of edge ps.d.o.s on edge Sobolev spaces.) This structure is given by restricting any fixed Riemannian edge metric, i.e.\ a smooth (on $M$) positive definite section of $S^2\,\Te^*M$, to $M^\circ$. Balls of radius $\sim 1$ with respect to this metric can be used to define a cover of $M^\circ$ by preferred coordinate charts into $\R^n$ so that all transition functions are uniformly (i.e.\ independently of the charts) bounded with all derivatives, and so that there is a constant $\mathfrak{C}$ so that every collection of $\mathfrak{C}+1$ different charts have empty intersection. In local coordinates $x\geq 0$, $y\in\R^{n_Y}$, $z\in\R^{n_Z}$ adapted to the boundary fibration of $M$, one may alternatively take as preferred charts the open sets
\[
  U_{j k l}=(2^{-j-1},2^{-j+1})\times(2^{-j}(k+(-2,2)^{n_Y}))\times(l+(-2,2)^{n_Z})
\]
where $j\in\N$, $k\in\Z^{n_Y}$, $l\in\Z^{n_Z}$, and $B^N\subset\R^N$ denotes the open unit ball, together with the maps
\[
  \phi_{j k l} \colon U_{j k l} \to \R^n,\ \ (x,y,z) \mapsto \bigl(2^j x, 2^j(y-2^{-j}k), z-l\bigr) \in (\tfrac12,2)\times (-2,2)^{n_Y}\times (-2,2)^{n_Z} \subset \R^n.
\]
Given a uniform symbol $a\in S^s_{\rm uni}(T^*M^\circ)$, i.e.\ when expressed in preferred charts, $a=a(x,\xi)$ satisfies uniform symbol bounds~\eqref{EqESymb}, we can define its quantization as follows: decompose $a$ using a partition of unity subordinate to the cover by preferred charts, quantize each localized piece using the standard quantization formula $\Op(a)=(2\pi)^{-n}\int e^{i(x-y)\cdot\xi}a(x,\xi)\,\dd\xi$ on $\R^n$ followed by a cutoff to the respective chart, and sum the pullbacks of these quantizations. Note that in a local coordinate chart $x,y,z,\xi,\eta,\zeta$ on $\Te^*M$, the membership $a\in S^s_{\rm uni}(T^*M^\circ)$, for a symbol whose support in $(x,y,z)$ is compactly contained in the chart, is equivalent to
\[
  |(x\pa_x)^j (x\pa_y)^\alpha \pa_z^\beta \pa_\xi^k \pa_\eta^\gamma \pa_\zeta^\kappa a(x,y,z;\xi,\eta,\zeta)| \leq C_{j\alpha\beta k\gamma\kappa}(1+|\xi|+|\eta|+|\zeta|)^{s-k-|\gamma|-|\kappa|}
\]
for all $j,k\in\N_0$, $\alpha,\gamma\in\N_0^{n_Y}$, $\beta,\kappa\in\N_0^{n_Z}$; this thus indeed amounts to edge regularity in the base variables.

The full space $\Psi_{\rm uni}^s(M^\circ)$ of uniform ps.d.o.s on $M^\circ$ is defined as the sum of $\Op_{\eop,\rm uni}(S^s_{\rm uni})$ and the space of residual operators, which have uniformly smooth Schwartz kernels with support in a bounded (with respect to a fixed Riemannian edge metric) neighborhood of the diagonal of $M^\circ$; see \cite[\S{A1.3}]{ShubinBounded} for details. We leave the extension to weighted operators (with weights $\rho^{-l}$ where $\rho$ is a boundary defining function) and operators with variable orders\footnote{More generally, one can allow for $\sfs\in\CI(S^*M^\circ)$ which, in preferred charts, are uniformly bounded with all derivatives.} $\sfs\in\CI(\Se^*M)$ to the reader.

Given $A\in\Psi_{\rm uni}^s(M^\circ)$ relative to the edge structure on $M$, we can not only define the elliptic set and operator wave front set of $A$ in the usual manner over $M^\circ$, but also in $\Se^*_{\pa M}M$: for example, $\varpi\in\Se^*_{\pa M}M$ lies in the elliptic set if there exists an open neighborhood $V\subset\Se^*M$ of $\varpi$ so that the principal symbol of $A$, in all preferred charts $\phi\colon U\subset M^\circ\to\R^n$ and in $S^*U\cap\phi_*(V)$, is uniformly elliptic. A similar definition can be given for operators with weights and variable orders.

For our purposes, it is important to have more refined notions of elliptic set and operator wave front set: in the context of Remark~\ref{RmkEBlowup}, suppose $a\in\cA^{-s}_{\upbeta^*\Se^*M}(\upbeta^*\ol{\Te^*}M)$ where $\upbeta\colon[M;\phi^{-1}(y_0)]\to M$ is the blow-down map; that is, $a$ is smooth in the base variables on $[M;\phi^{-1}(y_0)]$ and a symbol of order $s$ in the fibers of $\Te^*M$. Then $a|_{T^*M^\circ}$ is a uniform symbol (as follows from the fact that edge vector fields on $M$ lift to edge-b-vector fields---in particular, smooth vector fields---on $[M;\phi^{-1}(y_0)]$), so $A=\Op_{\eop,\rm uni}(a)$ is well-defined. But given such symbols $a$, we can define the elliptic set and operator wave front set of such $A=\Op_{\eop,\rm uni}(a)$ as subsets of the bundle $\upbeta^*\Se^*M\to[M;\phi^{-1}(y_0)]$. Such symbols arise in~\S\ref{SsSUPr} as products of smooth symbols of $\Te^*M$ with localizing factors of the form $\chi(\frac{y-y_0}{x})$ where, say, $\chi\in\CIc(\R^{n_Y})$.

\subsection{Invariant edge Sobolev spaces}
\label{SsEInv}

Part of the following discussion is an adaptation of \cite[\S2.5]{HintzVasyScrieb} and \cite[\S2.7.2]{Hintz3b}. We drop vector bundles from the notation. Using a choice of boundary defining function $x\geq 0$ and of local coordinates $y\in\R^{n_Y}$ on the base $Y$, we may use $\dd x$ and $\dd y$ as global coordinates on the space ${}^+N\phi^{-1}(y_0)$ on which the edge normal operator~\eqref{EqENe} is defined. Relabeling $\dd x,\dd y$ as $x,y$, the edge normal operator is thus an element of $\Diff_{\eop,\rm I}(\cN)$, i.e.\ an edge operator on
\[
  \cN := [0,\infty)_x\times\R^{n_Y}_y\times Z
\]
which is invariant under translations in $y$ and dilations in $(x,y)$; the edge structure is defined with respect to the fibration $(y,z)\mapsto y$ of the boundary at $x=0$. Denote by $\pi\colon\cN\to Z$ the projection. The space
\[
  \cV_{\eop,\rm I}(\cN) \subset \Ve(\cN)
\]
of invariant edge vector fields is then spanned over $\pi^*\CI(Z)$ by $x\pa_x$, $x\pa_{y^j}$ ($j=1,\ldots,n_Y$), and $\pi^*\cV(Z)$; we write $\Diff_{\eop,\rm I}^s(\cN)$ for the corresponding space of invariant edge differential operators.

We only consider the case that the base $Z$ is compact (without boundary). Fix an invariant edge density $\mu_0=|\frac{\dd x}{x}\frac{\dd y}{x^{n_Y}}\,\dd z|$ where we write $|\dd z|$ for any fixed positive smooth density on $Z$; for a weight $w\in\R$, we then define $L^2(\cN)$ using the density $x^w\mu_0$. For $s\in\N_0$, $\alpha\in\R$ we further define the invariant weighted edge Sobolev space
\[
  H_{\eop,\rm I}^{s,\alpha}(\cN) = x^\alpha H_{\eop,\rm I}^s(\cN)
\]
to consist of all $u\in x^\alpha L^2(\cN)$ so that $P u\in x^\alpha L^2(\cN)$ for all $P\in\Diff_{\bop,\rm I}^s(\cN)$. For general $s\in\R$, the space $H_{\eop,\rm I}^{s,\alpha}(\cN)$ is defined using interpolation and duality. Spaces of sections of bundles $\pi^*\cE\to\cN$, where $\cE\to Z$ is a smooth vector bundle are defined analogously; such bundles typically arise as restrictions of bundles on $M$ to fibers of $\pa M$.

We will encounter such spaces with variable differential orders $\sfs$. Such orders are required to be elements $\sfs\in\CI(\Se^*\cN)$ which are invariant; such functions are uniquely determined by their restriction to $\Se^*_Z\cN$ where $Z\cong\{0\}\times\{0\}\times Z\subset\cN$, and we shall thus identify them with their restrictions, i.e.\ $\sfs\in\CI(\Se^*_Z\cN)$. We then need to consider invariant edge-ps.d.o.s; these arise as quantizations of invariant symbols $a\in S^\sfs_I(\Te^*\cN):=S^\sfs(\Te^*_Z\cN)$. In the coordinates defined by writing edge covectors as $\xi\frac{\dd x}{x}+\eta\cdot\frac{\dd y}{x}+\zeta\cdot\dd z$ (using local coordinates on $Z$), such symbols are of the form $a=a(z;\xi,\eta,\zeta)$ and satisfy the bound $|\pa_z^\beta\pa_\xi^k\pa_\eta^\gamma\pa_\zeta^\kappa a(z;\xi,\eta,\zeta)|\leq C_{\delta,\beta k\gamma\kappa}(1+|\xi|+|\eta|+|\zeta|)^{\sfs(z,\xi,\eta,\zeta)-(1-\delta)(k+|\gamma|+|\kappa|)}$ for all $\delta>0$ and $k\in\N_0$, $\beta,\kappa\in\N_0^{n_Z}$, $\gamma\in\N_0^{n_Y}$. If $a$ has $z$-support contained in the set where a function $\chi\in\CIc(\R^{n_z})$ equals $1$, we define the invariant quantization of $a$ to have Schwartz kernel
\begin{align*}
  &\Op_{\eop,\rm I}(a)(x,y,z,x',y',z') \\
  &\qquad := (2\pi)^{-n} \int \exp\Bigl[i\Bigl(\log\Bigl(\frac{x}{x'}\Bigr)\xi+\frac{y-y'}{x'}\cdot\eta+(z-z')\cdot\zeta\Bigr)\Bigr] \\
  &\qquad \hspace{6em} \times \phi\Bigl(\log\Bigl(\frac{x}{x'}\Bigr)\Bigr)\phi\Bigl(\Bigl|\frac{y-y'}{x'}\Bigr|\Bigr)\chi(z')a(z;\xi,\eta,\zeta)\,\dd\xi\,\dd\eta\,\dd\zeta\,\Bigl|\frac{\dd x'}{x'}\,\frac{\dd y'}{x'{}^{n_Y}}\,\dd z\Bigr|
\end{align*}
where $\phi\in\CIc((-\frac12,\frac12))$ equals $1$ near $0$; the present choice of phase function is more convenient for present purposes than the one made in~\eqref{EqEPsdoOp}.\footnote{One can define $\Psi_{\eop,\rm I}^\sfs(\cN)$ as the space of sums of such quantizations and residual operators, i.e.\ invariant operators whose Schwartz kernels are smooth right edge densities vanishing to infinite order at the left and right boundary hypersurfaces of $\cN_\eop^2$; in this paper however, it suffices to work only with quantizations.} (One can drop all three cutoff functions, via convolution of $a$ in $\xi,\eta,\zeta$ with their inverse Fourier transforms, upon modifying $a$ by an invariant symbol of order $-\infty$.) The space
\[
  H_{\eop,I}^{\sfs,\alpha}(\cN) \subset H_{\eop,I}^{\inf\sfs,\alpha}(\cN)
\]
can now be defined in the usual manner via testing with an invariant edge-ps.d.o.\ of order $\sfs$ which is elliptic. The goal of the remainder of this section is to describe the behavior of these spaces under the Fourier transform in $y$.

First consider the phase space relationship: $x D_x$, $x D_y$, $D_z$ formally transform to $x D_x$, $x\eta$ (with $\eta\in\R^{n_Y}$ the dual variable to $y$), $D_z$, and upon recalling the map $\hat M_\lambda(\hat x,z)=(\hat x/\lambda,z)$ from~\eqref{EqEScale1}, their pullbacks under $\hat M_{|\eta|}$ are $\hat x D_{\hat x}=(1+\hat x)\cdot\frac{\hat x}{1+\hat x}D_{\hat x}$, $\hat x\hat\eta$ (where $\hat\eta=\frac{\eta}{|\eta|}$), $D_z=(1+\hat x)\cdot\frac{1}{1+\hat x}D_z$. On
\[
  \hat X:=[0,\infty]_{\hat x}\times Z,
\]
these are weighted b-scattering operators, i.e.\ they lie in $(1+\hat x)\Diff_{\bop,\scop}^1(\hat X)$, cf.\ \eqref{EqEDiffbsc}. Writing b-scattering covectors on $\hat X$ as
\[
  \xi_{\bop,\scop} (1+\hat x)\frac{\dd\hat x}{\hat x} + (1+\hat x)\zeta_{\bop,\scop},\qquad \zeta_{\bop,\scop}\in T^*Z,
\]
we are thus led to define the map
\begin{equation}
\label{EqEInvPhaseSpace}
  f_{\hat\eta} \colon \ol{{}^{\bop,\scop}T^*}\hat X \to \ol{\Te^*_Z}\cN,\qquad
  f_{\hat\eta}(\hat x,z;\xi_{\bop,\scop},\zeta_{\bop,\scop}) = \bigl(z; (1+\hat x)\xi_{\bop,\scop}, \hat x\hat\eta, (1+\hat x)\zeta_{\bop,\scop}\bigr).
\end{equation}
Our computations imply that for $A\in\Diff_{\eop,\rm I}^s(\cN)$ we have the formula
\begin{equation}
\label{EqEInvRedNeSymb}
  {}^{\bop,\scop}\upsigma^s(\hat N_\eop(A,\hat\eta)) = f_{\hat\eta}^*\bigl(\sigmae^s(A)\bigr)
\end{equation}
for the b-scattering principal symbol of the reduced normal operator (which is $y$-in\-de\-pen\-dent, and thus there is no need to specify $y$ here).

In order to extend this to the pseudodifferential case, we first define b-scattering ps.d.o.s on $\hat X$ whose Schwartz kernels are supported near the diagonal. We immediately consider variable orders $\sfs\in\CI({}^{\bop,\scop}S^*\hat X)$, $\sfr\in\CI(\ol{{}^{\bop,\scop}T^*_{\pa_\infty\hat X}}\hat X)$ (where $\pa_\infty\hat X:=\{\infty\}\times Z$) and symbols $a\in S^{\sfs,\sfr}(\ol{{}^{\bop,\scop}T^*}\hat X):=\cA_\cH^{-\sfs,-\sfr}(\ol{{}^{\bop,\scop}T^*}\hat X)$ where $\cH=\{{}^{\bop,\scop}S^*\hat X,\ol{{}^{\bop,\scop}T^*_{\pa_\infty\hat X}}\hat X)$. If $a$ has support contained in $\hat x<\infty$ then in a coordinate chart on $Z$, we quantize $a$ as
\begin{align*}
  \Op_\bop(a) &= (2\pi)^{-n}\iint \exp\Bigl[i\Bigl(\log\Bigl(\frac{\hat x}{\hat x'}\Bigr)\xi+(z-z')\cdot\zeta\Bigr)\Bigr] \\
    &\quad\hspace{6em} \times \phi\Bigl(\log\Bigl(\frac{\hat x}{\hat x'}\Bigr)\Bigr)\chi(z')a(\hat x,z;\xi,\zeta)\,\dd\xi\,\dd\zeta\,\Bigl|\frac{\dd\hat x'}{\hat x'}\,\dd z'\Bigr|
\end{align*}
where we write b-covectors as $\xi\frac{\dd\hat x}{\hat x}+\zeta\cdot\dd z$, whereas for $a$ with support contained in $\hat x^{-1}<\infty$, we write scattering covectors as $\xi_\scop\,\dd\hat x+\hat x^{-1}\zeta_\scop\cdot\dd z$ and use
\begin{equation}
\label{EqEInvOpsc}
\begin{split}
  \Op_\scop(a) &= (2\pi)^{-n}\iint \exp\Bigl[i\Bigl(\log\Bigl(\frac{\hat x}{\hat x'}\Bigr)\hat x'\xi_\scop+(z-z')\cdot\hat x'\zeta_\scop\Bigr)\Bigr] \\
    &\quad\hspace{6em} \times \phi\Bigl(\log\Bigl(\frac{\hat x}{\hat x'}\Bigr)\Bigr)\chi(z')a(\hat x,z;\xi_\scop,\zeta_\scop)\,\dd\xi_\scop\,\dd\zeta_\scop\cdot\hat x'{}^{n_Z}|\dd\hat x'\,\dd z'|.
\end{split}
\end{equation}
The principal symbol of such quantizations is well-defined in $S^{\sfs,\sfr}/\bigcap_{\delta>0}S^{\sfs-1+2\delta,\sfr-1+2\delta}$. Upon fixing a positive (weighted) b-density, we can define the associated function spaces $H_{\bop,\scop}^{\sfs,0,\sfr}(\hat X)$, and also more general weighted spaces
\[
  H_{\bop,\scop}^{\sfs,\alpha,\sfr}(\hat X) := \Bigl(\frac{\hat x}{\hat x+1}\Bigr)^\alpha H_{\bop,\scop}^{\sfs,0,\sfr}(\hat X).
\]

Consider now an invariant edge ps.d.o.\ $A=\Op_{\eop,\rm I}(a)$ with symbol $a\in S^\sfs_{\rm I}(\Te^*\cN)$ and Schwartz kernel
\[
  (2\pi)^{-n}\iiint \exp\Bigl[i\Bigl(\log\Bigl(\frac{x}{x'}\Bigr)\xi+\frac{y-y'}{x'}\cdot\eta+(z-z')\cdot\zeta\Bigr)\Bigr] a(z;\xi,\eta,\zeta)\,\dd\xi\,\dd\eta\,\dd\zeta\,\Bigl|\frac{\dd x'}{x'}\,\frac{\dd y'}{x'{}^{n_Y}}\,\dd z\Bigr|;
\]
we consider here the case that this Schwartz kernel is supported in a region of bounded $|\log\frac{x}{x'}|$, $|\frac{y-y'}{x'}|$, $|z-z'|$. In view of the translation invariance in $y$, $\cF(A u)(\eta;x,z)$ (where $\cF$ is the Fourier transform in $y$) is given by $(N_\eop(A,\eta)(\cF u)(\eta))(x,z)$ where $N_\eop(A,\eta)$ has Schwartz kernel
\[
  (x,z;x',z') \mapsto (2\pi)^{-(n-1)}\iint \exp\Bigl[i\Bigl(\log\Bigl(\frac{x}{x'}\Bigr)\xi + (z-z')\cdot\zeta\Bigr)\Bigr] a(z;\xi,x'\eta,\zeta)\,\dd\xi\,\dd\zeta\,\Bigl|\frac{\dd x'}{x'}\,\dd z\Bigr|.
\]
Passage to the reduced normal operator is effected via pullback along $\hat M_{|\eta|}$, i.e.\ formally replacing $x,x'$ by $\hat x/|\eta|,\hat x/|\eta|$; this gives
\begin{equation}
\label{EqEInvRedNeSK}
\begin{split}
  &\hat N_\eop(A,\hat\eta)(\hat x,z;\hat x',z') \\
  &\qquad = (2\pi)^{-(n-1)}\iint \exp\Bigl[i\Bigl(\log\Bigl(\frac{\hat x}{\hat x'}\Bigr)\xi + (z-z')\cdot\zeta\Bigr)\Bigr] a(z;\xi,\hat x'\hat\eta,\zeta)\,\dd\xi\,\dd\zeta\,\Bigl|\frac{\dd\hat x'}{\hat x'}\,\dd z\Bigr|.
\end{split}
\end{equation}

\begin{lemma}[Reduced normal operator]
\label{LemmaEInvRedNe}
  Let $\sfs\in\CI(\Se^*_Z\cN)$ and $A=\Op_{\eop,\rm I}(a)$ where $a\in S^\sfs_{\rm I}(\Te^*M)$. Then
  \[
    \hat N_\eop(A,\hat\eta) \in \Psi_{\bop,\scop}^{f_{\hat\eta}^*\sfs,f_{\hat\eta}^*\sfs}(\hat X)
  \]
  with smooth dependence on $\hat\eta\in\Sph^{n_Y-1}$ in $\Psi_{\bop,\scop}^{s_0,s_0}(\hat X)$ for any $s_0\geq\sup\sfs$, and its principal symbol is given by~\eqref{EqEInvRedNeSymb}.
\end{lemma}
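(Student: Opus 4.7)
The plan is to verify the claim directly from the explicit Schwartz kernel formula~\eqref{EqEInvRedNeSK} by recognizing it, in two distinct regions of $\hat X$, as a standard quantization of a symbol in the appropriate calculus. Concretely, I would use a partition of unity $1 = \chi_0 + \chi_\infty$ on $\hat X$ with $\chi_0 \in \CIc([0,\infty))$ equal to $1$ near $\hat x = 0$ and $\chi_\infty$ supported in $\hat x \geq 1$. Multiplying the kernel by $\chi_0(\hat x)\chi_0(\hat x')$ in one case and by $\chi_\infty(\hat x)\chi_\infty(\hat x')$ in the other (the cutoff $\phi(\log(\hat x/\hat x'))$ in~\eqref{EqEInvRedNeSK} confines the kernel to a neighborhood of the diagonal $\hat x \sim \hat x'$, so mixed terms are residual) reduces to two local calculations and one verifies the claimed pseudodifferential order in each.

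In the first region, where $\hat x, \hat x' \lesssim 1$, the middle argument $\hat x'\hat\eta$ of $a(z; \xi, \hat x'\hat\eta, \zeta)$ is bounded and depends smoothly on $\hat x'$; the pointwise symbol estimates on $a$ imply that $b(\hat x', z; \xi, \zeta) := a(z; \xi, \hat x'\hat\eta, \zeta)$ is a b-symbol on $\hat X$ (with $\hat x'\pa_{\hat x'}$ and $\pa_z$ preserving symbolic orders and $\pa_\xi, \pa_\zeta$ gaining one order each). The formula for $\hat N_\eop(A, \hat\eta)$ then matches the b-quantization of $b$ exactly, and $b$ has differential order $\sfs(z; \xi, \hat x'\hat\eta, \zeta)$, which along the diagonal $\hat x' = \hat x$ equals $f_{\hat\eta}^*\sfs$ via~\eqref{EqEInvPhaseSpace} (the rescaling factor $(1+\hat x)$ being bounded). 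Since off-diagonal values of symbols contribute only lower-order terms, this identifies $\hat N_\eop(A,\hat\eta)$ near $\hat x = 0$ as a b-ps.d.o.\ of the claimed order.

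In the second region, $\hat x, \hat x' \gtrsim 1$, I change fiber variables via $\xi = \hat x'\xi_\scop$, $\zeta = \hat x'\zeta_\scop$. The phase transforms to $i[\log(\hat x/\hat x')\hat x'\xi_\scop + (z-z')\cdot \hat x'\zeta_\scop]$, i.e.\ the scattering phase, and the Jacobian $\hat x'{}^{n_Z+1}$ combines with $|\frac{\dd\hat x'}{\hat x'}|$ to give $\hat x'{}^{n_Z}|\dd\hat x'|$, matching the density in~\eqref{EqEInvOpsc}. The new symbol is $\tilde a(\hat x', z; \xi_\scop, \zeta_\scop) := a(z; \hat x'\xi_\scop, \hat x'\hat\eta, \hat x'\zeta_\scop)$; since $|\hat\eta| = 1$ and $\hat x' \gtrsim 1$, the edge symbol estimates on $a$ along these rescaled arguments translate into scattering symbol estimates in which $\pa_{\xi_\scop}$ and $\pa_{\zeta_\scop}$ each gain a factor of $\hat x'$ together with a unit drop in edge order. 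The differential and scattering decay orders of $\tilde a$ are both given by $f_{\hat\eta}^*\sfs$ via the identification~\eqref{EqEInvPhaseSpace} at $\hat x = \infty$, where $f_{\hat\eta}$ sends $(\xi_\scop, \zeta_\scop)$ in the direction of $(\xi_\scop, \hat\eta, \zeta_\scop)$ after rescaling by $(1+\hat x) \sim \hat x$.

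The principal symbol formula~\eqref{EqEInvRedNeSymb} drops out of both local calculations: in each regime, the leading symbol of $\hat N_\eop(A, \hat\eta)$ at a b-scattering phase space point is $a$ evaluated at the image under $f_{\hat\eta}$, modulo terms of lower order. Smooth dependence on $\hat\eta$ is clear from the explicit kernel, since $\hat\eta$ enters smoothly as a parameter, and the uniformity of the symbolic bounds in the constant-order space $\Psi_{\bop,\scop}^{s_0, s_0}(\hat X)$ with $s_0 \geq \sup\sfs$ is immediate. The main obstacle I anticipate is the careful bookkeeping of variable orders in the sc-regime: specifically, confirming that on the support of the kernel the substitution of $\hat x'$ by $\hat x$ in the order function $f_{\hat\eta}^*\sfs$ contributes only terms absorbed by the $\bigcap_{\delta > 0}S^{\sfs - 1 + 2\delta, \sfs - 1 + 2\delta}$ ambiguity inherent in the variable-order principal symbol map.
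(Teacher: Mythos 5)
Your proposal is correct and follows essentially the same route as the paper's proof: reading off the b-quantization directly from the kernel formula~\eqref{EqEInvRedNeSK} for bounded $\hat x$, and performing the change of fiber variables $\xi=\hat x'\xi_\scop$, $\zeta=\hat x'\zeta_\scop$ for $\hat x,\hat x'\gtrsim 1$ to match the scattering quantization~\eqref{EqEInvOpsc}, with the order identification via $f_{\hat\eta}$ in both regimes. The additional details you supply (the partition of unity, the residual nature of the mixed terms thanks to the diagonal cutoff, and the $\delta$-loss absorbing the $\hat x'$-versus-$\hat x$ discrepancy in the variable order) are all consistent with the paper's framework.
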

\begin{proof}
  Note that the map $\Tb^*([0,\infty)\times Z)\ni(\hat x,z;\xi,\zeta)\mapsto(z;\xi,\hat x\hat\eta,\zeta)\in\Te^*_Z\cN$ is the same as the map~\eqref{EqEInvPhaseSpace}. The variable order and principal symbols statements follow in $\hat x<\infty$ then directly from the formula~\eqref{EqEInvRedNeSK}, as does the membership in $\Psi_\bop^{f_{\hat\eta}^*\sfs}([0,\infty)\times Z)$. For $\hat x,\hat x'\gtrsim 1$ on the other hand, we change variables via $\xi=\hat x'\xi_\scop$, $\zeta=\hat x'\zeta_\scop$ and obtain a quantization of the form~\eqref{EqEInvOpsc} with symbol $a_\scop(z;\xi_\scop,\zeta_\scop)\mapsto a(z;\hat x'\xi_\scop,\hat x'\hat\eta,\hat x'\zeta_\scop)$. This is bounded by $(1+|\hat x'|)^{\sfs(z;\xi_\scop,\hat\eta,\zeta_\scop)}(1+|\xi_\scop|+|\zeta_\scop|)^{\sfs(z;\xi_\scop,\hat\eta,\zeta_\scop)}$, and derivatives are estimated in a similar manner; so $a_\scop\in S^{f_{\hat\eta}^*\sfs,f_{\hat\eta}^*\sfs}$ indeed.
\end{proof}

\begin{lemma}[Fourier transform of invariant edge Sobolev spaces]
\label{LemmaEInvFT}
  Fix a positive smooth density $\mu_Z$ on $Z$ and the weighted b-densities $\mu_\cN=x^w|\frac{\dd x}{x}\,\dd y\,\mu_Z|$ and $\mu_{\hat X}=\hat x^w|\frac{\dd\hat x}{\hat x}\mu_Z|$ on $\cN$ and $\hat X$, respectively. Let $\sfs\in\CI(\Se^*_Z\cN)$, $\alpha\in\R$. Denoting by $(\cF u)(\eta;x,z):=\int e^{i y\cdot\eta}u(x,y,z)\,\dd y$ the Fourier transform in $y$ and recalling the notation $M_\lambda(\hat x,z)=(\hat x/\lambda,z)$, we then have an equivalence of squared norms
  \[
    \| u \|_{H_{\eop,\rm I}^{\sfs,\alpha}(\cN;\mu_\cN)}^2 \sim \int_{\Sph^{n_Y-1}}\int_0^\infty \|\hat M_{|\eta|}^*(\cF u(|\eta|\hat\eta;\cdot))\|_{H_{\bop,\scop}^{f_{\hat\eta}^*\sfs,\alpha,f_{\hat\eta}^*\sfs-\alpha}(\hat X;\mu_{\hat X})}^2\, |\eta|^{n_Y-1+2\alpha-w}\,\dd|\eta|\,\dd\hat\eta,
  \]
  i.e.\ the left hand side is bounded by a constant times the right hand side, and vice versa.
\end{lemma}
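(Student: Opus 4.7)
My plan is to proceed in three stages: establish the base case $\sfs=0$, $\alpha=0$ by a direct Plancherel-and-rescaling computation; extend to $\sfs=0$ with general $\alpha$ via the elementary identity $x^{-\alpha}=|\eta|^\alpha\hat x^{-\alpha}$ after the substitution $\hat x=|\eta|x$; and then reach general $\sfs$ by inserting a globally elliptic invariant edge pseudodifferential operator and appealing to Lemma~\ref{LemmaEInvRedNe} together with elliptic regularity in the b-scattering calculus.

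For the base case, Plancherel in $y$, followed by polar coordinates $\eta=|\eta|\hat\eta$ (which produces the factor $|\eta|^{n_Y-1}$) and the substitution $\hat x=|\eta|x$ (which preserves $|\dd x/x|$ and sends $x^w$ to $|\eta|^{-w}\hat x^w$), yields
\[
  \|u\|_{L^2(\cN;\mu_\cN)}^2 \sim \int_{\Sph^{n_Y-1}}\int_0^\infty \|\hat M_{|\eta|}^*\cF u(|\eta|\hat\eta;\cdot)\|_{L^2(\hat X;\mu_{\hat X})}^2\,|\eta|^{n_Y-1-w}\,\dd|\eta|\,\dd\hat\eta,
\]
which is the asserted formula when $\sfs=0$, $\alpha=0$. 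For $\sfs=0$ and general $\alpha$, I apply this identity to $x^{-\alpha}u$ and push the factor $|\eta|^\alpha\hat x^{-\alpha}$ into the right-hand side; this produces the correct power $|\eta|^{n_Y-1+2\alpha-w}$ and the weight $\hat x^{-\alpha}$ on $\hat M_{|\eta|}^*\cF u$. Near $\hat x=0$ this is precisely the b-weight $\alpha$, and near $\hat x=\infty$ the discrepancy between $\hat x^{-\alpha}$ and $((\hat x+1)/\hat x)^\alpha$ is $(\hat x+1)^{-\alpha}$, which is exactly the scattering weight corresponding to order $-\alpha$; thus the norm equals (up to equivalence) the $H_{\bop,\scop}^{0,\alpha,-\alpha}(\hat X;\mu_{\hat X})$ norm.

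For general $\sfs$, I pick an invariant symbol $a\in S^\sfs_{\rm I}(\Te^*\cN)$ which is globally elliptic, i.e.\ $|a(z;\xi,\eta,\zeta)|\gtrsim(1+|\xi|+|\eta|+|\zeta|)^\sfs$ (for a variable $\sfs$, one can take an order $\sfs$ power of an elliptic order $1$ symbol). Then $A=\Op_{\eop,\rm I}(a)\in\Psi_{\eop,\rm I}^\sfs(\cN)$ is elliptic and yields the equivalent norm $\|u\|_{H_{\eop,\rm I}^{\sfs,\alpha}}^2\sim\|A u\|_{x^\alpha L^2}^2+\|u\|_{x^\alpha H_{\eop,\rm I}^{s_0}}^2$ for any $s_0<\inf\sfs$. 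By Lemma~\ref{LemmaEInvRedNe}, $\hat N_\eop(A,\hat\eta)\in\Psi_{\bop,\scop}^{f_{\hat\eta}^*\sfs,f_{\hat\eta}^*\sfs}(\hat X)$ with smooth dependence on $\hat\eta$, and a direct inspection of~\eqref{EqEInvPhaseSpace} shows that global ellipticity of $a$ at $\Se^*_Z\cN$ translates into ellipticity of $\hat N_\eop(A,\hat\eta)$ at b-fiber infinity, at scattering infinity $\pa_\infty\hat X$, and at the corner where these meet. Standard b-scattering elliptic regularity, uniform in $\hat\eta\in\Sph^{n_Y-1}$, then gives
\[
  \|v\|_{H_{\bop,\scop}^{f_{\hat\eta}^*\sfs,\alpha,f_{\hat\eta}^*\sfs-\alpha}}^2 \sim \|\hat N_\eop(A,\hat\eta)v\|_{H_{\bop,\scop}^{0,\alpha,-\alpha}}^2 + \|v\|_{H_{\bop,\scop}^{s_0,\alpha,s_0-\alpha}}^2.
\]
Applying this with $v=\hat M_{|\eta|}^*\cF u(|\eta|\hat\eta;\cdot)$, using the intertwining $\hat M_{|\eta|}^*N_\eop(A,\eta)=\hat N_\eop(A,\hat\eta)\hat M_{|\eta|}^*$ that follows from~\eqref{EqEScale2} together with $\cF(A u)(\eta)=N_\eop(A,\eta)\cF u(\eta)$, and integrating against the measure $|\eta|^{n_Y-1+2\alpha-w}\,\dd|\eta|\,\dd\hat\eta$, the weighted $L^2$ case (applied both to $A u$ at level $\sfs=0$ and to $u$ at level $s_0$) converts this pointwise-in-$\hat\eta$ equivalence into the global norm equivalence in the statement.

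The main obstacle will be verifying that $f_{\hat\eta}^*\sfs$ genuinely defines a variable-order symbol class on the compactified b-scattering cotangent bundle of $\hat X$ in which the elliptic estimate above holds with constants uniform in $\hat\eta\in\Sph^{n_Y-1}$. The pullback is manifestly smooth on each face by Lemma~\ref{LemmaEInvRedNe}, but compatibility of the variable order across the corner where b-fiber infinity meets scattering infinity requires that $\sfs$ be extended invariantly from $\Se^*_Z\cN$ (which it is by hypothesis) and that the passage to $(\xi_{\bop,\scop},\zeta_{\bop,\scop})$ via $\xi=(1+\hat x)\xi_{\bop,\scop}$, $\zeta=(1+\hat x)\zeta_{\bop,\scop}$ preserves the symbol class with the correct, $\hat\eta$-dependent order; this ultimately comes down to a careful bookkeeping of how the symbol estimates~\eqref{EqESymb} for $a(z;\xi,\hat x\hat\eta,\zeta)$ translate into symbol estimates for the transformed variables, which I expect to be tedious but routine.
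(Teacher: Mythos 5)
Your overall strategy --- Plancherel for the base case, the weight identity $\hat M_{|\eta|}^*x^{-\alpha}=|\eta|^\alpha\hat x^{-\alpha}$ (and $\hat M_{|\eta|}^*x^w=|\eta|^{-w}\hat x^w$) to reduce to $\alpha=0$, $w=0$, and then an elliptic invariant quantization $A=\Op_{\eop,\rm I}(a)$ combined with Lemma~\ref{LemmaEInvRedNe} and the intertwining $\cF\circ A=N_\eop(A,\eta)\circ\cF$ --- is the same as the paper's. Your bookkeeping of the weights at $\hat x=0$ and $\hat x=\infty$ (identifying the image of the weighted $L^2$ space with $H_{\bop,\scop}^{0,\alpha,-\alpha}$) is correct, and the concern you raise at the end about $f_{\hat\eta}^*\sfs$ defining a legitimate variable order uniformly in $\hat\eta$ is exactly what Lemma~\ref{LemmaEInvRedNe} already supplies, so it is not an additional obstacle.

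There is, however, one genuine gap: the error terms at level $s_0$. Your two equivalences $\|u\|^2_{H_{\eop,\rm I}^{\sfs,\alpha}}\sim\|A u\|^2_{x^\alpha L^2}+\|u\|^2_{x^\alpha H_{\eop,\rm I}^{s_0}}$ and $\|v\|^2_{H_{\bop,\scop}^{f_{\hat\eta}^*\sfs,\alpha,f_{\hat\eta}^*\sfs-\alpha}}\sim\|\hat N_\eop(A,\hat\eta)v\|^2_{H_{\bop,\scop}^{0,\alpha,-\alpha}}+\|v\|^2_{H_{\bop,\scop}^{s_0,\alpha,s_0-\alpha}}$ require $s_0<\inf\sfs$, and matching the two residual terms is precisely the assertion of the lemma at the \emph{constant} order $s_0$ --- which your base case covers only when $s_0=0$. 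Invoking ``the weighted $L^2$ case applied to $u$ at level $s_0$'' is therefore only legitimate when $\inf\sfs>0$; for general $\sfs$ (which the lemma allows to be negative) the argument as written is circular. The missing ingredient is the case of negative constant orders, which the paper obtains by duality: boundedness of $\cF$ from $H_{\eop,\rm I}^{-s}$ into the corresponding $L^2$-in-$\eta$ space of $H_{\bop,\scop}^{-s,\cdot,-s}$-valued functions yields boundedness of $\cF^*=2\pi\cF^{-1}$ on the dual spaces, and likewise for $\cF^{-1}$ and $(\cF^{-1})^*$. With that step inserted before your general-$\sfs$ argument, your proof closes and coincides with the paper's.
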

\begin{proof}
  Since $\hat M_{|\eta|}^*x^{-\alpha}=|\eta|^\alpha\hat x^{-\alpha}$, it suffices to consider the case $\alpha=0$. Similarly, in view of $\hat M_{|\eta|}^*x^w=\hat x^w|\eta|^{-w}$ we may reduce to $w=0$; thus, we work with the densities $|\frac{\dd x}{x}\dd y\,\mu_Z|$ and $|\frac{\dd\hat x}{\hat x}\mu_Z|$. The case $\sfs=0$ then follows from Plancherel's theorem and the fact that $\hat M_{|\eta|}^*|\frac{\dd x}{x}\mu_Z|=|\frac{\dd\hat x}{\hat x}\mu_Z|$; to wit,
  \begin{align*}
    \|u\|_{L^2(\cN;\mu_\cN)}^2 &\sim \int_{\R^{n_Y}} \|\cF u(\eta;\cdot)\|_{L^2(\frac{\dd x}{x}\mu_Z)}^2\,\dd\eta \\
      &= \int_{\Sph^{n_Y-1}}\int_0^\infty \| (\hat M_{|\eta|}^*\cF u)(|\eta|\hat\eta;\cdot)\|_{L^2(\frac{\dd\hat x}{\hat x}\mu_Z)}^2\,|\eta|^{n_Y-1}\,\dd|\eta|\,\dd\hat\eta.
  \end{align*}

  For $\sfs\geq 0$, fix $A=\Op_{\eop,\rm I}(a)$ where $a\in S^\sfs_{\rm I}(\Te^*\cN)$ is elliptic; then $\|u\|_{H_{\eop,\rm I}^\sfs}^2\sim\|u\|_{L^2}^2+\|A u\|_{L^2}^2$ is equivalent to the integral over $\Sph^{n_Y-1}\times[0,\infty)$ (with measure $|\eta|^{n_Y-1}\,\dd|\eta|\,\dd\hat\eta$) of
  \begin{align*}
    & \| \hat M_{|\eta|}^*(\cF u(|\eta|\hat\eta;\cdot))\|_{L^2}^2 + \|\hat M_{|\eta|}^*(\cF(A u)(|\eta|\hat\eta;\cdot))\|_{L^2}^2 \\
    &\qquad = \| \hat M_{|\eta|}^*(\cF u(|\eta|\hat\eta;\cdot))\|_{L^2}^2 + \|\hat N_\eop(A,\hat\eta)\hat M_{|\eta|}^*(\cF u(|\eta|\hat\eta;\cdot))\|_{L^2}^2 \\
    &\qquad \sim \| \hat M_{|\eta|}^*(\cF u(|\eta|\hat\eta;\cdot)) \|_{H_{\bop,\scop}^{f_{\hat\eta}^*\sfs,0,f_{\hat\eta}^*\sfs}(\hat X)}^2,
  \end{align*}
  where we used Lemma~\ref{LemmaEInvRedNe} in the last step to deduce that $\hat N_\eop(A,\hat\eta)\in\Psi_{\bop,\scop}^{f_{\hat\eta}^*\sfs,f_{\hat\eta}^*\sfs}(\hat X)$ is elliptic.

  For constant orders $s<0$, the claim follows via a duality argument: the boundedness of $\cF\colon H_{\eop,\rm I}^{-s}(\cN)\to L^2(\Sph^{n_Y-1}\times[0,\infty);H_{\bop,\scop}^{-s,-s}(\hat X);|\eta|^{n_Y-1}\dd|\eta|\,\dd\hat\eta)$ implies that of the adjoint
  \[
    \cF^*=2\pi\cF^{-1}\colon L^2(\Sph^{n_Y-1}\times[0,\infty);H_{\bop,\scop}^{s,s}(\hat X))\to H_{\eop,\rm I}^s(\cN);
  \]
  similarly for $\cF^{-1}$ and $(\cF^{-1})^*=(2\pi)^{-1}\cF$. For general variable orders $\sfs$ finally, one writes $\|u\|_{H_{\eop,\rm I}^\sfs}\sim\|u\|_{H_{\eop,\rm I}^{s_0}}+\|A u\|_{L^2}$ where $s_0\leq\inf\sfs$ and $A=\Op_{\eop,\rm I}(a)$ with $a\in S_{\rm I}^\sfs(\Te^*\cN)$ elliptic, thereby reducing the claim to already settled cases.
\end{proof}

\bibliographystyle{alphaurl}


\begin{thebibliography}{GRHSZ20}

\bibitem[AGR17]{AlbinGellRedmanDirac}
Pierre Albin and Jesse Gell-Redman.
\newblock {T}he index formula for families of {D}irac type operators on
  pseudomanifolds.
\newblock {\em Preprint, arXiv:1712.08513}, 2017.

\bibitem[BM19]{BaskinMarzuolaComp}
Dean Baskin and Jeremy Marzuola.
\newblock {L}ocating the resonances on hyperbolic cones.
\newblock {\em Math. Res. Lett.}, 26(2):365--381, 2019.
\newblock \href {https://doi.org/10.4310/MRL.2019.v26.n2.a2}
  {\path{doi:10.4310/MRL.2019.v26.n2.a2}}.

\bibitem[BM22]{BaskinMarzuolaCone}
Dean Baskin and Jeremy~L. Marzuola.
\newblock The radiation field on product cones.
\newblock {\em Advances in Mathematics}, 408:108589, 2022.
\newblock \href {https://doi.org/10.1016/j.aim.2022.108589}
  {\path{doi:10.1016/j.aim.2022.108589}}.

\bibitem[BPSTZ03]{BurqPlanchonStalkerTahvildarZadehInvSq}
Nicolas Burq, Fabrice Planchon, John~G. Stalker, and A.~Shadi Tahvildar-Zadeh.
\newblock {S}trichartz estimates for the wave and {S}chr{\"o}dinger equations
  with the inverse-square potential.
\newblock {\em Journal of functional analysis}, 203(2):519--549, 2003.

\bibitem[BVW15]{BaskinVasyWunschRadMink}
Dean Baskin, Andr{\'a}s Vasy, and Jared Wunsch.
\newblock Asymptotics of radiation fields in asymptotically {M}inkowski space.
\newblock {\em Amer. J. Math.}, 137(5):1293--1364, 2015.

\bibitem[BVW18]{BaskinVasyWunschRadMink2}
Dean Baskin, Andr{\'a}s Vasy, and Jared Wunsch.
\newblock {A}symptotics of scalar waves on long-range asymptotically
  {M}inkowski spaces.
\newblock {\em Advances in Mathematics}, 328:160--216, 2018.

\bibitem[BW13]{BaskinWunschConicDecay}
Dean Baskin and Jared Wunsch.
\newblock Resolvent estimates and local decay of waves on conic manifolds.
\newblock {\em Journal of Differential Geometry}, 95(2):183--214, 2013.

\bibitem[BW23]{BaskinWunschDiracCoulomb}
Dean Baskin and Jared Wunsch.
\newblock Diffraction for the dirac--coulomb propagator.
\newblock {\em Annales Henri Poincar{\'e}}, Feb 2023.
\newblock \href {https://doi.org/10.1007/s00023-023-01279-0}
  {\path{doi:10.1007/s00023-023-01279-0}}.

\bibitem[Che22]{XiConeParametrix}
Xi~Chen.
\newblock {T}he {S}emiclassical {R}esolvent on {C}onic {M}anifolds and
  {A}pplication to {S}chr{\"o}dinger {E}quations.
\newblock {\em Communications in Mathematical Physics}, 390(2):757--826, 2022.
\newblock \href {https://doi.org/10.1007/s00220-021-04308-3}
  {\path{doi:10.1007/s00220-021-04308-3}}.

\bibitem[CT82a]{CheegerTaylorConicalII}
Jeff Cheeger and Michael Taylor.
\newblock On the diffraction of waves by conical singularities. {I}.
\newblock {\em Comm. Pure Appl. Math.}, 35(3):275--331, 1982.
\newblock \href {https://doi.org/10.1002/cpa.3160350302}
  {\path{doi:10.1002/cpa.3160350302}}.

\bibitem[CT82b]{CheegerTaylorConicalI}
Jeff Cheeger and Michael Taylor.
\newblock On the diffraction of waves by conical singularities. {II}.
\newblock {\em Comm. Pure Appl. Math.}, 35(4):487--529, 1982.
\newblock \href {https://doi.org/10.1002/cpa.3160350403}
  {\path{doi:10.1002/cpa.3160350403}}.

\bibitem[DM22]{DappiaggiMartaAdS}
Claudio Dappiaggi and Alessio Marta.
\newblock A generalization of the propagation of singularities theorem on
  asymptotically anti-de {S}itter spacetimes.
\newblock {\em Mathematische Nachrichten}, 295(10):1934--1968, 2022.
\newblock \href {https://doi.org/https://doi.org/10.1002/mana.202000287}
  {\path{doi:https://doi.org/10.1002/mana.202000287}}.

\bibitem[Duy06]{DuyckaertsInvSq}
Thomas Duyckaerts.
\newblock In{\'e}galit{\'e}s de r{\'e}solvante pour l'op{\'e}rateur de
  {S}chr{\"o}dinger avec potentiel multipolaire critique.
\newblock {\em Bulletin de la Soci{\'e}t{\'e} math{\'e}matique de France},
  134(2):201--239, 2006.

\bibitem[Gal17]{GalkowskiVainberg}
Jeffrey Galkowski.
\newblock A quantitative {V}ainberg method for black box scattering.
\newblock {\em Communications in Mathematical Physics}, 349(2):527--549, 2017.

\bibitem[GH08]{GuillarmouHassellResI}
Colin Guillarmou and Andrew Hassell.
\newblock Resolvent at low energy and {R}iesz transform for {S}chr\"{o}dinger
  operators on asymptotically conic manifolds. {I}.
\newblock {\em Math. Ann.}, 341(4):859--896, 2008.
\newblock \href {https://doi.org/10.1007/s00208-008-0216-5}
  {\path{doi:10.1007/s00208-008-0216-5}}.

\bibitem[GRHSZ20]{GellRedmanHassellShapiroZhangHelmholtz}
Jesse Gell-Redman, Andrew Hassell, Jacob Shapiro, and Junyong Zhang.
\newblock {E}xistence and {A}symptotics of {N}onlinear {H}elmholtz
  {E}igenfunctions.
\newblock {\em SIAM Journal on Mathematical Analysis}, 52(6):6180--6221, 2020.
\newblock \href {https://doi.org/10.1137/19M1307238}
  {\path{doi:10.1137/19M1307238}}.

\bibitem[GRHV16]{GellRedmanHaberVasyFeynman}
Jesse Gell-Redman, Nick Haber, and Andr{\'a}s Vasy.
\newblock {T}he {F}eynman {P}ropagator on {P}erturbations of {M}inkowski
  {S}pace.
\newblock {\em Communications in Mathematical Physics}, 342(1):333--384, Feb
  2016.

\bibitem[GTV20]{GrieserTalebiVertmanPhiLowEnergy}
Daniel Grieser, Mohammad Talebi, and Boris Vertman.
\newblock {S}pectral geometry on manifolds with fibred boundary metrics {I}:
  {L}ow energy resolvent.
\newblock {\em Preprint, arXiv:2009.10125}, 2020.

\bibitem[GW22]{GannotWrochnaAdS}
Oran Gannot and Micha{\l} Wrochna.
\newblock Propagation of singularities on {A}d{S} spacetimes for general
  boundary conditions and the holographic {H}adamard condition.
\newblock {\em Journal of the Institute of Mathematics of Jussieu},
  21(1):67--127, 2022.

\bibitem[Hin21]{HintzConicProp}
Peter Hintz.
\newblock Semiclassical propagation through cone points.
\newblock {\em Preprint, arXiv:2101.01008}, 2021.

\bibitem[Hin22]{HintzConicPowers}
Peter Hintz.
\newblock Resolvents and complex powers of semiclassical cone operators.
\newblock {\em Mathematische Nachrichten}, 295(10):1990--2035, 2022.
\newblock \href {https://doi.org/https://doi.org/10.1002/mana.202100004}
  {\path{doi:https://doi.org/10.1002/mana.202100004}}.

\bibitem[Hin23a]{HintzGlueLocI}
Peter Hintz.
\newblock Gluing small black holes along timelike geodesics {I}: formal
  solution.
\newblock {\em Preprint, arXiv:2306.07409}, 2023.

\bibitem[Hin23b]{HintzNonstat}
Peter Hintz.
\newblock Linear waves on non-stationary asymptotically flat spacetimes. {I}.
\newblock {\em Preprint, arXiv:2302.14647}, 2023.

\bibitem[Hin23c]{Hintz3b}
Peter Hintz.
\newblock Microlocal analysis of operators with asymptotic translation- and
  dilation-invariances.
\newblock {\em Preprint, arXiv:2302.13803}, 2023.

\bibitem[Hin24]{HintzScaledBddGeo}
Peter Hintz.
\newblock Pseudodifferential operators on manifolds with scaled bounded
  geometry.
\newblock {\em Preprint}, 2024.

\bibitem[HMV08]{HassellMelroseVasySymbolicOrderZero}
Andrew Hassell, Richard~B. Melrose, and Andr{\'a}s Vasy.
\newblock Microlocal propagation near radial points and scattering for symbolic
  potentials of order zero.
\newblock {\em Anal. PDE}, 1(2):127--196, 2008.

\bibitem[Hol12]{HolzegelAdS}
Gustav Holzegel.
\newblock Well-posedness for the massive wave equation on asymptotically
  anti-de {S}itter spacetimes.
\newblock {\em Journal of Hyperbolic Differential Equations}, 9(02):239--261,
  2012.

\bibitem[H{\"o}r71]{HormanderFIO1}
Lars H{\"o}rmander.
\newblock Fourier integral operators. {I}.
\newblock {\em Acta mathematica}, 127(1):79--183, 1971.

\bibitem[H{\"o}r07]{HormanderAnalysisPDE3}
Lars H{\"o}rmander.
\newblock {\em The analysis of linear partial differential operators. {III}}.
\newblock Classics in Mathematics. Springer, Berlin, 2007.

\bibitem[HPS77]{HirschPughShubInvariantManifolds}
Morris~W. Hirsch, Charles~C. Pugh, and Michael Shub.
\newblock {\em Invariant manifolds}, volume 583.
\newblock Springer-Verlag, Berlin-New York, 1977.

\bibitem[HV15]{HintzVasySemilinear}
Peter Hintz and Andr{\'a}s Vasy.
\newblock Semilinear wave equations on asymptotically de {S}itter, {K}err--de
  {S}itter and {M}inkowski spacetimes.
\newblock {\em Anal. PDE}, 8(8):1807--1890, 2015.
\newblock \href {https://doi.org/10.2140/apde.2015.8.1807}
  {\path{doi:10.2140/apde.2015.8.1807}}.

\bibitem[HV20]{HintzVasyMink4}
Peter Hintz and Andr{\'a}s Vasy.
\newblock {S}tability of {M}inkowski space and polyhomogeneity of the metric.
\newblock {\em Annals of PDE}, 6(2), 2020.
\newblock \href {https://doi.org/10.1007/s40818-020-0077-0}
  {\path{doi:10.1007/s40818-020-0077-0}}.

\bibitem[HV23]{HintzVasyScrieb}
Peter Hintz and Andr{\'a}s Vasy.
\newblock {M}icrolocal analysis near null infinity of asymptotically flat
  spacetimes.
\newblock {\em Preprint, arXiv:2302.14613}, 2023.

\bibitem[HW20]{HillairetWunschConic}
Luc Hillairet and Jared Wunsch.
\newblock On resonances generated by conic diffraction.
\newblock {\em Annales de l'Institut Fourier}, 70(4):1715--1752, 2020.
\newblock \href {https://doi.org/10.5802/aif.3355}
  {\path{doi:10.5802/aif.3355}}.

\bibitem[KM20]{KeelerMarzuolaCones}
Blake Keeler and Jeremy~L. Marzuola.
\newblock {P}ointwise dispersive estimates for {S}chr{\"o}dinger operators on
  product cones.
\newblock {\em Preprint, arXiv:2010.09778}, 2020.

\bibitem[Lau03]{LauterPsdoConfComp}
Robert Lauter.
\newblock Pseudodifferential analysis on conformally compact spaces.
\newblock {\em Mem. Amer. Math. Soc.}, 163(777):xvi+92, 2003.
\newblock \href {https://doi.org/10.1090/memo/0777}
  {\path{doi:10.1090/memo/0777}}.

\bibitem[Maz91]{MazzeoEdge}
Rafe~R. Mazzeo.
\newblock {Elliptic theory of differential edge operators I}.
\newblock {\em Communications in Partial Differential Equations},
  16(10):1615--1664, 1991.
\newblock \href {https://doi.org/10.1080/03605309108820815}
  {\path{doi:10.1080/03605309108820815}}.

\bibitem[Mel93]{MelroseAPS}
Richard~B. Melrose.
\newblock {\em The {A}tiyah-{P}atodi-{S}inger index theorem}, volume~4 of {\em
  Research Notes in Mathematics}.
\newblock A K Peters, Ltd., Wellesley, MA, 1993.
\newblock \href {https://doi.org/10.1016/0377-0257(93)80040-i}
  {\path{doi:10.1016/0377-0257(93)80040-i}}.

\bibitem[Mel94]{MelroseEuclideanSpectralTheory}
Richard~B. Melrose.
\newblock Spectral and scattering theory for the {L}aplacian on asymptotically
  {E}uclidian spaces.
\newblock In {\em Spectral and scattering theory ({S}anda, 1992)}, volume 161
  of {\em Lecture Notes in Pure and Appl. Math.}, pages 85--130. Dekker, New
  York, 1994.

\bibitem[Mel96]{MelroseDiffOnMwc}
Richard~B. Melrose.
\newblock Differential analysis on manifolds with corners.
\newblock {\em Book, in preparation, available online}, 1996.
\newblock URL: \url{https://math.mit.edu/~rbm/daomwcf.ps}.

\bibitem[MM83]{MelroseMendozaB}
Richard~B. Melrose and Gerardo Mendoza.
\newblock {\em Elliptic operators of totally characteristic type}.
\newblock Mathematical Sciences Research Institute, 1983.

\bibitem[MM87]{MazzeoMelroseHyp}
Rafe~R. Mazzeo and Richard~B. Melrose.
\newblock Meromorphic extension of the resolvent on complete spaces with
  asymptotically constant negative curvature.
\newblock {\em J. Funct. Anal.}, 75(2):260--310, 1987.
\newblock \href {https://doi.org/10.1016/0022-1236(87)90097-8}
  {\path{doi:10.1016/0022-1236(87)90097-8}}.

\bibitem[MM99]{MazzeoMelroseFibred}
Rafe~R. Mazzeo and Richard~B. Melrose.
\newblock Pseudodifferential operators on manifolds with fibred boundaries.
\newblock {\em Asian J. Math.}, 2(4):833--866, 1999.

\bibitem[MVW08]{MelroseVasyWunschEdge}
Richard~B. Melrose, Andr{\'a}s Vasy, and Jared Wunsch.
\newblock Propagation of singularities for the wave equation on edge manifolds.
\newblock {\em Duke Mathematical Journal}, 144(1):109--193, 2008.

\bibitem[MVW13]{MelroseVasyWunschDiffraction}
Richard~B. Melrose, Andr\'as Vasy, and Jared Wunsch.
\newblock Diffraction of singularities for the wave equation on manifolds with
  corners.
\newblock {\em Ast\'erisque}, (351):vi+135, 2013.

\bibitem[MW04]{MelroseWunschConic}
Richard~B. Melrose and Jared Wunsch.
\newblock Propagation of singularities for the wave equation on conic
  manifolds.
\newblock {\em Inventiones mathematicae}, 156(2):235--299, 2004.

\bibitem[Olv97]{OlverSpecial}
Frank Olver.
\newblock {\em {A}symptotics and {S}pecial {F}unctions}.
\newblock A K Peters/CRC Press, 1997.
\newblock \href {https://doi.org/10.1201/9781439864548}
  {\path{doi:10.1201/9781439864548}}.

\bibitem[O'N83]{ONeillSemi}
Barrett O'Neill.
\newblock {\em {S}emi-{R}iemannian geometry with applications to relativity},
  volume 103.
\newblock Academic press, 1983.

\bibitem[PSTZ03]{PlanchonStalkerTahvildarZadehInvSq}
Fabrice Planchon, John~G. Stalker, and A.~Shadi Tahvildar-Zadeh.
\newblock Dispersive estimate for the wave equation with the inverse-square
  potential.
\newblock {\em Discrete \& Continuous Dynamical Systems-A}, 9(6):1387, 2003.

\bibitem[Qia09]{QianDiffractionInvSq}
Randy Qian.
\newblock {D}iffractive theorems for the wave equation with inverse square
  potential, 2009.

\bibitem[Sch16]{SchwarzschildPaper}
Karl Schwarzschild.
\newblock {\"U}ber das {G}ravitationsfeld eines {M}assenpunktes nach der
  {E}insteinschen {T}heorie.
\newblock {\em Sitzungsberichte der K{\"o}niglich Preu{\ss}ischen Akademie der
  Wissenschaften (Berlin)}, pages 189--196, 1916.

\bibitem[Shu92]{ShubinBounded}
Mikhail~A. Shubin.
\newblock Spectral theory of elliptic operators on non-compact manifolds.
\newblock {\em Ast{\'e}risque}, 207:37--108, 1992.

\bibitem[SSS10a]{SchlagSofferStaubachConicI}
Wilhelm Schlag, Avy Soffer, and Wolfgang Staubach.
\newblock {D}ecay for the wave and {S}chr{\"o}dinger evolutions on manifolds
  with conical ends, {P}art {I}.
\newblock {\em Transactions of the American Mathematical Society},
  362(1):19--52, 2010.

\bibitem[SSS10b]{SchlagSofferStaubachConicII}
Wilhelm Schlag, Avy Soffer, and Wolfgang Staubach.
\newblock Decay for the wave and {S}chr{\"o}dinger evolutions on manifolds with
  conical ends, {P}art {II}.
\newblock {\em Transactions of the American Mathematical Society},
  362(1):289--318, 2010.

\bibitem[Vai89]{VainbergAsymptotic}
Boris Vainberg.
\newblock {\em Asymptotic methods in equations of mathematical physics}.
\newblock CRC Press, 1989.

\bibitem[Vas00]{VasyThreeBody}
Andr{\'a}s Vasy.
\newblock Propagation of singularities in three-body scattering.
\newblock {\em Ast{\'e}risque}, 262:1--157, 2000.

\bibitem[Vas01]{VasyManyBody}
Andr{\'a}s Vasy.
\newblock Propagation of singularities in many-body scattering.
\newblock {\em Annales Scientifiques de l’{\'E}cole Normale Sup{\'e}rieure},
  34(3):313--402, 2001.

\bibitem[Vas08]{VasyPropagationCorners}
Andr{\'a}s Vasy.
\newblock Propagation of singularities for the wave equation on manifolds with
  corners.
\newblock {\em Annals of Mathematics}, 168:749--812, 2008.

\bibitem[Vas10]{VasyDiffractionForms}
Andr{\'a}s Vasy.
\newblock Diffraction at corners for the wave equation on differential forms.
\newblock {\em Communications in Partial Differential Equations},
  35(7):1236--1275, 2010.

\bibitem[Vas12]{VasyWaveOnAdS}
Andr{\'a}s Vasy.
\newblock The wave equation on asymptotically anti de {S}itter spaces.
\newblock {\em Anal. PDE}, 5(1):81--144, 2012.

\bibitem[Vas13]{VasyMicroKerrdS}
Andr{\'a}s Vasy.
\newblock Microlocal analysis of asymptotically hyperbolic and {K}err--de
  {S}itter spaces (with an appendix by {S}emyon {D}yatlov).
\newblock {\em Invent. Math.}, 194(2):381--513, 2013.
\newblock \href {https://doi.org/10.1007/s00222-012-0446-8}
  {\path{doi:10.1007/s00222-012-0446-8}}.

\bibitem[Vas18]{VasyMinicourse}
Andr{\'a}s Vasy.
\newblock A minicourse on microlocal analysis for wave propagation.
\newblock In Thierry Daud{\'e}, Dietrich H{\"a}fner, and Jean-Philippe Nicolas,
  editors, {\em Asymptotic Analysis in General Relativity}, volume 443 of {\em
  London Mathematical Society Lecture Note Series}, pages 219--373. Cambridge
  University Press, 2018.

\bibitem[Vas21]{VasyLowEnergy}
Andr{\'a}s Vasy.
\newblock {R}esolvent near zero energy on {R}iemannian scattering
  (asymptotically conic) spaces.
\newblock {\em Pure and Applied Analysis}, 3(1):1--74, 2021.
\newblock \href {https://doi.org/10.2140/paa.2021.3.1}
  {\path{doi:10.2140/paa.2021.3.1}}.

\bibitem[War13]{WarnickAdS}
Claude~M. Warnick.
\newblock The massive wave equation in asymptotically ads spacetimes.
\newblock {\em Communications in Mathematical Physics}, 321(1):85--111, Jul
  2013.
\newblock \href {https://doi.org/10.1007/s00220-013-1720-3}
  {\path{doi:10.1007/s00220-013-1720-3}}.

\bibitem[Yan20]{YangDiffraction}
Mengxuan Yang.
\newblock Diffraction and scattering on product cones.
\newblock {\em Preprint, arXiv:2004.07030}, 2020.

\end{thebibliography}

\end{document}